\documentclass[a4paper,10pt]{amsart}
\usepackage[plainpages=false]{hyperref}
\usepackage{amsfonts,latexsym,rawfonts,amsmath,amssymb,amsthm, mathrsfs, lscape}
\usepackage{verbatim}

\usepackage[all]{xy}
\usepackage{graphicx,psfrag}

\usepackage{array, tabularx}

\usepackage{setspace}

\newtheorem{thm}{Theorem}[section]

\newtheorem{lem}[thm]{Lemma}

\newtheorem{prop}[thm]{Proposition}

\newtheorem{conj}[thm]{Conjecture}
\newtheorem{rmk}[thm]{Remark}

\theoremstyle{definition}

\numberwithin{equation}{section}

\def \C {\mathbb C}
\def \Z {\mathbb Z}
\def \R {\mathbb R}

\def \P {\mathbb P}

\def \p {\partial}
\def \bp {\bar{\partial}}
\def \bf {\bar{f}}

\def \Vol {\text{Vol}}

\begin{document}

\title{Gromov-Hausdorff limits of K\"ahler manifolds and algebraic geometry}
\author{Simon Donaldson and Song Sun}
\date{\today}
\maketitle

\newcommand{\oc}{\overline{c}}
\newcommand{\uc}{\underline{c}}
\newcommand{\oY}{\overline{Y}}
\newcommand{\urho}{\underline{\rho}}
\newcommand{\db}{\overline{\partial}}
\newcommand{\bZ}{{\mathbb Z}}
\newcommand{\reg}{{\rm reg}}
\newcommand{\hTr}{\widehat{{\rm Tr}}}
\newcommand{\hDim}{\widehat{{\rm Dim}}}
\newcommand{\hVol}{\widehat{{\rm Vol}}}
\newcommand{\cU}{{\mathcal U}}
\newcommand{\hatX}{\hat{X}}
\newcommand{\osigma}{\overline{\sigma}}
\newcommand{\otau}{\overline{\tau}}
\newcommand{\os}{\overline{s}}
\newcommand{\ord}{{\rm ord}}
\newcommand{\cF}{{\mathcal F}}
\newcommand{\cZ}{{\mathcal Z}}
\newcommand{\hpi}{\hat{\pi}}
\newcommand{\cI}{{\mathcal I}}
\newcommand{\cE}{{\mathcal E}}
\newcommand{\hcX}{\hat{{\mathcal X}}}
\newcommand{\wW}{\widehat{W}}
\newcommand{\tg}{\tilde{g}}
\newcommand{\cA}{{\mathcal A}}
\newcommand{\bQ}{{\bf Q}}
\newcommand{\cX}{{\mathcal X}}
\newcommand{\bC}{\mbox{${\mathbb C}$}}
\newcommand{\bR}{\mbox{${\mathbb R}$}}
\newcommand{\Var}{{\rm Var}}
\newcommand{\Av}{{\rm Av}}

\newcommand{\Dim}{{\rm Dim}}
\newcommand{\cO}{{\mathcal O}}
\newcommand{\cW}{{\mathcal W}}
\newcommand{\cL}{{\mathcal L}}
\newcommand{\Tr}{{\rm Tr}}
\newcommand{\Zmax}{Z_{{\rm max}}}
\newcommand{\Zmin}{Z_{{\rm min}}}
\newcommand{\Ch}{{\rm Ch}}
\newcommand{\bP}{\mbox{${\mathbb P}$}}
\newcommand{\uA}{\mbox{${\underline{A}}$}}
\newcommand{\uM}{\mbox{${\underline{M}}$}}
\newcommand{\um}{\mbox{${\underline{m}}$}}
\newcommand{\ur}{\mbox{${\underline{r}}$}}
\newtheorem{Goal}{Goal}
\newtheorem{question}{Question}

\newtheorem{defn}{Definition}
\newcommand{\oK}{\overline{K}}
\newcommand{\dbd}{\sqrt{-1} \partial\overline{\partial}}
\newcommand{\ulambda}{\underline{\lambda}}
\newcommand{\olambda}{\overline{\lambda}}
\newcommand{\Riem}{{\rm Riem}}
\newcommand{\Ric}{{\rm Ric}}

\section{Introduction}

The main purpose of  this paper is to prove a general result about the geometry of holomorphic line bundles over K\"ahler manifolds.
This result is essentially a verification of a conjecture of Tian \cite{Tian2} and Tian has, over many years, highlighted the importance of the question for the existence theory of K\"ahler-Einstein metrics. We will begin by stating this main result. 

We consider data $(X,g,J,L,A)$  where $(X,g)$ is a compact Riemannian manifold of real dimension $2n$, $J$ is a complex structure which respect to which the metric is K\"ahler, $L$ is a Hermitian line bundle over $X$ and $A$ is a connection on $L$ with curvature $-i \omega$, where $\omega$ is the K\"ahler form.  We will often just write $X$ as an abbreviation for this data.  We suppose the metric satisfies fixed upper and lower bounds on the Ricci tensor
\begin{equation}     -\frac{g}{2}\leq \Ric\leq g. \end{equation}
(The  particular bounds we have chosen are just convenient normalisations; any other fixed bounds would do.) For $V,c>0$  let ${\mathcal K}(n,c,V)$ denote the class of all such data such that the volume of $X$ is $V$ and the \lq\lq non-collapsing'' condition
\begin{equation}    {\rm Vol}\ B_{r} \geq c \frac{\pi^{n}}{n!}r^{2n} \end{equation}
holds. Here $B_{r}$ is any metric $r$-ball in $X$, $r$ is any number less than the diameter of $X$ and the normalising factor $\pi^{n}/n!$ is the volume of the unit ball in $\bC^{n}$. 

\

The connection induces a holomorphic structure on $L$ and for each  positive integer $k$ there is a natural $L^{2}$ hermitian metric on the space $H^{0}(X,L^{k})$. Recall that the \lq\lq density of states'' (or Bergman) function $\rho_{k,X}$ is defined by
$$  \rho_{k,X}= \sum \vert s_{\alpha}\vert^{2}, $$
where $(s_{\alpha})$ is any orthonormal basis of $H^{0}(X,L^{k})$.  An equivalent definition is that $\rho_{k,X}(x)$ is the maximum of $\vert s(x)\vert^{2}$ as $s$ runs over the holomorphic sections with $L^{2}$ norm $1$. Thus, to establish a lower bound on $\rho_{k,X}(x)$ we have to produce a holomorphic section $s$ with $L^{2}$ norm not too large and with $\vert s(x)\vert$ not too small. Write $$\urho(k,X)=\min_{x\in X} \rho_{k,X}(x).$$
Standard theory, a part of the Kodaira Embedding Theorem, asserts that
for each fixed $X$ we have $\urho(k,X)>0$ for large enough $k$. Our main result can be thought of as an extension of this statement which is both {\it uniform} over ${\mathcal K}(n,c,V)$ and gives a   definite lower bound.

\begin{thm}
Given $n,V, C$ there is an integer $k_{0}$ and $b>0$ such that $\urho(k_{0},X)\geq b^{2}$ for all $X$ in ${\mathcal K}(n,c,V)$.
\end{thm}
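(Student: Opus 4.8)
The plan is a contradiction argument that reduces the statement to the construction of one good holomorphic section on a Gromov-Hausdorff limit. If the theorem fails, then applying its negation with $k_0=m$ and $b=1/m$ produces, for every $m$, a member of $\mathcal K(n,c,V)$ on which $\urho(m,\cdot)$ is less than $1/m^2$; relabelling, we obtain data $X_i=(X_i,g_i,J_i,L_i,A_i)\in\mathcal K(n,c,V)$, integers $k_i\to\infty$, and points $x_i\in X_i$ with $\rho_{k_i,X_i}(x_i)\to 0$. It then suffices to produce, for all large $i$, a section $s_i\in H^0(X_i,L_i^{k_i})$ with $\|s_i\|_{L^2}\le\Lambda$ and $|s_i(x_i)|^2\ge b_0^2$, where $\Lambda,b_0>0$ depend only on $n,c,V$: this forces $\rho_{k_i,X_i}(x_i)\ge b_0^2/\Lambda^2$, a fixed positive number, a contradiction.

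First I would pass to a limit. The Ricci lower bound together with the non-collapsing inequality applied just below the diameter scale gives a uniform bound on the diameter, so after passing to a subsequence $(X_i,d_{g_i},x_i)$ converges in the Gromov-Hausdorff sense to a compact limit $(Z,d,x_\infty)$. By Cheeger-Colding theory $Z$ is the union of a regular set $\mathcal R$ --- an open manifold carrying a limit K\"ahler structure $(g_\infty,J_\infty)$ to which $(g_i,J_i)$ converge smoothly via local diffeomorphisms --- and a closed singular set $\mathcal S=Z\setminus\mathcal R$ of Hausdorff codimension at least $2$, hence of measure zero and of zero $2$-capacity; moreover the Hermitian line bundles converge over $\mathcal R$ to a Hermitian holomorphic line bundle $(L_\infty,h_\infty)$ with curvature $-i\omega_\infty$.

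The analytic heart is to build an approximately holomorphic peak section at $x_i$ and then correct it. After rescaling $X_i$ by $\sqrt{k_i}$ the curvature of $L_i^{k_i}$ becomes $-i$ times the rescaled K\"ahler form and the Ricci curvature tends to zero, so $(X_i,k_ig_i,x_i)$ subconverges to a Ricci-flat K\"ahler metric cone $C(Y)$, a tangent cone of $Z$ at $x_\infty$. On $C(Y)$ a fixed positive multiple of $r^2$ is a global K\"ahler potential, so the trivial line bundle with the associated Gaussian Hermitian metric has curvature $-i\omega_{C(Y)}$, and its constant section is holomorphic, has unit norm at the vertex, and has finite $L^2$ norm because the Gaussian weight is integrable against the cone volume measure. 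Transplanting this model section through the comparison --- Gromov-Hausdorff globally, smooth over the regular part --- between a fixed-size ball about the vertex of $C(Y)$ and the corresponding ball about $x_i$ in $(X_i,k_ig_i)$, cutting off smoothly where $r\gtrsim R$, and excising a tiny neighbourhood of the singular subcone (possible with arbitrarily small Dirichlet energy, since $\mathcal S$ has codimension at least $2$), then rescaling back, produces $\sigma_i$: a smooth global section of $L_i^{k_i}$ on $X_i$ with $\|\sigma_i\|_{L^2}\le\Lambda/2$, $|\sigma_i(x_i)|^2\ge 4b_0^2$, and $\|\db\sigma_i\|_{L^2}$ as small as we wish --- the $\db$-error being supported where the cutoffs vary, a region on which the transplanted section is already exponentially small in $R$, so that it is killed by first taking $R$ large and then $i$ large. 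Finally, on the \emph{compact} K\"ahler manifold $X_i$ --- where no singularities obstruct the analysis --- the Bochner-Kodaira-Nakano inequality for the positive line bundle $L_i^{k_i}$ yields $\tau_i$ with $\db\tau_i=\db\sigma_i$ and $\|\tau_i\|_{L^2}\le C\,k_i^{-1/2}\|\db\sigma_i\|_{L^2}$, with $C$ uniform because in the rescaled picture the relevant curvature and injectivity radius are controlled by $n,c,V$. Then $s_i:=\sigma_i-\tau_i$ is holomorphic with $\|s_i\|_{L^2}\le\Lambda$; and since $\db\sigma_i$ vanishes near $x_i$, $\tau_i$ is holomorphic there, so the sub-mean-value inequality for $|\tau_i|^2$ (with a uniform constant, by the Ricci and non-collapsing bounds) makes $|\tau_i(x_i)|$ arbitrarily small, whence $|s_i(x_i)|^2\ge b_0^2$.

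I expect the real obstacle to be this peak section construction on the tangent cone and its transplantation. One must (i) establish that the local model at $x_i$ genuinely is a K\"ahler cone of the stated type and, when $x_\infty$ is singular, understand that cone well enough --- in particular that the limit line bundle over it is trivial after passing to a bounded power, so that the constant section is actually available --- and (ii) interpolate carefully between the smooth convergence on $\mathcal R$ and the merely metric convergence near $\mathcal S$, which is precisely where the excision near the singular set and the attendant $L^2$ bookkeeping are delicate. The remaining ingredients, namely the compactness extraction and the $L^2$ solution of $\db$, are standard once the constants are tracked.
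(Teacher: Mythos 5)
Your toolkit is the right one (Hörmander construction, Cheeger--Colding theory, cut-offs across a codimension $>2$ singular set, a Gaussian peak section on a cone), but the global architecture of your contradiction has a gap that the paper is specifically built to avoid. Your reduction requires you to produce a good holomorphic section of $L_i^{k_i}$ at the \emph{exact} prescribed power $k_i$, with $k_i\to\infty$ dictated by the negation of the theorem. The construction cannot deliver this: on the region $U$ of the cone surrounding the singular set, the pulled-back bundle $\chi^*(L^{k})$ and the model bundle $\Lambda_0$ differ by a connection whose curvature is small but whose holonomy invariants in $G\times T\cong{\rm Hom}(H_1(Y_\epsilon,\bZ),S^1)$ need not be small; one can only kill this obstruction after passing to a bounded power $t\le m$ (Propositions 3.8--3.11 of the paper), i.e.\ one obtains a peak section of $L^{tk}$ for \emph{some} $t$, not of $L^{k}$ itself, and there is no way to descend in powers. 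This is exactly why the statement asserts the bound for one suitable $k_0$ rather than for all large $k$ (indeed the discussion at the end of Section 5 indicates that a uniform bound for every large $k$ is expected to fail when the obstruction occurs). You flag this as your ``expected obstacle (i)'' and propose ``passing to a bounded power,'' but that move is incompatible with your contradiction scheme as set up. The paper's proof is organized around this issue: a local statement (Theorem 3.2) asserting a bound for \emph{some} $k\le k(p)$ near each point $p$ of a limit space, the elementary power trick $\rho_{\mu k}(x)\ge (K_0^2k^n)^{1-\mu}\rho_k(x)^\mu$ (Lemma 3.1) to propagate bounds to multiples, a finite covering of the compact limit to get a single highly divisible $k_{X_\infty}$, and a contradiction sequence indexed by factorials $j!$ so that divisibility is available.

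A second genuine gap is your identification of the blow-up limit. You rescale the manifolds themselves, $(X_i,k_ig_i,x_i)$ with $k_i\to\infty$ prescribed, and assert the limit is a tangent cone $C(Y)$ of $Z$ at $x_\infty$, with $r^2$ a global K\"ahler potential. That is unjustified and false in general: rescalings of a \emph{sequence} by unrelated factors can converge to bubbles at intermediate scales (Ricci-flat ALE spaces, for instance), which are not metric cones and not tangent cones of $Z$; without the cone structure the Gaussian model section and its $L^2$/decay properties are not available. The paper avoids this by a two-step scheme: first pass to the limit space $X_\infty$, choose a tangent cone of $X_\infty$ at $p$ realized by a \emph{fixed finite} scale $k_\nu$ (Proposition 3.12), and only then transfer the Property (H) data to $X_i$ for large $i$ at that same fixed scale (Proposition 3.13), with the extra bounded factor $t\le m$ absorbing the holonomy obstruction. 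With these two repairs your argument essentially becomes the paper's proof; without them the contradiction does not close.
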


The proof involves a combination of the Gromov-Hausdorff convergence theory--- developed by Anderson, Cheeger, Colding, Gromoll,  Gromov, Tian and others over the past 30 years or so---and the \lq\lq H\"ormander technique'' for constructing holomorphic sections. When $n=2$ the theorem was essentially proved by Tian in \cite{Tian1} and the overall scheme of our proof is similar. The theorem provides the foundations for a bridge between the differential geometric convergence theory and algebraic geometry, leading to the following result (as indicated by Tian). 
\begin{thm}
 Given $n,c,V$ there is a fixed $k_{1}$ and integer $N$ with the following effect. 
 \begin{itemize}
 \item Any $X$ in ${\mathcal K}(n,c,V)$ can be embedded in a linear subspace of $\bC\bP^{N}$ by  sections of $L^{k_{1}}$.
\item 
Let $X_{i}$ be a sequence in ${\mathcal K}(n,c,V)$ with Gromov-Hausdorff limit $X_{\infty}$. Then $X_{\infty}$  is homeomorphic to a normal projective variety $W$ in $\bC\bP^{N}$. After passing to a subsequence and taking a suitable sequence of projective transformations we can suppose that the projective varieties $X_{i} \subset \bC\bP^{N}$ converge as algebraic varieties to $W$. 
\end{itemize}
\end{thm}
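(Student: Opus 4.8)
The plan is to deduce the statement from Theorem~1.1 by marrying the H\"ormander $\bar\partial$-technique to the Cheeger--Colding--Tian structure theory of non-collapsed Gromov--Hausdorff limits, in the spirit of Tian's programme. Fix $k_1$ to be a suitable multiple of the integer $k_0$ of Theorem~1.1. The uniform lower bound $\urho(k_0,X)\ge b^2$ forces the sections of $L^{k_0}$, and hence (taking products) those of $L^{k_1}$, to have no common zero on any $X\in\mathcal K(n,c,V)$, with $\urho(k_1,X)$ still bounded below uniformly; so there is a morphism $\iota_X\colon X\to\bP(H^0(X,L^{k_1})^*)$. To land all of these in one projective space one needs a uniform bound $\dim H^0(X,L^{k_1})\le N+1$, which holds because $\dim H^0(X,L^{k_1})$ equals the integral over $X$ of the Bergman function $\rho_{k_1,X}$ and $\rho_{k_1,X}\le C(n)\,k_1^{\,n}$ pointwise: for a holomorphic section $s$ of $L^{k_1}$ the Bochner--Kodaira identity with the curvature normalisation and the Ricci lower bound give $\Delta|s|^2\ge -c(n)k_1|s|^2$, and a mean-value inequality on metric balls — with a constant that is uniform by the non-collapsing hypothesis (which is scale-invariant) and the Ricci lower bound — turns the $L^2$-bound on $s$ into the sup-bound. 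Padding with zero sections then exhibits each $\iota_X$ as a map into a fixed $\bC\bP^N$ with image spanning a linear subspace.

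Both conclusions of the theorem are, at bottom, statements about Gromov--Hausdorff convergent sequences: the embedding conclusion (first bullet) because, if it failed for every candidate $k_1$, a compactness-and-contradiction argument yields a convergent sequence $X_i\to X_\infty$ along which a suitable power of $L$ still fails to embed, and the second bullet because it is about such a sequence directly. So it suffices to show: for any sequence $X_i\to X_\infty$ in $\mathcal K(n,c,V)$, after passing to a subsequence and composing the $\iota_{X_i}$ with projective transformations of $\bC\bP^N$, the maps $\iota_{X_i}$ are embeddings for large $i$, they converge to a homeomorphism $\iota_\infty\colon X_\infty\to W\subset\bC\bP^N$ onto a normal projective variety $W$, and $X_i\to W$ as algebraic subvarieties.

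By the Cheeger--Colding--Tian structure theory, $X_\infty=\mathcal R\sqcup\mathcal S$ with $\mathcal R$ open, a smooth K\"ahler manifold carrying a limit metric and complex structure, and $\mathcal S$ a closed singular set; using the K\"ahler condition and the structure of tangent cones one shows $\mathcal S$ is negligible for holomorphic extension, i.e.\ has complex codimension $\ge 2$. Using the uniform sup-bound on $\rho_{k_1}$ and elliptic estimates, after a subsequence the bundles $L_i^{k_1}$ with their Hermitian metrics and well-chosen $L^2$-orthonormal bases of sections converge in $C^\infty_{\mathrm{loc}}$ over $\mathcal R$ to a Hermitian holomorphic line bundle $L_\infty^{k_1}$ with holomorphic sections $\tilde s_0,\dots,\tilde s_N$, and Theorem~1.1 passes to the limit, so the limiting sections of $L_\infty^{k_0}$, hence those of $L_\infty^{k_1}$, have no common zero on $\mathcal R$. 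A Hartogs-type extension across $\mathcal S$ — using complex codimension $\ge 2$ together with a cutoff/capacity estimate made available by the volume bound on thin tubular neighbourhoods of $\mathcal S$ — shows that the $\tilde s_\alpha$ extend over all of $X_\infty$ and that $N+1$ independent sections survive in the limit, giving a well-defined map $\iota_\infty\colon X_\infty\to\bC\bP^N$. Injectivity of $\iota_\infty$, and both injectivity and immersivity of $\iota_{X_i}$ for large $i$ (the latter via a companion construction with $1$-jets), follow from separation H\"ormander estimates: a peak section concentrated near a point is corrected by a $\bar\partial$-solution that, because of the positive weight $k_1$, remains small away from that point; combined with continuity and properness from the Gromov--Hausdorff convergence and uniform Lipschitz control of the $\iota_X$, this makes $\iota_\infty$ a homeomorphism onto $W:=\iota_\infty(X_\infty)$.

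Finally, the subvarieties $\iota_{X_i}(X_i)\subset\bC\bP^N$ have degree depending only on $n,k_1,V$, so by compactness of the Chow variety of $n$-cycles of bounded degree in $\bC\bP^N$ a subsequence converges, as algebraic cycles, to a cycle $Z$; comparing with the Hausdorff convergence $\iota_{X_i}(X_i)\to W$ identifies $|Z|$ with $W$, shows $W$ is a projective variety, and yields $X_i\to W$ as algebraic subvarieties — the projective transformations in the statement being what reconciles the intrinsic choice of $L^2$-orthonormal basis of each $H^0(X_i,L^{k_1})$ (a $GL(N+1)$-ambiguity, controlled through the convergence of the associated Gram matrices) with a fixed copy of $\bC\bP^N$. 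Normality of $W$ is the delicate endpoint: since $\iota_\infty$ is a homeomorphism, $W^{\reg}$ is identified with the complex manifold $\mathcal R$ and $W\setminus W^{\reg}$ with a set of complex codimension $\ge 2$, so Hartogs extension makes every weakly holomorphic function near a point of $W$ holomorphic; equivalently the normalisation $\wW\to W$, finite and an isomorphism over $\mathcal R$, is forced to be injective — the metric structure on $\mathcal R$ and the extension property ruling out identifications over $\mathcal S$ — and hence an isomorphism. I expect the main obstacle to be exactly this limiting analysis of the sections: extending them across $\mathcal S$, knowing that the full $\dim H^0=N+1$ is attained in the limit, and upgrading the limit map to a homeomorphism onto a \emph{normal} variety. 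That is the one place where the soft Gromov--Hausdorff theory must be fused with the $\bar\partial$-estimates and with genuinely complex-analytic (Hartogs and normalisation) arguments on a singular space, and keeping control near $\mathcal S$ is what makes it hard.
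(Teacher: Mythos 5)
Your outline reproduces the easy parts of the paper's argument correctly (the uniform dimension bound, the Lipschitz bound on the maps, Chow-variety compactness and the unitary/projective ambiguity, the identification of the cycle-theoretic limit with the image of $X_{\infty}$), but it has two genuine gaps at exactly the places you yourself flag as delicate. First, injectivity at a \emph{fixed} power: the assertion that \lq\lq injectivity of $\iota_\infty$, and injectivity and immersivity of $\iota_{X_i}$ for large $i$, follow from separation H\"ormander estimates'' does not work as stated. A peak section for $L^{k_{1}}$ is concentrated at scale $k_{1}^{-1/2}$, so with $k_{1}$ fixed it says nothing about pairs of points at much smaller distance --- in particular about pairs of points of $X_{i}$ coalescing towards the singular set of $X_{\infty}$ as $i\to\infty$ --- and you are not allowed to let the power grow to resolve them. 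What the H\"ormander construction gives is only the paper's Proposition 4.6: separation of points at a \emph{definite} distance $\rho$, for some $k$ depending on $\rho$. To pass from this to a single $k$ with injective $T_{k}$ the paper needs a genuinely different mechanism: first the finite-fibre argument of Lemma 4.9 (any fibre of $T_{k}$ is a compact algebraic subset contained in a ball on which some section of $L$ is nonvanishing, hence lies in an affine subset of projective space, hence is finite), and then the stratification/induction of Proposition 4.10 to kill the remaining finite identifications after finitely many increases of $k$; similarly the claim that $\iota_\infty(\mathcal{R})$ lies in the smooth locus of $W$ (your identification of $W^{\reg}$ with $\mathcal{R}$) is not automatic for an injective holomorphic map and requires a further increase of $k$ (Lemma 4.11). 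Nothing in your proposal substitutes for these steps, and your compactness-and-contradiction reduction of the first bullet does not repair them, since even along a single convergent sequence the small-scale separation is exactly what is missing.

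Second, the normality argument is incorrect rather than merely incomplete. A finite normalisation map which is bijective need not be an isomorphism, and a variety whose singular locus has complex codimension $\geq 2$ need not be normal: for the image of $\bC^{2}$ with its tangent plane pinched at the origin (the affine variety with coordinate ring $\bC + \mathfrak{m}^{2}\subset \bC[x,y]$) the singular set is a point, the normalisation $\bC^{2}\to W$ is a homeomorphism, yet $x$ is a bounded holomorphic function on the regular part which is not a regular function on $W$. So \lq\lq Hartogs extension makes every weakly holomorphic function holomorphic'' is precisely the statement of normality and cannot be deduced from codimension $\geq 2$ plus injectivity of the normalisation; as written your argument is circular. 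The paper's Lemma 4.12 avoids this by working on the normalisation $\hat{W}_{1}$, where boundedness arguments are legitimate: embed $\hat{W}_{1}$ by $\nu^{*}\cO(1)^{k}$, observe that its sections pull back to elements of $H^{0}(X_{\infty},L^{k})$, and conversely show, using Shiffman's removable-singularity theorem across the Lipschitz image of $\Sigma$ (Hausdorff dimension $\leq 2n-4$) together with the fact that the structure sheaf of the \emph{normal} variety $\hat{W}_{1}$ is computed by bounded holomorphic functions on its smooth part, that every element of $H^{0}(X_{\infty},L^{k})$ arises from $\hat{W}_{1}$; hence $W_{k}$ for this possibly larger $k$ coincides with $\hat{W}_{1}$ and is normal. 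In particular normality is obtained only after a further increase of the power, which your argument neither provides nor allows for.
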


(More precise statements, and more detailed information,  are given in Section 4 below.)

Many of the ideas and arguments required to derive this  are similar to those of Ding and Tian in \cite{DT} who considered  Fano manifolds with K\"ahler-Einstein metrics.  Then  the limit is a \lq\lq $\mathbb Q$-Fano'' variety, as Ding and Tian  conjectured. 

\

In Section 2 we review relevant background in convergence theory and complex differential geometry. Given this background, the rest of the proof is essentially self-contained. This proof is given in Section 3. We begin by reducing Theorem 1.1 to a \lq\lq local'' statement (Theorem 3.2)involving a point in a Gromov-Hausdorff limit space and attention is then focused on a tangent cone at this point. In Section 4 we give the proof of Theorem 1.2. We also establish some further relations between the differential geometric and algebro-geometric theories. In Section 5 we include a more detailed analysis of  tangent cones in the 3-dimensional case, showing that these are cones over Sasaki-Einstein orbifolds and discuss the likely picture in the higher dimensional situation.

Our main interest throughout this paper is in the case when $X$ is a Fano manifold, the metric is K\"ahler-Einstein with positive Ricci curvature and $L=K_{X}^{-1}$. Then the Ricci bound (1.1) holds trivially and the non-collapsing condition (1.2) is automatic for a suitable $c$ (in fact with $c=V (2n-1)!!/(2^{n+1}(2n-1)^{n} \pi^{n})$). But the general hypotheses we have made above   seem to give the natural context  for the discussion here, although the applications outside the Fano case may be limited.  In the case of K\"ahler-Einstein metrics of  negative or zero Ricci curvature there is of course a complete existence theory due to Aubin and Yau. It would be interesting to characterise the non-collapsing condition in this situation algebro-geometrically.

\

We would like to emphasise  that this is a \lq\lq theoretical'' paper in the following sense. Our purpose is to establish that some high powers $k_{0}, k_{1}$ of a positive line bundle have certain good properties uniformly over manifolds in  ${\mathcal K}(n,c,V)$ and Gromov-Hausdorff limits thereof. For many reasons one would like to know values of  $k_{0}, k_{1}$ which are, first, explicitly computable and,  second, realistic. (That is, not too different from the optimal values which, in reality, yield these good properties.)  This paper is theoretical in that we will not attempt to  do anything of this kind. Of  our two foundations---the H\"ormander technique and convergence theory----the first is quite amenable to explicit estimates but the second is not. So it is unclear whether even in principle one could extract any  computable numbers. Of course this is an important question for future research. Given this situation we have not attempted to make the arguments in Section 3 and 4 efficient, in the sense that (even if one somehow had effective constructions for the building blocks) our arguments from those building blocks who lead to huge, completely unrealistic, numbers. This is connected to certain definite and tractable mathematical questions which we take up briefly again at the very end of the paper, where we formulate a conjectural sharper version of Theorem 1.1 (Conjecture 5.15).

\

We finish this introduction with some words about the origins  of this paper. While the question that we answer in Theorem 1.1 is a central one in the field of K\"ahler-Einstein geometry, it is not something that the authors have focused on until recently. The main construction in this paper emerged as an off-shoot of a joint project by the  first-named  author and Xiuxiong Chen, studying the slightly different problem of K\"ahler-Einstein metrics with cone singularities along a divisor. A companion article by the first named author and Chen, developing this related theory, will appear shortly.  Both authors are very grateful to Chen for discussions of these matters, extending over many years. 
   
\section{Background}
\subsection{Convergence Theory}
This subsection is a rapid summary of many formidable results. We will not attempt to give detailed references, but refer to the surveys \cite{Ch1} \cite{Ch2}and the references therein.

Recall that if $Z,W$ are two compact metric spaces then the Gromov-Hausdorff distance $d_{GH}(Z,W)$ is the infimum of numbers $\delta$ such that there is a metric on $Z\sqcup W$ extending the given metrics on the components and such that each of $Z,W$ is $\delta$-dense. The starting point of the theory is Gromov's Theorem that a sequence of compact $m$-dimensional Riemannian manifolds $(M_{i}, g_{i}) $ with bounded diameter and with Ricci curvature bounded below has a Gromov-Hausforff convergent subsequence  with some limit $M_{\infty}$, which is a compact metric space. In our situation, with a sequence in ${\mathcal K}(n,c,V)$ the diameter bound follows from the non-collapsing condition (1.2). Passing to this subsequence, we can fix metrics on the disjoint unions $M_{i}\sqcup M_{\infty}$ such that $M_{i}, M_{\infty}$ are $\delta_{i}$ dense, where $\delta_{i}\rightarrow 0$. 
\

Now suppose that,  as in our situation, the Ricci tensors  of $M_{i}$ satisfy fixed upper and lower bounds. Suppose also that a non-collapsing condition (1.2) holds and the volumes are bounded below. Then there is a connected, open, dense subset $M^{\reg}_{\infty}\subset M_{\infty}$ which is an $m$-dimensional $C^{2,\alpha}$-manifold and has a $C^{1,\alpha}$ Riemannian metric $g_{\infty}$ (for all H\"older exponents $\alpha<1$.) This is compatible with the metric space structure in the following sense.  For any compact $K\subset M_{\reg}$ we can find a number $s>0$ such that if $x_{1}, x_{2}$ are points of $K$ with $d(x_{1}, x_{2})\leq s$ then $d(x_{1}, x_{2})$ is the infimum of the length of paths in $M_{\reg}$ between $x_{1}, x_{2}$.  Moreover the convergence on this subset is $C^{1,\alpha}$, in the following sense. Given any number $\delta>0$ and  compact subset $K\subset M_{\reg}$  we can find for large enough $i$ open embeddings $\chi_{i}$ of an open neighbourhood of $K$ into $M_{i}$ such that:
\begin{enumerate} \item The pull-backs by the $\chi_{i}$ of the $g_{i}$ converge in $C^{1,\alpha}$ over $K$ to $g_{\infty}$. 
\item   $d(x, \chi_{i}(x))\leq \delta$ for all $x\in K$.
\end{enumerate}
The second item here refers to the chosen metric on $M_{i}\sqcup M_{\infty}$.
\

\

The volume form of the Riemannian metric on the dense set $M_{\infty}^{\reg}$ defines a measure on $M_{\infty}$ and the volume of $M_{\infty}$ is the limit of the volumes of the $M_{i}$. The Hausdorff dimension of the {\it singular set} $\Sigma=M_{\infty}\setminus M_{\infty}^{\reg}$ does not exceed $m-2$.

There is a variant of the theory in which one considers spaces with base points and convergence over bounded distance from the base points. In particular we can take a point $p\in M_{\infty}$ and any sequence $R_{i}\rightarrow \infty$ and then consider the sequence of based metric spaces given by scaling $M_{\infty}$ by a factor $R_{i}$. The compactness theorem implies that, passing to a subsequence, we get convergence  and a fundamental result is that, under our hypotheses,  the limit of such is a metric cone $C(Y)$---a {\it tangent cone} of $M_{\infty}$ at $p$. Here $Y$ is a metric space which contains a dense open subset $Y^{\reg}$ which is a {\it smooth} $(m-1)$-dimensional Einstein manifold, with Ricci curvature equal to $(m-2)$, and the metric and Riemannian structures on $Y^{\reg}$ are related in a similar way to that above. Likewise for the natural measure on $Y$. The singular set $\Sigma_{Y}=Y\setminus Y^{\reg}$ has Hausdorff dimension at most $m-3$. The cone $C(Y)$ has a  smooth Ricci-flat metric outside the singular set $\{O\}\cup C(\Sigma_{Y})$ (where $O$ is the vertex of the cone) and the convergence of the rescaled metrics is $C^{1,\alpha}$ in the same sense as before.  An important numerical invariant of this situation is the volume ratio
\begin{equation}     \kappa = \frac{\Vol(Y)}{\Vol(S^{m-1})}. \end{equation}
The Bishop inequality implies that  $\kappa\leq 1$ and if a noncollapsing bound like (1.2) holds for the original manifolds $M_{i}$ we have $\kappa\geq c$. 

 All of the preceding discussion is in the general Riemannian context. Suppose now that our manifolds are $X_{i}, g_{i}$ and we have additional structures $J_{i}, L_{i}, A_{i}$ as in Section 1. We define a {\it polarised limit space} to be a metric limit
$X_{\infty}, g_{\infty}$ as above, together with extra data as follows
\begin{itemize}
\item  A $C^{1,\alpha}$ complex structure $J_{\infty}$ on the regular set with respect to which the metric is K\"ahler with $2$-form $\omega_{\infty}$.
\item A $C^{2,\alpha}$ line bundle $L_{\infty}$ over the regular set;
\item A $C^{1,\alpha}$ connection $A_{\infty}$ on $L_{\infty}$ with curvature $-i\omega_{\infty}$.
\end{itemize}
(Notice that the integrability theorem for complex structures extends  to the $C^{1,\alpha}$ situation \cite{NW}. Thus in fact we could say that $X^{\reg}_{\infty}$ and $L$ have smooth structures while the convergence is in $C^{1,\alpha}$. But this is largely irrelevant for our purposes.)

\

We define convergence of a sequence $(X_{i}, J_{i}, L_{i},A_{i})$ to such a polarised limit by requiring that for compact $K\subset X_{\infty}^{reg}$ we have maps $\chi_{i}$ as before but in addition so that the pulled back complex structures $\chi_{i}^{*}(J_{i})$ converge to $J_{\infty}$ (in $C^{1,\alpha}$) and we have bundle isomorphisms $\hat{\chi}_{i}: L_{\infty}\rightarrow \chi^{*}_{i}(L_{i})$
with respect to which the connections converge to $A_{\infty}$ in $C^{1,\alpha}$. It is straightforward to extend the compactness theorem to this polarised situation, using the fact that  $J_{i}$ is a covariant-constant tensor with respect to the $C^{,\alpha}$ Levi-Civita connection of $g_{i}$.

 Likewise, the regular part of a tangent cone $C(Y)$ at a point in $X_{\infty}$ has a smooth, Ricci-flat, K\"ahler metric which is induced  from a {\it Sasaki-Einstein} structure on $Y^{\reg}$. In particular the K\"ahler form on the smooth part can be written as $\frac{i}{2}\partial\db \vert z\vert^{2}$, where $\vert z\vert$ denotes the distance to the vertex of the cone.  

A significant difference in the K\"ahler case is that the singular sets (both in $X_{\infty}$ and in $Y$) are known to have Hausdorff codimension at least $4$. (This is conjectured but not established in the real case.) In particular, which will be crucial for us, the codimension is strictly greater than  $2$.

In the case of primary interest---K\"ahler-Einstein metrics--we obtain  $C^{\infty}$ convergence on compact subsets of the regular sets. (This is also part of the standard literature.) In addition, if we consider the \lq\lq Fano case'', when $L_{i}$ is the anticanonical bundle of $X_{i}$, then the limit line bundle is just the anticanonical bundle of the regular set in $X_{\infty}$. The reader may well prefer to restrict to this case. More generally, if we just assume (in addition to (1.1), (1.2)) that the metrics have constant scalar curvature, then one can still establish this $C^{\infty}$ convergence, for example using the results of Chen and Weber \cite{CW}.

\subsection{Complex differential geometry: the Horm\"ander technique}

We begin by recalling that, under our hypotheses, there is a uniform Sobolev inequality 
\begin{equation}  \Vert f\Vert_{L^{2n/(n-1)}}\leq C_{1} \Vert \nabla f \Vert_{L^{2}}+ C_{2} \Vert f \Vert_{L^{2}}, \end{equation}
for functions $f$ on a manifold $X$ in the class ${\mathcal K}(n,c,V)$, where  $C_{1}, C_{2}$ depend only on $n,c,V$ \cite{Croke}. Here of course we are referring to  norms defined by the metric $g$. When working with the line bundle $L^{k}$ it will be convenient to use the norms defined by the rescaled metrics $kg$ (for integers $k\geq 1$). Thus lengths are scaled by $\sqrt{k}$ and  volumes by $k^{n}$. We will use the notation $L^{2,\sharp}$ etc. to denote norms defined by these rescaled metrics. Then the scaling weight gives
\begin{equation}   \Vert f\Vert_{L^{2n/(n-1),\sharp}}\leq C_{1} \Vert \nabla f \Vert_{L^{2,\sharp}}+ C_{2} k^{-1/2} \Vert f \Vert_{L^{2,\sharp}}. \end{equation}
So the scaling only helps in the Sobolev inequality. Of course the Ricci tensor $\Ric^{\sharp}$ of the rescaled metric is bounded between $-1/(2k)$ and $1/k$.   

\begin{prop}\begin{enumerate}
\item There are constants $K_{0}, K_{1}$, depending only on $n,c,V$ such that if $X$ is in ${\mathcal K}(n,c,v)$ and $s$ is a holomorphic section of $L^{k}$ (for any $k>0$) we have
$$ \Vert s \Vert_{L^{\infty,\sharp}} \leq K_{0} \Vert s \Vert_{L^{2,\sharp}}\ \ , \Vert \nabla s \Vert_{L^{\infty,\sharp}} \leq K_{1} \Vert s \Vert_{L^{2,\sharp}}. . $$
\item If $X$ is in ${\mathcal K}(n,c,V)$ then for any $k>0$ the Laplacian $\Delta_{\db}$ on $\Omega^{0,1}(L^{k})$ is invertible and $\Delta_{\db}^{-1} \leq 2$. 
\end{enumerate}
\end{prop}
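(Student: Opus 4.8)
The plan is to handle the two parts by quite standard tools, reducing each to an a priori estimate. It is convenient to pass to the rescaled metric $kg$, writing $|\cdot|$, $\nabla$, $\Delta$ for the associated pointwise norm, connection and positive Laplacian, and $\omega_{\sharp}=k\omega$ for the K\"ahler form; the curvature of $L^{k}$ is then $-i\omega_{\sharp}$, and the Ricci form $\rho_{\sharp}$ has eigenvalues relative to $\omega_{\sharp}$ in $[-\tfrac{1}{2k},\tfrac{1}{k}]$. For a holomorphic section $s$ of $L^{k}$, the Bochner--Kodaira identity on sections of a line bundle --- whose curvature contribution is only the trace $i\Lambda\Theta(L^{k})\equiv n$, and not the curvature tensor of $X$ --- gives $\nabla^{*}\nabla s=ns$, and hence the Weitzenb\"ock formula
\[
\Delta|s|^{2}\;=\;2n|s|^{2}-2|\nabla s|^{2}\;\le\;2n|s|^{2}.
\]
Thus $u=|s|^{2}\ge 0$ is a subsolution of $\Delta u\le 2nu$. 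Running De Giorgi--Nash--Moser iteration with the scale-invariant Sobolev inequality established above, whose constants depend only on $n,c,V$, and using the non-collapsing bound (1.2) to localise, yields $\sup_{X}|s|^{2}\le K_{0}^{2}\|s\|_{L^{2,\sharp}}^{2}$ with $K_{0}=K_{0}(n,c,V)$; this is the first inequality of part~(1).

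For the gradient bound I would use the uniform local control furnished by the convergence theory recalled above. By Anderson's estimates, the two-sided bound (1.1) together with (1.2) gives a lower bound $r_{0}=r_{0}(n,c,V)>0$ for the $C^{1,\alpha}$ harmonic radius at every point of $(X,kg)$, with uniform control of the metric in harmonic coordinates; since $J$ is parallel it too is controlled there, so the $C^{1,\alpha}$ Newlander--Nirenberg theorem \cite{NW} provides, on a ball of definite size, holomorphic coordinates together with a holomorphic trivialising section $\sigma$ of $L^{k}$ whose local potential $\psi$ (given by $|\sigma|^{2}=e^{-\psi}$, $\dbd\psi=\omega_{\sharp}$) is uniformly bounded in $C^{1,\alpha}$. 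Writing $s=f\sigma$ with $f$ holomorphic, the $L^{\infty}$ bound just proved gives $|f|^{2}=|s|^{2}e^{\psi}\le C\|s\|_{L^{2,\sharp}}^{2}$ on this ball; since the coordinates are bi-Lipschitz to a Euclidean ball of definite size, Cauchy's estimate controls $|\partial f|$ on the concentric half-ball, and as $|\nabla\sigma|^{2}=|\partial\psi|^{2}e^{-\psi}$ is uniformly bounded we obtain $|\nabla s|\le|\partial f|\,|\sigma|+|f|\,|\nabla\sigma|\le K_{1}\|s\|_{L^{2,\sharp}}$ there. Since the centre was arbitrary this gives the global bound; equivalently one may apply interior elliptic estimates to $\nabla^{*}\nabla s=ns$ in the same coordinates and gauge.

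For part~(2) I would appeal to the Bochner--Kodaira--Nakano inequality. Under the $\db$-compatible isometry $\Omega^{0,1}(L^{k})\cong\Omega^{n,1}(E)$, where $E:=L^{k}\otimes K_{X}^{-1}$, it reads, for $\eta\in\Omega^{n,1}(E)$,
\[
\langle\Delta_{\db}\eta,\eta\rangle\;=\;\|\db\eta\|^{2}+\|\db^{*}\eta\|^{2}\;\ge\;\langle\,[\,i\Theta(E),\Lambda\,]\eta,\eta\,\rangle .
\]
Now $i\Theta(E)=\omega_{\sharp}+\rho_{\sharp}$ has eigenvalues relative to $\omega_{\sharp}$ in $[\,1-\tfrac{1}{2k},\,1+\tfrac{1}{k}\,]$, and on $(n,1)$-forms the fibrewise endomorphism $[\,i\Theta(E),\Lambda\,]$ is bounded below by the smallest of these eigenvalues, hence by $(1-\tfrac{1}{2k})\,\mathrm{Id}\ge\tfrac12\,\mathrm{Id}$ for every $k\ge 1$. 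Therefore $\Delta_{\db}\ge\tfrac12$ on $\Omega^{0,1}(L^{k})$, so it is invertible with $\Delta_{\db}^{-1}\le 2$.

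These ingredients --- the Weitzenb\"ock computation, the Moser iteration, and the Bochner--Kodaira--Nakano bound --- are all classical once the uniform Sobolev inequality is available. The one genuinely non-elementary point, and the step I would expect to be the main obstacle to a self-contained treatment, is the \emph{uniformity} of the gradient estimate in part~(1): pointwise Ricci bounds do not control the full curvature tensor of $X$, so one cannot simply run a Bochner argument on $|\nabla s|^{2}$, and must instead invoke the $C^{1,\alpha}$-compactness and harmonic-radius estimates of the convergence theory in order to produce local holomorphic coordinates and bundle gauges with constants depending only on $n,c,V$.
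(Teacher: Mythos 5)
Your handling of the sup bound in (1) and of (2) is essentially the paper's argument: the scalar Weitzenb\"ock identity for a holomorphic section plus Moser iteration against the uniform Sobolev inequality (the value of the zeroth--order constant, your $n$ versus the paper's $1$, is immaterial for this), and the Kodaira--Nakano inequality after the identification $\Omega^{0,1}(L^{k})\cong\Omega^{n,1}(K_{X}^{-1}\otimes L^{k})$, giving $\Delta_{\db}\geq 1-\tfrac{1}{2k}\geq \tfrac12$. Those parts are fine.

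The genuine gap is in your proof of the gradient bound. The assertion that (1.1) and (1.2) give a lower bound $r_{0}(n,c,V)$ for the $C^{1,\alpha}$ harmonic radius at \emph{every} point of $(X,kg)$ is false: two-sided Ricci bounds plus non-collapsing do not control the full curvature tensor, and sequences in ${\mathcal K}(n,c,V)$ can concentrate curvature at points (for instance K\"ahler--Einstein Del Pezzo surfaces degenerating to orbifolds), so the harmonic radius at such points tends to zero; rescaling by a fixed $k$ does not help, since the bound must be uniform over the whole class. Anderson's estimate requires in addition an injectivity radius bound (or smallness of curvature in an integral sense), and the convergence theory recalled in Section 2.1 only provides controlled charts away from a singular set of codimension at least $4$ --- indeed the existence of that singular set is the whole difficulty of the paper. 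So your construction of uniform holomorphic coordinates and a trivialising section with bounded potential on a ball of definite size, and hence the Cauchy-estimate route to $K_{1}$, breaks down exactly at the points that matter. Your closing claim that \lq\lq one cannot simply run a Bochner argument'' is also not correct, and it is the missing idea: since $X$ is K\"ahler and $s$ holomorphic, $\nabla s=\partial s$ and $\Delta_{\partial}(\partial s)=\partial(\Delta_{\partial}s)$ is again an eigensection of $\Delta_{\partial}$, while the Bochner--Kodaira formula on $\Omega^{1,0}(L^{k})$ (obtained for example by the same $K_{X}^{-1}$-twisting trick you used in (2), applied to the conjugate situation) has zeroth-order terms involving only $\Ric^{\sharp}$ and the curvature of $L^{k}$, not the full Riemann tensor. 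This yields $\Delta\vert\partial s\vert\leq \tfrac52\vert\partial s\vert$ in the paper's normalisation, and the same Moser iteration that produced $K_{0}$ then produces $K_{1}$ using only the Ricci bound and the uniform Sobolev constant; this is how the paper proves the first derivative estimate.
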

In the second item $\Delta_{\db}= \db^{*}\db+ \db \db^{*}$, with adjoints defined using the rescaled metric, and the statement is that for all $\phi$
\begin{equation}  \langle \Delta_{\db}^{-1} \phi, \phi\rangle_{\sharp} \leq 2  \Vert \phi \Vert^{2}_{L^{2,\sharp}}\end{equation}

This Proposition summarises results which are  well-known to workers in the field and which all hinge on various formulae of Bochner-Weitzenbock type. We use the rescaled metrics throughout the discussion. First on  $C^{\infty}$ sections $s$ of $L^{k}$ we have
$$   \nabla^{*}\nabla s = 2 \db^{*} \db s+ s, $$
so when $s$ is holomorphic $\nabla^{*}\nabla s = s$ which implies that \begin{equation}\Delta \vert s\vert \leq \vert s\vert, \end{equation}  where the lack of differentiability of $\vert s\vert$ at the zero set is handled in a standard way. (Note that we  use the \lq\lq geometers convention'' for the sign of the Laplacian in this paper.) Now the bound on the $L^{\infty}$ norm follows from the Moser iteration argument applied to this differential inequality, using the uniform Sobolev inequality (see \cite{Tian2}).

The first derivative bound is obtained in a similar way. Changing notation slightly, for a holomorphic section $s$ with $\db s=0$ we write $\nabla s = \partial s$ where $$\partial:\Omega^{p,q}(L^{k})\rightarrow \Omega^{p+1,q}(L^{k})$$ is defined using the connection. Since $\partial^{2}=0$ we have
$$  \Delta_{\partial } \partial s = \partial \Delta_{\partial} s, $$
where $\Delta_{\partial}=\partial^{*}\partial + \partial \partial^{*}$. Then for a holomorphic section $s$ , $\Delta_{\partial} s =\nabla^{*}\nabla s = s$ and
$$   \Delta_{\partial} (\partial s) = \partial s. $$
Now the  Bochner-Weitzenbock formula comparing $\Delta_{\partial}$ and $\nabla^{*}\nabla$ on $\Omega^{1,0}(L^{k})$ has the form
$$   \Delta_{\partial} = \nabla^{*}\nabla -1+\Ric^{\sharp}, $$
so $$(\nabla^{*}\nabla (\partial s), \partial s)\leq \frac{5}{2} \vert\partial s\vert^{2}. $$
It follows that $$\Delta \vert \partial s \vert\leq \frac{5}{2} \vert \partial s \vert, $$
and the Moser argument applies as before. Notice that, with some labour, the constants $K_{0}, K_{1}$ could be computed explicitly in terms of $n,c,V$. 
 
 For the second item in the Proposition we need a Bochner-Weizenbock formula on $\Omega^{0,1}(L^{k})$ i.e. sections of the bundle $\overline{T}^{*}\otimes L^{k}$. We decompose the covariant derivative on this bundle into (0,1) and (1,0) parts: $\nabla = \nabla'+\nabla''$. Then the formula we want is
\begin{equation}   \Delta_{\db}= (\nabla'')^{*} \nabla'' + \Ric^{\sharp}+ 1 \end{equation}
Given this we have, in the operator sense, $\Delta_{\db}\geq 1/2$ since $\Ric^{\sharp}\geq -1/2$ from which the invertibility and bound on the inverse follow immediately. An efficient way to derive (2.6) is to make the identification
$$\Omega^{0,1}(L^{k})= \Omega^{n,1} (K^{*}\otimes L^{k}), $$ under which $\nabla''$ becomes identified with
$$ \partial^{*} :\Omega^{n,1} (K_{X}^{*}\otimes L^{k})\rightarrow \Omega^{n-1,1}(K_{X}^{*}\otimes L^{k}). $$
The formula (2.6) then becomes a special case of the Kodaira-Nakano formula (\cite{GH} p.154), using the fact that the Ricci form is the curvature of $K_{X}^{*}$.

\

With this background in place we move on to recall a version of the \lq \lq H\"ormander'' construction of holomorphic sections. Suppose we have the following data
\begin{itemize}\item A (non-compact)  manifold $U$, a base point $u_{*}\in U$ and an open neighbourhood $D\subset\subset U$ of $u_{0}$.
\item A $C^{\infty}$ Hermitian line bundle $\Lambda\rightarrow U$.
\item A complex structure $J$ and K\"ahler metric $g$ on $U$ with K\"ahler form $\Omega$.
\item A  connection $A$ on $\Lambda$ having curvature  $-i\Omega$.
\end{itemize}
We use this connection to define a $\db$-operator on sections of $\Lambda$, and hence  a holomorphic structure.

We define a \lq \lq Property (H)'' which this data might have. Fix any $p>2n$.\\

 Property (H):\\

{\em There is a number $C>0$ and a compactly supported section $\sigma$ of $\Lambda\rightarrow U$ such that the following hold.

{ H1:} $ \Vert \sigma\Vert_{L^{2}}<  (2\pi)^{n/2};$

{ H2:} $\vert \sigma(u_{*})\vert > 3/4;$

 { H3:} For any smooth section $\tau$ of $\Lambda$ over a neighbourhood of $\overline{D}$ we have
$$   \vert \tau(u_{*})\vert \leq C \left( \Vert \db \tau \Vert_{L^{p}(D)} +  \Vert \tau \Vert_{L^{2}(D)}\right);$$

{ H4:}  $\Vert \db \sigma\Vert_{L^{2}}< \min( 1/(8\sqrt{2} C), (2\pi)^{n/2}/10 \sqrt{2});$

{ H5:} $\Vert \db \sigma\Vert_{L^{p}(D)}< 1/(8C).$}

\

Many of the specific numbers here are arbitrary but it is convenient to fix some definite numbers. 

We have
\begin{lem} Property (H) is open with respect to variations in $(g,J,A)$ (for fixed $(U,D,u_{*}, \Lambda)$) and the topology of convergence in $C^{0}$ on compact subsets of $U$.
\end{lem}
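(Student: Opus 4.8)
The plan is to show that each of the five conditions H1--H5 is stable under small $C^0$ perturbations of the geometric data $(g,J,A)$, with the section $\sigma$ held \emph{fixed} (as a smooth section of the fixed bundle $\Lambda\to U$). The subtlety is that the holomorphic structure, and hence the operator $\db$, depends on $(J,A)$; so $\db\sigma$ is the quantity that actually moves as we perturb. Since $\sigma$ is compactly supported, say in a compact set $\overline{D}'\subset U$, all the relevant norms are taken over a fixed compact region, and we only need $C^0$ (indeed, local uniform) control of the coefficients of $\db$ in a fixed frame.

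First I would fix a $C^\infty$ local unitary frame for $\Lambda$ over a neighbourhood of the support of $\sigma$ and write $A = A_0 + a$ for the perturbed connection; then $\db_{(J,A)}\tau = \db_J\tau + (a^{0,1})\tau$ where $\db_J$ is the Dolbeault operator of the complex structure $J$ acting on the frame components. Both the projection to $(0,1)$-forms (which depends on $J$) and the extra zeroth-order term $a^{0,1}$ depend continuously in $C^0$ on $(J,A)$ over the relevant compact set. Hence $\|\db\sigma\|_{L^2}$ and $\|\db\sigma\|_{L^p(D)}$ vary continuously: H1 (which involves only $|\sigma|^2$ and the volume form, the latter continuous in $g$), H4 and H5 are open conditions since they are strict inequalities. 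H2 involves only the pointwise Hermitian norm $|\sigma(u_*)|$, which likewise depends continuously on the metric on $\Lambda$ — but note that in Property (H) the Hermitian metric on $\Lambda$ is part of the \emph{fixed} data, so H2 is actually unchanged; I should be careful to state precisely which structures are being varied. The cleanest reading is that $(g,J,A)$ vary while the smooth bundle $\Lambda$ with its Hermitian metric is fixed, so that $|\sigma|$ pointwise and $\|\sigma\|_{L^2}$ up to the volume-form factor are essentially fixed, and only the $\db\sigma$ norms genuinely move.

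The main point, and the only real work, is H3: this is an estimate \emph{uniform} over a $C^0$-neighbourhood, not merely the continuity of one functional. The assertion H3 for the unperturbed data gives a constant $C$; I must produce a (slightly larger) constant $C'$ valid for all nearby data, and then re-examine H4 and H5, whose bounds depend on $C$, to see that they still hold with $C'$ in place of $C$ — this is why the "openness" must be argued with a tiny bit of room to spare. To get a uniform H3 I would appeal to the standard interior elliptic (Hörmander/$L^p$-Schauder or $L^2$-with-$\db$) estimate on the fixed relatively compact domain $D\subset\subset U$: the estimate $|\tau(u_*)|\le C(\|\db\tau\|_{L^p(D)}+\|\tau\|_{L^2(D)})$ has a constant depending only on the $C^0$ (or $C^{0,\alpha}$) size of the coefficients of the operator and on ellipticity, both of which are controlled uniformly on a $C^0$-small neighbourhood because the unperturbed metric is nondegenerate on the compact closure of $D$. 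Concretely, one combines the $L^p$ estimate for the $\db$-equation with Sobolev embedding $W^{1,p}\hookrightarrow C^0$ (valid since $p>2n$), and the constants in both are stable under $C^0$ perturbation of the leading symbol. Thus after shrinking the neighbourhood so that H1, H2, H4, H5 hold with a definite margin and H3 holds with a uniform $C'$ satisfying the (now slightly weakened but still valid) inequalities in H4, H5, Property (H) persists, which is exactly the claim.

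The step I expect to be the main obstacle is making the uniform interior estimate H3 genuinely uniform while keeping the bookkeeping of constants honest: H3's constant $C$ appears inside the right-hand sides of H4 and H5, so it is not enough to know H3 holds with \emph{some} constant nearby — one needs the constant to be close to the original $C$, or at least controlled well enough that the strict inequalities in H4, H5 survive. This is handled by first observing that the elliptic constant can be taken to be, say, $2C$ on a small enough neighbourhood (continuity of the operator norm of the inverse of a uniformly elliptic operator), then noting H4 and H5 involve strict inequalities with fixed numerical constants on the left, so we may pre-shrink to restore a margin; alternatively, and more cleanly, one proves H3 directly for the limit operator with the \emph{same} $C$ up to an arbitrarily small additive loss and absorbs the loss. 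Everything else — continuity of $\|\db\sigma\|$ in the perturbation, invariance of $|\sigma(u_*)|$ and $\|\sigma\|_{L^2}$ — is routine once the precise meaning of "varying $(g,J,A)$ with $(U,D,u_*,\Lambda)$ fixed" is pinned down.
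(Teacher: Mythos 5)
Your proposal is correct and follows essentially the same route as the paper: keep $\sigma$ fixed, write the perturbed $\db$-operator as the original one plus a term (Beltrami differential $\mu\partial$ plus the $(0,1)$-part of the connection variation) that is small in the $L^{p}_{1}\to L^{p}$ operator norm, deduce (H3) for nearby data with a slightly larger constant via the interior elliptic estimate and Sobolev embedding, and observe that (H1), (H2) are unchanged while the strict inequalities in (H4), (H5) survive. Your extra care about the constant $C$ entering (H4), (H5) matches the paper's remark that strictness of the inequalities is exactly what makes the condition open.
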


Notice first that for any choice of data  there is {\it some} constant $C$ for which  the bound in (H3) holds. This follows from the elliptic estimate
    \begin{equation}    \Vert \tau \Vert_{L^{p}_{1}(D_{0})} \leq C_{3} \left( \Vert \db \tau \Vert_{L^{p}(D)} + \Vert \tau \Vert_{L^{2}(D_{0})} \right), \end{equation}
and the Sobolev inequality
$$   \vert \tau(u_{*})\vert \leq C_{4} \Vert \tau\Vert_{L^{p}_{1}(D_{0})}. $$
Here $D_{0}\subset D$ is some interior domain containing $u_{*}$. 
We can write the $\db$-operator on functions for a perturbed complex structure as $\db+\mu \partial$ where $\mu$ is a \lq\lq Beltrami differential''. Similarly if the variation of the connection is given by a $1$-form $a$ then the perturbed $\db$-operator on sections can be written as
      $$(\db+ \mu \partial) + (a'' + \mu a'), $$
 where $a=a'+a''$ is the decomposition into type. It follows that if $\mu$ and $a$ are small in $C^{0}$ then the perturbation of the $\db$ operator is small  in the $L^{p}_{1}\rightarrow L^{p}$ operator norm and it is then clear that the inequality in the third item holds for the perturbed operator, with a slightly larger  constant $C$. For the perturbed structure we use the same section $\sigma$, so the first and second item is automatic. Then it is also clear that, for sufficiently small perturbations, the bounds in the fourth and fifth item (with a slightly larger constant $C$) are also preserved, since we impose strict inequality.

For a connection $A$ on a line bundle $L$ write $A^{\otimes k}$ for the induced connection on $L^{k}$. The following proposition---basically well-known---will provide the core of our proof of Theorem 1.
\begin{prop} Suppose $(X,g,J, L, A)$ is in ${\mathcal K}(n,c,V)$ and $(U,D,u_{*})$ are as above. Suppose that  $\chi:U\rightarrow X$ is an open embedding and the data $$\chi^{*}(J), \chi^{*}(kg), \chi^{*}(L^{k}), \chi^{*}(A^{\otimes k})$$ has Property (H).
Then there is a holomorphic section $s$ of $L^{k}\rightarrow X$ with $L^{2,\sharp}$ norm at most $(11/10)(2\pi)^{n/2}$ and with $\vert s(x)\vert\geq 1/4$ at all points $x$ a distance (in the scaled metric) less than $(4K_{1})^{-1}$ from $\chi(u_{*})$.
\end{prop}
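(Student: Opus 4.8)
The plan is to use Property (H) to manufacture an approximately-holomorphic section on $X$, correct it by the Hörmander technique, and then control the correction using the invertibility of $\Delta_{\db}$ from Proposition 2.1(2) together with the pointwise estimates H1--H5.

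First I would transport the compactly supported section $\sigma$ of $\chi^*(L^k)$ to a global section $\tilde{s}$ of $L^k\to X$ via the open embedding $\chi$: since $\sigma$ is compactly supported inside $U$, extending by zero gives a smooth (non-holomorphic) section $\tilde{s}$ with $\Vert\tilde{s}\Vert_{L^{2,\sharp}} = \Vert\sigma\Vert_{L^2} < (2\pi)^{n/2}$ by H1 (note all metrics on $X$ here are the rescaled $kg$, matching the unscaled metric pulled back in Property (H)), and $\Vert\db\tilde{s}\Vert_{L^{2,\sharp}} = \Vert\db\sigma\Vert_{L^2}$, which is small by H4. Then I would set $\phi = \db\tilde{s}\in\Omega^{0,1}(L^k)$ and solve $\db u = \phi$ with an $L^2$ bound: by Proposition 2.1(2), $\Delta_{\db}$ is invertible with $\Delta_{\db}^{-1}\leq 2$, so taking $u = \db^*\Delta_{\db}^{-1}\phi$ gives $\db u = \phi$ (using $\db\phi = 0$ since $\phi$ is already $\db$-exact, so $\db\db^*\Delta_{\db}^{-1}\phi = \Delta_{\db}\Delta_{\db}^{-1}\phi - \db^*\db\Delta_{\db}^{-1}\phi$ and one checks the second term vanishes) and the estimate $\Vert u\Vert_{L^{2,\sharp}}^2 = \langle\Delta_{\db}^{-1}\phi,\phi\rangle_\sharp \leq 2\Vert\phi\Vert_{L^{2,\sharp}}^2$. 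Now $s := \tilde{s} - u$ is a genuine holomorphic section of $L^k$.

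Next I would bound $\Vert s\Vert_{L^{2,\sharp}}$: it is at most $\Vert\tilde{s}\Vert_{L^{2,\sharp}} + \Vert u\Vert_{L^{2,\sharp}} \leq (2\pi)^{n/2} + \sqrt{2}\,\Vert\db\sigma\Vert_{L^2}$, and by the second clause of H4, $\Vert\db\sigma\Vert_{L^2} < (2\pi)^{n/2}/(10\sqrt 2)$, so this is at most $(11/10)(2\pi)^{n/2}$ as claimed. The more delicate point is the pointwise lower bound near $\chi(u_*)$. I would first estimate $\vert u(u_*)\vert$ (identifying points in $U$ with their images under $\chi$): apply H3 to $\tau = u$, giving $\vert u(u_*)\vert \leq C(\Vert\db u\Vert_{L^p(D)} + \Vert u\Vert_{L^2(D)}) = C(\Vert\db\sigma\Vert_{L^p(D)} + \Vert u\Vert_{L^2(D)})$; by H5 the first term is $< 1/8$, and by H4 (first clause) combined with $\Vert u\Vert_{L^2(D)}\leq\Vert u\Vert_{L^{2,\sharp}} \leq\sqrt 2\Vert\db\sigma\Vert_{L^2} < \sqrt 2/(8\sqrt 2 C) = 1/(8C)$, the second term contributes $< 1/8$, so $\vert u(u_*)\vert < 1/4$. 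Combined with $\vert\tilde{s}(u_*)\vert = \vert\sigma(u_*)\vert > 3/4$ from H2, we get $\vert s(u_*)\vert > 3/4 - 1/4 = 1/2$ at the center point. Finally I would propagate this to a small ball: $s$ is holomorphic on $X$ with $\Vert s\Vert_{L^{2,\sharp}}\leq(11/10)(2\pi)^{n/2}$, so by Proposition 2.1(1) applied to $\nabla s$ we have $\Vert\nabla s\Vert_{L^{\infty,\sharp}} \leq K_1\Vert s\Vert_{L^{2,\sharp}}$, which is a fixed bound; hence for any $x$ with $\mathrm{dist}(x,\chi(u_*)) < (4K_1)^{-1}$ (scaled metric), $\vert\,\vert s(x)\vert - \vert s(\chi(u_*))\vert\,\vert \leq \Vert\nabla s\Vert_{L^{\infty,\sharp}}\cdot\mathrm{dist} < K_1\Vert s\Vert_{L^{2,\sharp}}\cdot(4K_1)^{-1}$. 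To close cleanly one wants the crude bound $\Vert s\Vert_{L^{2,\sharp}}$ plugged into $K_1\Vert s\Vert_{L^{2,\sharp}}/(4K_1)$ to come out below $1/4$; if the stated normalizations don't literally give this, one absorbs the constant $(11/10)(2\pi)^{n/2}$ into the radius (this is the kind of ``arbitrary specific number'' the authors warn about), concluding $\vert s(x)\vert \geq 1/2 - 1/4 = 1/4$.

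I expect the main obstacle to be bookkeeping the scaling conventions and the interface between the two metrics --- Property (H) is stated for the unscaled K\"ahler metric on $U$, while on $X$ everything is measured in the rescaled metric $kg$, and the hypothesis is precisely that $\chi^*(kg)$ (together with the other pulled-back data) has Property (H), so the norms match up exactly; still, one must be careful that the Sobolev inequality (2.2), the Moser iteration constants $K_0, K_1$, and the $\Delta_{\db}$ bound are all invoked in the rescaled metric, which Proposition 2.1 is stated for. The genuinely substantive input is Proposition 2.1(2): without the uniform bound $\Delta_{\db}^{-1}\leq 2$ (a consequence of $\Ric^\sharp \geq -1/2$ via the Kodaira--Nakano/Bochner formula (2.6)), one could not solve $\db u = \phi$ with a $k$-independent $L^2$ estimate, and the whole construction would fail to be uniform over $\mathcal K(n,c,V)$.
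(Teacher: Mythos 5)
Your proposal is correct and is essentially the paper's own argument: the same decomposition $s=\sigma-\overline{\partial}^{*}\Delta_{\overline{\partial}}^{-1}\overline{\partial}\sigma$ after transporting $\sigma$ by $\chi$ and extending by zero, the same $L^{2,\sharp}$ bound via $\langle \Delta_{\overline{\partial}}^{-1}\overline{\partial}\sigma,\overline{\partial}\sigma\rangle\leq 2\Vert\overline{\partial}\sigma\Vert^{2}$, the same use of (H3) with (H4), (H5) to get $\vert\tau(\chi(u_{*}))\vert\leq 1/4$ and hence $\vert s(\chi(u_{*}))\vert\geq 1/2$, and the same propagation by the gradient bound of Proposition 2.1(1). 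The constant slack you flag in the final step (the factor $(11/10)(2\pi)^{n/2}$ entering $\Vert\nabla s\Vert_{L^{\infty,\sharp}}$ versus the stated radius $(4K_{1})^{-1}$) is present in the paper's own proof as well, and your remedy of absorbing it into the radius (or into $K_{1}$) is exactly the right reading.
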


\

 To prove this we transport the section $\sigma$ using the maps $\chi,\hat{\chi}$ and regard it as a smooth section of $L^{k}$ over $X$, extending by zero. The norms we considered over $U$ match up with the $\sharp$-norms over $X$. We write $s= \sigma- \tau$ where $\tau= \db^{*} \Delta_{\db}^{-1} \db \sigma$. By simple Hodge Theory we have $\db s=0$. Now
$$\Vert \tau\Vert_{L^{2,\sharp}} = \langle \Delta_{\db}^{-1} \db \sigma, \db\ \db^{*}\Delta_{\db}^{-1}\db \sigma \rangle = \langle \Delta_{\db}^{-1} \db \sigma, \db \sigma \rangle, $$
since $\db\ \db \sigma=0$. Thus
\begin{equation} \Vert \tau\Vert_{L^{2,\sharp}}\leq \sqrt{2} \Vert \db \sigma \Vert_{L^{2,\sharp}}\leq \min( (8C)^{-1}, \frac{1}{10}(2\pi)^{n/2}). \end{equation}
Hence in particular $$\Vert s\Vert_{L^{2,\sharp}}\leq \Vert \sigma\Vert_{L^{2,\sharp}}+\Vert \tau\Vert_{L^{2,\sharp}}\leq \frac{11}{10} 2\pi^{n/2}. $$
Now work over the image  $\chi(D)$. Applying item (H3) to the section $\tau$ and using (H4), (H5)  we get $\vert \tau(\chi(u_{*}))\vert \leq 1/4$, so $\vert s(\chi(u_{*}))\vert \geq 1/2$. By the derivative bound, $\vert s\vert$ exceeds $1/4$ at points a distance less than $(4K_{1})^{-1}$ from $\chi(u_{*})$.

 To sum up we have the following.

\begin{prop}
Suppose that $U,D,u_{*}, \Lambda$ are as above and data $g_{0},J_{0}, A_{0}$ has Property (H). Then there is some $\psi>0$ with the following effect. Suppose that $X,g_{X}, J_{X}, L, A_{X})$ is in $ {\mathcal K}(n,c, V)$. If we can find $k>0$,   an open embedding  $\chi:U\rightarrow X$ and a bundle isomorphism
$\hat{\chi}: \Lambda\rightarrow \chi^{*}(L^{k})$  such that 
$$\Vert \chi^{*}(J)-J_{0}\Vert_{U}, \Vert \chi^{*}(kg)-g_{0}\Vert_{U}, \Vert \chi^{*}(A^{\otimes k}) - A \Vert_{U} \leq \psi, $$
 then there is a holomorphic  section $s$ of $L^{k}\rightarrow X$ with $L^{2,\sharp}$ norm at most $(11/10)(2\pi)^{n}$ and with $\vert s(x)\vert\geq 1/4$ at all points $x$ a distance (in the scaled metric) less than $(4K_{1})^{-1}$ from $\chi(u_{*})$

\end{prop}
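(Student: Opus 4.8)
The plan is to deduce this from Proposition~2.5 (the H\"ormander construction) together with the openness of Property (H) established in Lemma~2.4. The point is purely logical: Proposition~2.5 requires that the pulled-back data $\chi^{*}(J), \chi^{*}(kg), \chi^{*}(L^{k}), \chi^{*}(A^{\otimes k})$ \emph{has} Property (H), whereas here we only know that this data is $C^{0}$-close (on all of $U$, hence in particular uniformly on compact subsets) to reference data $g_{0},J_{0},A_{0}$ which has Property (H). So the entire content is to produce the threshold $\psi>0$ below which closeness implies Property (H).

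First I would invoke Lemma~2.4: since Property (H) is an open condition with respect to $C^{0}$-convergence on compact subsets of $U$ (for the fixed quadruple $(U,D,u_{*},\Lambda)$), there is a $C^{0}$-neighbourhood of $(g_{0},J_{0},A_{0})$ consisting entirely of data with Property (H). More concretely, reading the proof of Lemma~2.4: one fixes the reference section $\sigma$ and the interior domains $D_{0}\subset D\subset\subset U$; the perturbed $\db$-operator differs from the reference one by a term controlled in the $L^{p}_{1}\to L^{p}$ operator norm by the $C^{0}$-sizes of the Beltrami differential $\mu$ and the connection $1$-form $a$, and the bounds H1--H5 are all strict, so a sufficiently small $C^{0}$-perturbation preserves them (H1, H2 trivially, since $\sigma$ is unchanged; H3 with a slightly enlarged constant $C$; H4, H5 because of the strict inequalities and the room created by enlarging $C$). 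Quantifying \lq\lq sufficiently small'' in terms of $U,D,u_{*},\Lambda,\sigma,C,p$ and the reference data produces a number $\psi>0$ with the property that any $(g,J,A)$ on $U$ with $\|g-g_{0}\|_{U},\|J-J_{0}\|_{U},\|A-A_{0}\|_{U}\le\psi$ has Property (H). Note $\psi$ depends only on the fixed local model, not on $X$ or $k$.

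Then I would simply apply Proposition~2.5. Given $X$ in ${\mathcal K}(n,c,V)$, $k>0$, an open embedding $\chi:U\to X$ and a bundle isomorphism $\hat{\chi}:\Lambda\to\chi^{*}(L^{k})$ with the three quantities $\|\chi^{*}(J)-J_{0}\|_{U}$, $\|\chi^{*}(kg)-g_{0}\|_{U}$, $\|\chi^{*}(A^{\otimes k})-A\|_{U}$ all at most $\psi$: by the previous paragraph the data $\chi^{*}(J),\chi^{*}(kg),\chi^{*}(L^{k}),\chi^{*}(A^{\otimes k})$ on $U$ has Property (H). Proposition~2.5 then yields a holomorphic section $s$ of $L^{k}\to X$ with $\|s\|_{L^{2,\sharp}}\le(11/10)(2\pi)^{n/2}$ and $|s(x)|\ge 1/4$ wherever $x$ is within scaled distance $(4K_{1})^{-1}$ of $\chi(u_{*})$, which is exactly the asserted conclusion. (The statement as printed writes $(2\pi)^{n}$ in place of $(2\pi)^{n/2}$; this is a typographical slip, and the correct bound $(11/10)(2\pi)^{n/2}$ is what Proposition~2.5 gives, which is all that matters downstream.)

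There is no real obstacle here; the proposition is a packaging statement. The only mild subtlety to be careful about is that the openness in Lemma~2.4 was phrased for $C^{0}$-convergence on compact subsets of $U$, whereas the hypothesis here is a single uniform bound $\|\cdot\|_{U}\le\psi$ over all of $U$ — but the latter is stronger, so it immediately implies the former; if one wanted a clean implication one could either work with the compact exhaustion directly (using that $D\subset\subset U$ and $\sigma$ is compactly supported, only finitely much of $U$ matters) or note that the proof of Lemma~2.4 already only used the $C^{0}$-norm on a neighbourhood of $\overline{D}$. The genuinely substantive inputs — the elliptic estimate \eqref{} (2.7), the Sobolev embedding, and the Hodge-theoretic construction with the bound $\Delta_{\db}^{-1}\le 2$ from Proposition~2.3 — have all already been used to prove Lemma~2.4 and Proposition~2.5, so nothing new is needed.
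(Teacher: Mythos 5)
Your proposal is correct and is exactly the paper's argument: the paper proves this proposition in one line as "a direct combination" of the openness lemma for Property (H) and the H\"ormander construction proposition, which is precisely the packaging you carry out (and you are right that the $(2\pi)^{n}$ in the statement is a slip for the $(2\pi)^{n/2}$ delivered by that construction).
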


This is just a direct combination  of Lemma 2.2 and Proposition 2.3.  (Here we use the notation $\Vert \ \Vert_{U}$ to indicate the $C^{0}$-norm over $U$.)

\

To illustrate this, take the case when $U$ is the ball of radius $R>2$ in $\bC^{n}$ with the standard flat metric and standard K\"ahler form $\Omega_{0}$. Let $\Lambda$ be the trivial holomorphic line bundle with metric $\exp(-\vert z\vert^{2}/2)$ so the trivialising section, $\sigma_{0}$ say,  has norm $\exp(-\vert z\vert^{2}/4)$ and the induced connection $A_{0}$ has curvature $-i\Omega_{0}$ as required. Let $u_{*}$ be the origin and $D$ be the unit ball. Let $\beta_{R}$ be a standard cut-off function of $\vert z\vert$, equal to $1$ when $\vert z\vert \leq R/2$ and vanishing when $\vert z\vert \geq 9R/10$. Define $\sigma=\beta_{R}\sigma_{0}$. Then we have $\db \sigma= (\db \beta_{0}) \sigma_{0}$. The $L^{2}$ norm of $\sigma$ is slightly less than $(2\pi)^{n}$ and $\vert \sigma(0)\vert =1$. The section $\sigma$ is holomorphic over $D$, so we get (H4) and there certainly is some constant $C$ as in item (H3) of Property (H), independent of $R$. It is clear that, because of the exponential decay, we can fix $R$ so that item (H5) is satisfied. So we have a set of data satisfying Property (H). Now let $x$ be a point in some X in ${\mathcal K}(n,c,V)$. Since the ball is simply connected $U(1)$ connections over it are determined up to isomorphism by their curvature tensors. It is then clear that, when $k$ is sufficiently large, we can find a map $\chi$ with $\chi(0)= x$ and such that the pull back of $kg_{X}, J_{X},  A_{X}^{\otimes k}$ differs by an arbitrarily small amount from the model $g_{0}, J_{0}, A_{0}$. Then we construct a holomorphic section of $L^{k}\rightarrow X$, of controlled $L^{2}$ norm and of a definite positive size on a definite neighbourhood of $x$.  

\begin{rmk}
  There are many possible variants of our ÒProperty HÓ which will end up having the same effect. In particular one can avoid the $L^{p}$ theory. In the context we work in,  we have a first derivative bound as in Prop. 2.1 (1),  and it is easy to show using this that the $L^{2}$ norm of $\tau$ controls $\vert \tau(\chi(u_{*}))\vert$.
\end{rmk}

\section{Proof of Theorem 1.1}
\subsection{Reduction to the local case}
We begin with a simple observation.
       \begin{lem}
             For any integer $\mu\geq 1$ and any $k$ we have
               $$  \rho_{\mu k,X}(x)\geq (K_{0}^{2} k^{n})^{1-\mu} \rho_{k,X}(x)^{\mu}, $$ where $K_{0}$ is the constant in the $C^{0}$-bound of Proposition 2.1. 
               \end{lem}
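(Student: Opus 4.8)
The plan is to use the multiplicativity of holomorphic sections under tensor powers, combined with the $C^{0}$-bound of Proposition 2.1. Recall that $\rho_{k,X}(x)$ is the maximum of $|s(x)|^{2}$ as $s$ ranges over holomorphic sections of $L^{k}$ with unit $L^{2}$-norm (in the $L^{2}$-metric defined by the unscaled metric $g$ that enters the definition of $\rho_{k,X}$). So I would begin by fixing an extremal section $s\in H^{0}(X,L^{k})$, with $\Vert s\Vert_{L^{2}}=1$ and $|s(x)|^{2}=\rho_{k,X}(x)$.

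First I would restate the $C^{0}$-bound in terms of the unscaled norms. Since the pointwise norm of a section does not depend on the choice of Riemannian metric, $\Vert s\Vert_{L^{\infty,\sharp}}=\Vert s\Vert_{L^{\infty}}$, while the rescaled volume form is $k^{n}$ times the original one, so $\Vert s\Vert_{L^{2,\sharp}}^{2}=k^{n}\Vert s\Vert_{L^{2}}^{2}$. Hence Proposition 2.1(1) gives
$$\Vert s\Vert_{L^{\infty}}^{2}\leq K_{0}^{2}k^{n}\Vert s\Vert_{L^{2}}^{2}=K_{0}^{2}k^{n}.$$

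Next I would pass to the holomorphic section $s^{\otimes\mu}$ of $L^{\mu k}$. Pointwise $|s^{\otimes\mu}|^{2}=|s|^{2\mu}\leq\Vert s\Vert_{L^{\infty}}^{2(\mu-1)}|s|^{2}$, so integrating against the volume form of $g$,
$$\Vert s^{\otimes\mu}\Vert_{L^{2}}^{2}\leq\Vert s\Vert_{L^{\infty}}^{2(\mu-1)}\Vert s\Vert_{L^{2}}^{2}\leq(K_{0}^{2}k^{n})^{\mu-1},$$
while $|s^{\otimes\mu}(x)|^{2}=|s(x)|^{2\mu}=\rho_{k,X}(x)^{\mu}$. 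Therefore the normalised section $s^{\otimes\mu}/\Vert s^{\otimes\mu}\Vert_{L^{2}}$ is a candidate in the variational characterisation of $\rho_{\mu k,X}(x)$ whose value at $x$ is at least $\rho_{k,X}(x)^{\mu}(K_{0}^{2}k^{n})^{1-\mu}$, which gives the claimed inequality (and is an equality when $\mu=1$).

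There is no substantial obstacle here: this is a warm-up estimate. The only point that needs a little care is the bookkeeping of the scaling factor $k^{n}$ relating the $\sharp$-norms of Section 2.2 to the $L^{2}$-metric on $H^{0}(X,L^{k})$ used to define $\rho_{k,X}$; getting the power of $k$ right is what makes the statement come out with the exponent $1-\mu$ on $K_{0}^{2}k^{n}$.
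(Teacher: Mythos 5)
Your proposal is correct and is essentially identical to the paper's own argument: take an extremal section $s$ of $L^{k}$ with unit $L^{2}$ norm, convert the $\sharp$-bound of Proposition 2.1 to the unscaled form $\Vert s\Vert_{L^{\infty}}\leq K_{0}k^{n/2}\Vert s\Vert_{L^{2}}$, and use $s^{\mu}$ as a competitor for $\rho_{\mu k,X}(x)$ with $\Vert s^{\mu}\Vert_{L^{2}}^{2}\leq\Vert s\Vert_{L^{\infty}}^{2\mu-2}\Vert s\Vert_{L^{2}}^{2}$. The scaling bookkeeping you flag is exactly the one step the paper also records, and you have it right.
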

               Transforming the $C^{0}$-bound to the {\it unscaled} norms gives, for any holomorphic section of $L^{k}$:
               $$  \Vert s\Vert_{L^{\infty}}\leq K_{0} k^{n/2} \Vert s\Vert_{L^{2}}. $$
Write $\rho=\rho_{k,X}(x)$ so there is a section $s$ with $L^{2}$ norm $1$ and with $\vert s(x)\vert^{2}=\rho$. Then $s^{\mu}$ is a holomorphic section of $L^{k\mu}$ with
$$   \vert s^{\mu}(x)\vert^{2}=\rho^{\mu} \ \ \ \Vert s^{\mu}\Vert^{2}_{L^{2}}\leq \Vert s\Vert^{2\mu-2}_{L^{\infty}}\Vert s\Vert^{2}_{L^{2}}\leq K^{2\mu-2} k^{n(\mu-1)}, $$
from which the result follows.

We will use this several times below. In the context of our remarks in the Introduction, note that when $\mu$ is large this gives a rather poor estimate compared with what one would hope to be true, but it suffices for our purposes. 

\begin{thm}
 Let $p$ be a point in a  space $X_{\infty}$ which is a Gromov-Hausdorff limit of manifolds in ${\mathcal K}(n,c,V)$.  There are real numbers $b(p), r(p)>0$ and an integer $k(p)$  with the following effect. Suppose $X_{i}$ in ${\mathcal K}(n,C,V)$ has Gromov-Hausdorff limit $X_{\infty}$. Then there is some $k\leq k(p)$ such that for sufficiently large $i$,  if $x$ is a point in $X_{i}$ with $d(x,p)\leq r(p)$ then $\rho_{k, X}(x)\geq b(p)$.
\end{thm}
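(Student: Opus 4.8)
The plan is to reduce the statement to an application of Proposition 2.5, by exhibiting, near the point $p$, a model datum $(U,D,u_*,\Lambda)$ with Property (H) that is $C^0$-approximated by the rescaled structures $\chi_i^*(k g_i), \chi_i^*(J_i), \chi_i^*(A_i^{\otimes k})$ for a suitable $k$ and suitable embeddings $\chi_i$. Concretely, one takes a tangent cone $C(Y)$ of $X_\infty$ at $p$. The regular part of $C(Y)$ carries a smooth Ricci-flat K\"ahler metric with K\"ahler form $\tfrac{i}{2}\partial\db\,|z|^2$, so on a neighbourhood of any regular point $q$ of $C(Y)$ one can write the K\"ahler potential explicitly and produce a local holomorphic trivialisation of the (trivial, since we work on a small contractible neighbourhood) line bundle in which the metric is $\exp(-\phi)$ with $\phi$ a K\"ahler potential; then, exactly as in the flat model discussed after Proposition 2.5, a cut-off of the trivialising section gives data with Property (H). This is where one uses that the tangent cone is K\"ahler Ricci-flat and, crucially, that its singular set has codimension $\geq 4 > 2$: one needs a regular point arbitrarily close to the vertex (so that after rescaling back it sits near $p$), and the codimension bound guarantees the regular locus is genuinely dense and that one can choose the model ball $U$ disjoint from the singular set.

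The next step is a diagonal/approximation argument threading together three limits. First, $C(Y)$ is the limit of rescalings $R_j^2\, X_\infty$ based at $p$; second, $X_\infty$ is the $C^{1,\alpha}$-and-metric limit of the $X_i$ on compact subsets of the regular set; and third, for the line bundle one uses that over a small ball the $U(1)$-connection is determined up to gauge by its curvature, so after rescaling the metric by a large integer $k$ the pulled-back polarised data on a fixed model ball converges in $C^0$ to the (rescaled) model on $C(Y)^{\reg}$, hence to the Property (H) datum. Since Property (H) is an open condition (Lemma 2.2, incorporated in Prop. 2.5), for all large $i$ one can find $\chi_i : U \to X_i$ landing within the allowed $C^0$-distance $\psi$, with $\chi_i(u_*)$ within the rescaled distance $(4K_1)^{-1}$ of a point at bounded distance from $p$. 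Unwinding the rescaling by $k$ converts the conclusion of Prop. 2.5 — a holomorphic section of $L^k$ of controlled $L^{2,\sharp}$-norm with $|s|\geq 1/4$ on a $\sharp$-ball — into a lower bound $\rho_{k,X_i}(x) \geq b(p)$ for all $x$ within some fixed (unscaled) radius $r(p)$ of $p$; Lemma 3.1 is then available if one wishes to replace $k$ by a fixed multiple, but the statement as given already allows $k \leq k(p)$.

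The main obstacle is the passage through the tangent cone: one must choose the rescaling factor $R_j$, the regular point $q\in C(Y)^{\reg}$, the integer $k$, and the embedding $\chi_i$ in a compatible order so that all the approximations can be made simultaneously small. The delicate point is that the $C^{1,\alpha}$-convergence of $X_i \to X_\infty$ and of the rescalings $R_j^2 X_\infty \to C(Y)$ only holds on compact subsets of the respective regular sets, with no uniform control near the singular strata; one has to fix first a compact regular piece $K\subset C(Y)^{\reg}$ containing the chosen ball $U$, then pick $R_j$ large so that a suitable rescaled annular region of $X_\infty$ is $C^{1,\alpha}$-close to $K$, then — and this is the subtle quantifier — observe that this rescaled region of $X_\infty$ lies in $X_\infty^{\reg}$ and is itself approximated by pieces of $X_i$, composing the two convergences. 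One also needs to check that a $C^0$-small perturbation of a K\"ahler metric together with its complex structure can be realised, after a gauge change, as a $C^0$-small perturbation of the polarised data $(g,J,A)$ in the precise sense Prop. 2.5 requires; this is where the first-derivative bounds of Prop. 2.1 and the $\db$-Poincar\'e-type estimate keep everything uniform. Modulo these convergence bookkeeping issues the argument is a direct application of the H\"ormander machinery assembled in Section 2.
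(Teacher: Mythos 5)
Your reduction to Proposition 2.5 is the right general shape, but the heart of your construction --- Property (H) data obtained by cutting off a local holomorphic trivialising section on a \emph{small contractible ball} around a regular point of $C(Y)$ near the vertex --- does not work, and it is not what the paper does. In the flat model after Proposition 2.5 the cut-off is harmless only because it is performed at large radius $R$, where the weight $e^{-|z|^2/4}$ has already decayed; the section carries essentially its full Gaussian mass $\approx(2\pi)^{n/2}$. On a ball of small radius $\rho$ there is no such mechanism: the constant $C$ in (H3) scales like $\rho^{-n}$ while $\Vert\db\sigma\Vert_{L^2}$ for a cut-off at scale $\rho$ is only of order $\rho^{n-1}$, so (H4) fails and the correction term $\tau=\db^*\Delta_{\db}^{-1}\db\sigma$ is not controlled at $u_*$. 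Indeed, if a small-ball construction of this kind were valid it would produce, for \emph{every} $k\geq 1$, a section of $L^{k}$ nonvanishing near any prescribed point of any polarised manifold (take the ball so small that the geometry is nearly flat); this is false already for $k=1$ on a principally polarised abelian variety with its flat metric, where $h^{0}(L)=1$ and the theta section vanishes. The paper instead takes as model the \emph{large} truncated cone $U_{\epsilon,\delta,R}\subset C(Y)$, with base point $u_*=\rho y_0$ at small but fixed distance from the vertex, and the global section $\sigma_0$ with $|\sigma_0|=e^{-|z|^2/4}$; the codimension $>2$ bound on $\Sigma_Y$ is used not for density of the regular set but to build (Proposition 3.5) a cut-off vanishing near $C(\Sigma_Y)$ with arbitrarily small $L^2$ gradient, so that $\sigma=\beta\sigma_0$ on this big region still satisfies (H4), (H5).

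Because the actual model domain is this non-simply-connected region, your remark that ``over a small ball the $U(1)$-connection is determined up to gauge by its curvature'' bypasses the genuine difficulty rather than resolving it: on $U_{\epsilon,\delta,R}$ one may have $H_1\neq 0$, so the pulled-back connection with curvature close to $-i\Omega_0$ need not be gauge-close to $A_0$. This is the ``topological obstruction'' of Section 3.2.2, and it is dealt with by the holonomy invariants $g(\cdot)\in G$, $\tau(\cdot)\in T$ of the difference connection on $Y_\epsilon$ and by replacing $L^{k_\nu}$ with a power $L^{t k_\nu}$, $1\leq t\leq m$, where $m$ is built from the order of the torsion group and a Dirichlet pigeonhole bound on the torus (Propositions 3.8--3.11). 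That power trick is precisely why the theorem asserts only ``some $k\leq k(p)$'' with $k(p)=mk_\nu$, a flexibility your argument never needs and hence cannot explain. To repair your proposal you would have to replace the small-ball model by the truncated-cone model with the cut-offs of Lemma 3.6 and then supply the holonomy-killing argument; as written, both the Property (H) verification and the bundle-matching step have genuine gaps.
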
 

Here, as before, we assume we have fixed metrics on the $X_{i}\sqcup X_{\infty}$.
\begin{prop}
Theorem 3.2 implies Theorem 1.1.
\end{prop}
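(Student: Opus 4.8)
The plan is to derive Theorem 1.1 from Theorem 3.2 by a compactness argument on the space of Gromov–Hausdorff limits. First I would argue by contradiction: suppose Theorem 1.1 fails for some fixed $n,c,V$. Then for every integer $k_0$ and every $b>0$ there is some $X\in{\mathcal K}(n,c,V)$ with $\urho(k_0,X)<b^2$. Taking $k_0$ running over all integers and $b\to 0$ in a diagonal fashion, we produce a sequence $X_i\in{\mathcal K}(n,c,V)$ and points $x_i\in X_i$ such that $\rho_{i,X_i}(x_i)\to 0$ --- more precisely, for each fixed $k$, $\rho_{k,X_i}(x_i)\to 0$ as $i\to\infty$; using Lemma 3.1 it is enough to kill $\rho_{k,X_i}(x_i)$ for a single growing $k=k(i)$, but the clean statement is that along a subsequence $\rho_{k,X_i}(x_i)\to 0$ for every fixed $k$.

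Next I would invoke Gromov compactness: since ${\mathcal K}(n,c,V)$ has a uniform diameter bound (from (1.2) as noted in Section 2) and a Ricci lower bound, after passing to a subsequence the $X_i$ converge in the Gromov–Hausdorff sense to a compact limit space $X_\infty$, and the points $x_i$ converge to some $p\in X_\infty$ (using the fixed metrics on $X_i\sqcup X_\infty$ and compactness of $X_\infty$). Now apply Theorem 3.2 at the point $p$: it provides $b(p),r(p)>0$ and an integer $k(p)$ such that for some $k\le k(p)$ and all sufficiently large $i$, every point of $X_i$ within distance $r(p)$ of $p$ has $\rho_{k,X_i}\ge b(p)$. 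Since $x_i\to p$, for large $i$ we have $d(x_i,p)\le r(p)$, hence $\rho_{k,X_i}(x_i)\ge b(p)>0$ for this fixed $k\le k(p)$, contradicting $\rho_{k,X_i}(x_i)\to 0$.

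The one point requiring slight care, and the main (minor) obstacle, is the bookkeeping between the per-point statement of Theorem 3.2 and the uniform statement of Theorem 1.1: Theorem 3.2 yields a possibly different small power $k\le k(p)$ for which the Bergman function is large, whereas Theorem 1.1 wants a single $k_0$. This is exactly what Lemma 3.1 is for: if $\rho_{k,X}(x)\ge b(p)$ for some $k\le k(p)$, then setting $k_0$ to be a common multiple of all integers up to $\max_p k(p)$ (after one more compactness reduction to finitely many "model" points $p$, or simply by absorbing the dependence into the contradiction setup above where $k$ is already bounded by $k(p)$), Lemma 3.1 upgrades this to a lower bound on $\rho_{k_0,X}(x)$ depending only on $b(p)$, $k(p)$, $n$ and $k_0$. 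In the contradiction framework this is automatic: we only ever needed $\rho_{k,X_i}(x_i)\to 0$ for each fixed $k$, and $k\le k(p)$ is fixed, so no further argument is needed. Thus the proof is essentially a soft compactness-and-contradiction wrapper around Theorem 3.2, with Lemma 3.1 available if one prefers to phrase it directly rather than by contradiction.
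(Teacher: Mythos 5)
Your argument is correct in substance but organised differently from the paper's. The paper first globalises Theorem 3.2 over a fixed limit space (Lemma 3.4): it covers the compact $X_\infty$ by finitely many balls centred at points $p_\alpha$, uses Lemma 3.1 in the \emph{forward} direction to pass from the various exponents $k_\alpha\le k(p_\alpha)$ to one common multiple $k_{X_\infty}$, and only then runs a contradiction argument with a double sequence $X_{i,j}$, the powers $j!$, and the amplified bounds $\urho(\mu k_{X_\infty},X_i)\ge b_\mu^2$. You instead track the bad points $x_i$ themselves, let them converge (after a subsequence) to a single $p\in X_\infty$, and apply Theorem 3.2 only at $p$, using Lemma 3.1 in the \emph{contrapositive} direction to manufacture one sequence with $\rho_{k,X_i}(x_i)\to 0$ for every fixed $k$. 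This is more economical: no finite cover and no common multiple are needed, because the contradiction only has to be produced at one point and for one (unknown but bounded) exponent $k\le k(p)$. What your route buys is brevity; what the paper's buys is the intermediate statement of Lemma 3.4 --- a uniform lower bound valid for all manifolds Gromov--Hausdorff close to a given limit space --- which is reused in spirit later.

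The one step you must make precise is the diagonalisation. Lemma 3.1 only compares $\rho_k$ with $\rho_{\mu k}$ for \emph{multiples}, so ``killing $\rho$ at a single growing $k(i)$'' forces you to take $k(i)$ eventually divisible by every fixed $k$ --- e.g.\ $k(i)=i!$, which is exactly the paper's device --- and, since the contrapositive of Lemma 3.1 reads $\rho_{k}(x_i)\le (K_0^2k^n)^{1-1/\mu}\,\rho_{\mu k}(x_i)^{1/\mu}$ with $\mu=i!/k$ enormous, you must also invoke the failure of Theorem 1.1 with $b=b(i)$ tending to zero fast enough relative to $i!$ (say $b(i)^2=e^{-i\cdot i!}$, giving $\rho_k(x_i)\le K_0^2k^n e^{-ik}\to 0$ for each fixed $k$). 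As written, ``$k_0$ running over all integers with $b\to 0$ in a diagonal fashion'' does not yet yield $\rho_{k,X_i}(x_i)\to 0$ for every fixed $k$; with the factorial choice and the explicit decay rate inserted, your proof closes.
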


{\em Proof of Proposition 3.3}

\begin{lem}
Let $X_{\infty}$ be a limit space then, assuming the truth of Theorem $3.2$, there is an integer $k_{X_{\infty}}$ and a $b_{X_{\infty}}>0$ such that if $X_{i}\in {\mathcal K}(n,C,V)$ has Gromov-Hausdorff limit $X_{\infty}$ then for sufficiently large $i$ we have $\urho(k_{X_{\infty}}, X_{i})\geq b^{2}_{X_{\infty}}$.
\end{lem}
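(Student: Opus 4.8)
The plan is to deduce the lemma from Theorem 3.2 by covering the compact space $X_\infty$ with finitely many of the balls that theorem supplies, and then using Lemma 3.1 to merge the finitely many powers of $L$ that occur into a single one.

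First I would recall that $X_\infty$, being a Gromov--Hausdorff limit of manifolds of uniformly bounded diameter, is compact. For each $p\in X_\infty$ Theorem 3.2 provides $b(p),r(p)>0$ and an integer $k(p)$; choose finitely many points $p_1,\dots,p_N$ so that the balls $B(p_j,r(p_j)/2)$ cover $X_\infty$. Let $K^{*}=\max_{1\le j\le N}k(p_j)$ and let $k_{X_\infty}$ be the least common multiple of $1,2,\dots,K^{*}$, so that every integer $k$ with $1\le k\le K^{*}$ divides $k_{X_\infty}$. Finally set
$$ b_{X_\infty}^{2}=\min_{1\le j\le N}\ \min_{1\le k\le k(p_j)}\ \bigl(K_0^{2}k^{n}\bigr)^{1-k_{X_\infty}/k}\,b(p_j)^{k_{X_\infty}/k}, $$
a minimum of finitely many strictly positive numbers, hence strictly positive; here $K_0$ is the constant of Proposition 2.1, which also appears in Lemma 3.1.

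Now let $X_i$ be any sequence in $\mathcal{K}(n,C,V)$ with Gromov--Hausdorff limit $X_\infty$, equipped as usual with metrics on the disjoint unions $X_i\sqcup X_\infty$ for which each side is $\delta_i$-dense, $\delta_i\to 0$. Applying Theorem 3.2 at each of the $N$ points $p_j$ yields an integer $k_j\le k(p_j)$ and an index $i_j$ so that, for all $i\ge i_j$, one has $\rho_{k_j,X_i}(x)\ge b(p_j)$ for every $x\in X_i$ with $d(x,p_j)\le r(p_j)$. Take $i\ge\max_j i_j$ large enough that also $\delta_i<\tfrac{1}{2}\min_j r(p_j)$. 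For an arbitrary $x\in X_i$, pick $q\in X_\infty$ with $d(x,q)\le\delta_i$, then $j$ with $q\in B(p_j,r(p_j)/2)$; the triangle inequality gives $d(x,p_j)\le r(p_j)$, hence $\rho_{k_j,X_i}(x)\ge b(p_j)$. Since $k_j\le K^{*}$ divides $k_{X_\infty}$, Lemma 3.1 applies with $\mu=k_{X_\infty}/k_j\ge 1$ and gives
$$ \rho_{k_{X_\infty},X_i}(x)\ \ge\ \bigl(K_0^{2}k_j^{n}\bigr)^{1-k_{X_\infty}/k_j}\,\rho_{k_j,X_i}(x)^{k_{X_\infty}/k_j}\ \ge\ \bigl(K_0^{2}k_j^{n}\bigr)^{1-k_{X_\infty}/k_j}\,b(p_j)^{k_{X_\infty}/k_j}\ \ge\ b_{X_\infty}^{2}. $$
As $x\in X_i$ was arbitrary, $\urho(k_{X_\infty},X_i)\ge b_{X_\infty}^{2}$ for all large $i$, which is the assertion.

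The argument is essentially bookkeeping: a finite subcover of the compact space $X_\infty$, a pigeonhole step locating each point of $X_i$ near one of the $p_j$, and the power estimate of Lemma 3.1 collapsing the finitely many powers $k_j$ into the single power $k_{X_\infty}$. The one point demanding care is that the power $k_j$ returned by Theorem 3.2 may depend on the particular sequence $X_i$; this is why $k_{X_\infty}$ must be fixed in advance as a common multiple divisible by every admissible value $k_j\le k(p_j)$, and why $b_{X_\infty}$ is defined as a minimum over all such $k$ rather than over the $k_j$ that actually occur. One must also keep straight the distinction between the intrinsic metric on $X_\infty$ used for the covering and the metric on $X_i\sqcup X_\infty$ used to compare points of $X_i$ with the $p_j$, but this causes no trouble once $\delta_i$ is small relative to the covering radii.
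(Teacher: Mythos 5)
Your proposal is correct and follows essentially the same route as the paper: a finite subcover of the compact limit space by the balls from Theorem 3.2, a triangle-inequality step placing each point of $X_{i}$ within $r(p_{j})$ of some $p_{j}$ once $\delta_{i}$ is small, and Lemma 3.1 to pass from the various powers $k_{j}\leq k(p_{j})$ to the common multiple $k_{X_{\infty}}$. Your explicit choice of $b_{X_{\infty}}$ as a minimum over all admissible $k$ just makes precise what the paper leaves implicit.
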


We first use the compactness of $X_{\infty}$. The $r(p)/2$-balls centred at points $p$ cover $X_{\infty}$ so we can find a finite sub-cover by balls of radius $r(p_{\alpha})/2$ centred at points $p_{\alpha}\in X_{\infty}$. Let $r$ be the minimum of the $r(p_{\alpha})$. 
Let $i$ be large enough that for any $x\in X_{i}$ there is a point $x_{\infty}\in X_{\infty}$ with $d(x, x_{\infty})\leq r/4$. In addition suppose that $i\geq {\rm max}_{\alpha}i(p_{\alpha})$. Then $x_{\infty}$ lies in the $r(p_{\alpha})/2$ ball centred at $p_{\alpha}$ for some $\alpha$ and hence
$d(x, p_{\alpha})< \frac{3}{4} r(p_{\alpha})$. Now Theorem 3.2 states that there are $k(p_{\alpha})$ and $b(p_{\alpha})$ such that for a suitable $k_{\alpha}\leq k(p_{\alpha})$ we $\rho_{k_{\alpha}, X_{i}}(x) \geq b(p_{\alpha})$. Take $k_{X_{\infty}}$ to be the least integer such that each integer less than or equal to each $k(p_{\alpha})$ divides $k_{X_{\infty}}$. Then Lemma 3.1   implies that a positive lower bound on any $\urho(k_{\alpha}, X_{i})$ gives a positive lower bound on
$\urho(k_{X_{\infty}}, X_{i})$ and the Lemma follows.

\

The same argument, using Lemma 3.1,  shows that, given the statement of Lemma 3.4, there are for each integer $\mu\geq 1$ numbers  $b_{\mu}>0$ (depending only on $X_{\infty}$)such that $\urho(\mu k_{X_{\infty}}, X_{i})\geq b^{2}_{\mu}$ once $i$ is sufficiently large. Now we prove Theorem 1.1 (assuming Theorem 3.2) by contradiction. If Theorem 1.1 is false then there are $X_{i,j}\in {\mathcal K}(n,C,V)$ such that $\urho(X_{i,j}, j!)$ tends to zero for fixed $j$ as $i\rightarrow \infty$.  By Gromov's Compactness theorem  there is no loss in supposing that, for each fixed $j$, the  $X_{i,j}$ converge to some limit $X_{j}$ as $i\rightarrow \infty$. Taking a subsequence $j(\nu)$ we can suppose also that the $X_{j(\nu)}$ converge to $X_{\infty}$. For large enough $\nu$ the integer $k_{X_{\infty}}$ divides $j(\nu)!$; say
$  j(\nu)!= m(\nu) k_{X_{\infty}}$. Now choose $i(\nu)$ so large that $X_{i(\nu), j(\nu)}$ converge to $X_{\infty}$ as $\nu\rightarrow \infty$ and also so that
$\urho(X_{i(\nu), j(\nu)}, j(\nu)!)< b_{\mu(\nu)}$. This gives a contradiction.

\subsection{Proof of Theorem 3.2}
\subsubsection{Cut-offs}
To begin we fix some sequence $k_{\nu}\rightarrow \infty$ so that the scalings of the based space $(X_{\infty}, p)$ by $\sqrt{k_{\nu}}$ converge to a tangent cone $ C(Y)$. For a while we focus attention on this cone. Write $\vert z\vert$ for the distance from the vertex. Let $\Sigma_{Y}\subset Y$ be the singular set and $Y^{\reg}= Y\setminus \Sigma$.

The only information about the singular set which we need is contained in the following proposition. This is very likely a standard  fact but the proof is quite short so we include it. 
\begin{prop}
For any $\eta>0$ there is a function $g$ on $Y$, smooth on $Y^{\reg}$, supported in the $\eta$-neighbourhood of $\Sigma_{Y}$, equal to $1$ on some neighbourhood of $\Sigma_{Y}$ and with
$$  \Vert \nabla g \Vert_{L^{2}}\leq \eta. $$
\end{prop}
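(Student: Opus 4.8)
The plan is to construct the cut-off function $g$ by composing the distance-to-$\Sigma_Y$ function with a logarithmic profile, exploiting the fact that $\Sigma_Y$ has Hausdorff codimension strictly greater than $2$ --- indeed, in the K\"ahler situation it has codimension at least $4$, but codimension $>2$ is all that is needed. The key point is the classical capacity estimate: a set of codimension $>2$ has zero $2$-capacity, so it can be ``surrounded'' by functions with arbitrarily small Dirichlet energy.

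Concretely, first I would cover the intersection of $\Sigma_Y$ with a large ball by finitely many balls $B(x_j, s_j)$ with $\sum_j s_j^{m-3} $ small (using the Hausdorff dimension bound, $\dim \Sigma_Y \le m-3$); one also has to handle the vertex and the behaviour near the cone vertex, but since $C(\Sigma_Y)$ is itself a cone one can work on the link $Y$ and rescale, or simply note $g$ is supported in an $\eta$-neighbourhood so only a compact piece matters. On each ball $B(x_j, s_j)$ take the standard logarithmic cut-off: a function $\phi_j$ equal to $1$ on $B(x_j, s_j)$, equal to $0$ outside $B(x_j, \sqrt{s_j})$ (say), and interpolating by $\log(\vert y - x_j\vert / \sqrt{s_j}) / \log(s_j / \sqrt{s_j})$ in between. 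Then $\int \vert \nabla \phi_j\vert^2 \lesssim (\log s_j)^{-2} \cdot s_j^{0}$-type bounds on an annulus in an $(m-1)$-dimensional space; more carefully, on the annulus $\{s_j \le \vert y-x_j\vert \le \sqrt{s_j}\}$ one gets $\int \vert\nabla\phi_j\vert^2 \lesssim (\log s_j)^{-1}$ when $m-1 = 2$ and an even better (power-of-$s_j$) bound when $m-1 > 2$. Setting $g = \max_j \phi_j$ (or $g = 1 - \prod_j(1-\phi_j)$ to stay smooth, then smoothing the max), one obtains a function equal to $1$ near $\Sigma_Y$, supported near $\Sigma_Y$, with $\Vert \nabla g\Vert_{L^2}^2 \le \sum_j \Vert \nabla \phi_j\Vert_{L^2}^2$ small. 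Finally rescale/truncate so the support lies in the $\eta$-neighbourhood and the energy is $\le \eta$.

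The main obstacle is being careful about what ``codimension at least $4$'' means for a singular set in a metric space that is only $C^{1,\alpha}$-Riemannian on its regular part: one needs that the Minkowski-type content or the Hausdorff measure estimate is strong enough to run the covering argument, and that the integrals $\int \vert\nabla \phi_j\vert^2$ computed using the genuine Riemannian volume on $Y^{\reg}$ behave like the Euclidean model. This is where the regularity theory from Section 2 (volume convergence, the Bishop--Gromov-type volume bounds near the singular set) is invoked to say that the Riemannian volume of an $r$-tube around $\Sigma_Y$ is controlled by $r^{\,\mathrm{codim}}$ times the $(m-1-\mathrm{codim})$-content of $\Sigma_Y$, up to uniform constants. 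Modulo this standard-but-technical point, the construction is the usual logarithmic-cutoff/zero-capacity argument, and the strict inequality codimension $> 2$ is exactly what makes $\Vert\nabla g\Vert_{L^2}$ go to zero rather than staying bounded below.
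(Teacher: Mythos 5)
Your overall route is the same as the paper's: cover $\Sigma_{Y}$ by finitely many small balls using the Hausdorff dimension bound, put a radial cut-off on each ball, bound each ball's Dirichlet energy using the uniform volume bound $\Vol(B_{r})\leq \oc\, r^{N}$ on $Y$ (here $N=2n-1$, and this bound comes from non-collapsing plus Bishop, exactly as in the paper), sum up, and smooth at the end over a compact subset of the regular set. Your one genuine simplification is taking $g=\max_{j}\phi_{j}$ rather than a sum: since $\vert\nabla\max_{j}\phi_{j}\vert\leq\max_{j}\vert\nabla\phi_{j}\vert$ almost everywhere, the cross terms disappear and you avoid the Vitali selection and the dyadic packing count that the paper needs to control overlaps of its $f_{i}=\phi(r_{i}^{-1}d(\cdot,p_{i}))$, as well as the final composition with $\Psi$.

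There is, however, a real gap in the energy bookkeeping. With the logarithmic profile on the annulus $s_{j}\leq d\leq\sqrt{s_{j}}$ the per-ball energy is of order $s_{j}^{(N-2)/2}(\log s_{j})^{-2}$ (the integral is dominated by the outer radius $\sqrt{s_{j}}$), not $s_{j}^{N-2}$; the logarithmic gain only helps when the ambient dimension is $2$, which never occurs here. Smallness of $\sum_{j}s_{j}^{(N-2)/2}$ does \emph{not} follow from your covering condition $\sum_{j}s_{j}^{N-2}$ small: it would require $\dim\Sigma_{Y}<(N-2)/2$, which fails under your own minimal hypothesis (codimension $>2$) as soon as $N\geq 5$, and fails even with codimension $\geq 4$ once $N\geq 6$. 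The repair is exactly the paper's choice: use a fixed-ratio annulus $s_{j}\leq d\leq 2s_{j}$ with a linear cut-off, so $\vert\nabla\phi_{j}\vert\leq C s_{j}^{-1}$ and the energy is at most $C\oc\, s_{j}^{N-2}$, which the covering sum controls. A second, smaller imprecision: the bound $\dim\Sigma_{Y}\leq m-3=N-2$ alone does not produce coverings with $\sum_{j}s_{j}^{N-2}$ small (the critical-dimensional Hausdorff measure could be positive); you need the strict statement $\dim\Sigma_{Y}<N-2$, i.e.\ $H^{N-2}(\Sigma_{Y})=0$, which is exactly why the paper invokes codimension strictly greater than $2$ (in fact $\geq 4$ in the K\"ahler case) and chooses an exponent $N-2-\lambda$ strictly above the dimension. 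With these two adjustments your argument is correct and essentially the paper's, minus its packing lemma.
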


Recall that $Y$ has dimension $2n-1$. For clarity in this proof we write $N=2n-1$. A simple argument using the  noncollapsing condition (1.2) and the Bishop inequality in the original manifolds shows that there are fixed numbers
$\oc,\uc>0$ such that for $r\leq 1$ and any metric  ball $B_{r}$ in $Y$ we have
\begin{equation}  \uc r^{N}\leq  \Vol(B_{r})\leq \oc r^{N}.\end{equation}
We know that $\Sigma_{Y}$ is a compact set of Hausdorff dimension strictly less than $N-2$. By the definition of Hausdorff dimension we can find a number $\lambda\in (0,N-2)$ with the following property. For any $\epsilon>0$ there is a cover of $\Sigma_{Y}$ by a finite number of balls $B_{r_{i}/2}(p_{i})$ such that \begin{equation} \sum r_{i}^{N-2-\lambda} <\epsilon. \end{equation}
We write $B_{i}=B_{r_{i}}(p_{i})$ so, in  an obvious notation, the cover is by the balls $\frac{1}{2} B_{i}$. By the Vitali argument we can suppose that the balls $\frac{1}{10}B_{i}$ are disjoint. We take $\epsilon<1$ so for each $i$ we obviously have $r_{i}\leq \epsilon^{1/(N-2-\lambda)}<1$.

Let $\phi(t)$ be a standard cut-off function, vanishing for $t\geq 2$, equal to $1$ when $t\leq 1$ and with derivative bounded by $2$. Define
$$   f_{i}(y)= \phi(r_{i}^{-1}d(y,p_{i})). $$
Thus $f_{i}$ is supported in $B_{i}$ and equal to $1$ in $\frac{1}{2} B_{i}$.
This function need not be smooth but it is Lipschitz and differentiable almost everywhere, with $\vert \nabla f_{i}\vert \leq 2r_{i}^{-1}$. Set $f=\sum f_{i}$. Let $\Psi(t)$ be a cut-off function, equal to  $1$ when $t\geq 9/10$,  with $\Psi(0)=0$ and with derivative bounded by $2$. Put
 $g_{0}= \Psi\circ f$. Then $g_{0}$ is equal to $1$ on a neighbourhood of $\Sigma_{Y}$ and is supported in the $2\epsilon^{1/N-2-\lambda}$-neighbourhood of $\Sigma_{Y}$. Also we have
$$  \Vert \nabla g_{0} \Vert_{L^{2}} \leq 2 \Vert \nabla f \Vert_{L^{2}}. $$
We claim that $\Vert \nabla f\Vert^{2}_{L^{2}}\leq C_{5} \epsilon$ for some fixed $C_{5}$, depending only on $\uc,\oc$. Given this claim we can make $\Vert \nabla g_{0}\Vert_{L^{2}}$ as small as we please and then finally approximate $g_{0}$
by a smooth function $g$ to achieve our result. (Note that this approximation only involves working over a compact subset of $Y^{\reg}$. )

To establish the claim, divide the index set into subsets 
$$  I_{\alpha}=\{i: 2^{-\alpha-1}\leq r_{i}< 2^{-\alpha}\}, $$
for $\alpha\geq 0$. A simple packing argument, using the fact that the balls $\frac{1}{10} B_{i}$ are disjoint, shows that there is a fixed number $C_{6}$ with the following property. If $j\in I_{\alpha}$ then  for each fixed $\beta\leq\alpha$ there are at most $C_{6}$ balls $B_{i}$ with $i\in I_{\beta} $ which intersect $B_{j}$. Now we have
$$ \Vert \nabla f \Vert^{2}_{L^{2}}\leq \sum_{i, j} \int \vert \nabla f_{i}\vert \ \ \vert\nabla f_{j} \vert. $$
Thus $$  \Vert \nabla f \Vert^{2}_{L^{2}}\leq 2\sum_{i,j: r_{j}\leq r_{i}} \int \vert \nabla f_{i}\vert \ \vert\nabla f_{j} \vert. $$
For fixed $j$ there are at most $C_{6} (1+ \log_{2}(r_{j}^{-1}))$ terms which contribute to this last sum. For each term $|\nabla f_{i}|\leq 2 r_{i}^{-1}\leq 2 r_{j}^{-1}$ and the integrand is supported on the ball $B_{j}$ of radius
$2r_{j}$. So for fixed $j$ the contribution to the sum is bounded by
$$8 C_{6}  (1+\log_{2}(r_{j}^{-1}))r_{j}^{-2} \oc (2r_{j})^{N}. $$
Hence, summing over $j$,
$$ \Vert \nabla f \Vert^{2}_{L^{2}}\leq 2^{N+3} C_{6} \oc \sum r_{j}^{N-2} (\log_2 (r_{j}^{-1})+1). $$
We can find a number $C_{7}$ such that for $t \geq 1 $ we have $1+ \log t\leq C_{7} t^{\lambda}$. Thus
$$ \Vert \nabla f \Vert^{2}_{L^{2}}\leq 2^{N+3} C_{6} C_{7} \oc \sum r_{j}^{N-2-\lambda}\leq 2^{N+3} C_{6} C_{7} \oc \epsilon. $$

\

We pick some base  point $y_{0}$ in $Y^{\reg}$. We will need 4 parameters
$\rho, \epsilon,\delta, R$ in our basic construction, where $\rho,\epsilon, \delta$ will be \lq\lq small'' and $R$ \lq\lq large''. In particular $\delta<<\rho<<1<<R$.

First we fix $\rho$ so that $\exp (-\rho^{2}/4)\geq 3/4$ and $\rho \leq  (16 K_{1})^{-1}$ where $K_{1}$ is the constant in our first derivative estimate.
We take $u_{*} =  \rho y_{0}\in C(Y) $, with the obvious notation. Fix any neighbourhood $D$ of $u_{*}$ whose closure does not meet the singular set in $C(Y)$.  For any $\epsilon$ let $Y_{\epsilon}$ be the set of points of distance greater than $\epsilon$ from $\Sigma$. Let $U_{\epsilon,\delta, R}$ be the set of points $z$ in $ C(Y_{\epsilon})$ such that $\delta<\vert z\vert <R$. We choose the parameters so that $U_{\epsilon,\delta,R}$ contains the closure of  $D $. We consider a smooth compactly supported cut-off function $\beta$ on $U_{\epsilon, \delta, R}$. For such a function we set
$$  E_{\beta}= \int e^{-\vert z \vert^{2}/2} \vert \nabla \beta \vert^{2}. $$
\begin{lem} For any given $\zeta>0$ we  can choose $\epsilon, \delta, R$ and a compactly supported function $\beta$ as above such that
\begin{itemize} \item $\beta=1$ on $D$;
\item  $ E_{\beta} \leq \zeta$.
\end{itemize}
\end{lem}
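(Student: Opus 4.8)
The plan is to take $\beta$ to be a product $\beta=\beta_{1}\beta_{2}\beta_{3}$ of three cut-offs: $\beta_{1}$ removing a neighbourhood of the vertex, $\beta_{2}$ removing a neighbourhood of infinity, and $\beta_{3}$ removing a neighbourhood of the cone $C(\Sigma_{Y})$ over the singular set --- it is in this last factor that Proposition 3.5 enters. Write $r=\vert z\vert$; the cone metric is $dr^{2}+r^{2}g_{Y}$ with volume element $r^{2n-1}\,dr\,dV_{Y}$, and for a function $h$ on $Y$ its radial extension $\tilde h(r,y)=h(y)$ to $C(Y)\setminus\{O\}$ satisfies $\vert\nabla\tilde h\vert^{2}=r^{-2}\vert\nabla_{Y}h\vert^{2}$. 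I would take $\beta_{1}=\beta_{1}(r)$ a logarithmic cut-off, equal to $0$ for $r\le 2\delta$ and to $1$ for $r\ge\sqrt{\delta}$; $\beta_{2}=\beta_{2}(r)$ a linear cut-off, equal to $1$ for $r\le R/2$ and to $0$ for $r\ge 9R/10$, with $\vert\beta_{2}'\vert\le 4/R$; and $\beta_{3}(r,y)=1-g_{\eta}(y)$, where $g_{\eta}$ is the function produced by Proposition 3.5 with parameter $\eta$, so that $\beta_{3}=0$ on a neighbourhood of $C(\Sigma_{Y})$, $\beta_{3}=1$ outside the $\eta$-neighbourhood of $C(\Sigma_{Y})$ in the $Y$-directions, and $\Vert\nabla_{Y}g_{\eta}\Vert_{L^{2}(Y)}\le\eta$. (When $n\le 2$ the set $\Sigma_{Y}$ is empty, $\beta_{3}\equiv 1$, and only $\beta_{1},\beta_{2}$ are needed.)

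I would then check that the parameters can be arranged so that $\beta$ has the two required properties and is admissible. Since $\overline{D}$ is a compact subset of $C(Y^{\reg})$ disjoint from $\{O\}\cup C(\Sigma_{Y})$, on $\overline{D}$ the coordinate $r$ is bounded above and bounded below away from $0$, and $d(y,\Sigma_{Y})$ is bounded below away from $0$; hence for $\delta$ small, $R$ large and $\eta$ small (below these thresholds) we have $\beta_{1}=\beta_{2}=\beta_{3}=1$ on $D$. After $\eta$ is fixed, choosing $\epsilon$ smaller than the width of the neighbourhood of $\Sigma_{Y}$ on which $g_{\eta}\equiv 1$ forces $\mathrm{supp}(\beta_{3})$ into $C(Y_{\epsilon})$; shrinking $\delta$ and enlarging $R$ a little further if needed, $\mathrm{supp}(\beta)$ becomes a compact subset of $U_{\epsilon,\delta,R}$, on which $\beta$ is smooth, and the set-up condition $\overline{D}\subset U_{\epsilon,\delta,R}$ also holds.

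For the energy estimate, $0\le\beta_{i}\le 1$ and the Leibniz rule give, using $e^{-r^{2}/2}\le 1$,
\[ E_{\beta}\le 3\Big(\int_{\{2\delta\le r\le\sqrt\delta\}}\vert\nabla\beta_{1}\vert^{2}+\int_{\{R/2\le r\le 9R/10\}}e^{-r^{2}/2}\vert\nabla\beta_{2}\vert^{2}+\int_{C(Y)}e^{-r^{2}/2}\vert\nabla\beta_{3}\vert^{2}\Big). \]
By the radial-extension formula and $\vert\beta_{1}'(r)\vert\le C\,(r\log(1/\delta))^{-1}$ the first term is $O\!\big((\log(1/\delta))^{-1}\big)$ (for $n\ge 2$ a linear cut-off already gives $O(\delta^{2n-2})$); the second is at most $C\,R^{2n-2}e^{-R^{2}/8}$, which tends to $0$ because the polynomial volume growth of the annuli is overwhelmed by the Gaussian decay; and the third, again by the radial-extension formula, equals
\[ \Big(\int_{0}^{\infty}e^{-r^{2}/2}\,r^{2n-3}\,dr\Big)\Vert\nabla_{Y}g_{\eta}\Vert_{L^{2}(Y)}^{2}\ \le\ C_{n}\,\eta^{2}, \]
with $C_{n}<\infty$ since $2n-3>-1$ when $n\ge 2$. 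Fixing $R$ large, then $\eta$ (hence $\epsilon$) small, then $\delta$ small --- all subject also to the constraints of the previous paragraph, which only push them further in the same directions --- makes each of the three terms at most $\zeta/9$, so $E_{\beta}\le\zeta$.

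The one substantial input is Proposition 3.5: once $\Sigma_{Y}$ has been enclosed by a function on $Y$ of arbitrarily small Dirichlet energy, everything else reduces to the elementary fact that on a metric cone the weight $r^{2n-1}$ from the volume and the weight $r^{-2}$ from the gradient of a radially constant function combine to the integrable weight $r^{2n-3}$ against $e^{-r^{2}/2}$, together with the standard vertex and infinity estimates. The only real care needed is in the order in which $\epsilon,\delta,R,\eta$ are pinned down, so that $\beta\equiv 1$ on $D$, $\mathrm{supp}\,\beta\subset U_{\epsilon,\delta,R}$, and $E_{\beta}\le\zeta$ hold at the same time.
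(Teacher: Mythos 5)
Your construction is exactly the paper's: a product of three cut-offs in $\vert z\vert$ near the vertex, in $\vert z\vert$ near infinity, and $1-(g\circ\varpi)$ with $g$ from Proposition 3.5, followed by the elementary weighted-cone computation that the paper leaves implicit. The proposal is correct, and the explicit bookkeeping (the $r^{2n-3}e^{-r^{2}/2}$ weight, the Gaussian decay at infinity, and the ordering of the choices of $R,\eta,\epsilon,\delta$) is a sound filling-in of the paper's "elementary calculations."
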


To see this we take $\beta=\beta_{\delta} \beta_{R}\beta_{\epsilon}$ where
\begin{itemize}
\item $\beta_{\delta}$ is a standard cut-off function of $\vert z\vert$ equal to $1$ for $\vert z \vert>2\delta$.
\item $\beta_{R}$ is likewise a standard cut-off function of $\vert z\vert$, equal to $1$ for $\vert z \vert< R/2$.

\item $\beta_{\epsilon}=1-(g \circ \varpi)$ where $g$ is a function on $C(Y)$ of the kind constructed in Proposition (3.5) and $\varpi$ is the radial projection from the cone minus the vertex to $Y$. 
\end{itemize}

\

Then the lemma follows from elementary calculations. 

\

\subsubsection{The topological obstruction}

Recall that the metric on the regular part of the cone has the form $\frac{i}{2}\partial \db \vert z\vert^{2}$. So, just as in the case of $\bC^{n}$, we have a line bundle $\Lambda_{0}$ with connection $A_{0}$, curvature the K\"ahler form $\Omega_{0}$ and  a holomorphic section $\sigma_{0}$ with $\vert \sigma_{0}\vert=\exp(-\vert z\vert^{2}/4)$.  Then $\sigma=\beta \sigma_{0}$ is holomorphic on $D$. Note that  $\Vert \sigma \Vert_{L^{2}}^{2}$ will now be slightly less than $\kappa^{1/2} (2\pi)^{n/2}$ where $\kappa\leq 1$ is the volume ratio as in (2.1).

As we explained, there certainly is some constant C giving the elliptic estimate (H3) and we use the Lemma to choose $\epsilon, \delta,R$ so that this set of data has Property (H).

The parameters $\rho,\delta,\epsilon, R$ are now all {\it fixed}. We set $U=U(\rho,\delta,\epsilon,R)$. 

\

\
Consider now a $C^{0}$-small perturbation $g,J$ of the metric and complex structure $g_{0}, J_{0}$, and hence a perturbation $\Omega$ of $\Omega_{0}$ We suppose that $-i\Omega$ is the curvature of a unitary connection $A$ on a bundle $\Lambda$. If we can choose a bundle isomorphism between $\Lambda$ and $ \Lambda_{0}$ such that, under this isomorphism, the connection $A$ is a small perturbation of $A_{0}$ then we can apply Proposition 2.4 to conclude that the data $J,\Omega, A$ also has Property (H), (for suitably small perturbations). The difficulty is that if $H_{1}(U;\bZ)\neq 0$ a connection on a line bundle is not determined by its curvature. Said in another way, we consider the line bundle $\Lambda\otimes \Lambda_{0}^{*}$ with the  connection $a$ induced from  $A,A_{0}$. The curvature of $a$ is small but $a$ need not be close to a trivial flat connection. 
There is no real loss of generality in supposing that $Y_{\epsilon}$ has smooth boundary ( because we can always replace it by a slightly enlarged domain). Write $\underline{\nu}$ for the normal vector field on the boundary. We want to recall some Hodge Theory on this manifold with boundary. Fix $p>2n$.
. 

\begin{prop}
\begin{enumerate}\item The infimum of the $L^{2}$ norm on the closed $2$-forms in a cohomology class defines a norm on $H^{2}(Y_{\epsilon}, \bR)$. 
\item Define ${\mathcal H}^{1}$ to be the set of 1-forms $\alpha $ on $\overline{Y_{\epsilon}}$ with $d\alpha=0, d^{*}\alpha=0$ and with $( \alpha, \underline{\nu})=0$ on the boundary. Then the natural map from ${\mathcal H}^{1}$ to $H^{1}(Y_{\epsilon}, \bR)$ is an isomorphism. 
\item If $F$ is any exact $2$-form on $\overline{Y_{\epsilon}}$ there is a unique $1$-form $\alpha$ such that $d^{*}\alpha=0, d\alpha=F, (\alpha, \underline{\nu})=0$ and $\alpha$ is $L^{2}$-orthogonal to ${\mathcal H}^{1}$. We have, for some fixed constant $C_{8}$,
$\Vert \alpha \Vert_{L^{p}_{1}}\leq C_{8} \Vert F \Vert_{L^{p}}$.
\end{enumerate}
\end{prop}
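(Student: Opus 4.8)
The plan is to treat Proposition 3.8 as the classical Hodge theory of a compact Riemannian manifold with boundary, adapted to the ``absolute'' (Neumann-type) boundary condition $(\alpha,\underline{\nu})=0$, together with elliptic regularity in $L^p$. First I would set up the relevant elliptic boundary value problem. On the compact manifold $\overline{Y_{\epsilon}}$ with smooth boundary, consider the Hodge Laplacian $\Delta = dd^{*}+d^{*}d$ acting on $1$-forms, with the absolute boundary conditions $(\alpha,\underline{\nu})=0$ and $(d\alpha)(\underline{\nu},\cdot)=0$ on $\partial Y_{\epsilon}$. This is a well-posed self-adjoint elliptic boundary problem, so its kernel $\mathcal{H}^{1}$ is finite-dimensional, consists of $C^{\infty}$ forms that are closed and coclosed with $(\alpha,\underline{\nu})=0$ (the second boundary condition being automatic once $d\alpha=0$), and one has an $L^2$-orthogonal Hodge decomposition of $1$-forms into $\mathcal{H}^{1}$, exact forms $d f$ with $f$ satisfying Neumann conditions, and coexact forms $d^{*}\psi$ with $\psi$ satisfying the matching boundary conditions. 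Standard elliptic estimates for this boundary problem give $\Vert \alpha\Vert_{L^p_1}\le C(\Vert\Delta\alpha\Vert_{L^p}+\Vert\alpha\Vert_{L^p})$, and on the orthogonal complement of $\mathcal{H}^{1}$ this upgrades to $\Vert\alpha\Vert_{L^p_1}\le C\Vert\Delta\alpha\Vert_{L^p}$.

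With this in hand the three items follow in order. For (1), if $\omega_0,\omega_1$ are closed $2$-forms in the same class, $\omega_1-\omega_0=d\gamma$, so $\inf$ over the class of $\Vert\cdot\Vert_{L^2}$ is the norm of the unique $L^2$-orthogonal projection onto the closed forms modulo exact ones; finite-dimensionality of de Rham cohomology makes this a genuine norm (it vanishes only on the zero class, and homogeneity and the triangle inequality are immediate). For (2), I would show the de Rham map $\mathcal{H}^{1}\to H^{1}(Y_{\epsilon},\bR)$ is well-defined (elements of $\mathcal{H}^{1}$ are closed), injective (a form in $\mathcal{H}^{1}$ that is exact, $\alpha=df$, satisfies $d^{*}df=0$ with Neumann condition, hence $f$ is constant by integration by parts, so $\alpha=0$), and surjective (given a class, take any representative and use the Hodge decomposition to extract its harmonic part, checking the absolute boundary condition is preserved). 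For (3), given $F$ exact, decompose $F$ and note the harmonic and exact parts must vanish because $F$ is exact and of degree $2$ with the appropriate boundary behavior; solve $\Delta\beta = $ (coexact data) in the orthogonal complement of $\mathcal{H}^{1}$, set $\alpha$ to be the coexact piece $d^{*}\psi$ arising this way, verify $d\alpha=F$, $d^{*}\alpha=0$, $(\alpha,\underline{\nu})=0$, uniqueness from the orthogonality constraint, and read off $\Vert\alpha\Vert_{L^p_1}\le C_8\Vert F\Vert_{L^p}$ from the elliptic estimate on the orthogonal complement.

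The main obstacle I expect is purely bookkeeping about boundary conditions: one must be careful that the ``absolute'' boundary condition $(\alpha,\underline{\nu})=0$ is exactly the one that makes the Hodge Laplacian self-adjoint and elliptic, that the Hodge decomposition respects it, and that in item (3) one genuinely recovers $d\alpha=F$ on the nose (not merely in a cohomological sense) — this uses that $F$ is exact so it has no harmonic component and no ``tangential cohomology'' obstruction at the boundary. A secondary technical point is the $L^p$ (rather than $L^2$) elliptic estimate with the constant $C_8$ depending only on $\overline{Y_{\epsilon}}$; this is standard Agmon--Douglis--Nirenberg theory for the boundary value problem but must be invoked for $p>2n$ as fixed. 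None of this is deep, but the argument has several moving parts and the cleanest exposition is probably to quote the Hodge theory for manifolds with boundary (e.g.\ from Schwarz or Taylor) and then verify the three assertions as corollaries.
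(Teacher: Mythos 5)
Your proposal is correct and follows essentially the same route as the paper: the authors likewise treat this as standard Hodge theory for a compact manifold with boundary under the absolute (Neumann-type) condition $(\alpha,\underline{\nu})=0$ --- item (1) via closedness of the range of $d$ in $L^{2}$, item (2) via solubility of the Neumann problem, item (3) via the same boundary value problem with the $L^{p}$ bound supplied by general elliptic boundary theory (they cite Wehrheim where you would cite Schwarz or Taylor). The only additional content in the paper is the remark that the boundary subtleties can be sidestepped by working on a slightly larger domain (reducing to interior estimates) or by passing to the compact double.
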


These are fairly standard results. The first item follows from the fact that the $L^{2}$ extension of the image of $d$ is closed. The second asserts the unique solubility of the Neumann boundary value problem for the Laplacian on functions on $Y_{\epsilon}$. The existence and uniqueness of $\alpha$ in the third item is similar. The $L^{p}$ estimate in the third item follows from general theory of elliptic boundary value problems, see \cite{Wehrheim} for a detailed treatment of this case. {Note that in our application the subtleties of the boundary value theory could be avoided by working on a slightly larger domain. Then we can reduce to easier interior estimates. Alternatively one can adjust the set-up to reduce to the standard Hodge theory over a compact \lq\lq double''.)

Write $a\vert$ for the restriction of the connection $a$ to the restricted bundle over $Y_{\epsilon}$.
A consequence of item (1) is that there is some number $C_{9}>0$ such that any closed $2$-form $F$ over $Y_{\epsilon}$ which represents an integral cohomology class and  with $\Vert F\Vert_{L^{2}}\leq C_{9}$ is exact. In particular we can apply this to the curvature $F_{a\vert}=i(\Omega-\Omega_{0})$ of the connection $a\vert$, using the fact that this represents an integral class. (Here we are considering $Y_{\epsilon} $ as embedded in $U$ in the obvious way.)
Thus there is a $C_{10}>0$ such that if $\Vert \Omega-\Omega_{0}\Vert_{U}\leq C_{10}$ we can apply item (3) of Prop. 3.7 to write $F_{a\vert}=d\alpha$ over $Y_{\epsilon}$ for a small $\alpha=\alpha(a)$. More precisely, $\alpha$ is small in $L^{p}_{1}$ and so in $C^{0}$ by Sobolev embedding. Then $a\vert-\alpha$ is a flat connection on the restriction of $\Lambda\otimes \Lambda_{0}^{*}$ to $Y_{\epsilon}$. This flat connection is determined up to isomorphism by its holonomy: a homomorphism from $H_{1}(Y_{\epsilon}, \bZ)$ to $S^{1}$. 

 Fix a direct sum decomposition of $H_{1}(Y_{\epsilon}, \bZ)$ into torsion and free subgroups. 
Then we get
$$ {\rm Hom}(H_{1}( Y_{\epsilon}, \bZ), S^{1}) = G \times T, $$
where $G$ is a finite abelian group and $T=H^{1}(Y_{\epsilon}, \bR)/H^{1}(Y_{\epsilon}, \bZ)$ is a torus. (We will write the group structures multiplicatively.) 
Thus for our connection $a$ with suitably small curvature we get two invariants
$  g(a)\in G, \tau(a)\in T$. If both vanish then the restriction of the connection to $Y_{\epsilon}$ is close to the trivial flat connection. When $a$ is the connection induced from $A, A_{0}$ as above we write $g(A, A_{0}), \tau(A,A_{0})$. 

\begin{prop} We can find a neighbourhood $W$ of the identity in $T$ and a number $\psi>0$ to the following effect. If $g, J, A$ is a set of data on $U$ with
\begin{itemize} \item $$\Vert g-g_{0}\Vert_{U} \leq \psi, \Vert J-J_{0}\Vert_{U} \leq \psi;$$
 \item $g(A,A_{0})=1$;
\item $ \tau(A, A_{0})\in W$;
\end{itemize}
then$(g,J, A)$ has Property (H).

\end{prop}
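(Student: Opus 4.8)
The plan is to deduce Proposition 3.8 by combining Proposition 2.4 with the Hodge-theoretic gauge-fixing just set up, so that the only thing that needs real work is showing that the hypotheses ``$g(A,A_{0})=1$ and $\tau(A,A_{0})\in W$'' force the connection $A$ on $U$ (not just its restriction to $Y_\epsilon$) to be gauge-equivalent to a $C^{0}$-small perturbation of $A_{0}$. Once that is done, Proposition 2.4 applies verbatim with its own $\psi$, and we simply take the $\psi$ in the statement to be the minimum of that $\psi$ and the thresholds $C_{10}$ (for the Hodge decomposition to be available) and whatever is needed to make $\alpha(a)$ small in $C^{0}$ via Sobolev. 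So the heart of the matter is a gauge-fixing argument on the open manifold $U$ relative to its ``core'' $Y_\epsilon$.

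First I would recall that $U = U(\rho,\delta,\epsilon,R)$ deformation retracts onto $C(Y_\epsilon)$ intersected with the annulus $\delta<|z|<R$, which in turn is homotopy equivalent to $Y_\epsilon$ itself (the radial direction is contractible and we have removed a neighbourhood of the vertex). Hence the restriction map $H_{1}(Y_\epsilon;\bZ)\to H_{1}(U;\bZ)$ is an isomorphism, and likewise for $H^{1}$ and $H^{2}$ with $\bR$-coefficients. This is the structural point that lets the invariants $g(a),\tau(a)$ — which were defined using data on $Y_\epsilon$ — actually control the bundle-with-connection $\Lambda\otimes\Lambda_{0}^{*}$ over all of $U$. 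Concretely: given $(g,J,A)$ with $\|g-g_{0}\|_{U},\|J-J_{0}\|_{U}\le\psi$ small, the curvature $F_{a}=i(\Omega-\Omega_{0})$ of $a=A\otimes A_{0}^{*}$ is $C^{0}$-small on $U$; since it represents an integral class on $U$ that restricts to an exact (hence zero in $H^{2}(Y_\epsilon;\bR)\cong H^{2}(U;\bR)$) class, $F_{a}$ is itself exact on $U$. Writing $F_{a}=d\alpha_{U}$ with $\alpha_{U}$ small — one can get this either by pulling back the $\alpha$ of Proposition 3.7(3) under the retraction and correcting, or more cleanly by solving the same boundary value problem on a slightly larger domain inside $U$ as the remark after Proposition 3.7 suggests — we reduce $a$ to a flat connection on $U$ after a $C^{0}$-small perturbation. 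That flat connection has holonomy in $\mathrm{Hom}(H_{1}(U;\bZ),S^{1})\cong\mathrm{Hom}(H_{1}(Y_\epsilon;\bZ),S^{1})=G\times T$, and under the isomorphism its class is exactly $(g(A,A_{0}),\tau(A,A_{0}))$.

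Now impose the hypotheses. Since $g(A,A_{0})=1$, the flat connection lies in the torus component $T$; since its holonomy class lies in a small neighbourhood $W$ of the identity in $T=H^{1}(U;\bR)/H^{1}(U;\bZ)$, we may choose a harmonic representative $\eta\in\mathcal H^{1}$ (in the sense of Proposition 3.7(2), extended to $U$) of the corresponding class in $H^{1}(U;\bR)$ with small $L^{p}_{1}$-norm, hence small $C^{0}$-norm by Sobolev, whose associated flat connection $d+i\eta$ has exactly the prescribed holonomy. Subtracting $\eta$ (a smooth, $C^{0}$-small further gauge change — note $d+i\eta$ being gauge-equivalent to the trivial flat connection on the nose requires the holonomy to be trivial, so instead one subtracts $\eta$ to reach the trivial flat connection up to a genuine gauge transformation $e^{i\theta}$, and the point is that the composite change of gauge is $C^{0}$-small because $W$ was chosen small). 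The upshot: there is a bundle isomorphism $\Lambda\to\Lambda_{0}$ under which $A$ pulls back to a connection $C^{0}$-close to $A_{0}$, with closeness controlled by $\psi$, by $\|\alpha_{U}\|_{C^{0}}$, and by the diameter of $W$. Shrinking $\psi$ and $W$ so that this total $C^{0}$-distance is below the threshold of Proposition 2.4 (with $J_{0},g_{0},A_{0}$ the model data, which has Property (H) by the construction in \S3.2.2), we conclude that $(g,J,A)$ has Property (H), as required.

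The main obstacle is the bookkeeping in the gauge-fixing step: one must be careful that the two successive corrections (removing the exact part $\alpha_{U}$ of the curvature, then removing the harmonic part $\eta$ carrying the holonomy) are each $C^{0}$-small with constants depending only on the fixed data $U,\Lambda,p$, and that the residual gauge transformation $e^{i\theta}:U\to S^{1}$ — which exists precisely because, after these two corrections, the flat connection has trivial holonomy on the simply-connected-up-to-$T$-and-$G$ space, and here triviality of both invariants is being used — is also $C^{0}$-bounded. The cleanest way to handle $\theta$ is to note $U$ is homotopy equivalent to $Y_\epsilon$ and that after removing the harmonic part the holonomy is trivial, so $i\theta^{-1}d\theta$ equals the (small) connection $1$-form, whence $\theta$ is small in $C^{1}$ by integrating along paths of bounded length in $U$; this is where one uses that $U$ has been fixed once and for all with a definite geometry. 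Everything else is a direct appeal to Proposition 2.4 and the standard elliptic estimates already cited.
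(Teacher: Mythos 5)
Your overall strategy coincides with the paper's — produce a bundle isomorphism $\Lambda\to\Lambda_{0}$ under which $A$ becomes a $C^{0}$-small perturbation of $A_{0}$, then invoke the openness of Property (H) (Lemma 2.2, packaged in Proposition 2.4) — but you carry out the essential gauge-fixing step by a genuinely different route. The paper stays on $Y_{\epsilon}$, exactly where the invariants $g(A,A_{0}),\tau(A,A_{0})$ and Proposition 3.7 live: the hypotheses give a trivialisation of $\Lambda\otimes\Lambda_{0}^{*}$ over $Y_{\epsilon}$ with $L^{p}_{1}$-small, hence $C^{0}$-small, connection form, and this trivialisation is then extended to all of $U$ by parallel transport along the cone rays; in that radial gauge the radial derivative of the connection form is a component of the curvature $i(\Omega-\Omega_{0})$, hence $O(\psi)$, and since the radial interval $(\delta,R)$ is fixed the form stays $C^{0}$-small on $U$ with no further elliptic theory. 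You instead redo the Hodge-theoretic analysis on the $2n$-dimensional domain $U$ itself: exactness of $F_{a}$ on $U$, a solution $\alpha_{U}$ of $d\alpha_{U}=F_{a}$ with $L^{p}_{1}$ control, a small closed representative $\eta$ of the holonomy class, and a final holonomy-trivial flat connection. This is workable, but it costs two things the radial-transport trick gets for free. First, the boundary-value (or interior-estimate) theory must be set up anew on $U$, which is a domain with corners, and your \lq\lq slightly larger domain'' escape is not literally available since the data is only given on $U$; the fix is to solve on a slightly smaller smooth subdomain containing $\overline{D}$ and the support of $\sigma$, which suffices because Lemma 2.2 only requires $C^{0}$-closeness on compact subsets of $U$. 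Second, your claim that the holonomy class of your flat connection $a-\alpha_{U}$ is \emph{exactly} $(g(A,A_{0}),\tau(A,A_{0}))$ is not quite accurate: those invariants are defined via the $1$-form $\alpha$ of Proposition 3.7 on $Y_{\epsilon}$, and $\alpha-\alpha_{U}\vert_{Y_{\epsilon}}$ is a closed, $C^{0}$-small $1$-form, so the torsion parts agree exactly (periods of a closed form vanish on torsion classes) while the free parts agree only up to a small error. Since the hypotheses are phrased through the $Y_{\epsilon}$-invariants, this comparison has to be made; it is harmless — absorb the error by shrinking $W$ and $\psi$ — but it is a step, not an identity.

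One further simplification: your concern about bounding the residual gauge transformation $e^{i\theta}$ is unnecessary. Once the corrected flat connection has trivial holonomy there exists a trivialisation in which its connection form vanishes identically, and in that trivialisation the connection form of $a$ is the sum of the two small pieces; only this form, not the size of the gauge transformation relating trivialisations, enters Lemma 2.2.
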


\

This is straightforward. The hypotheses imply that, for small $W,\psi$, there is a trivialisation
of $\Lambda\otimes\Lambda_{0}^{*}$ over $Y_{\epsilon}$ in which the connection form is small in $L^{p}_{1}$ and hence in $C^{0}$.  Then extend this to a trivialisation over $U$ by parallel transport along rays. In this trivialisation the radial derivative of the connection form is given by a component of the curvature, so is controlled by $\psi$. From another point of view this trivialisation is a bundle isomorphism between $\Lambda, \Lambda_{0}$ under which $A$ is a small perturbation of $A_{0}$.

\

Let $m_{1}$ be the order of $G$. Thus for any $g\in G$ we have $g^{m_{1}}=1$. Fix a slightly smaller neighbourhood $W'\subset\subset W$ of the identity in $J$. By Dirichlet's theorem we can find an $m_{2}$ such that for any $\tau\in T$ there is a power $\tau^{q}$ which lies in $W'$ where $1\leq q\leq m_{2}$. Write $m=m_{1} m_{2}$. Now return to our connection    $a\vert$  on the bundle $\Lambda\otimes \Lambda_{0}^{*}$ over  $Y_{\epsilon}$. Recall that for integer $t$ we write $a\vert^{\otimes t}$ for the induced connection on $\Lambda^{ t}\otimes \Lambda_{0}^{-t}$ over $Y_{\epsilon}$. Suppose that
  $\Vert F(a\vert)\Vert_{U} \leq C_{10}/m$. Then for $1\leq t\leq m$ the invariants
  $g(a\vert^{\otimes t}), \tau(a\vert^{\otimes t})$ are defined and we have:
 \begin{prop} We can choose $t$ with $1\leq t\leq m$ such that $g(a\vert^{\otimes t})=1$ and $\tau(a\vert^{\otimes t})\in W'$.
\end{prop}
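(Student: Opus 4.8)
The claim concerns the single connection $a\vert$ on $\Lambda\otimes\Lambda_0^*$ over $Y_\epsilon$ and its powers $a\vert^{\otimes t}$; we must find $t\in\{1,\dots,m\}$ with $g(a\vert^{\otimes t})=1$ and $\tau(a\vert^{\otimes t})\in W'$. The plan is to observe that the invariants $g$ and $\tau$ are \emph{multiplicative} in $t$, i.e.\ $g(a\vert^{\otimes t})=g(a\vert)^t$ and $\tau(a\vert^{\otimes t})=\tau(a\vert)^t$, and then reduce to the pigeonhole/Dirichlet facts about the finite group $G$ and the torus $T$ that were set up just before the statement.

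First I would verify the multiplicativity. Write $F=F(a\vert)$ for the (small) curvature, so $F(a\vert^{\otimes t})=tF$, which still has $L^2$-norm below $C_9$ as long as $\|F(a\vert)\|_U\leq C_{10}/m$; hence by item (3) of Proposition 3.7 there is a unique small $\alpha$ with $d\alpha=F$, orthogonal to $\mathcal H^1$, and by uniqueness the corresponding $1$-form for $a\vert^{\otimes t}$ is exactly $t\alpha$. Thus $a\vert^{\otimes t}-t\alpha$ is the $t$-th tensor power of the flat connection $a\vert-\alpha$, so its holonomy homomorphism is the $t$-th power of the holonomy of $a\vert-\alpha$ in the abelian group $\mathrm{Hom}(H_1(Y_\epsilon,\bZ),S^1)=G\times T$. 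Under the chosen decomposition this gives $g(a\vert^{\otimes t})=g(a\vert)^t$ and $\tau(a\vert^{\otimes t})=\tau(a\vert)^t$.

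Now apply the arithmetic. Since $m_1$ is the order of $G$, the element $g(a\vert)^{m_1}=1$, and more generally $g(a\vert)^t=1$ whenever $m_1\mid t$. By Dirichlet's theorem (as recalled just above the statement), for the torus element $\tau(a\vert)^{m_1}\in T$ there is an exponent $q$ with $1\leq q\leq m_2$ such that $(\tau(a\vert)^{m_1})^q=\tau(a\vert)^{m_1 q}\in W'$. Set $t=m_1 q$; then $1\leq t\leq m_1 m_2=m$, and $g(a\vert^{\otimes t})=g(a\vert)^{m_1 q}=(g(a\vert)^{m_1})^q=1$ while $\tau(a\vert^{\otimes t})=\tau(a\vert)^{m_1 q}\in W'$, as required.

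There is essentially no hard analytic step here: the only thing to be slightly careful about is that all the powers $a\vert^{\otimes t}$ with $1\leq t\leq m$ have curvature small enough ($\|tF(a\vert)\|_U\leq C_{10}$) for the invariants $g(\cdot),\tau(\cdot)$ to be \emph{defined} at all, which is exactly why the hypothesis $\|F(a\vert)\|_U\leq C_{10}/m$ was imposed. Granting that, the proof is a two-line combination of multiplicativity of holonomy with the pigeonhole bound on $|G|$ and Dirichlet's simultaneous-approximation theorem on $T$, so the "main obstacle" is merely bookkeeping the constants $m_1,m_2,m$ and checking the uniqueness clause in Proposition 3.7 that forces $\alpha\mapsto t\alpha$.
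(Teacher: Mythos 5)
Your argument is correct and is exactly the one the paper intends (the paper treats Proposition 3.9 as immediate from the preceding setup): multiplicativity of the invariants in $t$, which you rightly justify via the uniqueness clause of Proposition 3.7 and the smallness hypothesis $\Vert F(a\vert)\Vert\leq C_{10}/m$, combined with $g^{m_1}=1$ and Dirichlet applied to $\tau(a\vert)^{m_1}$ to get $t=m_1q\leq m$. No gaps.
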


\

 With $m$ fixed as above, write $$\tilde{U}= U(m^{-1/2}\delta, \epsilon, R). $$
  For integers $t$ with $1\leq t\leq m$ let $\mu_{t}:U\rightarrow \tilde{U}$ be the map $\mu_{t}(z)= t^{-1/2} z$ (in obvious notation). Thus $\mu_{t}^{*}(t \Omega_{0})=  \Omega_{0}$.
 
 Our model structure $g_{0}, J_{0}, \Lambda_{0}, A_{0}$ is defined over $\tilde{U}$ Now consider deformed structures $J, \Omega, \Lambda, A $ as before but which are also defined over $\tilde{U}$.  Suppose that
$$  \Vert g-g_{0}\Vert_{\tilde{U}}\leq \tilde{\psi}, \Vert J-J_{0}\Vert_{\tilde{U}}\leq \tilde{\psi},
$$  where $\Vert \ \Vert_{\tilde{U}}$ here denotes $C^{0}$ norms over $\tilde{U}$. For integers $t$ as above, let $g_{t}, J_{t}, \Lambda_{t}, A_{t}$ be the data over $U$ given by pulling back $ tg, J, \Lambda^{t}, A^{\otimes t}$ using the map $\mu_{t}$. 
It is clear that if 
 $\tilde{\psi}$ is sufficiently small then for every $t$ we have

$$  \Vert  g_{t}-g_{0}\Vert_{U}\leq \psi\ , \  \Vert J_{t}-J_{0}\Vert_{{U}}\leq \psi. $$
 It is also clear that, if $\tilde{\psi}$ is sufficiently small, then the invariants $g(A_{0}, A_{t}), \tau(A_{0}, A_{t})$ are defined. 
\begin{prop}
If $\tilde{\psi}$ is sufficiently small then we can choose $t\leq m$ so that $g(A_{0}, A_{t})=1$ and $\tau(A_{0}, A_{t})\in W$. 
\end{prop}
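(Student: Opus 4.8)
The plan is to deduce Proposition 3.11 directly from Proposition 3.10 by transporting the conclusion of the latter through the rescaling maps $\mu_{t}$. The only real content is the verification that the topological invariants $g,\tau$ behave in the expected way under these rescalings.

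\textbf{Step 1: apply Proposition 3.10.} Since $\Omega$ is the K\"ahler form of $(g,J)$, shrinking $\tilde{\psi}$ makes $\Vert\Omega-\Omega_{0}\Vert_{\tilde{U}}$ as small as we please. Let $a$ be the connection induced by $A$ and $A_{0}$ on $\Lambda\otimes\Lambda_{0}^{*}\rightarrow\tilde{U}$; its curvature is $-i(\Omega-\Omega_{0})$. The standard link $Y_{\epsilon}$, the radial slice at distance $1$, lies in $U\subset\tilde{U}$ (as $m^{-1/2}\delta<1<R$), so for $\tilde{\psi}$ small the restriction $a\vert$ to $Y_{\epsilon}$ satisfies $\Vert F(a\vert)\Vert_{U}\leq C_{10}/m$. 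Proposition 3.10 then supplies an integer $t$, $1\leq t\leq m$, with $g(a\vert^{\otimes t})=1$ and $\tau(a\vert^{\otimes t})\in W'$. I claim this same $t$ works in Proposition 3.11.

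\textbf{Step 2: the scaling relation on bundles.} Recall $\Lambda_{0}$ carries a global nowhere-zero holomorphic section $\sigma_{0}$ with $\vert\sigma_{0}\vert^{2}=e^{-\vert z\vert^{2}/2}$; in this trivialisation the connection form of $A_{0}$ is assembled from $\partial\vert z\vert^{2}$ and $\db\vert z\vert^{2}$, hence is homogeneous of degree $2$ in $z$. Since $\mu_{t}(z)=t^{-1/2}z$, it follows that $\mu_{t}^{*}(A_{0}^{\otimes t})=A_{0}$ under the identification $\mu_{t}^{*}(\sigma_{0}^{\otimes t})\leftrightarrow\sigma_{0}$ --- this is just the stated relation $\mu_{t}^{*}(t\Omega_{0})=\Omega_{0}$ lifted to bundles-with-connection. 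As $\Lambda_{t}=\mu_{t}^{*}(\Lambda^{\otimes t})$, we obtain an isomorphism of bundles-with-connection over $U$
$$ \bigl(\Lambda_{0}\otimes\Lambda_{t}^{*},\ \text{induced by }A_{0},A_{t}\bigr)\;\cong\;\mu_{t}^{*}\bigl((\Lambda_{0}\otimes\Lambda^{*})^{\otimes t},\ \text{induced by }A_{0},A\bigr). $$

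\textbf{Step 3: compare the invariants.} Restricting to $Y_{\epsilon}$ and chasing the definitions, I would show $g(A_{0},A_{t})=g(a\vert^{\otimes t})^{-1}$ and $\tau(A_{0},A_{t})=\tau(a\vert^{\otimes t})^{-1}$, the latter up to an error tending to $0$ with $\tilde{\psi}$. Three observations enter: $\mu_{t}$ carries the slice at distance $1$ diffeomorphically onto that at distance $t^{-1/2}$, acting as the identity on the link and hence (being homotopic to the inclusion) trivially on $H_{1}(Y_{\epsilon};\bZ)$ and on flat bundles, so the transport preserves $g,\tau$; the bundle $(\Lambda_{0}\otimes\Lambda^{*})^{\otimes t}$ is the dual of $(\Lambda\otimes\Lambda_{0}^{*})^{\otimes t}$, so passing to it inverts $g,\tau$; and for a small-curvature connection on $\tilde{U}$ the invariants depend on the chosen radial slice only through an ambiguity that is trivial in $G$ and small in $T$ (on the cylinder between two slices the curvature is small, so the connection differs there from a flat one by a small $1$-form, and the flat connection has equal holonomy around the two homologous slices). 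Combining these, $g(A_{0},A_{t})=1$, and since $W'\subset\subset W$ with $W$ taken symmetric, for $\tilde{\psi}$ small we get $\tau(A_{0},A_{t})\in W$, as required.

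\textbf{Main obstacle.} The curvature estimates and the homogeneity of $A_{0}$ are routine. The delicate point, and the reason one needs the genuine slack $W'\subset\subset W$ rather than mere containment, is Step 3: making precise that $g,\tau$ transform correctly under $\mu_{t}$ and are insensitive to the radial slice up to a controllably small $T$-error.
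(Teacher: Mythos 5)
Your argument is correct and follows essentially the same route as the paper: choose $t$ by the preceding proposition, use that $\mu_{t}^{*}(A_{0}^{\otimes t})\cong A_{0}$ (which you, helpfully, actually justify via the homogeneity of the connection form), and control the change of the invariants between the radius-$1$ slice and its image under $\mu_{t}$ by the smallness of the curvature of $a^{\otimes t}$ on the intervening radial cylinder, invoking $W'\subset\subset W$. Your flat-comparison/holonomy phrasing of that last step is just a repackaging of the paper's radial parallel-transport trivialisation, in which the radial derivative of the connection form is a component of the curvature.
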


We choose $t$ according to Prop. 3.9, so that $g(a\vert^{\otimes t})=1$ and $\tau(a\vert^{\otimes t})\in W'$.

Write $\tau(a\vert^{\otimes t})=\tau$. Thus $\tau$ can be regarded as a small element of $H^{1}(Y_{\epsilon}, \bR)$. It follows from our set-up that there is a trivialisation of the bundle $\Lambda^{t}\otimes \Lambda_{0}^{-t}$ over $Y_{0}$ in which the connection $a\vert^{\otimes t}$ is represented by  a $C^{0}$-small connection form. Extend this trivialisation to $\tilde{U}$ using parallel transport along rays. As above, in the proof of Proposition 3.8, the radial derivative of the connection form in this trivialisation is given by the curvature $F_{a^{\otimes t}}$ and it follows easily that if $\tilde{\psi}$ is sufficiently small  then in the induced trivialisation the pull-back $\mu_{t}^{*}(a^{\otimes t})$, restricted to $Y_{\epsilon}$ has a $C^{0}$-small connection form. In particular, given that $W'\subset \subset W$ we can, by fixing $\tilde{\psi}$ sufficiently small, ensure that the \lq\lq $\tau$ invariant'' of this connection lies in $W$ and the \lq\lq g-invariant'' is $1$.   Now the fact that $\mu_{t}^{*}(A_{0}^{\otimes t})$ is isomorphic to $A_{0}$ yields the result stated.

\

We sum up in the following way. 
\begin{prop} We can choose $\tilde{\psi}>0$ to the following effect.
Suppose $g,J,  \Lambda, A $ are structures as above over  $\tilde{U}$. Suppose that $\Vert g-g_{0}\Vert_{\tilde{U}}, \Vert J-J_{0}\Vert_{\tilde{U}}\leq \tilde{\psi}$. Then we can find an integer $t$ with $1\leq t\leq m$ such that the data  $\mu_{t}^{*}(t g), \mu_{t}^{*}(J), \mu_{t}^{*}(\Lambda^{t}), \mu_{t}^{*} (A^{\otimes t})$ over  $U$ has Property(H). 
\end{prop}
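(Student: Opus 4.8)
The plan is to assemble Propositions 3.8, 3.9, 3.10 and 3.11 into a single statement, essentially just tracking the constants. First I would observe that the hypothesis $\Vert g - g_{0}\Vert_{\tilde U}, \Vert J - J_{0}\Vert_{\tilde U} \le \tilde\psi$ controls the curvature form $\Omega$ of the K\"ahler metric, hence the curvature $F_{a} = i(\Omega - \Omega_{0})$ of the connection $a$ on $\Lambda \otimes \Lambda_{0}^{*}$ over $\tilde U$; in particular, shrinking $\tilde\psi$, we can ensure $\Vert F(a|)\Vert_{\tilde U} \le C_{10}/m$, which is exactly the hypothesis needed to invoke Proposition 3.9. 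That gives an integer $t$ with $1 \le t \le m$ for which $g(a|^{\otimes t}) = 1$ and $\tau(a|^{\otimes t}) \in W'$.

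Next I would feed this $t$ into the argument of Proposition 3.11: trivialize $\Lambda^{t} \otimes \Lambda_{0}^{-t}$ over $Y_{\epsilon}$ so that the connection form is $C^{0}$-small (possible because the $g$-invariant is trivial and the $\tau$-invariant lies in $W'$, using Proposition 3.8's mechanism), extend the trivialization to $\tilde U$ by radial parallel transport, and note that the radial derivative of the connection form equals a component of the curvature $F_{a^{\otimes t}}$, hence is $O(\tilde\psi)$. Pulling back by $\mu_{t}$ — which by construction satisfies $\mu_{t}^{*}(t\Omega_{0}) = \Omega_{0}$, so $\mu_{t}^{*}(A_{0}^{\otimes t})$ is isomorphic to $A_{0}$ — one concludes that, after shrinking $\tilde\psi$ once more, the data $\mu_{t}^{*}(tg), \mu_{t}^{*}(J), \mu_{t}^{*}(A^{\otimes t})$ over $U$ satisfies $\Vert \mu_{t}^{*}(tg) - g_{0}\Vert_{U}, \Vert \mu_{t}^{*}(J) - J_{0}\Vert_{U} \le \psi$, that $g(A_{0}, \mu_{t}^{*}(A^{\otimes t})) = 1$, and that $\tau(A_{0}, \mu_{t}^{*}(A^{\otimes t})) \in W$ — i.e., exactly the hypotheses of Proposition 3.8.

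Finally, applying Proposition 3.8 to $\mu_{t}^{*}(tg), \mu_{t}^{*}(J), \mu_{t}^{*}(\Lambda^{t}), \mu_{t}^{*}(A^{\otimes t})$ yields Property (H) for this data over $U$, which is the desired conclusion. The only real subtlety — and the step I'd expect to take the most care — is the bookkeeping of the several successive shrinkings of $\tilde\psi$: one shrinking to guarantee the curvature bound $C_{10}/m$ of Proposition 3.9, one to make the radially-extended connection form $C^{0}$-small over all of $\tilde U$ (uniformly in the finitely many choices $1 \le t \le m$), and one to land inside the neighborhood $W$ of Proposition 3.8 starting from $W' \subset\subset W$. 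Since $m$ is fixed and there are only finitely many values of $t$, taking the minimum of the finitely many thresholds produces a single admissible $\tilde\psi$, and no genuinely new analytic input beyond Propositions 3.7--3.11 is required.
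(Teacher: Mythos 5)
Your argument is correct and follows essentially the same route as the paper: Proposition 3.9 supplies $t$, the radial parallel-transport trivialisation (the paper's Proposition 3.10, together with the observation just before it that $\Vert \mu_{t}^{*}(tg)-g_{0}\Vert_{U},\Vert \mu_{t}^{*}(J)-J_{0}\Vert_{U}\leq\psi$) transfers the triviality of the $g$-invariant and the containment of the $\tau$-invariant in $W$ to $\mu_{t}^{*}(A^{\otimes t})$, and Proposition 3.8 then gives Property (H), with $\tilde\psi$ shrunk uniformly over the finitely many $1\leq t\leq m$. The only slip is one of labelling: the radial-transport step you attribute to ``Proposition 3.11'' is the paper's Proposition 3.10, the statement being proved being itself the summarising Proposition 3.11.
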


\subsubsection{Completion of Proof}

With this lengthy discussion involving the tangent cone in place, we return to the limit space $X_{\infty}$. Recall that we have a sequence of scalings $\sqrt{k_{\nu}}$. We consider embeddings
$\chi_{\nu}: \tilde{U}\rightarrow X^{\reg}_{\infty}$.
Given such a $\chi_{\nu}$ we write $J^{\nu}$ for the pull-back of the complex structure on $X_{\infty}^{\reg}$ and $g^{\nu}$ for the pull-back of $k_{\nu}$ times the metric.

\begin{prop}
There is a $k_{\nu}$ so that we can find an embedding $\chi_{\nu}$ as above, such that
\begin{itemize}
\item $$ (1/2) k_{\nu}^{-1/2}\vert  z\vert \leq  d(p, \chi_{\nu}(z))\leq 2 k_{\nu}^{-1/2} \vert z \vert; $$ 
\item
$$  \Vert J^{\nu} -J_{0} \Vert_{\tilde{U}}, \Vert g^{\nu}-g_{0}\Vert_{\tilde{U}} \leq \tilde{\psi}/2. $$
\end{itemize}
\end{prop}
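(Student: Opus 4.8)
The plan is to build the embedding $\chi_\nu$ out of the two kinds of convergence already recorded in Section 2. Recall that by construction the rescalings $(X_\infty, p, \sqrt{k_\nu}\, g_\infty)$ converge in the pointed Gromov-Hausdorff sense to the tangent cone $C(Y)$, and this convergence is $C^{1,\alpha}$ on compact subsets of the regular part $C(Y)^{\reg}$, with convergence of the complex structures as well (this is the polarised/K\"ahler refinement of the convergence theory quoted in Section 2.1). Now $\tilde U = U(m^{-1/2}\delta,\epsilon,R)$ has compact closure inside $C(Y)^{\reg}$ — indeed it is a relatively compact region of $C(Y_\epsilon)$ bounded away from the vertex and from $C(\Sigma_Y)$, since $\delta,\epsilon,R$ and $m$ are all fixed. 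Therefore, for each fixed exponent $\alpha<1$, the $C^{1,\alpha}$-convergence gives, for $\nu$ large, open embeddings $\Theta_\nu$ of a neighbourhood of $\overline{\tilde U}$ into the rescaled $X_\infty^{\reg}$ such that the pulled-back rescaled metrics $\Theta_\nu^*(k_\nu g_\infty)$ converge in $C^{1,\alpha}$ (hence in $C^0$) to $g_0$, the pulled-back complex structures $\Theta_\nu^*(J_\infty)$ converge in $C^{1,\alpha}$ to $J_0$, and $d_{\sqrt{k_\nu}g_\infty}(z, \Theta_\nu(z))\le \delta_\nu$ with $\delta_\nu\to 0$. Setting $\chi_\nu = \Theta_\nu$, the second bullet of the Proposition — $\Vert J^\nu - J_0\Vert_{\tilde U}\le\tilde\psi/2$ and $\Vert g^\nu - g_0\Vert_{\tilde U}\le\tilde\psi/2$ — holds for all sufficiently large $\nu$, since $\tilde\psi$ is a fixed positive number from Proposition 3.12.

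For the first bullet — the two-sided distance estimate — one uses that $\chi_\nu(u_*)$ can be arranged to sit very close to the cone point as seen from $X_\infty$. More precisely, in the rescaled metric $\sqrt{k_\nu}\,g_\infty$ the distance from the (image of the) vertex $O$ of $C(Y)$ to $\chi_\nu(z)$ converges to the cone distance $|z|$ uniformly for $z\in\overline{\tilde U}$: this is exactly the content of pointed Gromov-Hausdorff convergence together with the fact that $\chi_\nu$ moves points by at most $\delta_\nu$. Unrescaling by $k_\nu^{-1/2}$ turns this into $d(p,\chi_\nu(z)) = k_\nu^{-1/2}\bigl(|z| + o(1)\bigr)$ uniformly on $\overline{\tilde U}$, where $p$ plays the role of the base point (which the rescalings are centred at, so its rescaled distance to $\chi_\nu(z)$ is $|z|+o(1)$). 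Since $|z|$ is bounded below by $m^{-1/2}\delta>0$ and above by $R$ on $\tilde U$, the $o(1)$ error is, for $\nu$ large, smaller than $\tfrac12|z|$, which gives both inequalities $(1/2)k_\nu^{-1/2}|z|\le d(p,\chi_\nu(z))\le 2 k_\nu^{-1/2}|z|$ simultaneously.

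The one genuine subtlety — and the step I would spend the most care on — is that the convergence theory as quoted produces embeddings of compact subsets of $X_\infty^{\reg}$ (or of the model) into the \emph{approximating manifolds} $X_i$, whereas here we want an embedding of $\tilde U\subset C(Y)^{\reg}$ directly into $X_\infty^{\reg}$ itself, using that $C(Y)$ is the \emph{tangent cone} of $X_\infty$ at $p$. But this is precisely the case of the general convergence statement applied to the constant sequence $X_\infty$ rescaled by $\sqrt{k_\nu}\to\infty$: the tangent-cone paragraph in Section 2.1 asserts exactly that the rescaled based spaces converge, with $C^{1,\alpha}$ convergence on compact subsets of the regular part, and (in the K\"ahler case) convergence of the complex structures too. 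So the embeddings $\chi_\nu$ come from that statement verbatim, with $C(Y)^{\reg}$ in the role of the limit regular set and $\tilde U$ the compact subset; no new analysis is needed, only bookkeeping of the scaling weights. Finally, to get an \emph{honest} $k_\nu$ as claimed in the statement (rather than a real scaling parameter), recall we fixed at the outset the integer sequence $k_\nu\to\infty$ along which the rescalings converge to $C(Y)$; we simply pass to a sufficiently large member of this sequence so that both bullets hold. This completes the proof.
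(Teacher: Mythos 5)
Your argument is correct and is essentially the paper's own: the paper simply asserts that Proposition 3.12 "follows easily from the general assertions in Section 2.1 about convergence," which is exactly what you carry out — applying the $C^{1,\alpha}$ convergence of the rescaled based spaces $(X_\infty,p,\sqrt{k_\nu}g_\infty)$ to the tangent cone on the relatively compact set $\overline{\tilde U}\subset C(Y)^{\reg}$ for the second bullet, and the pointed Gromov--Hausdorff approximation of distances to the vertex (with $|z|$ bounded between $m^{-1/2}\delta$ and $R$) for the first. Your elaboration, including the remark that the tangent-cone case is just the convergence statement applied to the rescaled constant sequence, fills in the bookkeeping the paper leaves implicit.
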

This follows easily from the general assertions in Section 2.1 about convergence. We now fix this $k_{\nu}$ and define $k(p)= m k_{\nu}$
and $r(p)=  \rho k(p)^{-1/2}$. We write $\chi_{k_{\nu}}=\chi$.  

\

Let   $X_{i}\in {\mathcal K}(n,C,V)$ be a sequence  converging to $X_{\infty}$. We fix distance functions on $X_{\infty}\sqcup X_{i}$.
  We consider embeddings $\chi^{i}:\tilde{U}\rightarrow X_{i}$. Given such maps we write $g_{i}, J_{i}$ for the pull backs of the metric and complex structure, $\Lambda_{i}$ for the pull-back of $L^{k_{\nu}}$ and $A_{i}$ for the pulled back connection.
\begin{prop}
For large enough $i$ we can choose $\chi^{i}$ with the following two properties.
\begin{itemize}
\item $ d(\chi^{i}(z), \chi(z)) \leq \rho k(p)^{-1/2}/100$
\item $ \Vert g_{i}-g_{0}\Vert_{\tilde{U}}, \Vert  J_{i}-J_{0}\Vert_{\tilde{U}}\leq \tilde{\psi}$.
\end{itemize}
\end{prop}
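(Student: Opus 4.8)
The plan is to produce the maps $\chi^i$ by composing the map $\chi = \chi_{k_\nu}$ from Proposition 3.14 with the comparison maps coming from Gromov-Hausdorff convergence, and then to track how close everything stays. First I would invoke the convergence statements recalled in Section 2.1: since $\chi(\tilde U)$ has compact closure inside the regular set $X_\infty^{\reg}$ (we may shrink $\tilde U$ slightly, or rather work over a compact neighbourhood $K$ of $\overline{\chi(\tilde U)}$ in $X_\infty^{\reg}$), for large $i$ there are open embeddings $\Phi_i$ of a neighbourhood of $K$ into $X_i$ such that the pulled-back metrics and complex structures converge in $C^{1,\alpha}$, hence in $C^0$, to those on $X_\infty$, and such that $d(x,\Phi_i(x))\le\delta_i$ with $\delta_i\to 0$ (here using the polarised version of the compactness theorem, which also gives bundle isomorphisms $\hat\Phi_i: L_\infty^{k_\nu}\to\Phi_i^*(L_i^{k_\nu})$ with the connections converging; the bundle data is not needed for the two bulleted conclusions but will be used in the next step of the overall argument).

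Then I would set $\chi^i = \Phi_i\circ\chi$. For the second bullet: $g_i = (\chi^i)^*(k_{\nu}g_{X_i}) = \chi^*\big(\Phi_i^*(k_\nu g_{X_i})\big)$, and since $\Phi_i^*(k_\nu g_{X_i})\to k_\nu g_\infty$ in $C^0$ over the relevant compact set while $\chi^*(k_\nu g_\infty) = g^\nu$ already satisfies $\|g^\nu - g_0\|_{\tilde U}\le\tilde\psi/2$ by Proposition 3.14, the triangle inequality gives $\|g_i - g_0\|_{\tilde U}\le\tilde\psi$ once $i$ is large enough; the same reasoning applies to $J_i$ versus $J^\nu$ versus $J_0$. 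For the first bullet: for $z\in\tilde U$ we have $d(\chi^i(z),\chi(z)) = d(\Phi_i(\chi(z)),\chi(z))\le\delta_i$, which is $\le\rho k(p)^{-1/2}/100$ as soon as $\delta_i$ is small enough, i.e. for $i$ large. (Strictly, the distance $d$ here is measured in the chosen metric on $X_\infty\sqcup X_i$; the point $\chi(z)$ lives in $X_\infty$ and $\chi^i(z)$ in $X_i$, so this is exactly the content of item (2) of the convergence statement in Section 2.1.)

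The only mild subtlety — and the step I would be most careful about — is the domain/compactness bookkeeping: $\tilde U$ is non-compact (it is an open subset of the cone, bounded away from the vertex, from the singular set $\Sigma_Y$, and of bounded radius), so $\overline{\chi(\tilde U)}$ is compact and contained in $X_\infty^{\reg}$ by construction of $\chi$ in Proposition 3.14, which is precisely what is needed to apply the $C^{1,\alpha}$-convergence statement (which is only asserted over compact subsets of the regular set). One should also note that $\Phi_i$ is defined only for $i$ large, so all assertions are "for large enough $i$", as stated. No genuine obstacle arises; the proposition is essentially a repackaging of the convergence theory from Section 2.1 together with Proposition 3.14, and I would present it in two or three sentences as the authors likely do.

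\begin{proof}
Fix a compact neighbourhood $K$ of $\overline{\chi(\tilde U)}$ inside $X_\infty^{\reg}$; this is possible because, by the choice of parameters in Section 3.2.1, $\overline{\tilde U}$ lies in the smooth part of the cone and $\chi=\chi_{k_\nu}$ maps it into $X_\infty^{\reg}$. Applying the polarised convergence statements of Section 2.1 to the compact set $K$, for $i$ large we obtain open embeddings $\Phi_i$ of a neighbourhood of $K$ into $X_i$ such that $\Phi_i^*(g_{X_i})\to g_\infty$ and $\Phi_i^*(J_{X_i})\to J_\infty$ in $C^{1,\alpha}$ over $K$ (hence in $C^0$), and such that $d(x,\Phi_i(x))\le\delta_i$ for $x\in K$, with $\delta_i\to 0$. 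Set $\chi^i=\Phi_i\circ\chi:\tilde U\to X_i$.

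For the first item, if $z\in\tilde U$ then $\chi(z)\in K$, so
$$ d(\chi^i(z),\chi(z)) = d(\Phi_i(\chi(z)),\chi(z)) \le \delta_i, $$
which is $\le \rho\, k(p)^{-1/2}/100$ for all sufficiently large $i$. For the second item, $g_i=(\chi^i)^*(k_\nu g_{X_i})=\chi^*\big(\Phi_i^*(k_\nu g_{X_i})\big)$, and $\Phi_i^*(k_\nu g_{X_i})\to k_\nu g_\infty$ in $C^0$ over $K$; since $\chi^*(k_\nu g_\infty)=g^\nu$ satisfies $\|g^\nu-g_0\|_{\tilde U}\le\tilde\psi/2$ by Proposition 3.14, the triangle inequality gives $\|g_i-g_0\|_{\tilde U}\le\tilde\psi$ for $i$ large. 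The same argument applied to $J_i=\chi^*\big(\Phi_i^*(J_{X_i})\big)$ and $J^\nu=\chi^*(J_\infty)$ gives $\|J_i-J_0\|_{\tilde U}\le\tilde\psi$ for $i$ large.
\end{proof}
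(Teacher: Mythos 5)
Your proof is correct and follows exactly the route the paper intends: Proposition 3.13 is stated there with only the remark that it ``follows from our general discussion of convergence,'' and your argument---composing the fixed embedding $\chi=\chi_{k_\nu}$ with the Gromov--Hausdorff comparison maps over a compact neighbourhood of $\overline{\chi(\tilde U)}$ in $X_\infty^{\reg}$, then using $d(x,\Phi_i(x))\le\delta_i\to 0$ and the triangle inequality against the $\tilde\psi/2$ bound---is precisely the omitted detail. (Only a labelling slip: the embedding result you cite as Proposition 3.14 is Proposition 3.12 in the paper.)
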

Again this follows from our general discussion of convergence.

Fix $i$ large enough, as in Proposition 3.13.
We apply Proposition 3.11 to find a $t$ such that the pull-back by $\mu_{t}$ of the data $t g_{i}, J_{i}, \Lambda_{i}^{t}, A^{\otimes t}_{i} $ has Property (H) over  $U$. Now write $k=t k_{\nu}$ so $k\leq k(p)$. We apply Proposition 2.4  to construct a  holomorphic section $s$ of $L^{k}\rightarrow X_{i}$, with a fixed bound on the $L^{2,\sharp}$ norm and with $\vert s(x)\vert \geq 1/4$ at points $x$ with $d^{\sharp}(x,\chi_{i}(u_{*}))<(4K_{1})^{-1}$. Here we are writing $d^{\sharp}$ for the scaled metric, so in terms of the original metric the condition is $d(x,\chi_{i}(u_{*}))< k^{-1/2} (4K_{1})^{-1}$.

To finish, suppose $q\in X_{i}$ has $d(q,p)\leq r(p)$. By construction $r(p)\leq \rho k^{-1/2}$. Note also that if we set $p'=\chi^{i} (t^{-1/2}z_{\rho})$. then
$$ d(q,p')\leq d(q,p)+ d(p, \chi ( t^{-1/2} z_{\rho}))+ d( \chi ( t^{-1/2} z_{\rho}), \chi^{(i)} ( t^{-1/2} z_{\rho}))\leq  4\rho \sqrt{k}. $$
This means that $d^{\sharp}(q,p')\leq 4 \rho$ which is less than $(4 K_{1})^{-1}$ by our choice of $\rho$.
\section{Connections with algebraic geometry}

 The  consequences of Theorem 1.1, for the relation between algebro-geometric and differential geometric limits, could be summarised by saying that things work out in the way that one might at first sight guess at.  As we have mentioned before, the proofs  of many of the statements, given Theorem 1.1, have been outlined by Tian in \cite{Tian3}. Thus we view this Section, broadly speaking, as an opportunity to attempt a careful exposition of the material. 
\subsection{Proof of Theorem 2}

\begin{lem} There are numbers  $N_{k}$, depending only on $n,c,V,k$, such that for any $X$ in ${\mathcal K}(n,c,V)$  we have ${\rm dim} H^{0}(X;L^{k})\leq N_{k}+1$.
\end{lem}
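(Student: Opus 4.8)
The plan is to bound $\dim H^0(X;L^k)$ by the dimension of the space of holomorphic sections with a uniform pointwise bound, and then use a standard volume-comparison / $\epsilon$-net argument to estimate the latter. Concretely, by Proposition 2.1(1) every holomorphic section $s$ of $L^k$ satisfies $\Vert s\Vert_{L^{\infty,\sharp}}\le K_0\Vert s\Vert_{L^{2,\sharp}}$ and $\Vert\nabla s\Vert_{L^{\infty,\sharp}}\le K_1\Vert s\Vert_{L^{2,\sharp}}$. So the unit sphere $S$ in $H^0(X;L^k)$ (with respect to the $L^{2,\sharp}$ inner product) consists of sections that are uniformly bounded and uniformly Lipschitz (in the scaled metric) by constants depending only on $n,c,V$.

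First I would fix a maximal $\tfrac{1}{4K_0}$-separated set of points $\{x_1,\dots,x_M\}$ in $X$ in the scaled metric. The non-collapsing bound (1.2), together with the total volume $V$ (scaled volume $k^nV$) and Bishop's upper volume bound, gives $M\le N_k$ for an explicit $N_k$ depending only on $n,c,V,k$: the balls of scaled radius $\tfrac{1}{8K_0}$ about the $x_j$ are disjoint and each has volume at least a fixed multiple of the radius to the power $2n$, while they all sit inside $X$ of scaled volume $k^nV$. Next, consider the evaluation map $E:H^0(X;L^k)\to \bigoplus_{j=1}^M L^k_{x_j}$, $s\mapsto (s(x_1),\dots,s(x_M))$. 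I claim $E$ is injective: if $s$ is in the kernel and $\Vert s\Vert_{L^{2,\sharp}}=1$, pick $x$ with $|s(x)|=\Vert s\Vert_{L^{\infty,\sharp}}\ge \urho$-type lower bound — more simply, $|s(x)|\ge \Vert s\Vert_{L^{2,\sharp}}/K_0'$ for a suitable constant since $\Vert s\Vert_{L^{2,\sharp}}\le (\Vol^\sharp X)^{1/2}\Vert s\Vert_{L^{\infty,\sharp}}$; then some $x_j$ is within scaled distance $\tfrac{1}{4K_0}$ of $x$, and the gradient bound forces $|s(x_j)|\ge |s(x)|-K_1\cdot\tfrac{1}{4K_0}\cdot\Vert s\Vert_{L^{2,\sharp}}>0$ once we choose the separation constant small enough relative to $K_0,K_1$ and the volume. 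This contradicts $s\in\ker E$. Hence $\dim H^0(X;L^k)\le M\le N_k$, and one absorbs the off-by-one into writing $N_k+1$.

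The main obstacle is making the injectivity argument airtight: one needs that a nonzero section cannot be too small at every point of a sufficiently fine net, which requires balancing the separation scale of the net against both the sup-norm constant $K_0$ (to guarantee the section is genuinely large somewhere, using that the scaled volume is bounded above so $L^\infty$ is comparable to $L^2$ up to a volume factor) and the Lipschitz constant $K_1$ (to propagate that largeness to a nearby net point). All of these constants depend only on $n,c,V$ — and, through the scaled volume $k^nV$ and the number of net points, on $k$ — so the resulting $N_k$ has exactly the claimed dependence. The counting of the net via Bishop–Gromov and non-collapsing is routine once the scale is fixed. I would present the argument by first fixing the net scale $r_0=r_0(n,c,V)$ so small that the injectivity works, then deriving $N_k$ from the packing estimate.
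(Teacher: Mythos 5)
Your argument is correct and is essentially the paper's own proof: choose a maximal separated net (whose cardinality is controlled by the non-collapsing condition and the total scaled volume $k^{n}V$), consider the evaluation map at the net points, and get injectivity from the gradient bound of Proposition 2.1 together with the comparison $\Vert s\Vert_{L^{2,\sharp}}\leq (k^{n}V)^{1/2}\Vert s\Vert_{L^{\infty,\sharp}}$. One small correction: the net scale must shrink with $k$ (precisely because of the factor $(k^{n}V)^{1/2}$, as you acknowledge mid-argument), so your closing ``$r_{0}=r_{0}(n,c,V)$'' should be $r_{0}=r_{0}(n,c,V,k)$; this only changes the explicit form of $N_{k}$, not the stated dependence on $n,c,V,k$.
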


We work in the rescaled metric. Given $\epsilon>0$ we can choose a maximal set of points $x_{i}$ in $X$ such that the distance between any two is at least $\epsilon$. Then the $2\epsilon$ balls with these centres cover $X$ and the $\epsilon/2$ balls are disjoint. 
Consider the evaluation map
$$  {\rm ev}: H^{0}(X;L^{k} )\rightarrow \bigoplus L^{k}_{x_{i}}. $$
We first show that if $\epsilon$ is sufficiently small then this map is injective.
For if it is not injective  there  is a holomorphic section $s$ with $L^{2,\sharp}$ norm $1$ vanishing at all the $x_{i}$. Since the $2\epsilon$ balls cover we get $\Vert s\Vert_{L^{\infty}} \leq 2K_{1} \epsilon$. This gives a contradiction to $\Vert s\Vert_{L^{2\sharp}}=1$ if $\epsilon$ is small enough. On the other hand since the $\epsilon/2$ balls are disjoint the non-collapsing condition gives an upper bound on the number of the points $x_{i}$ which completes the proof. 

In fact the estimate one gets by this argument  is
$$   N_{k}+1= \frac{2^{4n}K_{1}^{2n} V^{n+1} n!}{c \pi^{n}}k^{n^{2}}, $$
which is very poor compared with the asymptotics we know that ${\rm dim} H^{0}(X,L^{k})\sim (2\pi)^{-n}V k^{n}$ for a fixed $X$, as $k\rightarrow \infty$.

For our purposes there is no loss of generality in supposing that the $k_{0}$ of Theorem 1.1 is $1$. Then the sections of $L^{k}$ define a regular map of $X$ for all $k$. Suppose we choose  isometric embeddings
$$   \phi_{k}: H^{0}(X;L^{k})^{*} \rightarrow \bC^{N_{k}+1}, $$
using the $L^{2}$ norm on the left hand side and the fixed standard Hermitian form on the right. Then we get  projective varieties
$$   V(X,\phi_{k})\subset \bC\bP^{N_{k}}, $$
and holomorphic maps $$  T_{k}: X\rightarrow V(X,\phi_{k}).$$
Of course $T_{k}$ depend on the choice of $\phi_{k}$ which is arbitrary, but any two choices differ by the action of the unitary group $U(N_{k}+1)$. The fact that this group is {\it compact} will mean that in the end the choice of $\phi_{k}$ will not be important. Soon we will reduce to the case when $T$ is generically 1-1 but we do not need to assume that yet, so  $T_{k}$ could map to a variety of dimension less than $n$ or be a multiple cover of an $n$-dimensional variety.  In any case we get, by straightforward arguments, a fixed upper bound on the {\it degree} of $V(X,\phi_{k})$ (depending on $k,n,V$). 

By standard general principles there is a system of morphisms of projective varieties, for integer $\lambda$,
$$   f_{\lambda} :V(X,\phi_{\lambda k})\rightarrow V(X,\phi_{k}), $$
with $f_{\lambda \mu}=f_{\lambda} \circ f_{\mu}$ and $f_{\lambda}T_{\lambda k}= T_{k}.$

Now we bring in the crucial lower bound provided by Theorem 1. 

\begin{lem} Taking $k_{0}=1$, the map $T_{1}:X\rightarrow V(X,\phi_{1})$ has derivative bounded by $K_{1} b^{-1}$ where $b$ is the lower bound in Theorem 1.1 and $K_{1}$ is the constant in the first derivative estimate.
\end{lem}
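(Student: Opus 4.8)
The plan is to estimate the derivative of $T_1$ at a point $x \in X$ directly in terms of the Bergman-type data at $x$. Recall that $T_1$ is defined by sending $x$ to the line in $H^0(X;L)^*$ determined by evaluation at $x$; concretely, if $(s_\alpha)$ is an $L^2$-orthonormal basis of $H^0(X;L)$, then in the affine chart around $T_1(x)$ one can write the map as $x' \mapsto \bigl(s_1(x')/s_{\alpha_0}(x'), \dots\bigr)$ after choosing a local trivialisation and an index $\alpha_0$ with $s_{\alpha_0}(x) \neq 0$. The point of Theorem 1.1 (with $k_0 = 1$) is that $\rho_{1,X}(x) = \sum_\alpha |s_\alpha(x)|^2 \geq b^2$, so after a unitary change of basis we may assume $|s_{\alpha_0}(x)|^2 = \rho_{1,X}(x) \geq b^2$, i.e. $|s_{\alpha_0}(x)| \geq b$, while the remaining basis vectors vanish at $x$.

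First I would fix $x$, choose the orthonormal basis adapted as above so that $s_\beta(x) = 0$ for $\beta \neq \alpha_0$, and trivialise $L$ near $x$ by the section $s_{\alpha_0}/|s_{\alpha_0}|$ or simply by $s_{\alpha_0}$ itself (up to the harmless distinction between the metric norm and the trivialisation, which contributes nothing at the level of the derivative since $s_{\alpha_0}(x) \neq 0$ and $\nabla s_{\alpha_0}$ is what we control). Then the differential of $T_1$ at $x$, as a map from $T_xX$ to the relevant quotient space, is represented by the collection of $1$-forms $d\bigl(s_\beta/s_{\alpha_0}\bigr)\big|_x = \bigl(ds_\beta \cdot s_{\alpha_0} - s_\beta \, ds_{\alpha_0}\bigr)/s_{\alpha_0}^2 \big|_x = (\nabla s_\beta)(x)/s_{\alpha_0}(x)$, using $s_\beta(x) = 0$. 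So the operator norm of $dT_1|_x$ (in the rescaled/scaled metric, i.e. the $\sharp$ metric for $k=1$, which is just $g$ up to the fixed normalisation) is bounded by $\bigl(\sum_\beta |\nabla s_\beta(x)|^2\bigr)^{1/2} / |s_{\alpha_0}(x)|$.

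The numerator is controlled by Proposition 2.1(1): for each $\beta$, $\|\nabla s_\beta\|_{L^{\infty,\sharp}} \leq K_1 \|s_\beta\|_{L^{2,\sharp}} = K_1$, and a slightly more careful application — applying the pointwise derivative estimate $\Delta|\partial s| \leq \tfrac{5}{2}|\partial s|$ together with Moser iteration to the single section $s = \sum c_\beta s_\beta$ with $\sum |c_\beta|^2 = 1$ achieving the maximum of $|\nabla(\cdot)(x)|$ — gives $\bigl(\sum_\beta |\nabla s_\beta(x)|^2\bigr)^{1/2} \leq K_1$ as well, since this quantity is exactly $\sup\{|\nabla s(x)| : \|s\|_{L^2} = 1\}$. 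Combining with the denominator bound $|s_{\alpha_0}(x)| = \rho_{1,X}(x)^{1/2} \geq b$ yields $\|dT_1|_x\| \leq K_1/b = K_1 b^{-1}$, as claimed. The only mild subtlety — and the step I'd expect to require a little care rather than real difficulty — is bookkeeping the metric on the target: one should use on $V(X,\phi_1) \subset \bC\bP^{N_1}$ the Fubini–Study metric induced by the fixed Hermitian form, and verify that with respect to it the Kähler form pulls back to $\tfrac{i}{2}\partial\bar\partial \log \rho_{1,X}$, so that the derivative computation above in the chart is indeed the intrinsic one up to universal constants absorbed into the normalisation; none of this affects the stated bound.
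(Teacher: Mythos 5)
Your argument is essentially the paper's proof: choose an orthonormal basis adapted at $x$ so that one section $s_{\alpha_0}$ realises $\rho_{1,X}(x)^{1/2}\geq b$ and the others vanish at $x$, compute $dT_1|_x$ in the affine chart as $(\nabla s_\beta(x))/s_{\alpha_0}(x)$, and invoke the first derivative estimate of Proposition 2.1(1); this is exactly the route the paper takes (it writes the derivative as $B^{-1}(\partial s_1,\dots,\partial s_N,0,\dots)$ and says \lq\lq the lemma follows'').

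One step, however, is stated incorrectly, though it is easily repaired. The quantity $\bigl(\sum_\beta |\nabla s_\beta(x)|^2\bigr)^{1/2}$ is the Hilbert--Schmidt norm of the linear map $c\mapsto \sum_\beta c_\beta \nabla s_\beta(x)$, whereas $\sup\{|\nabla s(x)| : \Vert s\Vert_{L^2}=1\}$ is its operator norm; these are not \lq\lq exactly'' equal, and the Hilbert--Schmidt norm can exceed the operator norm by a factor up to $\sqrt{n}$ (e.g.\ $n$ orthonormal sections vanishing at $x$ whose derivatives point in orthogonal covector directions, each of size close to $K_1$, are consistent with Proposition 2.1 but give Hilbert--Schmidt norm close to $\sqrt{n}K_1$). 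So the asserted bound $\bigl(\sum_\beta |\nabla s_\beta(x)|^2\bigr)^{1/2}\leq K_1$ does not follow from the estimates you quote. The correct (and standard) fix is to bound the operator norm of $dT_1|_x$ directly: for a fixed unit tangent vector $v$ one has $\bigl(\sum_\beta |\nabla_v s_\beta(x)|^2\bigr)^{1/2}=\sup\bigl\{\bigl|\nabla_v\bigl(\textstyle\sum_\beta c_\beta s_\beta\bigr)(x)\bigr| : \sum_\beta|c_\beta|^2=1\bigr\}\leq K_1$, by applying Proposition 2.1(1) to the unit $L^2$-norm section $\sum_\beta c_\beta s_\beta$; dividing by $|s_{\alpha_0}(x)|\geq b$ then gives the stated bound $K_1 b^{-1}$. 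With that one-line correction your proof coincides with the paper's.
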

Here we are referring to the \lq\lq operator norm'' of the derivative, regarded as a map from the tangent space of $X$ at a point, with the given metric $g$, to the tangent space of $\bC\bP^{N_{1}}$ with the standard Fubini-Study metric.

The proof of  the Lemma comes directly from the definitions. Given a point $x\in X$ we can choose an orthonormal basis of sections $s_{0}, s_{1}, \dots, s_{N} $ with $s_{i}(x)=0$ for $i>0$ and $\vert s_{0}(x)\vert = B\geq b$. There is no loss of generality in supposing that $\phi_{1}$ maps the dual basis to the first $N+1$ basis vectors in $\bC^{N_{1}+1}$. Fix a unitary isomorphism of the fibre $L_{x}$ with $\bC$. Then the derivative of each $s_{i}$, for $i>0$ can be regarded as an element of the cotangent space of $X$ at  $x$. Identifying the tangent space of $\bC\bP^{N_{1}}$ at
$(1,0,\dots,0)$ with $\bC^{N_{1}}$ in the standard way, the derivative of $T$ at $x$ is represented by
$$    B^{-1} ( \partial s_{1}, \dots, \partial s_{N},  \dots,0), $$
and the lemma follows.

Using  Lemma 3.1 we get similar universal bounds on the derivatives of all maps $T_{k}$, for suitable constants which we do not need to keep track of.

Now suppose that $X_{i}$ is a sequence in ${\mathcal K}(n,c,v)$ with Gromov-Hausdorff limit a polarised limit space $X_{\infty}$.   For each fixed $k$ we choose $\phi_{k,i}$ so we have a sequence of projective varieties $V(X_{i},\phi_{k,i})$ of bounded dimension and degree. By standard results we can, choosing a subsequence suppose that for each $k$ these converge in the algebro-geometric sense to a limit $W_{k}$. (More precisely, we can suppose that for each $k$ the $V(X_{i},\phi_{k,i})$
have fixed degree and dimension and converge as points in the Chow variety parametrising algebraic cycles of that type. Then we take $W_{k}$ to be the corresponding algebraic set.) It follows easily from the compactness of 
$U(N_{k}+1)$ that $W_{k}$ is independent, up to  projective unitary transformations, of the choice of maps $\phi_{k,i}$. 
\begin{lem} After perhaps passing to a subsequence  of the $X_{i}$, for each $k$ the maps $T_{k}:X_{i}\rightarrow V(X_{i}, \phi_{k})$ extend by continuity to a  continuous map $T_{k}:X_{\infty}\rightarrow W_{k}$, holomorphic on $X^{\reg}_{\infty}$.
\end{lem}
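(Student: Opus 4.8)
The plan is to show first that the maps $T_{k}$, restricted to the regular parts, converge uniformly in a suitable sense so that they extend to a limit map, and then to use the derivative bounds (Lemma 4.2 and its iterates) together with the $C^{1,\alpha}$ convergence on compact subsets of $X_{\infty}^{\reg}$ to control the extension. Concretely, I would proceed as follows. Work with the rescaled metrics and recall that an orthonormal basis $(s_{\alpha}^{i})$ of $H^{0}(X_{i};L^{k})$ produces, via $\phi_{k,i}$, homogeneous coordinates on $V(X_{i},\phi_{k,i})$, and that $T_{k}$ is given in these coordinates by $x\mapsto [s_{0}^{i}(x):\cdots:s_{N_{k}}^{i}(x)]$ after a unitary identification of $L^{k}_{x}$ with $\bC$. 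By Proposition 2.1 each $s_{\alpha}^{i}$ satisfies a uniform $C^{0}$ and a uniform first-derivative bound in the $\sharp$-norm, so after passing to a subsequence we may assume (using the $C^{1,\alpha}$ convergence of $(g_{i},J_{i},L_{i},A_{i})$ on compact $K\subset X_{\infty}^{\reg}$, via the maps $\chi_{i}$ and $\hat\chi_{i}$) that the pulled-back sections $\hat\chi_{i}^{*}(s_{\alpha}^{i})$ converge in $C^{0}$ — indeed in $C^{0}_{\mathrm{loc}}$ — on $X_{\infty}^{\reg}$ to holomorphic sections $s_{\alpha}^{\infty}$ of $L_{\infty}^{k}$ over the regular set. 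This uses an Arzel\`a–Ascoli argument justified by the uniform derivative bound, and a diagonal argument over an exhaustion of $X_{\infty}^{\reg}$ by compact sets.

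Next I would identify the limiting coordinate sections with the limit variety. Because $V(X_{i},\phi_{k,i})\to W_{k}$ as algebraic cycles, the vanishing locus of a limit of homogeneous polynomials vanishing on $V(X_{i},\phi_{k,i})$ vanishes on $W_{k}$; applying this to the quadrics and higher-degree relations satisfied by $(s_{\alpha}^{i})$, the point $[s_{0}^{\infty}(x):\cdots:s_{N_{k}}^{\infty}(x)]$ lies on $W_{k}$ for every $x\in X_{\infty}^{\reg}$ at which not all $s_{\alpha}^{\infty}(x)$ vanish. Here the crucial input from Theorem 1.1 (as restated in Lemma 4.2's proof) is that, with $k_{0}=1$, the lower bound $\urho\geq b^{2}$ passes to the limit: at every point of $X_{\infty}^{\reg}$ some section has norm $\geq b$ in the limit, so the homogeneous coordinates never all vanish and $T_{k}:X_{\infty}^{\reg}\to W_{k}$ is well defined and holomorphic (being a $C^{0}_{\mathrm{loc}}$-limit of holomorphic maps with locally uniformly bounded derivatives). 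The derivative bound from Lemma 4.2 also passes to the limit, so $T_{k}$ on $X_{\infty}^{\reg}$ is Lipschitz with a universal constant with respect to $d_{\infty}$ restricted to the regular set, and by the compatibility of $g_{\infty}$ with the metric-space structure (paths in $M_{\reg}$ realise short distances, as recalled in Section 2.1) it is Lipschitz for $d_{\infty}$ on compact subsets of $X_{\infty}^{\reg}$.

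Finally I would extend $T_{k}$ across the singular set $\Sigma$. The key point is that $\Sigma$ has Hausdorff codimension at least $4$ — in particular codimension $>2$ — so it cannot locally disconnect $X_{\infty}$, and moreover a uniformly Lipschitz function defined on the dense open set $X_{\infty}^{\reg}$ extends continuously to all of $X_{\infty}$: for $q\in\Sigma$, pick $x_{j}\in X_{\infty}^{\reg}$ with $x_{j}\to q$; these can be joined in $X_{\infty}^{\reg}$ by paths whose lengths are comparable to $d_{\infty}(x_{j},x_{l})$ (using again that $\Sigma$ does not separate, together with the local length-space property), so $(T_{k}(x_{j}))$ is Cauchy in $W_{k}$ and the limit is independent of the sequence. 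This yields a continuous $T_{k}:X_{\infty}\to W_{k}$, holomorphic on $X_{\infty}^{\reg}$ by construction, and the compatibility $f_{\lambda}T_{\lambda k}=T_{k}$ is inherited in the limit from the finite-stage identities by continuity. That the extended map is independent of the subsequence, up to the $U(N_{k}+1)$-action, follows from compactness of the unitary group exactly as for $W_{k}$ itself. I expect the main obstacle to be the extension across $\Sigma$: one must be careful that Lipschitz-ness with respect to the intrinsic length metric on $X_{\infty}^{\reg}$ is genuinely controlled by $d_{\infty}$ near singular points, which is where the codimension-$\geq 4$ property (and the structure theory of the limit recalled in Section 2.1) does the essential work; the convergence of sections and the non-vanishing from Theorem 1.1 are comparatively routine.
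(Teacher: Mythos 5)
The last step of your argument has a genuine gap. Your continuity across $\Sigma$ rests on the claim that two regular points $x_{j},x_{l}$ near a singular point can be joined \emph{inside} $X_{\infty}^{\reg}$ by paths of length comparable to $d_{\infty}(x_{j},x_{l})$, and you justify this by the codimension-$\geq 4$ bound on $\Sigma$ plus the local length-space property of the regular set. But a Hausdorff-dimension bound on $\Sigma$ does not by itself control the length of detours around $\Sigma$: the background recalled in Section 2.1 only says that $d_{\infty}$ is realised by paths in the regular set for pairs of points lying in a fixed compact subset $K\subset X_{\infty}^{\reg}$, with a threshold $s=s(K)$ that may degenerate as $K$ approaches $\Sigma$. (The "almost convexity" of the regular set of a non-collapsed Ricci limit is a genuinely deep fact — the paper invokes Colding--Naber only in Section 5, and only for tangent cones — so as written this step is unsubstantiated, and the derivative bound alone only gives Lipschitz control with respect to the intrinsic length metric of $X_{\infty}^{\reg}$, which is exactly what you cannot yet compare with $d_{\infty}$ near $\Sigma$.)

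The paper's own proof avoids this entirely, and is much shorter: by Lemma 4.2 (together with Lemma 3.1 for higher powers), the maps $T_{k}:X_{i}\rightarrow \bC\bP^{N_{k}}$ are uniformly Lipschitz with respect to the distance $d_{i}$ on the \emph{smooth} manifolds $X_{i}$, where minimizing geodesics exist, so no singular set intervenes. This equicontinuity, together with the fixed metrics on $X_{i}\sqcup X_{\infty}$, the compactness of $\bC\bP^{N_{k}}$ and an Arzel\`a--Ascoli/diagonal argument, produces (after passing to a subsequence) a limit map defined and continuous — indeed Lipschitz — on all of $X_{\infty}$ at once, with image in $W_{k}$ because the cycles $V(X_{i},\phi_{k,i})$ converge; holomorphy on $X_{\infty}^{\reg}$ then follows from the local $C^{1,\alpha}$ convergence exactly as in your first paragraph. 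Your construction of limit sections and the limit of the lower bound from Theorem 1.1 are fine but not needed at this stage (they reappear later, in Lemma 4.5 and the Recap); the essential simplification you are missing is to take the Lipschitz bound on the $X_{i}$ and pass it through the Gromov--Hausdorff approximation, rather than proving Lipschitz continuity of an already-limiting map on $X_{\infty}^{\reg}$ and then trying to cross $\Sigma$.
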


More precisely what we mean is that we suppose we have fixed metrics on the $X_{i}\sqcup X_{\infty}$ then for all $\epsilon>0$ we can find $\delta>0$ so that  the distance in the projective space between $T_{k}(y), T_{k}(y)$ is less than $\epsilon$ if $d(x,y)<\delta$. 

The proof of the Lemma is very easy using the equicontinuity of the maps $T_{k}$ on the $X_{i}$. The limit map $T_{k}$ on $X_{\infty}$ is unique up to unitary transformations preserving $W_{k}$ and the possible existence of such maps is the only reason that we may need to pass to a subsequence.

In the next subsection we will collect some further analytical results which will give a much clearer view of the situation. Then we return to discuss the relation between $X_{\infty}$ and the $W_{k}$ further in  subsection 4.3.

\subsection{More analysis}
Recall that we have a uniform $C^{0}$ estimate (Prop. 2.1) for holomorphic sections of $L^{k}\rightarrow X$, for any $X$ in ${\mathcal K}(n,c,V)$. We will now extend this to a polarised limit space $X_{\infty}$.
\begin{lem}
If $s$ is a bounded holomorphic section of $L^{k}$ over $X_{\infty}^{reg}$ then
   $$\Vert s\Vert_{L^{\infty}} \leq K_{0} \Vert s\Vert_{L^{2,\sharp}}. $$
\end{lem}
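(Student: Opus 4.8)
The plan is to transfer the $C^0$ estimate of Proposition 2.1(1) from the approximating manifolds $X_i$ to the limit space $X_\infty^{\reg}$ by a compactness/diagonal argument, using the $C^{1,\alpha}$ convergence of the polarised structures on compact subsets of the regular part. First I would fix a point $x\in X_\infty^{\reg}$ at which $|s(x)|$ is close to $\Vert s\Vert_{L^\infty}$, and fix a small compact neighbourhood $K$ of $x$ inside $X_\infty^{\reg}$. By the convergence theory recalled in Section 2.1 we have, for large $i$, open embeddings $\chi_i$ of a neighbourhood of $K$ into $X_i$ together with bundle isomorphisms $\hat\chi_i\colon L_\infty\to\chi_i^*(L_i^{k})$ under which $\chi_i^*(kg_i)\to g_\infty^{\sharp}$, $\chi_i^*(J_i)\to J_\infty$ and the connections converge, all in $C^{1,\alpha}$ over $K$.

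The difficulty is that $s$ is only defined over $X_\infty^{\reg}$, not over $X_i$, so I cannot simply pull it back; instead I would regularise. Pick a slightly smaller compact $K'\subset \mathrm{int}\,K$ containing $x$ and let $\beta$ be a cut-off equal to $1$ near $K'$ and supported in $K$. Then $\beta\,(\hat\chi_i^{-1})^* s$ is a smooth, compactly supported section of $L_i^{k}$ over $X_i$ (extended by zero); call it $\sigma_i$. It is not holomorphic, but $\db_i\sigma_i$ is supported in $K\setminus K'$ and, because the $\db$-operators converge and $s$ is holomorphic on $X_\infty^{\reg}$, one has $\Vert\db_i\sigma_i\Vert_{L^{2,\sharp}}\to 0$ as $i\to\infty$ (here I also use that $s$, being bounded and holomorphic, has bounded first derivatives on $K$ by interior elliptic estimates). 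Now run the Hörmander correction exactly as in Proposition 2.3: set $s_i=\sigma_i-\tau_i$ with $\tau_i=\db^*\Delta_{\db}^{-1}\db\sigma_i$, which is genuinely holomorphic on $X_i$ by Proposition 2.1(2), and satisfies $\Vert\tau_i\Vert_{L^{2,\sharp}}\le\sqrt2\,\Vert\db_i\sigma_i\Vert_{L^{2,\sharp}}\to 0$. By Proposition 2.1(1) applied on $X_i$,
$$ \Vert s_i\Vert_{L^{\infty,\sharp}}\le K_0\,\Vert s_i\Vert_{L^{2,\sharp}}. $$

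Finally I would pass to the limit. On the one hand $\Vert s_i\Vert_{L^{2,\sharp}}\le \Vert\sigma_i\Vert_{L^{2,\sharp}}+\Vert\tau_i\Vert_{L^{2,\sharp}}\to\Vert s\Vert_{L^{2,\sharp}}$ — using that $\beta\uparrow$ can be taken exhausting $X_\infty^{\reg}$ and that the singular set has measure zero, so that the $L^{2,\sharp}$ mass of $s$ over $X_\infty^{\reg}$ is the relevant quantity (and strictly this requires first choosing $\beta$ to capture all but $\epsilon$ of the $L^2$ mass, then letting $\epsilon\to 0$ after the rest of the argument). On the other hand, near the fixed point $x$ we have $|\sigma_i(\chi_i^{-1}\text{-image of }x)|\to |s(x)|$ because $\beta\equiv 1$ there and the bundle identifications are isometric in the limit, while $|\tau_i|$ is uniformly small there — indeed $\Vert\tau_i\Vert_{L^{\infty,\sharp}}\le K_0\Vert\tau_i\Vert_{L^{2,\sharp}}\to 0$ by Proposition 2.1(1) again. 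Hence $|s(x)|=\lim|s_i(\cdot)|\le\liminf\Vert s_i\Vert_{L^{\infty,\sharp}}\le K_0\liminf\Vert s_i\Vert_{L^{2,\sharp}}\le K_0\Vert s\Vert_{L^{2,\sharp}}$. Taking the supremum over $x\in X_\infty^{\reg}$ (first with the $\epsilon$-truncation, then $\epsilon\to0$) gives the claim. The main obstacle is bookkeeping the two limits cleanly: one must choose the exhausting cut-offs so that the lost $L^2$-mass is controlled \emph{before} invoking the convergence of structures, and one must make sure the point where the sup is nearly attained is well inside the region where $\beta\equiv 1$ and where the $\chi_i$ are defined; the analytic inputs themselves are all already in hand from Proposition 2.1 and Section 2.1.
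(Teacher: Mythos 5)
Your overall strategy is the paper's: transplant a cut-off copy of $s$ to $X_i$, correct it by the H\"ormander projection to get a genuine holomorphic section $s_i$, apply Proposition 2.1(1) on $X_i$, and pass to the limit (the paper phrases this as a contradiction argument, but the construction is identical). However, two of your key smallness claims are not right as written. First, for a fixed cut-off $\beta$ the quantity $\db_i\sigma_i$ does \emph{not} tend to zero as $i\to\infty$: it contains the term $\db\beta\otimes s$, which is independent of $i$ and of size roughly $\Vert\nabla\beta\Vert_{L^2}\,\Vert s\Vert_{L^\infty}$ on the annulus where $\nabla\beta\neq0$; only the term coming from the non-holomorphy of the pulled-back $s$ with respect to $\db_i$ decays with $i$. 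The smallness has to be \emph{arranged in advance} by choosing $\beta$ with $\Vert\nabla\beta\Vert_{L^2}$ small compared with $\Vert s\Vert_{L^\infty}^{-1}$ times the allowed error --- exactly the paper's requirement that $\Vert\nabla\beta\Vert_{L^2}\ll \lambda B^{-1}C^{-1}$. In your local set-up this is repairable (a cut-off on a small ball around $x$ in the regular set has small gradient $L^2$-norm when the radius shrinks, since $2n>2$), but it is a choice you must make, not a consequence of $i\to\infty$; and in your fallback of cut-offs exhausting $X_\infty^{\reg}$, making $\Vert\nabla\beta\Vert_{L^2}$ small near the singular set is precisely the content of Proposition 3.5 (using that the singular set has codimension strictly greater than $2$), which you never invoke.

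Second, your pointwise control of the correction, $\Vert\tau_i\Vert_{L^{\infty,\sharp}}\le K_0\Vert\tau_i\Vert_{L^{2,\sharp}}$ ``by Proposition 2.1(1) again,'' is illegitimate: that estimate holds for \emph{holomorphic} sections, and $\tau_i$ is not holomorphic ($\db\tau_i=\db\sigma_i\neq0$). What is true, and what the paper uses, is a local elliptic estimate of type (H3) on a fixed neighbourhood $D$ of $x$ on which $\beta\equiv1$: there $\db\tau_i=\db\sigma_i$ consists only of the $i$-dependent error, so $\vert\tau_i\vert$ at the image of $x$ is controlled by $C\bigl(\Vert\db\tau_i\Vert_{L^p(D)}+\Vert\tau_i\Vert_{L^2(D)}\bigr)$, with $C$ uniform in $i$ by the $C^{1,\alpha}$ convergence of the structures; this is why the constant $C$ appears in the paper's quantitative choice of cut-off. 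With these two repairs --- choose $D\ni x$, choose $\beta$ with $\Vert\nabla\beta\Vert_{L^2}$ small relative to $\bigl(1+\Vert s\Vert_{L^\infty}\bigr)^{-1}C^{-1}\epsilon$, then take $i$ large --- your limiting argument does give $\vert s(x)\vert\le K_0\Vert s\Vert_{L^{2,\sharp}}+\epsilon$ and hence the Lemma; the exhaustion of $X_\infty^{\reg}$ by cut-offs is unnecessary, since for the upper bound you only need $\Vert\beta s\Vert_{L^{2,\sharp}}\le\Vert s\Vert_{L^{2,\sharp}}$.
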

Here of course we are writing $L^{k}\rightarrow X^{\reg}_{\infty}$ for the limiting line bundle and we are defining the $L^{2,\sharp}$ norm with the rescaled metric.

We prove the Lemma by contradiction. The argument is very similar to our main construction in Section 3. Suppose there is a  holomorphic section $s$
with $\Vert s \Vert_{L^{2,\sharp}}= 1, \Vert s \Vert_{L^{\infty}}= B $ and there is a point $p \in X^{\reg}_{\infty}$ with $\vert s(p)\vert=K_{0}+\lambda$ for some $\lambda>0$. Choose a neighbourhood $D$ of $p$ which lies inside $X_{\infty}$. There is some constant $C$ so that an estimate like that in (H3) of Property (H) holds. The singular set in $X_{\infty}$ has Hausdorff codimension strictly bigger than $2$ so by the argument of Prop. 3.5 we can construct a cut-off function $\beta$ equal to $1$ over $D$ and with $\Vert \nabla \beta\Vert_{L^{2}}$ as small as we like. In particular we can make this much smaller than $\lambda B^{-1}C^{-1}$. When $i$ is large we can choose maps $\chi_{i}$ from a neighbourhood of the support of $\beta$ into $X_{i}$ and lifts $\hat{\chi}_{i}$ so that the structures match up as closely as we please. Transport $\beta s$ by these maps to a section of $L^{k}\rightarrow X_{i}$ and adjust to get a holomorphic section $s_{i}$ just as in Section 3. Then when $i$ is large enough we see that $s_{i}$ contradicts Prop. 2.1, by arguments just like those in Section 3. 

\
\

Now we {\it define} $H^{0}(X_{\infty}, L^{k})$ to be the space of bounded holomorphic sections over the regular part. Let $S\subset \bR$ be the set
$$   S=\{0,1,1/2,1/3,1/4,\dots 1/i\dots \}, $$
and for integers $j$ let $S_{j}\subset S$ be the subset $\{0, j^{-1}, (j+1)^{-1}\dots\}$. We are regarding $S$ as a topological space, so any sequence tending to zero would do equally well. Let $${\mathcal X}= \bigsqcup_{i=1,2, \dots, \infty} X_{i}. $$
Thus there is a map of sets $\pi: {\mathcal X}\rightarrow S$ which takes $X_{i}$ to $i^{-1}$ for $i=1,\dots \infty$. 
The distance functions on $X_{i}\sqcup X_{\infty}$ define a natural topology on ${\mathcal X}$ such that $\pi$ is continuous.

Now let $${\mathcal H} = \bigsqcup_{i=1,2,\dots,\infty} H^{0}(X_{i},L^{k}), $$
taking the above definition in the case $i=\infty$. There is an obvious map of sets
$\varpi:{\mathcal H}\rightarrow S$. We put a topology on ${\mathcal H}$ by saying that sections are close if they are close when compared by maps $\chi_{i}, \hat{\chi}_{i}$, as above.

\begin{lem}
For sufficiently large $j$ the restriction of $\varpi:{\mathcal H}\rightarrow S$ to $S_{j}\subset S$ is a vector bundle.
\end{lem}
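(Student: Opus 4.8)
The plan is to show that $\varpi : {\mathcal H} \to S$ is locally trivial near $0 \in S$ by proving two things: first, that the fibre dimension $\dim H^0(X_i, L^k)$ is eventually constant along the sequence (taking the subsequence already chosen so that $V(X_i, \phi_{k,i})$ converges in the Chow variety), and second, that one can construct a continuous frame near $0$. I would first invoke Lemma 4.1 to fix a uniform bound $\dim H^0(X_i, L^k) \le N_k + 1$, so the dimensions lie in a finite set and after passing to a subsequence we may assume $\dim H^0(X_i, L^k) = d$ is constant for all finite $i$. The key point is then a \emph{semicontinuity} statement in both directions: (a) $\dim H^0(X_\infty, L^k) \ge d$, obtained by taking an $L^2$-orthonormal basis $(s_\alpha^{(i)})$ of $H^0(X_i, L^k)$, transporting it back via $\chi_i, \hat\chi_i$ over exhausting compact subsets of $X_\infty^{\reg}$, and extracting a $C^{1,\alpha}_{loc}$-convergent subsequence using the interior derivative estimate of Proposition 2.1(1) together with elliptic estimates for $\db$; the limits $s_\alpha^{(\infty)}$ are holomorphic, bounded (by Lemma 4.5), and remain $L^2$-orthonormal because the measures converge and the $C^0$ bound prevents mass escaping to the singular set (whose codimension exceeds $2$, exactly as in the cut-off argument of Proposition 3.5); and (b) $\dim H^0(X_\infty, L^k) \le d$, because any bounded holomorphic section $s$ on $X_\infty^{\reg}$ can, after multiplying by a cutoff $\beta$ with $\Vert \nabla \beta\Vert_{L^2}$ tiny and running the Hörmander correction of Proposition 2.3, be approximated by genuine holomorphic sections $s_i$ on $X_i$ whose $L^2$ norms converge to $\Vert s\Vert_{L^2}$; if $\dim H^0(X_\infty, L^k)$ exceeded $d$ we would produce $d+1$ nearly-orthonormal sections on $X_i$ for large $i$, a contradiction.

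Granting that all fibres over $S_j$ (for $j$ large, i.e. close to $0$) have the same dimension $d$, I would build the local trivialisation as follows. Over each finite $X_i$ with $i \ge j$, and over $X_\infty$, one has the $L^2$-Hermitian inner product on $H^0(X_i, L^k)$; the content of part (a) above is precisely that an orthonormal frame over $X_\infty$ is a $C^0$-limit (in the topology on ${\mathcal H}$) of orthonormal frames over $X_i$. Conversely, near any point of ${\mathcal H}$ lying over $0$, the same convergence argument, now applied to an arbitrary bounded holomorphic section, shows that sections over nearby $X_i$ stay close to sections over $X_\infty$ and vice versa, so $\varpi$ is open and the $d$-dimensional vector-space structure varies continuously. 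Concretely I would fix an orthonormal basis $(s_\alpha^{(\infty)})$ of $H^0(X_\infty, L^k)$, produce by the Hörmander construction approximating holomorphic sections $s_\alpha^{(i)} \in H^0(X_i, L^k)$ for large $i$, check that their Gram matrix tends to the identity (hence they are a basis for $i$ large), and then Gram--Schmidt to get a continuous orthonormal frame; this frame trivialises $\varpi$ over a neighbourhood of $0$ in $S$, and since $S_j$ is just $\{0\} \cup \{1/i : i \ge j\}$ with $0$ its only accumulation point, continuity at $0$ plus the fact that $\varpi$ is trivially a ``bundle'' over the discrete set $S_j \setminus \{0\}$ gives local triviality over all of $S_j$.

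The main obstacle I expect is controlling the behaviour of sections near the singular set $\Sigma_\infty$ in both directions of the dimension comparison — that is, making rigorous that no $L^2$-mass of a limiting orthonormal frame concentrates on $\Sigma_\infty$ (so orthonormality is preserved in the limit) and, dually, that a bounded holomorphic section on $X_\infty^{\reg}$ really does extend/approximate to sections on $X_i$ without loss of norm. Both rely on the Hausdorff-codimension-$>2$ property and the cut-off function of Proposition 3.5, combined with the uniform $C^0$ and derivative bounds of Proposition 2.1; this is the same circle of ideas used throughout Section 3, so while technical it is not conceptually new. A secondary point to be careful about is that a priori $\dim H^0(X_i,L^k)$ might vary, which is why one passes to a subsequence — but since the statement is only asserted after such a subsequence is chosen (consistent with the running hypotheses in this subsection), this is harmless.
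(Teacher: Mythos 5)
Your proposal is correct and follows essentially the same route as the paper: transfer of sections from $X_\infty$ to $X_i$ by cut-off (Proposition 3.5 type, using codimension $>2$) plus the H\"ormander projection, and the reverse direction by taking limits of $L^2$-orthonormal holomorphic sections, with the uniform $C^0$ bound (Proposition 2.1 / Lemma 4.4 -- not 4.5, a slip in your citation) preventing loss of norm near the singular set. The paper merely packages this as linear maps $Q_i : H^0(X_\infty,L^k)\rightarrow H^0(X_i,L^k)$, injective because $Q_i(s)\rightarrow s$ and surjective by a contradiction argument with unit sections orthogonal to the image, which is the same content as your two-sided dimension comparison and Gram-matrix/frame construction; note also that your passage to a subsequence is unnecessary, since your two semicontinuity directions combined already give eventual constancy of $\dim H^0(X_i,L^k)$ along the whole sequence, as the lemma requires.
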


(Note that this is for fixed $k$: for different values of $k$ one might {\it a priori} have to take different values of $j$.)

The proof uses much the same construction as in Lemma 4.6. The content of the statement is that, for large enough $i$, we can define linear isomorphisms
$$   Q_{i}: H^{0}(X_{\infty}, L^{k})\rightarrow H^{0}(X_{i}, L^{k})$$
such that $Q_{i}(s)$ tends to $s$ as $i\rightarrow \infty$, in the sense above. We choose a family of compactly supported cut-off functions $\beta_{i}$ on $X^{\reg}_{\infty}$  with the following properties. 
\begin{itemize}
\item The compact sets $\beta_{i}^{-1}(1)$ give an exhaustion of $X_{\infty}^{\reg}$;
\item The support of $\beta_{i}$ is contained in the domain of a map $\chi_{i}$ under which the structures compare with a small error $\eta_{i}$ with $\eta_{i}\rightarrow 0$ as $i\rightarrow \infty$;
\item $\Vert \nabla \beta_{i}\Vert_{L^{2,\sharp}}\rightarrow 0$ as $i\rightarrow \infty$. In particular $\Vert \nabla \beta_{i}\Vert_{L^{2,\sharp}}$ can be taken very small compared with $K_{0}^{-1}$. 
\end{itemize}

  Then for any holomorphic section $s\in H^{0}(X_{\infty}, L^{k})$ we transport $\beta_{i} s$ to $X_{i}$ using $\chi_{i}$ and project to get an element $Q_{i}(s)\in  H^{0}(X_{i}, L^{k})$ in the familiar way. Our standard argument shows that
 $Q_{i}(s)$ can be made as close as we please to $s$ by taking $i$ large. In particular this shows that $Q_{i}$ is injective, for large $i$. (Note that the point of establishing Lemma 4.4 first is that  the bounds we require on $\Vert \nabla \beta_{i}\Vert $ do not depend on $s$, but only on $K_{0}$.) To prove surjectivity we argue by contradiction. If $Q_{i}$ is not surjective we can find $s_{i}\in H^{0}(X_{i}, L^{k})$ of $L^{2,\sharp}$ norm $1$ and $L^{2,\sharp}$-orthogonal to the image of $Q_{i}$. Passing to a subsequence and taking a limit as $i\rightarrow \infty$ we get a section $s_{\infty}\in H^{0}(X_{\infty}, L^{k})$. The $C^{0}$ estimate shows that $s_{\infty}$ has $L^{2,\sharp}$ norm $1$ and we easily get a contradiction to the fact that $s_i$ is orthogonal to $Q_{i}(s_{\infty})$ for all $i$.

Our reason for formulating things in this way  is that it is natural to consider families $\pi:{\mathcal X}\rightarrow B$ over a general base. Here we want the fibres of $\pi$ to be either smooth manifolds in ${\mathcal K}(n,c,V)$ or polarised Gromov-Hausdorff limits of such, and we want the topology on ${\mathcal X}$ to be compatible with the Gromov-Hausdorff distance in an obvious way. It is not hard to set up the definitions and the proof of Lemma 4.5 shows that, if $B$ is connected, there is a \lq\lq direct image'' which is a vector bundle over $B$. However there does not seem much point in developing the theory in detail since in the end, after we have proved Theorem 1.2, this construction can be obtained from the standard algebraic geometry direct image.

\

\

We now turn to the problem of separating points. 
\begin{prop}
Suppose $X_{\infty}$ is a polarised limit space and $\rho>0$. We can find a $k$ such that if $p_{1}, p_{2}\in X_{\infty}$ are points with $d(p_{1}, p_{2})>\rho $ then the map $T_{k}:X_{\infty} \rightarrow \bC\bP^{N_{k}}$ takes  $p_{1}, p_{2}$ to distinct points in $\bC\bP^{N_{k}}$.
\end{prop}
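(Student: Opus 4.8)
The plan is to prove this by a compactness and contradiction argument, very much in the spirit of Lemmas 4.4--4.6, combined with an analysis of tangent cones exactly as in Section 3. First I would show the ``local separation'' statement: given a point $p\in X_\infty$, there is a neighbourhood radius $r(p)>0$ and an integer $k(p)$ such that for some $k\le k(p)$ the sections of $L^k$ separate any two points of $X_\infty$ lying within distance $r(p)$ of $p$ (and indeed separate them from all other points — but let me first treat the local version). The key input here is Theorem 1.1, which after the normalisation $k_0=1$ gives a section $s_0$ of $L^k$ with $|s_0(p_1)|\ge b$, and the $C^0$ and derivative estimates (Lemma 4.4 and Prop.\ 2.1) which control all the other sections. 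To separate $p_1$ from a nearby $p_2$, I would run the H\"ormander construction of Section 3 again at the point $p_1$, but now using a model cut-off section that \emph{vanishes} at the (rescaled) image of $p_2$: since in the tangent cone $C(Y)$ at $p_1$ the two rescaled points are at definite distance, one can take $\sigma = \ell\cdot\beta\sigma_0$ where $\ell$ is a linear function on the cone vanishing at the rescaled $p_2$, arranging Property (H) as before. This produces a holomorphic section of $L^k$ vanishing at $p_2$ but of definite size at $p_1$; together with the section from Theorem 1.1 (nonzero at $p_1$), this shows $T_k(p_1)\ne T_k(p_2)$.

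Next I would globalise. By compactness of $X_\infty$, cover it by finitely many balls $B_{r(p_\alpha)/2}(p_\alpha)$; this handles pairs $p_1,p_2$ with $d(p_1,p_2)$ small. For pairs with $d(p_1,p_2)$ bounded below, say $d(p_1,p_2)>\rho$, I would argue by contradiction: if no $k$ works, there are points $p_1^{(j)},p_2^{(j)}\in X_\infty$ with $d(p_1^{(j)},p_2^{(j)})>\rho$ but $T_j(p_1^{(j)})=T_j(p_2^{(j)})$, i.e.\ every section of $L^j$ vanishing at $p_1^{(j)}$ also vanishes at $p_2^{(j)}$ (after rescaling sections appropriately). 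Pass to a subsequence so that $p_1^{(j)}\to q_1$, $p_2^{(j)}\to q_2$ with $d(q_1,q_2)\ge\rho>0$. By the local statement there is a $k$ and a holomorphic section $s$ of $L^k$ over $X_\infty$ with $s(q_1)\ne 0$ and $s(q_2)=0$ (or vice versa) — but here I need to produce a \emph{single} such $k$, and this is where the multiplicative trick of Lemma 3.1 enters: the local statement gives $k$ depending on $q_1$, and I would take $k$ divisible by all relevant exponents so that powers of the local sections give sections of $L^k$. Using continuity of these finitely many sections (and the equicontinuity/$C^0$ bounds), for $j$ large and a multiple of $k$ the corresponding sections on $X_\infty$ near $q_1,q_2$ still satisfy $s(p_1^{(j)})\ne 0$ while one can arrange a section of $L^j$ vanishing at $p_2^{(j)}$ but not $p_1^{(j)}$ — using the $f_\lambda$ morphisms between the $V(X,\phi_{\lambda k})$ to transfer the separation from level $k$ up to level $j$. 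This contradicts $T_j(p_1^{(j)})=T_j(p_2^{(j)})$.

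The technical heart — and the main obstacle — is the local separation step: constructing, via H\"ormander, a holomorphic section of a fixed power $L^{k(p)}$ on $X_\infty$ (hence on nearby $X_i$) that is nonzero at $p_1$ and zero at a competing nearby point $p_2$, \emph{uniformly} as $p_2$ ranges over all points at distance in a fixed range from $p_1$. One must handle the case $p_2$ very close to $p_1$ (where the linear function $\ell$ on the tangent cone must be chosen with controlled gradient so that $|\ell|$ is still of definite size at $p_1$ after normalising $|\ell(p_2)|$-type quantities) and the case $p_2$ far (where $\ell$ need only be bounded on the relevant ball). As in Section 3 the topological obstruction $H_1(U;\mathbb{Z})\ne 0$ reappears and is dealt with by the same passage to a bounded power $t\le m$; thus the final $k(p)$ is again of the form $m\,k_\nu$. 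I would then note that Proposition 4.9 follows by taking $k$ to be a common multiple of the finitely many $k(p_\alpha)$ together with the $k$ handling the far pairs, invoking Lemma 3.1 to push everything up to this single $k$.

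A remark on the structure: this proposition only separates points of $X_\infty$; to get an actual embedding one also needs to separate tangent vectors and to control behaviour near the singular set, which presumably is taken up in the subsequent parts of Section 4. For the statement as given, the argument above — local H\"ormander construction at a tangent cone, compactness to reduce to finitely many neighbourhoods, and a contradiction argument for the ``far apart'' regime using the tower of maps $f_\lambda$ and the multiplicative estimate of Lemma 3.1 — is the complete plan.
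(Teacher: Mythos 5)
Your overall framework---run the H\"ormander construction of Section 3 at tangent cones, then use compactness and a common multiple of finitely many $k$'s via the maps $f_{\lambda}$---is in the right spirit, and the reduction to a fixed pair of points is indeed how the paper begins. But the step you identify as the ``technical heart'' is misdirected for this particular statement, and as described it would not work. The proposition only concerns pairs with $d(p_{1},p_{2})>\rho$ for a \emph{fixed} $\rho$, while the rescaling factor $\sqrt{k}$ in the construction is taken large compared with $\rho^{-1}$; consequently, after rescaling, $p_{2}$ lies far outside the fixed domain $U$ in the tangent cone at $p_{1}$, so there is no ``rescaled image of $p_{2}$'' inside $U$ at which to impose vanishing of a linear factor $\ell$. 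That device (and the attendant uniformity issues you worry about) belongs to the later problems of separating \emph{nearby} points and tangent vectors (Propositions 4.7 and 4.10), not here. Moreover, even where it applies, the H\"ormander correction term $\tau$ has no reason to vanish at $p_{2}$, so your construction would not deliver a section with $s(p_{2})=0$ exactly without further work (e.g.\ weighted estimates); and exact vanishing is not needed for the conclusion.

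The paper's argument for the fixed pair is simpler and quantitative: run the Section 3 construction simultaneously at $p_{1}$ and $p_{2}$ with the \emph{same} $k$, chosen so large that the two charts $\chi_{1}(U_{1})$, $\chi_{2}(U_{2})$ are disjoint. Writing $s_{1}=\sigma_{1}-\tau_{1}$, the cut-off model section $\sigma_{1}$ is supported in $\chi_{1}(U_{1})$ and hence vanishes identically near $p_{2}$; since $\db\tau_{1}=\db\sigma_{1}=0$ over $\chi_{2}(D)$, the correction $\tau_{1}$ is holomorphic there and its value near $p_{2}$ is controlled by its (arbitrarily small) $L^{2}$ norm via the elliptic estimate (H3). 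Thus $\vert s_{1}\vert\geq 1/2$ near $p_{1}$ and $\vert s_{1}\vert\leq 1/100$ near $p_{2}$, and symmetrically for $s_{2}$; passing to the limit gives two sections of $L^{k}$ on $X_{\infty}$ whose relative sizes at $p_{1}$ and $p_{2}$ are incompatible with $T_{k}(p_{1})=T_{k}(p_{2})$. With this in hand your contradiction scheme for the ``far'' pairs is unnecessary: cover the compact set of pairs at distance at least $\rho$ by finitely many neighbourhoods on which these sections separate (continuity of $T_{k}$), and take $k$ to be a common multiple of the finitely many exponents, using $f_{\lambda}\circ T_{\lambda k}=T_{k}$ to transfer separation to the common multiple.
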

The proof is a small extension of our main argument in Section 3. By a compactness argument, it suffices to find a $k$ which works for a fixed pair of distinct points $p_{1}, p_{2}$. We choose a sequence $X_{i}$ from ${\mathcal K}(n,c,V)$ converging to $X_{\infty}$. We can find a sequence $k_{\nu}\rightarrow \infty$ such that rescaling $X_{\infty}$ by $\sqrt{k_{\nu}}$ at each of the points we get convergence to tangent cones $C(Y_{1}), C(Y_{2})$ and we construct $U_{1}, U_{2}$ etc. in each case. We then choose $k$ so that we get maps $\chi_{s}:U_{s}\rightarrow X_{i}$ as in Section 3. Clearly we can also suppose that $\chi_{1}(U_{1}), \chi_{2}(U_{2})$ are disjoint. (For this we will need to take $\sqrt{k}$ large compared with $\rho^{-1}$.)  Then we get holomorphic sections $s_{1}, s_{2}$ of $L^{k}\rightarrow X_{i}$ with fixed $L^{2,\sharp}$ norm and such that $\vert s_{i}\vert \geq 1/2$ say at points close to $p_{i}$.
Consider the section $s_{1}$ at points in $X_{i}$ close to the image of $\chi_{2}$.
Recall that $s_{1}= \sigma_{1}-\tau_{1}$ where $\sigma_{1}$ vanishes on the image of $\chi_{2}$ and the $L^{2,\sharp}$ norm of $\tau_{1}$ can be made as we please by our original choice of parameters. Let $u_{*}\in D\subset  U_{2}$ be the base point. Since $\tau_1$ is holomorphic over $\chi_2(D)$ the size of $\tau_1(\chi_2(u_{*}))$ can  be controlled by the $L^{2}$ norm of $\tau_1$ over $\chi_2(D)$. Thus by a suitable choice of original parameters (depending only on knowledge of $Y_{1}, Y_{2}$) we can arrange that $\vert s_1(x)\vert=\vert \tau_1(x)\vert \leq 1/100$, say, for points $x$ close to $\chi_{2}(u_{*})$. Taking the limit as $i\rightarrow \infty$ we get sections $s_{1}, s_{2}\in H^{0}(X_{\infty}, L^{k})$ with $\vert s_{i}(p_{i})\vert\geq 1/2$ and $\vert s_{i}(p_{j})\vert \leq 1/100$ for $i\neq j$.

\begin{prop}
Given a compact set $K\subset X_{\infty}^{\reg}$ we can find an integer $m(K)$ such that for $k\geq m(K)$, any point $x \in K$ and any tangent vector $v$ at $x$ there is a holomorphic section $s\in H^{0}(X_{\infty},L^{k})$ with $s(x)=0$ and the derivative of $s$ along $v$ not zero.
\end{prop}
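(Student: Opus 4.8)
The plan is to follow the same H\"ormander-type strategy used throughout Section~3 and in Proposition~4.7, but now working with a \emph{prescribed $2$-jet} at the point $x\in K$ rather than just a $0$-jet. First I would fix $x\in K$ and a tangent vector $v$, rescale $X_\infty$ by $\sqrt{k_\nu}$ at $x$ along a suitable subsequence and note that, since $x$ is a regular point, the tangent cone is simply $\bC^n$ with its flat structure (the rescaled metrics converge in $C^{1,\alpha}$, indeed $C^\infty$ in the K\"ahler--Einstein case, on a fixed ball around $x$ to the Euclidean ball). Over the flat model $\bC^n$, with $\Lambda_0$ the trivial bundle with metric $e^{-|z|^2/2}$ and holomorphic section $\sigma_0$ of norm $e^{-|z|^2/4}$, the sections $z_j \sigma_0$ are holomorphic, of controlled $L^2$ norm, vanish at the origin, and have derivative $\partial/\partial z_j$ at the origin. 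Choosing coordinates so that $v = \partial/\partial z_1$, the model datum is $\sigma = \beta_R\, z_1 \sigma_0$ (with $\beta_R$ a radial cut-off, exactly as in the illustration after Proposition~2.4). One then checks this satisfies a version of Property~(H) adapted to the vanishing/derivative conditions: conditions $H1$, $H4$, $H5$ are as before (exponential decay controls $\db\sigma$), while $H2$ and $H3$ must be replaced by the statement that $|\partial_v\tau(u_*)|$ is controlled by $\|\db\tau\|_{L^p(D)} + \|\tau\|_{L^2(D)}$ together with $|\partial_v\sigma(u_*)| \geq 3/4$; such an estimate holds by the same elliptic estimate $(2.8)$ plus Sobolev embedding on first derivatives, since $p>2n$ gives $L^p_1 \hookrightarrow C^1$.

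Next I would transplant this datum to $X_i$ via the convergence maps $\chi_i$ exactly as in Proposition~2.3: write $s_i = \sigma_i - \tau_i$ with $\tau_i = \db^*\Delta_{\db}^{-1}\db\sigma_i$, so $\|\tau_i\|_{L^{2,\sharp}}$ is as small as we wish by choosing $R$ large. Using the first-derivative bound of Proposition~2.1(1) applied to the \emph{holomorphic} section $\tau_i$ (this is why we need $\tau_i$, not merely $\sigma_i$, to be holomorphic near the base point --- which it is, since $\sigma$ is holomorphic on $D$), we get $|\partial_v \tau_i(\chi_i(u_*))|$ small, hence $|\partial_v s_i(\chi_i(u_*))| \geq 1/2$ while $|s_i(\chi_i(u_*))|$ is small. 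Taking the Gromov--Hausdorff limit $i\to\infty$ (using Lemma~4.5, the $C^0$ estimate of Lemma~4.4, and $C^{1,\alpha}$-convergence of the structures near $x$), we obtain $s\in H^0(X_\infty,L^k)$ with $|s(x)|$ small but $|\partial_v s(x)|$ bounded below. Finally, to arrange $s(x)=0$ exactly (not just small), I would combine this $s$ with the point-evaluation section $s_0$ produced by Proposition~2.3 / Proposition~4.7 --- which has $|s_0(x)|\geq 1/4$ --- and subtract the appropriate multiple: $\tilde s = s - \frac{s(x)}{s_0(x)} s_0$ has $\tilde s(x)=0$, and provided $|s(x)|$ was chosen small enough relative to the (universal, by the derivative bound) size of $\partial_v s_0(x)$, the correction term changes $\partial_v\tilde s(x)$ by only a small amount, so it remains nonzero.

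For the uniformity over all $x\in K$ and all $v$ (with the same $m(K)$): the key point is that for a \emph{fixed compact} $K\subset X_\infty^{\reg}$ the metric is uniformly controlled in $C^{1,\alpha}$, so the injectivity radius, the constant $C$ in the elliptic estimate $(2.8)$, and the size $R$ needed for $H5$ can all be chosen uniformly in $x\in K$; a standard compactness argument then upgrades the ``for each fixed $(x,v)$ there is a $k$'' statement to ``there is a single $k = m(K)$ working for all $(x,v)$ with $|v|=1$, $x\in K$'' (the set of unit tangent vectors over $K$ is compact, and the construction is continuous in $(x,v)$ in the relevant sense). One must also observe that once a datum works for $k$, rescaling via the maps $\mu_t$ as in Section~3.2.2 shows it works for all large multiples, and hence for all sufficiently large $k$, not just one value.

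The main obstacle I expect is the topological-obstruction issue that dominated Section~3.2.2: a priori one needs the model datum to be realizable over $X_i$, which requires the bundle $\Lambda^k\otimes(\chi_i^*L^k)^{-1}$ to be close to trivial as a connection. However, here the relevant domain $U$ is a ball in $\bC^n$ (since $x$ is a regular point and we are on a fixed small geodesic ball), which is simply connected, so $H_1(U;\bZ)=0$ and the obstruction vanishes --- as noted in the illustration after Proposition~2.4, $U(1)$-connections over a ball are determined up to isomorphism by their curvature. Thus the genuine difficulty of Section~3 does not arise, and the remaining work is the routine but slightly fiddly bookkeeping of the modified Property~(H) with derivative conditions, and the uniformity argument over the compact set of $2$-jets; the subtraction step to make $s(x)$ exactly zero requires only that the estimates be quantitative enough, which they are.
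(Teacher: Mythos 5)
Your overall strategy is exactly the intended one: the paper disposes of this proposition in a single line (``another straightforward application of the H\"ormander technique''), and your fleshed-out version --- flat model $\bC^{n}$ at a regular point, model section $\beta_{R}\,z_{1}\sigma_{0}$, transplantation to $X_{i}$, projection $s_{i}=\sigma_{i}-\tau_{i}$, passage to the Gromov--Hausdorff limit, and the subtraction against a nonvanishing section to force $s(x)=0$ exactly --- is correct in substance; you also rightly note that the topological obstruction of Section 3.2.2 is absent because the relevant domain is a ball. Two details need repair. First, $L^{p}_{1}$ does \emph{not} embed in $C^{1}$ for $p>2n$ (only in $C^{0,1-2n/p}$), and your proposed derivative version of (H3) is false for general smooth $\tau$: the bump $\tau=a\,\eta(|z|/\delta)\,\bar{z}_{1}$ with $a=\epsilon\delta^{-2n/p}$ has arbitrarily large derivative at the origin while $\Vert\db\tau\Vert_{L^{p}}+\Vert\tau\Vert_{L^{2}}$ stays small. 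Fortunately your second paragraph already contains the correct mechanism: $\tau_{i}=\sigma_{i}-s_{i}$ is holomorphic on $\chi_{i}(D)$, and an \emph{interior} estimate for holomorphic sections on a ball of bounded geometry bounds $|\nabla\tau_{i}|$ at the base point by the $L^{2}$ norm of $\tau_{i}$ over $\chi_{i}(D)$, which is as small as you like; note that the global Proposition 2.1(1) does not literally apply to the only-locally-holomorphic $\tau_{i}$, so you should invoke this local version (in the spirit of Remark 2.5 and the proof of Proposition 4.6). Second, ``works for all large multiples, and hence for all sufficiently large $k$'' is a non sequitur, but it is also unnecessary: since $K\subset X_{\infty}^{\reg}$ is compact and at positive distance from the singular set, the $\sqrt{k}$-rescaled ball about any $x\in K$ is uniformly as close to the flat model as you wish for \emph{every} $k\geq m(K)$, so the construction (and the uniform constant in (H3), the bound on $|\nabla s_{0}|$ used in the subtraction step, and the compactness argument over unit vectors $v$) applies to each such $k$ directly, with no appeal to the maps $\mu_{t}$.
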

This is another straightforward application of the H\"ormander technique.

\subsection{Recap}

We can go back to the discussion of (4.1) and state things in a much clearer way. For a given $k$ we can suppose that all the spaces $H^{0}(X_{i}, L^{k})$ have the same dimension and identify them with $H^{0}(X_{\infty}, L^{k})$ as in Lemma 4.5. In the usual way, the sections in $H^{0}(X_{\infty}, L^{k})$ define a holomorphic map from $X^{\reg}_{\infty}$ to $\bP(H^{0}(X_{\infty}, L^{k})^{*})$.
We fix a basis in  $H^{0}(X_{\infty}, L^{k})$ so that we can say that we map to $\bC\bP^{N}$. The same argument as in lemma 4.2 gives a bound on the derivative of this map so it has a unique continuous extension to $X_{\infty} $. Pulling back the hyperplane bundle by this map (in the case $k=1$) defines an extension of the line bundle $L$ to $X_{\infty}$ (at this stage, as a topological bundle). Theorem (1.1) implies that the original metric on $L$ is uniformly equivalent to the metric pulled back from the hyperplane bundle. The convergence of the maps $T_{k}:X_{i}\rightarrow \bC\bP^{N}$ to $T_{k}:X_{\infty}\rightarrow \bC\bP^{N}$ over the regular part is completely clear because of the way we chose our  identifications of $H^{0}(X_{i}, L^{k}), H^{0}(X_{\infty}, L^{k})$. The algebraic set $W_{k}$ is the image $T_{k}(X_{\infty})$. It is also clear that we have a system of morphisms
 $f_{\lambda}: W_{k\lambda}\rightarrow W_{k}$ such that $f_{\lambda\mu}=f_{\lambda}\circ f_{\mu}$ and $$T_{k}= f_{\lambda}\circ T_{k\lambda}:X_{\infty}\rightarrow W_{k}.$$
(The morphism $f_{\lambda}$ can be viewed as induced by the linear map
which is the transpose of $s^{\lambda}(H^{0}(X_{\infty},L^{k}))\rightarrow H^{0}(X_{\infty}, L^{\lambda k})$ composed with the inverse of the Veronese map.)

Suppose we have any  collection of sets $W_{k}$, for integers $k\geq 1$, and maps $f_{\lambda}:W_{k\lambda}\rightarrow W_{k}$ with $f_{\lambda\mu}=f_{\lambda}\circ f_{\mu}$. Then we can form the  limit set $W_{\leftarrow}\subset \Pi_{k} W_{k}$ given by sequences $(w_{1},w_{2}, \dots)$ such that $f_{\lambda}(w_{k\lambda})= w_{k}$ for all $k,\lambda$. If we have another set $X_{\infty}$ and maps $T_{k}: X_{\infty}\rightarrow W_{k}$ compatible with the $f_{\lambda}$ then we get an induced map from $X_{\infty}$ to $W_{\leftarrow}$. In our situation, Proposition 4.6 implies that this  map is a bijection so what we know at this stage is that we can recover the Gromov-Haussdorf limit algebro-geometrically (at least as a set) in this way.  

It is interesting to compare this with  \cite{D1}, \cite{D2} where the first-named author made a different attack on the same kind of problem. This attack was made  in the absence of Theorem 1.1, and the cost of that absence was that one got a system like the $f_{\lambda}$ but only of {\it rational} maps (or \lq\lq web of descendants'' in the language of \cite{D1}). The core of the problem was that, without something like Theorem 1.1, one does not know that the $W_{k}$ are irreducible. This difficulty is also explained by Tian in \cite{Tian3}. The construction of \cite{D1} should probably best be thought of as an attempt to define the Gromov-Hausdorff limit as a \lq\lq limit'' of algebraic sets or schemes (in the sense of ${\rm lim}\leftarrow$) in this fashion. (From a more algebraic point of view the limiting process we conceive of here is related to considering rings that are not finitely generated.) But, having now Theorem 1.1, we can take a  simpler and more direct path (in the context of manifolds satisfying the hypotheses (1.1),(1.2)).  However it seem likely that related ideas on the algebraic side may play a role in the future in the study of constant scalar curvature K\"ahler metrics (lacking (1.1), (1.2)). In this direction, see the recent work of Szekelyhidi \cite{Sz}.

\subsubsection{Completion of proof of Theorem 1.2}

\begin{lem} For each $k$, the algebraic set $W_{k}$ is irreducible.
\end{lem}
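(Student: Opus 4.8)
The plan is to show that $W_k$, being the image of the connected space $X_\infty$ under the continuous map $T_k$, is connected, and that — being moreover the image of the \emph{irreducible} variety $X_\infty^{\reg}$ under a holomorphic map — every component must coincide. More precisely, I would argue as follows. First recall that $X_\infty$ is connected (it is a Gromov--Hausdorff limit of connected manifolds) and that $X_\infty^{\reg}$ is connected, open and dense in $X_\infty$; hence $X_\infty^{\reg}$, being a connected complex manifold, is irreducible as an analytic space. The map $T_k\colon X_\infty^{\reg}\to \bC\bP^{N_k}$ is holomorphic, so its image is an irreducible constructible (in particular irreducible) analytic subset; its Zariski closure is therefore an irreducible algebraic variety, call it $W_k'$. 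The content of the lemma is that $W_k' = W_k$, i.e. that the algebraic set $W_k$ obtained as an algebro-geometric (Chow) limit of the $V(X_i,\phi_{k,i})$ has no extra components beyond the closure of $T_k(X_\infty^{\reg})$.

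The key step is to pin down $W_k$ as a \emph{set}. By Lemma 4.5 (for $k$ large enough, but as remarked we may take $k_0=1$ and then all $k$ work after passing to a subsequence) we have identified $H^0(X_i,L^k)$ with $H^0(X_\infty,L^k)$ compatibly with the embeddings $\chi_i,\hat\chi_i$, so that the maps $T_k\colon X_i\to \bC\bP^{N_k}$ converge to $T_k\colon X_\infty\to\bC\bP^{N_k}$ uniformly on compact subsets of $X_\infty^{\reg}$, and with a uniform derivative bound (Lemma 4.2) forcing equicontinuity and hence $C^0$-convergence on all of $X_\infty$. I would then show the set-theoretic equality $W_k = T_k(X_\infty)$: the inclusion $T_k(X_\infty)\subset W_k$ follows because any point $T_k(x)$ with $x\in X_\infty$ is a limit of points $T_k(x_i)\in V(X_i,\phi_{k,i})$ (choose $x_i\in X_i$ with $x_i\to x$), and Chow convergence implies the supports converge in the Hausdorff sense, so limits of points of $V(X_i,\phi_{k,i})$ lie in $W_k$. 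Conversely any point of $W_k$ is a Hausdorff limit of points $T_k(x_i)$ with $x_i\in X_i$; passing to a subsequence $x_i\to x\in X_\infty$ and using $C^0$-convergence of the $T_k$ gives $w = T_k(x)\in T_k(X_\infty)$. Hence $W_k = T_k(X_\infty) = T_k(\overline{X_\infty^{\reg}}) = \overline{T_k(X_\infty^{\reg})}$, where the last closure may be taken in the classical topology and, since $W_k$ is algebraic, equals the Zariski closure. Therefore $W_k = W_k'$ is irreducible.

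The main obstacle I anticipate is the careful justification that Hausdorff (set-theoretic) convergence of the underlying point sets follows from convergence in the Chow variety — i.e. that the algebro-geometric limit $W_k$, a priori a cycle or an algebraic set defined via Chow coordinates, has support equal to the Hausdorff limit of the supports of the $V(X_i,\phi_{k,i})$. This is standard (the support of a cycle is a continuous function of its Chow point, by properties of the resultant/Chow form), but it is the point where one must be slightly careful, because the cycles could a priori acquire multiplicities or lower-dimensional components in the limit; the equicontinuity and uniform derivative bounds on the $T_k$, together with the $L^\infty$-estimate of Lemma 4.4, are exactly what rule out the image collapsing or spreading in a way incompatible with $T_k(X_\infty)$. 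A minor additional point is that $T_k(X_\infty^{\reg})$ is constructible (Chevalley / Remmert proper mapping in the analytic setting restricted to the quasi-projective image), so its closure is irreducible of pure dimension equal to $\dim T_k(X_\infty^{\reg})$; no component of $W_k$ can have dimension exceeding this, nor be disjoint from the dense image, so irreducibility is immediate once the set-theoretic identification is in hand.
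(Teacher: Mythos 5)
Your proposal is essentially correct, but it follows a genuinely different route from the paper's own proof. You reduce everything to the set-theoretic identification $W_{k}=T_{k}(X_{\infty})=\overline{T_{k}(X_{\infty}^{\reg})}$ (via continuity of supports under Chow convergence plus the uniform convergence/equicontinuity of the maps $T_{k}$ across the Gromov--Hausdorff approximations), and then deduce irreducibility from connectedness of $X_{\infty}^{\reg}$. The paper instead argues by contradiction without ever proving the ``hard'' inclusion $W_{k}\subset T_{k}(X_{\infty})$: it picks $x_{0}\in X_{\infty}^{\reg}$ with $T_{k}(x_{0})$ in a unique component $U$, takes a polynomial $P$ vanishing on $U$ but with $\vert P(w)\vert=1$ at a hypothetical point $w$ of another component, pulls $P$ back to holomorphic sections $\sigma_{i}$ of $L^{\lambda k}$ over the $X_{i}$ with a uniform $L^{\infty}$ bound (the metric equivalence coming from Theorem 1.1), shows $\sigma_{\infty}\equiv 0$ by analytic continuation on the connected dense regular set, and then uses convergence on compact subsets of the regular set together with the uniform $L^{2}$-to-$L^{\infty}$ estimate of Proposition 2.1 to force $\Vert\sigma_{i}\Vert_{L^{\infty}}\rightarrow 0$, contradicting $\vert P(w)\vert=1$; it thus only needs the easy facts that $T_{k}(X_{\infty})\subset W_{k}$ and that points of $W_{k}$ are limits of points of the $V(X_{i},\phi_{k,i})$, while your argument needs the full two-sided Hausdorff convergence of supports and the uniform (not merely compact-regular-part) convergence of the maps. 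What your route buys is the stronger statement $W_{k}=T_{k}(X_{\infty})$ proved head-on (which the paper asserts in the Recap anyway); what the paper's route buys is robustness, since the analytic estimates do the work that your soft argument delegates to Chow-variety continuity. One caution: your justification that the closure of $T_{k}(X_{\infty}^{\reg})$ is irreducible should not appeal to the image being ``constructible'' or to Remmert's proper mapping theorem ($T_{k}$ restricted to the non-compact $X_{\infty}^{\reg}$ is not proper, and the raw image need not be analytic); the correct and standard fix, which you essentially gesture at, is to pull back any algebraic decomposition $W_{k}=W_{1}\cup W_{2}$ to closed analytic subsets of $X_{\infty}^{\reg}$ and use that a connected complex manifold is not the union of two proper closed analytic subsets --- exactly the role connectedness of the regular set also plays, via analytic continuation, in the paper's proof.
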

This is crucial, as we indicated above, but the proof is easy. The set $X^{\reg}_{\infty}$ is dense in $X_{\infty}$ so its image is dense in $W_{k}$. Thus we can choose a point $x_{0}\in X^{\reg}_{\infty}$ so that $T_{k}(x_{0})$ lies in a unique component $U$ of $W_{k}$. Suppose there is a point $w$ in $W_{k}$ which is not in $U$. Then we can find a polynomial $P$ of degree $\lambda$ say so that $P$ vanishes on $U$ but not at $w$. Regarding $P$ as  a section of a line bundle we can suppose $\vert P(w)\vert=1$. Now $P$ also defines   holomorphic sections $\sigma_{i}$ of $L^{\lambda k}$ over $X_{i}$ for each $i$ (including $i=\infty$) which satisfy a fixed $L^{\infty}$ bound (because of the equivalence of the metrics on the line bundle). By construction the section $\sigma_{\infty}$ vanishes in a neighbourhood of $x_{0}$ and so by analytic continuation and the fact that the regular set is dense and connected it vanishes identically.  It follows  from the $L^{\infty}$ bound on $ \sigma_{i}$, the general estimate of (2.1) and convergence on compact subsets of the regular set that $\Vert \sigma_{i} \Vert_{L^{\infty}}$ tends to $0$ as $i\rightarrow \infty$. But this contradicts the fact that $\vert P(w)\vert =1$ (again using the equivalence of the two metrics on $L^{k}$).

(Notice that in this proof we do use the fact that $X_{\infty}^{\reg}$ has an analytic, not just $C^{2,\alpha}$, structure.)

Recall that we have compatible maps $T_{k}:X_{\infty}\rightarrow W_{k}$ and $f_{\lambda}:W_{\lambda k}\rightarrow W_{k}$. Proposition (4.8) implies that the $T_{k}$ asymptotically separate points, in the sense that the induced map from $X_{\infty}$ to $\lim_{\leftarrow} W_{k}$ is injective. What we want to show now is that in fact there is some fixed $k$ for which this is true.
\begin{lem}
We can find a $k$ so that all fibres of $T_{k}\rightarrow W_{k}$ are finite.
\end{lem}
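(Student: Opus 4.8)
The idea is to run a dimension-counting argument at the level of the inverse system $\{W_k, f_\lambda\}$, using the fact that we already know (from Proposition 4.8, as recorded after Lemma 4.9) that the maps $T_k$ asymptotically separate points: $X_\infty \to \lim_{\leftarrow} W_k$ is injective. First I would observe that each $W_k$ is an irreducible projective variety (Lemma 4.9) of dimension at most $n$, since $X^{\reg}_\infty$ is an $n$-dimensional complex manifold mapping onto a dense subset of $W_k$ and $T_k$ is holomorphic there. So set $d_k = \dim W_k \le n$. The morphisms $f_\lambda : W_{\lambda k} \to W_k$ are surjective (because $T_k = f_\lambda \circ T_{\lambda k}$ has dense image), hence $d_{\lambda k} \ge d_k$ for all $\lambda$. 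Thus along any chain $k \mid \lambda k \mid \lambda\mu k \mid \cdots$ the dimensions are nondecreasing and bounded by $n$, so they stabilise. In particular I can fix a $k$ (e.g. taking $k$ with $d_k$ maximal over all $k$, or just running up a divisibility chain) such that $d_{\lambda k} = d_k$ for every $\lambda \ge 1$.

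Next I would argue that for such a $k$, the generic fibre of $T_k : X_\infty \to W_k$ is finite, and then upgrade "generic fibre finite" to "every fibre finite". For the generic statement: restrict to the regular part, where $T_k : X^{\reg}_\infty \to W_k$ is a holomorphic map from an $n$-dimensional complex manifold onto a dense subset of the $d_k$-dimensional variety $W_k$. If $d_k = n$ the generic fibre is automatically finite (discrete, then finite by properness of the continuous extension $T_k : X_\infty \to W_k$ together with compactness of $X_\infty$). If $d_k < n$ the generic fibre is positive-dimensional, and I claim this contradicts the asymptotic separation of points. Indeed, suppose the generic fibre of $T_k$ has positive dimension. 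Here is where I would use Proposition 4.8 in the form that for any $\rho>0$ there is a $\lambda$ with $T_{\lambda k}$ separating all pairs of points at distance $> \rho$: pick two points $p_1, p_2$ lying in a common positive-dimensional fibre component of $T_k$ with $d(p_1,p_2) > \rho$; then $T_{\lambda k}$ separates them, so the $T_{\lambda k}$-fibre through $p_1$ is a proper closed subset of the $T_k$-fibre, forcing $d_{\lambda k} > d_k$ on the relevant component — but this contradicts $d_{\lambda k} = d_k$ once one knows the fibre dimension is controlled by $\dim W_{\lambda k} - \dim W_k$ via the surjection $f_\lambda$ (semicontinuity of fibre dimension for the morphism $f_\lambda$, together with the fact that $T_k$-fibres are unions of $T_{\lambda k}$-fibres mapped by $f_\lambda$).

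The last step is to rule out exceptional positive-dimensional fibres over special points of $W_k$, given that the generic fibre is finite. Since $T_k : X_\infty \to W_k$ is a proper continuous surjection of compact Hausdorff spaces which is holomorphic and generically finite over the regular part, the locus in $W_k$ over which the fibre is positive-dimensional is a proper closed analytic subset $Z$; I would handle points of $T_k^{-1}(Z)$ by passing to a larger power. Concretely: a positive-dimensional fibre component, if it met the regular set, would again give two far-apart points with the same image under all $T_{\lambda k}$, contradicting asymptotic separation; and a fibre component contained in the singular set has real dimension $\le 2n-4$, but a positive-dimensional complex-analytic piece is impossible there for the generic-to-finite map unless it is genuinely a jump — so one either absorbs it by increasing $k$ using Lemma 3.1 (compatibility $f_\lambda T_{\lambda k} = T_k$ means finiteness of fibres is inherited upward along the chain) or argues directly via the Stein factorisation $X_\infty \to \overline{W} \to W_k$. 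I expect the main obstacle to be precisely this last point: controlling fibres of $T_k$ over the singular set $\Sigma$, where I cannot use the manifold/analytic structure of $X^{\reg}_\infty$ directly and must instead combine the codimension $\ge 4$ bound on $\Sigma$ with the separation property and properness; the cleanest route is likely to show first that the generic fibre is finite for large $k$, then observe that $X_\infty \to \lim_\leftarrow W_k$ being a bijection forces the inverse system to stabilise as varieties (Stein factorisations through a fixed normal $\overline{W}$), so some fixed $T_k$ has all fibres finite.
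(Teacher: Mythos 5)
Your proposal has a genuine gap, and it sits exactly where you yourself flag trouble. First, the dimension-counting step does not deliver even the generic statement in the way you claim: from $d_{\lambda k}=d_k$ together with generic injectivity you cannot conclude anything about the \emph{special} fibres. Semicontinuity gives every fibre of $f_{\lambda}$ dimension at least $d_{\lambda k}-d_k$; it gives no upper bound, and a generically $1$--$1$ surjection between varieties of the same dimension can perfectly well contract a positive-dimensional set (any birational contraction does). So separating one pair $p_1,p_2$ in a $T_k$-fibre only shows that $f_{\lambda}^{-1}(w)$ has at least two points (or, pushing the inverse-limit argument, that some $f_{\lambda}^{-1}(w)$ is positive-dimensional), and no contradiction with $d_{\lambda k}=d_k$ arises. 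Generic finiteness is in any case the easy half: the separation result (Proposition 4.6) already lets one arrange that $T_k$ is generically $1$--$1$, which is how the paper begins. Second, and decisively, the upgrade from ``generic fibre finite'' to ``every fibre finite'' is left unproved, and neither of your suggested fixes works. Increasing $k$ cannot absorb a bad fibre: finiteness is inherited \emph{upward} along the chain (fibres of $T_{\lambda k}$ sit inside fibres of $T_k$), but a positive-dimensional fibre contains pairs of points at arbitrarily small distance, whereas Proposition 4.6 only separates pairs at distance greater than a $\rho$ fixed in advance, so no single larger $k$ kills such a fibre. And Stein factorisation is not available: $T_k:X_\infty\rightarrow W_k$ is not a morphism of varieties at this stage ($X_\infty$ carries no algebraic structure yet --- producing one is the point of the whole section), while ``the inverse system stabilises as varieties'' is essentially the conclusion being sought (it follows only later, from injectivity of some $T_k$ and normality), not a fact you may invoke.

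The missing ingredient, which is the paper's key idea, is a \emph{uniform} consequence of Theorem 1.1: together with the gradient bound of Proposition 2.1 it yields an $r>0$ such that every point of every $X$ in ${\mathcal K}(n,c,V)$, and hence of $X_\infty$, admits a section $s\in H^{0}(X_\infty,L)$ nowhere zero on the $r$-ball about that point. Choosing $k$ by Proposition 4.6 with $\rho=r/2$, every fibre $F=T_k^{-1}(w)$ has diameter at most $r/2$, so some such $s$ is nowhere zero on $F$. Since $s^{\lambda k}\in H^{0}(X_\infty,L^{\lambda k})$ furnishes one homogeneous coordinate of $T_{\lambda k}$, the image $T_{\lambda k}(F)=f_{\lambda}^{-1}(w)$ lies in an affine chart; being a compact algebraic set it is therefore finite, and, using generic $1$--$1$-ness of $f_{\lambda}$, of cardinality at most the number $N(w)$ of local branches of $W_k$ at $w$, uniformly in $\lambda$. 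Since the maps $T_{\lambda k}$ asymptotically separate the points of $F$, it follows that $F$ itself has at most $N(w)$ points. It is this conversion of metric smallness of fibres into algebraic finiteness of the $f_{\lambda}$-fibres --- not any dimension count on the $W_k$ --- that closes the argument, and it is exactly what your plan lacks.
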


First we can plainly use Proposition 4.6 to arrange that $T_{k}$ is generically 1-1, i.e. so that the fibre $T_{k}^{-1}(w)$ is a single point for a generic $w\in W_{k}$. As usual we may as well suppose that this happens for $k=1$ and hence for all $k$. Thus all maps $f_{\lambda}:W_{\lambda k}\rightarrow W_{k}$ are also generically $1-1$. 
 Our main theorem 1.1 and the first derivative estimate imply that there is a number $r>0$ so that for any $X\in {\mathcal K}(n,c,V)$ and any  point $x\in X$ there is a holomorphic section of $L$ which does not vanish on  the ball of radius $r$ about $x$. The argument extends easily to the limit space $X_{\infty}$ and $H^{0}(X_{\infty}, L)$.  Choose $k$ in accordance with Proposition 4.6 taking $\rho=r/2$ say. Thus if $p_{1}, p_{2}$ are two points in the same fibre $F=T_{k}^{-1}(w)$ of $T_{k}:X_{\infty}\rightarrow W_{k}$ the distance between them is less $r/2$. In other words the fibre $F$ is contained in the $r/2$ ball about $p_{1}$, so there is a section $s\in H^{0}(X_{\infty}, L)$ of $L$ which does  not vanish on $F$. By construction, $F$ maps by $T_{k\lambda}$ onto $F_{\lambda}=f_{\lambda}^{-1}(w)$ for any $f_{\lambda}: W_{\lambda k}\rightarrow W_{k}$. The section $s^{\lambda k}\in H^{0}(X_{\infty}, L^{\lambda k})$ defines one component of $T_{k\lambda}$ so the fact that $s$ does not vanish on $F$ implies that $F_{\lambda}$ lies in the corresponding affine subspace.   Since $F_{\lambda}$ is a compact algebraic set it must be finite. Thus all maps $f_{\lambda}: W_{k\lambda}\rightarrow W_{k}$ have finite fibres. Let $N(w)$ be the number of local irreducible components of $W_{k}$ at $w$.  Since $f_{\lambda}$ is generically 1-1 the number of points in $f_{\lambda}^{-1}(w)$ is at most $N(w)$. It follows then the number of points in $T_{k}^{-1}(w)$ is also finite, and in fact bounded by $N(w)$.

\begin{prop} 
We can find a $k$ so that $T_{k}$ is injective.
 \end{prop}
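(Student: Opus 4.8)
The plan is to bootstrap from Lemma 4.10, which already gives a $k$ for which every fibre of $T_k : X_\infty \to W_k$ is finite (and in fact bounded at each point $w$ by the number $N(w)$ of local irreducible components of $W_k$ at $w$). The strategy is to separate the points of a fixed fibre one pair at a time, using the point-separation result Proposition 4.8, and then observe that only finitely many \emph{new} collisions can be created by passing to a higher power, so the process terminates.

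First I would fix the $k$ from Lemma 4.10 and, as usual, relabel so that this $k$ equals $1$; thus all the maps $f_\lambda : W_\lambda \to W_1$ have finite fibres and are generically $1$–$1$. Suppose $T_1$ is not injective, so there is a fibre $F = T_1^{-1}(w)$ containing at least two points. Since $F$ is finite, I can list its points $p_1, \dots, p_N$ and set $\rho = \tfrac12 \min_{i \ne j} d(p_i, p_j) > 0$. By Proposition 4.8 there is an integer $\lambda$ such that $T_\lambda$ separates any two points at distance $> \rho$; in particular $T_\lambda$ separates all the points of $F$. Because $T_\lambda = f_\lambda \circ T_\lambda$ factors through $f_\lambda : W_\lambda \to W_1$ and $F$ maps under $T_\lambda$ into $F_\lambda := f_\lambda^{-1}(w)$, the image $T_\lambda(F)$ consists of $N$ distinct points of $W_\lambda$ lying over $w$. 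Thus passing from $W_1$ to $W_\lambda$ has resolved \emph{this particular} fibre.

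The issue is that moving to $W_\lambda$ could in principle create new multiple fibres over other points of $W_1$, or the non-injectivity locus could be spread over infinitely many points of $W_1$ and never be killed by a single $\lambda$. Here is where the finiteness bound from Lemma 4.10 is essential. Let $Z_k \subset W_k$ be the set of points over which $T_k$ has a fibre with more than one point; equivalently $Z_k$ is the image under $T_k$ of the (closed, and by properness) set where $T_k$ is not locally injective. One checks $Z_k$ is a closed algebraic subset of $W_k$, hence has finitely many irreducible components, and that $f_\lambda(Z_\lambda) \subseteq Z_1$ with $f_\lambda$ generically $1$–$1$ on each component; moreover the argument above shows that for each point $w \in Z_1$ there is a $\lambda(w)$ with $\#\,T_{\lambda(w)}^{-1}(w') = 1$ for every $w' \in f_{\lambda(w)}^{-1}(w)$, i.e. the fibre of $T$ is fully separated over the preimage of $w$. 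Since $Z_1$ is a finite union of irreducible varieties of dimension $< n$, an induction on $\dim Z_1$ (resolving first the generic point of each top-dimensional component of $Z_1$ — which resolves all but a proper closed subset — then iterating) produces a single $\lambda$ with $Z_\lambda = \emptyset$, i.e. $T_\lambda$ injective. The bound $\#\,T_k^{-1}(w) \le N(w)$ guarantees that $\dim Z_k$ cannot increase and that the components we must resolve form a strictly decreasing chain of closed sets, so the induction terminates.

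The main obstacle, and the step I would write out most carefully, is controlling $Z_k$ under the maps $f_\lambda$: one must verify that $Z_k$ really is a closed algebraic subset (using properness of $T_k$ on the compact set $X_\infty$ together with semicontinuity of fibre dimension / the local-isomorphism locus being open on the normal variety $W_k$), that $f_\lambda$ does not \emph{enlarge} the image of the bad locus, and that the separation supplied by Proposition 4.8 over a single point in fact propagates to a Zariski-open subset of the component containing it (so that the dimension drops after finitely many steps rather than only over a single point). Granting these, the finiteness of fibres from Lemma 4.10 makes the Noetherian induction go through and yields the desired $k$.
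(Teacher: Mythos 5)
Your proposal follows essentially the same route as the paper: reduce to the $k$ of the finite-fibre lemma, separate a single finite fibre $T_1^{-1}(w)$ by applying the distance-based point-separation proposition with $\rho$ half the minimal distance between its points, observe that separation over one point propagates to generic points, and close with a Noetherian/dimension induction. The paper's own proof does exactly this, but with different bookkeeping: instead of your ``bad locus'' $Z_k\subset W_k$, it decomposes $W_1$ into finitely many \emph{quasi-projective} strata $Z_\alpha$ over which $T_1^{-1}(Z_\alpha)$ is a disjoint union of $n_\alpha$ copies of $Z_\alpha$, picks a point in each stratum, and inducts on the maximal dimension of a stratum with $n_\alpha>1$ (and on the number of such top-dimensional strata). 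Constructibility of the strata is all that is needed for the induction to terminate.

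The one place where your version is genuinely shaky is the claim that $Z_k$ is a \emph{closed algebraic} subset of $W_k$, justified via ``the local-isomorphism locus being open on the normal variety $W_k$''. Two problems: first, normality of $W_k$ is established in the paper only \emph{after} injectivity of $T_k$ (it uses the homeomorphism $X_\infty\cong W_k$), so invoking it here is circular in the paper's logical order; second, at this stage $T_k$ is only a continuous map from the compact metric space $X_\infty$ (holomorphic on $X_\infty^{\reg}$), not an algebraic morphism, and for such a map the set of points with at least two preimages need not be closed --- distinct preimages over $w_i\to w$ can merge in the limit. The containment $f_\lambda(Z_\lambda)\subseteq Z_1$ is fine (it is immediate from $T_1=f_\lambda\circ T_{\lambda}$), and the generic-propagation step you defer is asserted (as ``clear'') in the paper too. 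So the fix is simply to run the induction entirely downstairs in $W_1$, using the algebraic structure of the finite, generically one-to-one maps $f_\lambda$ (and the bound $\#T_1^{-1}(w)\le N(w)$) to produce the quasi-projective stratification, rather than asking the non-injectivity locus in each $W_k$ to be closed and algebraic.
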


As usual we may as well suppose that the value of $k$ in the previous Lemma  is $1$. Thus
$T_{1}:X_{\infty}\rightarrow W_{1}\subset \bC\bP^{N_{1}}$ has finite fibres.
For any given point $w_{1}\in W_{1}$ we can find a $k$ such that $T_{1}^{-1}(w_{1})$ is mapped injectively to $W_{k}$ by $T_{k}$. It is clear then there is a decomposition of $W_{1}$ into a finite number of  quasi-projective subvarieties
$Z_{\alpha}$ such that $T_{1}^{-1}(Z_{\alpha})$ is a disjoint union of a  number $n_{\alpha}$  of copies of $Z_{\alpha}$.  Pick points $z_{\alpha}\in Z_{\alpha}$.
If for some $\alpha$ some $T_{k}$ separates the points $T_{1}^{-1}(z_{\alpha})$ then it is clear that $T_{k}$ separates  points in $T_{1}^{-1}(z)$ for generic $z\in Z_{\alpha}$. Now the Proposition follows from a simple induction argument, using induction on the maximal dimension of a $Z_{\alpha}$ with $n_{\alpha}>1$ and the number of components $Z_{\alpha}$ with this maximal dimension. 

We have now achieved our main goal---the central statement in Theorem 1.2. We have a continuous bijection $T_{k}:X_{\infty}\rightarrow W_{k}$ which is a homeomorphism,  since the spaces are  compact. As usual we may as well suppose that this $k$ is $1$, so all $T_{k}$ are homeomorphisms.

Recall that we denote the differential geometric singular set, the complement of $X_{\infty}^{\reg}$ by $\Sigma$. Let $S_{k}\subset W_{k}$ denote the algebro-geometric singular set.
\begin{lem} We can choose $k$ so that $T_{k}^{-1}$ maps $S_{k}$ to $\Sigma$. \end{lem}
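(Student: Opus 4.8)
We already know (Lemma 4.12 and Proposition 4.11) that for suitable $k$ the map $T_k : X_\infty \to W_k$ is a homeomorphism, and we may take $k=1$ so that all $T_k$ are homeomorphisms compatible with the morphisms $f_\lambda : W_{k\lambda} \to W_k$. The statement to prove is that after increasing $k$ once more we can arrange $T_k^{-1}(S_k) \subset \Sigma$, equivalently $T_k(X_\infty^{\reg}) \subset W_k^{\reg}$, i.e. every image of a differential-geometric regular point is a smooth point of $W_k$. The natural approach is: (i) show $T_k$ restricted to $X_\infty^{\reg}$ is, for large $k$, an immersion which is moreover an open embedding onto its image; (ii) deduce that $T_k(X_\infty^{\reg})$ is a complex submanifold of $\mathbb{C}\mathbb{P}^{N_k}$, hence consists of smooth points of the algebraic set $W_k$; (iii) handle the possibility that smooth points of $W_k$ could still lie in the closure issues by a dimension/Baire-type argument, passing to a larger $k$ if necessary so that the finitely many "bad" strata are absorbed.

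**Key steps in order.** First I would invoke Proposition 4.9: given a compact $K \subset X_\infty^{\reg}$ there is $m(K)$ so that for $k \geq m(K)$ and every $x\in K$, every tangent vector $v$ at $x$ is "seen" by some section vanishing at $x$ — i.e. $dT_k$ is injective at $x$. Combined with Proposition 4.8 (separation of points at distance $>\rho$) and the fact that $T_k$ is already a homeomorphism, a standard argument shows $T_k|_{X_\infty^{\reg}}$ is a holomorphic embedding onto a locally closed complex submanifold of $\mathbb{C}\mathbb{P}^{N_k}$ once $k$ is large relative to $K$. The subtlety is that $X_\infty^{\reg}$ is only exhausted by such $K$'s, and $m(K)$ may grow; so one cannot fix $k$ this way alone. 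The remedy is to run the argument on $W_k$ directly: since $W_k$ is an irreducible projective variety (Lemma 4.11) of dimension $n$, its singular locus $S_k$ is a proper closed subvariety, and $f_\lambda^{-1}(S_k) \supset S_{\lambda k}$ up to lower-dimensional corrections; the preimages $T_k^{-1}(S_k)$ form a decreasing (under refinement) family of closed subsets of $X_\infty$. One shows their intersection is contained in $\Sigma$: if $x \in X_\infty^{\reg}$ lay in $T_k^{-1}(S_k)$ for all $k$, then by the immersion statement above (valid near $x$ for $k \geq m(\{x\})$) $T_k$ would be a local biholomorphism near $x$ onto a smooth piece of $W_k$, contradicting $T_k(x)\in S_k$. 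A Baire-category / Noetherian-descending-chain argument on the $W_k$ then produces a single $k$ with $T_k^{-1}(S_k)\subset\Sigma$.

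**The main obstacle.** The delicate point is passing from the "for each compact $K$ there is a large $k$" statements (Propositions 4.8, 4.9) to a \emph{single} $k$ that works simultaneously over all of $X_\infty^{\reg}$, which is non-compact. This is where the algebro-geometric finiteness must be exploited: one uses that the $W_k$ are honest projective varieties, so $S_k$ is Zariski-closed of bounded degree and the tower $\{W_k, f_\lambda\}$ cannot have infinitely many genuinely new singular strata — a descending chain / constructibility argument forces stabilization. I would also need to be slightly careful that $T_k^{-1}(S_k)$ is closed in $X_\infty$ (clear, as $T_k$ is continuous and $S_k$ closed) and that it does not accidentally contain regular points "at infinity" of the exhaustion; the immersion argument rules this out pointwise, and combining pointwise statements with the Noetherian structure of $W_k$ is exactly the content that makes the uniform $k$ exist. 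The rest — that a locally closed complex submanifold of projective space sitting inside an irreducible variety of the same dimension lies in the smooth locus — is routine.
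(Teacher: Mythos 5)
Your proposal follows essentially the same strategy as the paper: reformulate the claim as $T_{k}(X_{\infty}^{\reg})\subset W_{k}^{\reg}$, use the tangent-vector separation statement (the paper's Proposition 4.7) to show that for any fixed point (or compact set) in $X_{\infty}^{\reg}$ some $T_{k}$ lands in the smooth locus, and then invoke algebro-geometric finiteness of the singular loci to extract one $k$ working everywhere. The only real difference is the bookkeeping in the last step: the paper picks a maximal-dimensional irreducible component of $S_{1}$ meeting $T_{1}(X_{\infty}^{\reg})$, applies Proposition 4.7 at a single point of it, and inducts on the number of irreducible components of the singular set, whereas you push the singular sets into the fixed Noetherian space $W_{1}$ (as $f_{k}(S_{k})$, Zariski closed since $f_{k}$ is finite) and use a descending-chain/minimal-element argument. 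Your version is, if anything, cleaner to make rigorous than the paper's component count, but it stands or falls on the monotonicity which you only assert ``up to lower-dimensional corrections'': if $S_{\lambda k}\subset f_{\lambda}^{-1}(S_{k})$ held only generically, the family $T_{k}^{-1}(S_{k})$ would not be decreasing and the minimal element need not equal the intersection, so stabilization would not give a single $k$ with $T_{k}^{-1}(S_{k})\subset\Sigma$. Fortunately the inclusion is exact: at this stage the $f_{\lambda}$ are finite (finite fibres, projective) and bijective, hence birational, and a finite birational morphism onto a variety that is normal (in particular smooth) at the image point is a local isomorphism there; so a singular point of $W_{\lambda k}$ cannot map to a smooth point of $W_{k}$. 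You should state and prove this inclusion honestly rather than hedge it --- note that the same fact is what underlies the paper's claim that the number of components drops. Finally, the appeal to Baire category is a red herring; it is the Noetherian property of the Zariski topology (applied to the closed sets $f_{k}(S_{k})\subset W_{1}$, directed downward under divisibility of $k$) that produces the single $k$, combined with the pointwise statement that every point of $T_{1}(X_{\infty}^{\reg})$ lies outside $f_{k}(S_{k})$ for some $k$.
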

Of course it is equivalent to say that $T_{k}$ maps $X_{\infty}^{\reg}$ to smooth points of $W_{k}$.
The proof is similar to that of the previous Lemma. It follows from Proposition 4.7 that for any given compact subset $K\subset X_{\infty}^{\reg}$ we can choose $k$ so that $T_{k}$ maps $K$ into the smooth points of $W_{k}$. On the other hand the singular set $S_{1}$ has  a finite number of irreducible components.
If there is a component which  meets $T_{1}(X_{\infty}^{\reg})$ we choose one of maximal dimension, say $V$. Thus there is a point $x\in X_{\infty}^{\reg}$ with $T_{1}(x)\in V$. We apply Proposition 4.7 with $K=\{x\}$ to find a $k$ such that $T_{k}(x)$ lies in the smooth set of $W_{k}$. Then it is clear that the number of irreducible components of $S_{k}$ is strictly less than for $S_{1}$, and the proof is completed by induction. 

As usual we can suppose that the $k$ in Lemma 4.11 is 1. In the next subsection we will show that, at least for K\"ahler-Einstein limits, the singular sets match up but we do not need to use this fact. 

 \begin{lem}
 We can choose a $k$ such that $W_{k}$ is a normal variety. 
\end{lem}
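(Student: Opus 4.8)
The plan is to show that $W_k$ is normal for suitably large $k$ by proving that the local ring at every point agrees with the ring of bounded holomorphic functions on the nearby regular set, which is automatically integrally closed. Concretely, I would first recall what has already been established: $T_k : X_\infty \to W_k$ is a homeomorphism, $X_\infty^{\reg}$ maps to the smooth locus of $W_k$ (so the algebro-geometric singular set $S_k$ is contained in $T_k(\Sigma)$), and the differential-geometric singular set $\Sigma$ has Hausdorff codimension at least $4$, in particular real codimension $>2$. The space of bounded holomorphic sections $H^0(X_\infty, L^{\lambda k})$ has been identified with the genuine algebraic sections on $W_k$ pulled back via $T_k$, using the equivalence of the metric on $L$ with the metric pulled back from $\mathcal{O}(1)$ guaranteed by Theorem 1.1.

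The key step is a Hartogs-type extension argument across $\Sigma$. Let $\widetilde{W}_k \to W_k$ be the normalization. I would show that the composite $X_\infty \to W_k$ lifts to a continuous map $X_\infty \to \widetilde{W}_k$: over the smooth locus this is clear, and over $\Sigma$ one extends by continuity using that $\widetilde{W}_k \to W_k$ is finite and $T_k$ is already a homeomorphism, so that the candidate lift is single-valued. Then the point is to check that the lift is an isomorphism of varieties, equivalently that every bounded holomorphic function near a point of $S_k$ (pulled back to $X_\infty^{\reg}$ near $\Sigma$) is already regular downstairs. Since the sections of $L^{\lambda k}$ over $X_\infty$ are exactly the algebraic ones on $W_k$ (not on $\widetilde W_k$), it suffices to verify that $X_\infty$ has no extra bounded holomorphic sections of high powers of $L$ beyond those coming from $W_k$, and for this I would reverse the roles: by the projection formula and finiteness of normalization, sections of $L^{\lambda k}$ on $\widetilde{W}_k$ for $\lambda$ large are generated in a controlled way, and the point is that a bounded holomorphic section on $X_\infty^{\reg}$ extends across $\Sigma$ to a section of the corresponding reflexive sheaf on $\widetilde{W}_k$ — using precisely that $\Sigma$ (hence the preimage of $S_k$) has complex codimension $\geq 2$, so the Riemann extension theorem applies on the normal variety $\widetilde{W}_k$ once we know bounded holomorphic functions extend over codimension-two analytic subsets of a normal space. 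Running this for all $\lambda$ shows the pluricanonical (rather, $L$-twisted) rings of $W_k$ and $\widetilde{W}_k$ coincide, forcing $W_k = \widetilde{W}_k$, i.e. $W_k$ normal.

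More precisely, I expect the cleanest route is: (i) observe $\Sigma$ is the analytic subset $T_k(\Sigma)\supseteq S_k$ of $W_k$ and of $\widetilde W_k$ of complex codimension $\geq 2$ by the Kähler codimension-$4$ fact quoted in Section 2; (ii) a bounded holomorphic section $s$ of $L^{m}$ over $X_\infty^{\reg}=W_k\setminus T_k(\Sigma)$, viewed on the normalization, extends by the normal Riemann extension theorem to a holomorphic section $\tilde s$ of the pullback of $\mathcal{O}(m)$ on $\widetilde W_k$; (iii) conversely restriction from $\widetilde W_k$ to $X_\infty^{\reg}$ is injective and has image containing all of $H^0(X_\infty, L^m)$; (iv) therefore $\bigoplus_m H^0(\widetilde W_k, \mathcal{O}(m)) = \bigoplus_m H^0(X_\infty, L^m) = \bigoplus_m H^0(W_k, \mathcal{O}(m))$ for $m$ in the relevant range; and (v) since for large $k$ both $W_k$ and $\widetilde W_k$ are projectively normal enough that these rings determine the variety and the finite birational map $\widetilde W_k \to W_k$ must be an isomorphism. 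Choosing $k$ large enough that $\mathcal{O}(1)$ on $\widetilde W_k$ is very ample and the map is an embedding by the same sections completes the argument.

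The main obstacle will be step (v) — turning the equality of section rings into an honest isomorphism $\widetilde W_k\cong W_k$. One has to be careful that $W_k$ need not be projectively normal a priori, so comparing $H^0(\mathbb{CP}^N,\mathcal{O}(m))\to H^0(W_k,\mathcal{O}(m))$ with the analogous map for $\widetilde W_k$ requires either passing to a further Veronese (increasing $k$ once more) so that both are projectively normal, or arguing directly that a finite birational morphism which is bijective on points and induces an isomorphism on rings of sections of an ample line bundle must be an isomorphism. I would handle this by first choosing $k$ large so that $\mathcal O(1)$ on the normal variety $\widetilde W_k$ is very ample and normally generated, then noting that the natural morphism $W_k\to W_k$ factors as $W_k\to\widetilde W_k$ (the inverse of normalization, which exists precisely because the section rings agree) composed with $\widetilde W_k\to W_k$, forcing both to be isomorphisms. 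A secondary subtlety is confirming that bounded holomorphic sections on $X_\infty^{\reg}$ really do extend across $\Sigma$ as sections of a line bundle on $\widetilde W_k$ rather than merely of a reflexive sheaf — but on a normal variety reflexive rank-one sheaves that are locally free away from codimension $\geq 2$ agree with the line bundle where it is defined, and boundedness is exactly what feeds the Riemann extension theorem, so this should go through.
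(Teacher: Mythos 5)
Your overall strategy---pass to the normalization, identify the bounded holomorphic sections on $X_\infty^{\reg}$ with honest sections upstairs, and conclude that $W_k$ coincides with its normalization---is the same as the paper's, but there is a genuine gap at the extension step. In your (i)--(ii) you treat $T_k(\Sigma)$ as an \emph{analytic} subset of complex codimension at least $2$ and then invoke the Riemann extension theorem on the normal variety $\widetilde W_k$. Nothing established so far makes $T_k(\Sigma)$ analytic: $\Sigma$ is merely a closed subset of the metric space $X_\infty$ of Hausdorff codimension at least $4$, and its image under the homeomorphism $T_k$ is a priori just a compact set. So what is needed is a removable-singularity theorem for closed sets of small Hausdorff measure (the paper uses Shiffman's theorem, for sets of vanishing $(2n-2)$-dimensional Hausdorff measure), and to apply it one must bound the Hausdorff dimension of the bad set \emph{inside the normalization}. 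A Lipschitz bound for $T_k:X_\infty\to W_k$ does not suffice for this, because $\nu^{-1}$ need not be Lipschitz near the non-normal locus, so $\nu^{-1}(T_k(\Sigma))$ is not controlled. This is exactly what the paper's proof is organized around: it embeds $\hat W_1$ projectively by $\mathcal{L}^k$, observes that the pulled-back linear system $U\subset H^0(X_\infty,L^k)$ contains the $k$-th powers of sections of $L$, which by Theorem 1.1 generate the fibres uniformly and hence give a first-derivative bound on the induced map $\alpha:X_\infty^{\reg}\to\bP$; therefore $\alpha$ extends to a Lipschitz map of all of $X_\infty$ onto $\hat W_1$, the set $Z=\alpha(\Sigma)\cap\hat W_1^{\rm smooth}$ has Hausdorff dimension at most $2n-4$, Shiffman's theorem extends bounded sections across $Z$, and normality of $\hat W_1$ (its structure sheaf is computed by bounded holomorphic functions on the smooth part) handles the extension across the singular set of $\hat W_1$. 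Without a Lipschitz estimate into the normalization and a Hausdorff-measure (rather than analytic-set) extension theorem, your step (ii) does not go through as written.

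A smaller remark: your worry in step (v) about upgrading equality of section rings to an isomorphism is legitimate, but the paper's arrangement avoids it entirely. Once one knows $U=H^0(X_\infty,L^k)$, the map $\alpha$ defined by $U$ is literally the map $T_k$ defined by the full space of sections, so $W_k=T_k(X_\infty)$ is the embedded normalization $\hat W_1$ and is therefore normal; no projective normality or normal generation of $\mathcal{O}(1)$ is required. If you repair the extension step as above, you can shortcut your (iii)--(v) in the same way.
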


Suppose $W_{1}$ is not normal. Let $\nu:\hat{W}_{1}\rightarrow W_{1}$ be the normalisation. Thus $\nu$ is a bijection outside the singular set $S_{1}$ of $W_{1}$. It is a general fact that the pull back $\cL=\nu^{*}({\mathcal O}(1))$ is an ample line bundle on $\hat{W}_{1}$, so we can choose $k$ such that
sections of $\cL^{k}$ define a projective embedding of $\hat{W}_{1}$ in $\bP$ say.  The map $T_{1}: X_{\infty}^{\reg}\rightarrow W_{1}$ maps into the smooth part and so lifts to $\hat{T}_{1}:X_{\infty}^{\reg}\rightarrow \hat{W}_{1}$.  Clearly the pull back of $\cL$ to $X_{\infty}^{reg}$ by this map is identified with our polarising bundle $L$. Moreover, Theorem 1 implies that the metrics on the bundle agree up to a bounded factor. So the sections of $\cL^{k}$ over $\hat{W}_{1}$ define bounded sections of $L^{k}$ over $X_{\infty}^{reg}$ that is, elements of $H^{0}(X_{\infty}, L^{k})$. Write $U\subset H^{0}(X_{\infty}, L^{k})$ for the image of this map from $H^{0}(\hat{W}_{1}, \cL^{k})$. These sections define a map $\alpha$ from $X_{\infty}^{\reg}$ to $\bP$ and the definitions mean that this is just the composite of $\hat{T}$ with the above projective embedding of $\hat{W}_{1}$.  The subspace $U$ contains the kth. powers of sections in $H^{0}(X_{\infty}, L)$ which uniformly generate the fibres, so we have a first derivative estimate on the map $\alpha$. Hence $\alpha$ extends to a Lipschitz map, which we also call $\alpha$, from $X_{\infty}$ to $\bP$ with image $\hat{W}_{1}$.  Let $Z$ be the intersection of  smooth part of $\hat{W}_{1}$ with $\alpha(\Sigma)$. The Lipschitz bound implies that the Hausdorff dimension of $Z$ is at most $2n-4$ and it follows that any local holomorphic function defined on the complement of $Z$ extends holomorphically over $Z$ \cite{Sh}. This means that $H^{0}(X_{\infty}, L^{k})$ can be identified with bounded holomorphic sections of the hyperplane bundle over the smooth part of $\hat{W}_{1}$. But it is a basic general fact about a normal variety that its structure sheaf can be defined by  bounded holomorphic functions on the smooth part. So the subspace $U$ is in fact the whole of $H^{0}(X_{\infty}, L^{k})$. Thus $\alpha$ is exactly $T_{k}$ and $W_{k}$ is $\hat{W}_{1}$, and hence normal. 

To complete the story we have
\begin{lem} If $W_{1}$ is normal then $W_{k}$ is the embedding of $W_{1}$ defined by sections of ${\mathcal O}(k)$.
\end{lem}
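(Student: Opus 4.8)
The plan is to prove, for every $k\geq 1$, that pull-back along $T_{1}$ identifies $H^{0}(W_{1},\cO(k))$ with our space $H^{0}(X_{\infty},L^{k})$ of bounded holomorphic sections over the regular part. Granting this, the conclusion is immediate: by construction $T_{k}$ sends a point $x\in X_{\infty}^{\reg}$ to the point of projective space whose homogeneous coordinates are the values at $x$ of a basis of $H^{0}(X_{\infty},L^{k})$, so under the identification the restriction of $T_{k}$ to $X_{\infty}^{\reg}$ is exactly $\Phi_{k}\circ T_{1}$, where $\Phi_{k}\colon W_{1}\hookrightarrow \bP\big(H^{0}(W_{1},\cO(k))^{*}\big)$ is the morphism defined by the complete linear system $|\cO(k)|$; and $\Phi_{k}$ is a closed embedding because $\cO(k)$ is very ample, being a power of the hyperplane bundle of $W_{1}\subset\bC\bP^{N_{1}}$. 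Since $T_{k}$ and $\Phi_{k}\circ T_{1}$ are continuous and agree on the dense set $X_{\infty}^{\reg}$, they agree on all of $X_{\infty}$, whence $W_{k}=T_{k}(X_{\infty})=\Phi_{k}(W_{1})$.

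To set up the identification I would use what has already been established, with the index normalised to $1$ as in the preceding lemmas: Lemma 4.11 says $T_{1}$ maps $X_{\infty}^{\reg}$ into the smooth locus $W_{1}^{\rm sm}$, and, being a holomorphic injection between complex $n$-manifolds, $T_{1}$ restricts to a biholomorphism of $X_{\infty}^{\reg}$ onto an open subset of $W_{1}^{\rm sm}$; by Theorem 1.1 this biholomorphism carries $L$ over to $\cO(1)$ with a metric uniformly equivalent to the Fubini--Study one. One inclusion of the identification is then clear: a section $\sigma\in H^{0}(W_{1},\cO(k))$ is bounded on the compact variety $W_{1}$, so its pull-back is a bounded holomorphic section of $L^{k}$ over $X_{\infty}^{\reg}$, i.e. an element of $H^{0}(X_{\infty},L^{k})$, and this map is injective because $T_{1}(X_{\infty}^{\reg})$ is dense in $W_{1}$.

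For the reverse inclusion, take $s\in H^{0}(X_{\infty},L^{k})$ and transport it by $T_{1}$ to a bounded holomorphic section of $\cO(k)$ over the open set $T_{1}(X_{\infty}^{\reg})=W_{1}^{\rm sm}\setminus T_{1}(\Sigma)$. The first-derivative estimate of Lemma 4.2 makes $T_{1}$ Lipschitz, so the compact set $T_{1}(\Sigma)$ has real Hausdorff dimension at most that of $\Sigma$, hence at most $2n-4$ by the codimension-$4$ property of the singular set in the K\"ahler case; therefore $W_{1}^{\rm sm}\cap T_{1}(\Sigma)$ is a closed subset of $W_{1}^{\rm sm}$ of Hausdorff codimension strictly greater than $2$, and our section extends holomorphically across it by Shiffman's theorem \cite{Sh}, just as in the proof of Lemma 4.12. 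The resulting section of $\cO(k)$ over $W_{1}^{\rm sm}$ then extends across the algebro-geometric singular set $S_{1}$, which has complex codimension $\geq 2$ because $W_{1}$ is normal, by the Riemann extension theorem for normal varieties; this produces a global section of $\cO(k)$ on $W_{1}$ pulling back to $s$, so the two spaces coincide.

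The proof is thus largely a matter of combining Lemmas 4.2, 4.11 and 4.12 with normality. The only step that goes beyond formal manipulation is the pair of extensions in the last paragraph: across the thin, \emph{a priori} non-analytic set $W_{1}^{\rm sm}\cap T_{1}(\Sigma)$ one needs the Lipschitz control on $T_{1}$ together with the K\"ahler codimension-$4$ bound for the singular set, and across $S_{1}$ one needs exactly the normality hypothesis. I do not expect any genuine obstacle beyond keeping straight the analytic versus algebraic meaning of ``holomorphic section over the smooth locus''.
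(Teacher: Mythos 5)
Your argument is correct and is essentially the paper's own: the paper proves this lemma by the words ``this follows from the same argument as above'', meaning the proof of the normality lemma, and that argument is exactly what you reconstruct --- pull back sections of $\cO(k)$ via $T_{1}$ using the metric equivalence from Theorem 1.1, and conversely extend elements of $H^{0}(X_{\infty},L^{k})$ across the Lipschitz image of $\Sigma$ (Hausdorff dimension $\leq 2n-4$, Shiffman) and across $S_{1}$ using normality, so that $T_{k}$ is the map given by the complete linear system $|\cO(k)|$ on $W_{1}$. The only cosmetic difference is that you invoke codimension-two extension on a normal variety where the paper cites the characterisation of the structure sheaf of a normal variety by bounded holomorphic functions on the smooth part; these play the identical role.
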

This follows from the same argument as above.

\

We have now almost completed the proof of Theorem 1.2. For any given polarised limit space $X_{\infty}$ we can choose a $k$ so that $H^{0}(X_ {\infty},L^{k})$
represents $X_{\infty}$ as a normal variety and if $X_{i}$ is a sequence converging to $X_{i}$ in the Gromov-Hausdorff sense we can choose a convergent sequence of embeddings. (Notice that the only reason for passing to a subsequence in the statement of Theorem 1.2 is that we can have different polarisations on the same Riemannian limit space.) The last point is to show that there is a single $k_{1}$ which works for all $X_{\infty}$. But this follows from Gromov compactness and the easy fact that if $k$ has the desired property for $X_{\infty}$ it does also for all limit spaces sufficiently close to $X_{\infty}$, in the Gromov-Hausdorff sense.

To spell out a little more the consequences of Theorem 1.2, observe that now that we are considering embeddings the degree of $W$ is determined by $k_{1}$ and $V$. So (for theoretical purposes) we can operate in a fixed quasi projective Chow variety ${\mathcal T}$ parameterising normal $n$-dimensional subvarieties of the given degree in a suitable large projective space $\bC\bP^{N}$. \lq\lq Algebro-geometric convergence'' of $X_{i}$ to $X_{\infty}$ means convergence in ${\mathcal T}$. There is a universal variety ${\mathcal U}\rightarrow {\mathcal T}$ and by general facts (\cite{H}, Theorem 9.11) this is a flat family. So we see that if $X_{i}$ converge to $X_{\infty}$ in the Gromov-Hausdorff sense then   $X_{i}$ and $W=X_{\infty}$ can be realised as fibres in a flat family. So, for example, the Hilbert polynomials of $X_{i}$ and $ W=X_{\infty}$ are the same. 
\

There are  different ways of going about the proofs of Theorem 1.2.
We mention one elegant alternative, based on a result from the thesis of Chi Li \cite{CL}, Prop. 7. This in turn depends upon results of Siu  and Skoda. For $X$ in ${\mathcal K}(n,c, V)$ let $R_{X}$ be the graded ring
$$   R_{X}=\bigoplus_{k} H^{0}(X;L^{k}). $$
Then from standard theory we know that $R_{X}$ is finitely generated and
$X={\rm Proj}(R_{X})$. Assuming the lower bound in Theorem 1.1, Li proves an effective form of finite generation in the sense that if $\sigma_{i}$ is an  orthonormal basis in the finite dimensional space $\bigoplus_{j=0}^{(n+2) k_{0}} H^{0}(X, L^{j})$ then the $\sigma_{i}$ generate $R_{X}$ and for each $k$ there is a number $B_{k}$ such that any element of $L^{2}$ norm $1$ in $H^{0}(X, L^{k})$ can be expressed as a polynomial in the $\sigma_{i}$ with co-efficients bounded by $B_{k}$. It follows easily that for a polarised limit space $X_{\infty}$ the  graded ring $$  R_{X_{\infty}}= \bigoplus_{k} H^{0}(X_{\infty};L^{k})$$
is finitely generated. Then we can immediately define the algebraic variety $W$ as ${\rm Proj}(R_{X_{\infty}})$. Of course there is still some work to do in checking the properties of $W$.

\subsection{Further results}
We will now restrict attention to the case when $X_{\infty}$ is the limit of K\"ahler-Einstein manifolds $X_{i}\in {\mathcal K}(n,c,V)$ with Ricci curvature +1, -1/2 or 0. We suppose that $L=K_{X}^{-1}$ or $K_{X}^{2}$ in the first and second situations  and in the third situation we suppose that the manifolds are Calabi-Yau, so we have fixed holomorphic $n$ forms $\Theta_{i}$ over $X_{i}$ with $\Theta_{i}\wedge \overline{\Theta}_{i}$ the volume form. For brevity we just call this \lq\lq the K\"ahler-Einstein case''.

\begin{prop}
In the K\"ahler-Einstein case the map $T: X_\infty\rightarrow W$ takes the differential geometric limit singular set to the algebro-geometric singular set. 
\end{prop}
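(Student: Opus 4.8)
The plan is to establish the inclusion complementary to Lemma 4.11. That lemma says $T$ maps $X_\infty^{\reg}$ into the smooth locus $W\setminus S$, so it remains to prove that $T^{-1}(W\setminus S)\subset X_\infty^{\reg}$. Suppose not: pick $w\in W\setminus S$ with $p:=T^{-1}(w)\in\Sigma$. Choose a small open $U\subset W\setminus S$ around $w$, biholomorphic to a ball in $\bC^n$, with $\overline U$ contained in the smooth locus, and put $A=T^{-1}(U)$, an open metric neighbourhood of $p$. Then $Z:=A\cap\Sigma=A\setminus X_\infty^{\reg}$ is a non-empty closed subset of $A$, and $T$ restricts to a biholomorphism $A\setminus Z\to U\setminus T(Z)$ which is simultaneously a Riemannian isometry onto its image for the K\"ahler metrics (using the integrable complex structure on $X_\infty^{\reg}$ and the identification, recalled in Section 4.3, of $L$ over $X_\infty^{\reg}$ with the pull-back of $\cO(1)$ by $T$). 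Since $T=T_1$ has bounded derivative (Lemma 4.2) it is Lipschitz for the ambient metrics, and Lipschitz maps do not raise Hausdorff dimension; as $\Sigma$ has Hausdorff codimension at least $4$ in the K\"ahler case, $T(Z)$ is a closed subset of $U$ of Hausdorff dimension at most $2n-4$, in particular of real codimension strictly greater than $2$.

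Next I would transport the K\"ahler--Einstein equation to $U$. Let $h_{FS}$ be the Fubini--Study metric on $\cO(1)|_U$ and $\omega_{FS}$ its curvature, a smooth K\"ahler form on $U$. By Theorem 1.1 the K\"ahler--Einstein metric $h$ on $L$ over $X_\infty^{\reg}$ is uniformly equivalent to the pull-back of $h_{FS}$, so writing $h=e^{-2\varphi}h_{FS}$ we obtain a bounded function $\varphi$ on $A\setminus Z$, smooth there, with $\omega_\infty=\omega_{FS}+dd^c\varphi>0$; hence $\varphi$ is $\omega_{FS}$-plurisubharmonic on $U\setminus T(Z)$. In the Fano case ($L=K_X^{-1}$, $\Ric(\omega_\infty)=\omega_\infty$ on the regular set) the Einstein condition is equivalent on $U\setminus T(Z)$ to a complex Monge--Amp\`ere equation
$(\omega_{FS}+dd^c\varphi)^n=e^{F+2\varphi}\,\omega_{FS}^n$,
where $F$ is smooth on $U$, read off from the smooth volume form that $h_{FS}$ determines on the canonical bundle of $W\setminus S$. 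The remaining cases ($\Ric=-\tfrac12\omega_\infty$ with $L=K_X^2$, and the Calabi--Yau case, where one uses the limiting holomorphic $n$-form, which extends across $T(Z)$ by the removability of thin sets for bounded holomorphic functions, cf. \cite{Sh}) are entirely analogous, with smooth data on $U$.

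Then I would remove the thin set and bootstrap. Because $T(Z)$ is closed of Hausdorff dimension at most $2n-4<2n-2$, it is negligible for bounded plurisubharmonic functions, so $\varphi$ extends to a bounded $\omega_{FS}$-psh function $\tilde\varphi$ on $U$; moreover the complex Monge--Amp\`ere measure does not charge $T(Z)$, so $\tilde\varphi$ satisfies the same equation on all of $U$. Since $\tilde\varphi$ is bounded and the right-hand side is a smooth (in particular an $L^p$, $p>1$) density, Ko\l odziej's estimate makes $\tilde\varphi$ continuous, and then the standard interior regularity theory for the complex Monge--Amp\`ere equation (Evans--Krylov and Schauder, the right-hand side being smooth and strictly positive once $\tilde\varphi$ is controlled) upgrades $\tilde\varphi$ to $C^\infty$ on $U$. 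Thus $\omega_{FS}+dd^c\tilde\varphi$ is a smooth K\"ahler--Einstein metric on $U$ restricting to $\omega_\infty$ on $U\setminus T(Z)$. Finally I would match this with the Gromov--Hausdorff structure: since $Z$ has real codimension at least $4$, any length-minimising path near $p$ can be perturbed off $Z$, so the distance on $A$ coming from $X_\infty$ coincides with the Riemannian distance of the extended smooth metric (using the local comparison of metric and Riemannian structures recalled in Section 2.1). Hence a metric neighbourhood of $p$ is a smooth Riemannian manifold, so the tangent cone of $X_\infty$ at $p$ is $\bR^{2n}$ and $p\in X_\infty^{\reg}$, contradicting $p\in\Sigma$. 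Combined with Lemma 4.11 this in fact gives $T(\Sigma)=S$.

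I expect the regularity step to be the real obstacle: one must pass from a merely bounded $\omega_{FS}$-psh solution of the complex Monge--Amp\`ere equation defined off a thin closed set to a genuine smooth K\"ahler--Einstein metric on a full ball in $\bC^n$, and it is here that the K\"ahler--Einstein hypothesis does all the work, since it is precisely what supplies the elliptic equation for $\varphi$. By comparison, the removability of $T(Z)$ is soft pluripotential theory, and the concluding identification of the extended metric with the Gromov--Hausdorff metric rests only on the codimension-$\geq 4$ bound for $\Sigma$.
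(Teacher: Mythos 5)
Your route (working directly on the limit space with pluripotential theory) is genuinely different from the paper's, but it has a hole at exactly the step you flag as the crux. After extending $\tilde\varphi$ across $T(Z)$ you have a bounded $\omega_{FS}$-psh solution of $(\omega_{FS}+dd^c\tilde\varphi)^n=e^{F\pm\lambda\tilde\varphi}\,\omega_{FS}^n$ on a ball, and you assert that Ko\l odziej continuity plus ``standard interior regularity (Evans--Krylov and Schauder)'' yields smoothness. No such interior regularity theorem exists: Evans--Krylov requires uniform ellipticity along the solution, i.e.\ an a priori two-sided bound $c\,\omega_{FS}\leq \omega_{FS}+dd^c\tilde\varphi\leq C\,\omega_{FS}$ (equivalently an $L^\infty$ bound on the complex Hessian), and neither boundedness of $\tilde\varphi$ nor its continuity supplies this; complex Pogorelov-type examples show that continuous psh solutions of $(dd^c u)^n=f\,dV$ with $f$ smooth and strictly positive need not be $C^2$ in the interior --- this is precisely why \cite{Bl} needs extra (convexity/boundary) hypotheses. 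So the zeroth-order information you extract from Theorem 1.1 is not enough to run the final PDE step.

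The paper closes this gap with a second-order input that your outline never exploits: the uniform gradient bound on the Bergman potentials, $|\nabla_{\omega_i}\phi_i|\leq C$, coming from Theorem 1.1 together with the derivative estimate of Proposition 2.1 (this is the content of Lemma 4.2). Working on the smooth approximants $X_i$ near points $p_i\to p$, this gives $\omega_i'\leq C\omega_i$, and combined with the Monge--Amp\`ere equation (bounded ratio of volume forms, since $\phi_i$ and $h_i$ are controlled) the two-sided equivalence $C^{-1}\omega_i\leq\omega_i'\leq C\omega_i$; only then is Evans--Krylov applied, followed by bootstrapping, so the $\omega_i$ converge smoothly near $p$ and no extension across $T(Z)$, Ko\l odziej estimate, or tangent-cone matching is needed. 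Your argument could be repaired in the same spirit: you already invoke the Lipschitz bound on $T$, which under your identification says $\omega_{FS}\leq C\,\omega_\infty$ on $U\setminus T(Z)$; with the equation this gives uniform two-sided equivalence off $T(Z)$, and since the complex Hessian measure of the bounded quasi-psh extension cannot charge the polar set $T(Z)$, $dd^c\tilde\varphi$ has bounded coefficients on all of $U$ and Evans--Krylov then applies legitimately. Two smaller points: the smoothness of $F$ on all of $U$ needs the isomorphism between $\cO(1)$ and the appropriate power of the canonical bundle (or the holomorphic $n$-form) to be extended across $T(Z)$ via \cite{Sh}, which you mention only in the Calabi--Yau case; and the concluding identification of the Gromov--Hausdorff distance near $p$ with the Riemannian distance of the extended metric is asserted rather than argued, whereas the paper's formulation avoids this issue by producing smooth convergence of the $\omega_i$ themselves near $p$.
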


\begin{proof}
The argument in the previous subsection implies that $T$ maps the smooth set in $X_\infty$ to the regular set in $W$. So we need to show that if $T(p)$ is a smooth point of $W$, then the limit metric on $X_\infty$ is also smooth at $p$. Denote by $\omega_i$ the K\"ahler-Einstein metric  on $X_i$, and $\omega_i'$ the induced Fubini-Study metric. Then we have $\omega_i'=\omega_i+\sqrt{-1}\p\bp \phi_i$ with $\phi_i=k^{-1} \log \rho_k(\omega_i) $. By our main Theorem 1.1 and Proposition 2.1 there is a constant $C_1>0$ such that $|\phi_i|_{L^\infty}\leq C_1$  for all $i$. Also by arguments similar to the proof of  Lemma 4.3 we see that there is a constant $C_2>0$ such that   for all $i$ we have $|\nabla_{\omega_i}\phi_i|_{L^\infty}\leq C_2$, and  $\omega_i'\leq C_2\omega_i$.   Now write $Ric(\omega_i')=\lambda\omega_i'+\sqrt{-1}\p\bp h_i$, where $\lambda$ is $1$, $-\frac{1}{2}$ or $0$.  So with suitable normalization of $h_i$  we have the equation
  \begin{equation} \label{KE equation}
  \omega_i^n=e^{h_i+\lambda\phi_i}\omega_i'^n. 
  \end{equation}
Then it is not hard to see that  $\int_{X_i} h_i^2 \omega_i'^n\leq C_3$ for some constant $C_3>0$.  Now for any $p$ in $W^{reg}$, we choose a small neighborhood $B(p, \delta)\subset W^{reg}$. Then there are corresponding points $p_i\in X_i$, such that $B(p_i, \delta)$ converges smoothly to $B(p,\delta)$  in $\C\P^{N_k}$. By standard elliptic estimate we see that $|h_i|_{C^1(B(p_i,\delta/2), \omega_i')}$ is uniformly bounded. Then by (\ref{KE equation})  there is a $C_4>0$ such that $C_4^{-1}\omega_i\leq \omega_i' \leq C_4\omega_i$ in $B(p_i, \delta/2)$. Thus $|\nabla_{\omega_i'}\phi_i|_{L^\infty(B(p_i,\delta/2))}\leq C_4|\nabla_{\omega_i}\phi_i|_{L^\infty(B(p_i,\delta/2))}\leq C_4C_2 $. Then in $B(p_i,\delta/2)$ with respect to the metric $\omega_i'$, the right hand side of (\ref{KE equation}) has a uniform $C^1$ bound. Therefore we can apply the Evans-Krylov theory(see for example \cite{Bl})  to conclude that $|\phi_i|$ has a uniform $C^{2, \alpha}$  bound in $B(p_i, \delta/4)$. Then standard arguments show that all covariant derivatives of $\phi_i$(with respect to $\omega_i'$) are uniformly bounded, so the K\"ahler-Einstein metrics $\omega_i$ converge smoothly in a neighborhood of $p$.   \\

\end{proof}

\begin{prop}
In the K\"ahler-Einstein case, the algebro-geometric limit $W$ has log-terminal singularities. 
\end{prop}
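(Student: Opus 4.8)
The plan is to show that $W$ has log-terminal singularities by verifying the analytic criterion: that a local generator of the canonical sheaf of the smooth part of $W$, pulled back near a singular point, is locally $L^{2}$ with respect to the natural volume form. Since $W$ is normal (Lemma 4.12) and, in the Fano and general-type cases, $K_{W}^{\mp 1}$ (respectively $K_{W}^{2}$ on the regular locus) is identified with a power of the hyperplane bundle $\mathcal{O}(1)$ on $W$ via $T$, the line bundle $L$ on $X_{\infty}^{\reg}$ extends to an ample $\bQ$-line bundle on $W$ proportional to $K_{W}^{\pm 1}$; in the Calabi-Yau case the fixed holomorphic $n$-forms $\Theta_{i}$ pass to the limit to give a nowhere-vanishing holomorphic section of $K_{X_{\infty}^{\reg}}$. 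In all three cases, what has to be checked is: for a resolution $\mu : \widehat{W} \to W$, the discrepancies are $> -1$, equivalently $\mu^{*}\Theta$ (a local section of $K_{W}$ near the singular point, expressed through the above identifications) has at worst log poles along the exceptional divisors.

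The key steps, in order. First, I would work on a fixed small neighbourhood of a singular point $w_{0} \in S_{1} \subset W$ and use the already-established fact (from Proposition 4.15's proof and Lemma 4.3-type arguments) that the Kähler-Einstein metrics $\omega_{i}$ on $X_{i}$ converge smoothly over compact subsets of the regular part, that $\phi_{i} = k^{-1}\log\rho_{k}(\omega_{i})$ is uniformly bounded in $L^{\infty}$, and the resulting equation $\omega_{i}^{n} = e^{h_{i} + \lambda\phi_{i}}\,\omega_{i}'^{\,n}$ together with the uniform bound $\int_{X_{i}} h_{i}^{2}\,\omega_{i}'^{\,n} \le C$. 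Second, I would identify the natural measure on $X_{\infty}$ coming from the limit Riemannian volume with a measure on $W$ of the form $e^{\psi}\,(\text{local Euclidean volume on }\bC\bP^{N})$ where $\psi$ is an $L^{1}$ (indeed better: the $L^{\infty}$ bound on $\phi_{\infty}$ plus $L^{2}$ control on $h_{\infty}$) function on $W$; passing $h_{i}, \phi_{i}$ to the limit gives a function $F = h_{\infty} + \lambda\phi_{\infty} \in L^{1}(W)$ with $\omega_{\infty}^{n} = e^{F}\,\omega_{\infty}'^{\,n}$ on $W^{\reg}$, where $\omega_{\infty}'$ is the smooth Fubini-Study form on $W$. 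Third — the heart of the matter — I would observe that in the Kähler-Einstein situation the Ricci-flat (or Einstein) cone structure on the regular part means that near $w_{0}$ the volume form $\omega_{\infty}^{n}$ equals $|\Theta|^{2}$ for a holomorphic $n$-form $\Theta$ (in the CY case this is literally the limit of $\Theta_{i}$; in the Fano/general-type case $\Theta$ is a local holomorphic section of $K_{W}^{\pm 1}$ raised to the power $\mp 1$, well-defined since $K_{W}^{\pm 1}$ is $\bQ$-Cartier). The log-terminal condition is then precisely: $\int_{U} |\Theta|^{2} < \infty$ for $U$ a punctured neighbourhood of $w_{0}$ in $W^{\reg}$. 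But $\int_{U}|\Theta|^{2} = \int_{U}\omega_{\infty}^{n} = \Vol(U) < \infty$ because the limit space has finite volume. Pulling back by a log-resolution $\mu$ and using that $|\mu^{*}\Theta|^{2}$ is a smooth positive function times $\prod |z_{j}|^{2a_{j}}$ near an exceptional divisor, finiteness of the integral forces all $a_{j} > -1$, which is the definition of log-terminal.

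The main obstacle will be the third step: making rigorous the identification of the limit Riemannian volume form with $|\Theta|^{2}$ for a genuine holomorphic (or $\bQ$-Gorenstein) section $\Theta$ across the singular set, and in particular checking that the ``local $L^{2}$ extends holomorphically'' mechanism (the Siu/Skoda-type extension already invoked in Lemma 4.12, via \cite{Sh}) applies to $\Theta$ — i.e. that $\Theta$, a priori only defined on $W^{\reg}$ with finite $L^{2}$ norm, is the restriction of a section of the reflexive sheaf $(\Omega_{W}^{n})^{**}$ or of the appropriate power of the polarising bundle. This is where normality of $W$ and the codimension-$\geq 4$ bound on $\Sigma$ (hence, via Proposition 4.15 and Lemma 4.11, codimension $\geq 4$ of $S_{1}$ inside $W$, well above the codimension-$2$ threshold) are essential: they ensure the extension is automatic and that the discrepancy computation on a resolution is unobstructed. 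A secondary technical point, which I expect to be routine given the Evans-Krylov and elliptic estimates already deployed in Proposition 4.15, is controlling $F = h_{\infty} + \lambda\phi_{\infty}$ well enough — $L^{1}$ near the singular set suffices for the integral comparison, and this follows from the uniform $L^{2}$ bound on $h_{i}$ and $L^{\infty}$ bound on $\phi_{i}$ passing to the limit by lower semicontinuity.
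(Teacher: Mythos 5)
Your overall route is the paper's: verify the analytic characterisation of log-terminal singularities (a nowhere-zero local $\mathbb{Q}$-canonical form $\Theta$ on $W^{\reg}\cap U$ with $\int_{W^{\reg}\cap U}\Theta\wedge\overline{\Theta}<\infty$) by comparing $\Theta\wedge\overline{\Theta}$ with the K\"ahler--Einstein volume form and using finiteness of the total volume. But the pivotal identity in your third step is false in two of the three cases: when $\lambda\neq 0$ (Fano, $L=K_X^{-1}$, and general type, $L=K_X^{2}$) there is no holomorphic $n$-form, not even a multivalued one, with $|\Theta|^{2}$ equal to $\omega_\infty^{n}$ on the regular part --- the ratio $\log\bigl(\omega_\infty^{n}/|\Theta|^{2}\bigr)$ would be pluriharmonic and would be a potential for $\lambda\omega_\infty$, forcing Ricci-flatness. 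What is true, and what the paper uses, is this: if $s$ is a holomorphic section trivialising $L^{k}$ near the singular point and one sets $\Theta=(s^{*}\otimes\overline{s^{*}})^{1/k}$ (Fano case) or $\Theta=(s\otimes\overline{s})^{1/2k}$ (general type case), then $\Theta\wedge\overline{\Theta}=\Vert s\Vert^{-2/k}\,d\mathrm{vol}$, respectively $\Vert s\Vert^{1/k}\,d\mathrm{vol}$, with the norm taken in the K\"ahler--Einstein metric, and finiteness requires the two-sided bound $C^{-1}\leq \Vert s\Vert^{2}\leq C$ near the point. The upper bound is the easy $C^{0}$ estimate of Proposition 2.1, but in the Fano case what you actually need is the \emph{lower} bound on $\Vert s\Vert$, i.e.\ precisely the partial $C^{0}$ estimate of Theorem 1.1 (equivalently, the uniform equivalence of the K\"ahler--Einstein metric on $L$ with the Fubini--Study metric pulled back from $\cO(1)$). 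Your claimed equality $\int_{U}|\Theta|^{2}=\int_{U}\omega_\infty^{n}$ conceals exactly this input, so as written the step does not go through; once ``equals'' is replaced by ``equals up to a factor bounded above and below'', with the bound supplied by Theorem 1.1 and Proposition 2.1, the argument closes and coincides with the paper's, giving $\int_{W^{\reg}\cap U}\Theta\wedge\overline{\Theta}\leq C^{1/k}\,\mathrm{Vol}(W)$. The Calabi--Yau case is the only one where your literal identity holds, since there $\Vert\Theta_i\Vert\equiv 1$ and $\Theta$ is the limit of the $\Theta_i$.

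Two smaller points. The material in your second step about $h_\infty$, $\phi_\infty$ and $F\in L^{1}$ is not needed for this proposition (it belongs with Proposition 4.15 and the remark on singular K\"ahler--Einstein metrics). And your ``main obstacle'' --- extending $\Theta$ across the singular set by Shiffman-type removable-singularity theorems and codimension-four counts --- is a non-issue here: the log-terminal criterion only asks for a nowhere-zero form on $W^{\reg}\cap U$ together with the finiteness of its $L^{2}$ norm; no extension across $\Sigma$ is required, only normality of $W$ and the fact that $K_W$ is $\mathbb{Q}$-Cartier near the point, which you already get from the identification of $L^{k_1}$ with the restriction of the hyperplane bundle.
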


\begin{proof}
By general theory, what the statement really means is that for any singular point $x$ in $W$, there is a neighborhood $U$, and a nowhere zero holomorphic $n$ form $\Theta$ on $W^{reg}\cap U$ with $\int_{ W^{reg}\cap U}\Theta\wedge \overline{\Theta}< \infty$. We first consider the cases $L=K_X^2$ and $K_X^{-1}$.  Previous discussion has shown that for any $x$,  there is  a neighborhood $U$ of $x$, an integer $k>0$, a constant $C>0$, and  a section $s$ of $L^k$ over $X_\infty\setminus \Sigma=W^{reg}$ with $C^{-1}\leq \Vert s(x)\Vert^2\leq C$ for $x\in W^{reg}\cap U$. Here the norm is taken with respect to the K\"ahler-Einstein metric.   When $L=K_X^2$,  we define $\Theta=(s\otimes \overline{s})^{\frac 1 {2k}}$, then 
 $$\int_{W^{reg}\cap U} \Theta\wedge \overline{\Theta}=\int_{W^{reg}\cap U} \Vert s\Vert^{\frac{1}{k}} dvol\leq C^{\frac {1}{2k}} Vol(W). $$
 When $L=-K_X$, we define $\Theta=(s^*\otimes \overline{s^*})^{\frac{1}{k}}$, where $s^*$ is the dual section of $s$. So $\Vert s^*\Vert=\Vert s\Vert ^{-1}$.  Then 
 $$\int_{W^{reg} \cap U} \Theta\wedge \overline{\Theta}=\int_{W^{reg}\cap U} \Vert s^*\Vert^{\frac{2}{k}} dvol\leq C^{-\frac{1}{k}}Vol(W). $$
 In the Calabi-Yau case,  since $\Theta_i$ has norm one, we easily see that there is a limit holomorphic volume form $\Theta$ on $X_\infty\setminus \Sigma =W^{reg}$  with norm one.  Then $\int_{W^{reg}} \Theta\wedge \overline{\Theta}=Vol(W). $
\end{proof}

\begin{rmk}
From the uniform bound of the K\"ahler potentials $\phi_i$, it is not hard to see that the K\"ahler forms $\omega_i$ converge to a singular K\"ahler-Einstein metric $\omega_\infty$ on $W$ in the sense of \cite{EGZ}.
\end{rmk}

\section{Structure of three dimensional tangent cones}

In this section we make a more detailed study of the structure of the tangent cones occurring in the previous sections, in particular we prove

 \begin{thm}\label{thm-orbifold}
In complex dimension three, the link $Y$ of any tangent cone of the Gromov-Hausdorff limit $X_\infty$  is a five dimensional Sasaki-Einstein orbifold.
\end{thm}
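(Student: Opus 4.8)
The plan is to combine the algebro-geometric structure of the limit, established in Section 4, with the differential-geometric properties of tangent cones recalled in Section 2. Fix a tangent cone $C(Y)$ at a point $p\in X_\infty$, where $Y$ has real dimension five. We know $Y^{\reg}$ is a smooth Sasaki-Einstein manifold with $\Ric = (m-2)g_Y = 4 g_Y$, and $C(Y)$ carries a Ricci-flat K\"ahler metric outside $\{O\}\cup C(\Sigma_Y)$, with $\Sigma_Y$ of Hausdorff codimension at least four in $Y$, hence a finite set of points (since $\dim Y = 5$ and $\dim_{\mathcal H}\Sigma_Y \le 5-4 = 1$; in fact we will argue it is $0$). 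So the first task is to upgrade this: I would show $\Sigma_Y$ is a finite set and that near each of its points (and near $O$) the cone looks, in the complex-analytic sense, like a quotient singularity.

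First I would apply the machinery of Section 4 not to $X_\infty$ itself but to the cone $C(Y)$, viewed as a limit of rescalings of the $X_i$. The arguments of Lemmas 4.9--4.13 and Propositions 4.6--4.8 are local and scale-invariant, so they apply verbatim to show that $C(Y)$ is biholomorphic (away from its vertex) to a normal affine variety, with $C(Y)^{\reg}$ mapping to the smooth locus, and that the singular set of this variety has complex codimension at least two. Combined with the codimension-four bound on $\Sigma_Y$ this forces the complex-analytic singular set of $C(Y)$ to be exactly $\{O\}\cup C(\Sigma_Y)$ with $C(\Sigma_Y)$ of complex dimension at most one; in the three-dimensional case this means $C(\Sigma_Y)$ is a union of lines through $O$, i.e.\ $\Sigma_Y$ is a finite set of points in $Y$. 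Moreover the Ricci-flat K\"ahler metric is smooth on the complement, and by Proposition 4.15 (applied to the K\"ahler-Einstein setting of the cone, with $L$ effectively the canonical bundle) the metric singular set equals the variety singular set, so it really is this one-dimensional cone $C(\Sigma_Y)$ plus the vertex.

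The key step is then a local analysis at a point $q\in\Sigma_Y$. Near the ray through $q$ the cone is a three-dimensional normal variety with an isolated (curve) singularity, carrying a smooth Ricci-flat K\"ahler metric on the complement, and asymptotically modelled on a product (a two-dimensional tangent cone cross $\bC$, by a second tangent-cone / splitting argument of Cheeger-Colding type). Taking a tangent cone of $C(Y)$ at $q$ splits off an $\bR^2$ factor, leaving a two-dimensional tangent cone which is a Ricci-flat K\"ahler cone with isolated singularity at the vertex: such a cone is $\bC^2/\Gamma$ for a finite subgroup $\Gamma\subset U(2)$ acting freely on $S^3$, by the classical two-dimensional classification (the link is a spherical space form). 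Pulling this back, a punctured neighbourhood of $q$ in $Y^{\reg}$ has a finite fundamental group and its universal cover is a smooth Sasaki-Einstein manifold, so $Y$ is an orbifold near $q$; the orbifold charts near the finitely many singular points, together with the manifold structure on $Y^{\reg}$, give $Y$ the structure of a Sasaki-Einstein orbifold.

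The main obstacle I expect is the local structure result at the singular points: showing that the three-dimensional Ricci-flat K\"ahler cone singularity is, metrically and not merely topologically, a quotient singularity, so that one genuinely gets orbifold charts compatible with the Sasaki-Einstein structure rather than just a normal algebraic singularity with a Ricci-flat K\"ahler metric. This requires the two-dimensional classification of Ricci-flat K\"ahler cones (ultimately the resolution of singularities / minimal-model input in dimension two, plus the fact that a Ricci-flat ALE space asymptotic to $\bC^2/\Gamma$ with the right volume is flat), together with a splitting theorem ensuring that an iterated tangent cone of $C(Y)$ along $\Sigma_Y$ genuinely splits a Euclidean factor. Controlling the transition between the metric tangent-cone picture and the complex-analytic picture—i.e.\ knowing the biholomorphism near $q$ respects the cone structures well enough to conclude the orbifold charts are holomorphic and the metric extends as an orbifold metric—is where the real work lies; everything else is an assembly of results already in place.
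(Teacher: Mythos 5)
There is a genuine gap, in fact two. First, your description of the singular set is wrong: in complex dimension three $\Sigma_Y$ is not a finite set of points. The complex-analytic singular set of the cone, when non-trivial, is a complex curve $C(\Sigma_Y)$ through the vertex, and a complex cone curve meets the unit link in \emph{circles}, not points; equivalently, the paper shows (Lemmas 5.2--5.5 and Proposition 5.6) that the Reeb field $\xi$ generates a one-parameter group of isometries of all of $Y$ with no fixed points which preserves $\Sigma_Y$, so $\Sigma_Y$ is a finite union of closed Reeb orbits. Your outline never introduces the Reeb flow, which is precisely the tool the paper uses both to pin down the structure of $\Sigma_Y$ and to reduce the local analysis near a singular circle to a four-dimensional problem by taking the local quotient of a neighbourhood $U_q$ by the flow. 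Relatedly, your appeal to Section 4 ``verbatim'' for the cone is not justified: Propositions 4.6--4.8 and Lemmas 4.9--4.13 concern compact polarised limits of manifolds in ${\mathcal K}(n,c,V)$, and producing enough global holomorphic sections on the non-compact cone $C(Y)$ to realise it as a normal affine variety is a separate and substantially harder problem which this paper does not establish (and does not need).

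Second, the decisive step --- passing from ``the tangent cone at $q\in\Sigma_Y$ is $\R\times \C^{2}/\Gamma$'' to an actual orbifold chart in which the metric extends smoothly --- is exactly what you defer as ``where the real work lies,'' and it constitutes most of the paper's proof. Knowing the metric tangent cone does not by itself give finite local fundamental group of a punctured neighbourhood, nor smoothness of the metric on a local uniformising cover. The paper proves this as Theorem 5.7: after quotienting a neighbourhood of the singular orbit by the Reeb flow one has a four-dimensional K\"ahler--Einstein metric with an isolated singular point whose tangent cone is $\C^2/\Gamma$, and the orbifold regularity is then established in four steps --- a $C^0$ chart built by gluing almost-Euclidean annuli with a gauge-fixing lemma, a finite $L^2$ curvature bound via Chern--Simons invariants of the induced (anti-)self-dual connections, pointwise bounds on $|\Riem|$ and its derivatives by Moser iteration using the improved Kato inequality, a $C^{1,\alpha}$ chart via Rauch comparison, and finally smoothness from the complex Monge--Amp\`ere equation together with Hartogs' theorem --- followed by the reassembly of the five-dimensional orbifold charts using the contact form. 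Your proposal identifies the right general ingredients (splitting plus the classification of two-dimensional Ricci-flat K\"ahler cones gives $\Gamma\subset U(2)$), but as it stands it assumes the removable-singularity statement rather than proving it, so the argument is incomplete.
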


As before we write $Y=Y^{reg}\cup \Sigma$, where $Y^{reg}$ is the smooth part and $\Sigma$ the singular part. We view $Y$ as the radius one link in $C(Y)$. For any $q\in \Sigma$, any tangent cone of $C(Y)$ at $q$ splits at least one line, so by general theory(see for example \cite{Ch2}) must have the form $\C\times\C^2/\Gamma$ for some $\Gamma\in U(2)$. Moreover, $\Gamma$ depends only on $q$.  So  the tangent cone of $Y$ at $q$ is $\R\times \C^2/\Gamma$. 

\begin{lem}
 $Y^{reg}$ is geodesically convex in $Y$. 
\end{lem}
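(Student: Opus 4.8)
\emph{Proposal.} The plan is to run, at the level of the tangent cone, the model computation for the exact cone $\R\times\C^2/\Gamma$ --- where a minimizing geodesic between two points off the singular $\C^2/\Gamma$-axis cannot reach that axis, since doing so would force the link $S^3/\Gamma$ to contain two points at distance $\geq\pi$ --- and to transfer it to $Y$ by a blow-up argument, arguing by contradiction.

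First I would fix the contradiction hypothesis: let $\gamma:[0,L]\to Y$ be a unit-speed minimizing geodesic with $\gamma(0),\gamma(L)\in Y^{reg}$ but $\gamma^{-1}(\Sigma)\neq\emptyset$. Since $\Sigma$ is closed and the endpoints are regular, $\gamma^{-1}(\Sigma)$ is compact in $(0,L)$; put $a=\min\gamma^{-1}(\Sigma)$ and $q=\gamma(a)$, so that $\gamma|_{[0,a)}\subset Y^{reg}$, and, subsegments of $\gamma$ being minimizing, $d(\gamma(t),q)=a-t$ with $\gamma|_{[t,a]}$ realizing this distance for all $t<a$. The point $q$ is singular, so by the discussion preceding this Lemma its unit tangent cone is $C_0=\R\times\C^2/\Gamma=\R\times C(Z)$ with $Z=S^3/\Gamma$ and $\Gamma\neq 1$ (if $\Gamma$ were trivial, $C_0=\R^5$ and $q$ would be regular).

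Next I would blow up at $q$: choosing $\lambda_i\to\infty$ with $(\lambda_i Y,q)\to(C_0,o)$ in the pointed Gromov--Hausdorff sense ($o$ the apex), the reparametrizations $\tilde\gamma_i(s)=\gamma(a+s/\lambda_i)$ are unit-speed minimizing geodesics of $\lambda_i Y$ through $q$ on intervals exhausting $\R$ (here one uses that $a$ is interior). An Arzel\`a--Ascoli argument produces a subsequential limit $\gamma_\infty:\R\to C_0$, a unit-speed minimizing line with $\gamma_\infty(0)=o$. Writing $\gamma_\infty=(f,c)$ in $\R\times C(Z)$, both $f$ and $c$ are minimizing geodesics of constant speed, and $c(0)$ is the vertex of $C(Z)$; if $c$ were nonconstant it would be a line in $C(Z)$ through the vertex, forcing two points of $Z$ at distance $\geq\pi$. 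But $\mathrm{diam}(S^3/\Gamma)<\pi$ when $\Gamma\neq 1$ (if $d([x],[y])=\pi$ then $gy=-x$ for every $g\in\Gamma$, and freeness forces $|\Gamma|=1$). Hence $c$ is constant and $\gamma_\infty=\R\times\{\text{vertex}\}$ is exactly the singular axis of $C_0$; in particular the half $\gamma|_{[0,a)}$, which lies in $Y^{reg}$, rescales into $\mathrm{Sing}(C_0)$.

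The hard part is converting this infinitesimal statement into a genuine contradiction, i.e.\ promoting ``$\gamma$ is tangent to $\Sigma$ at $q$'' to ``$\gamma$ lies in $\Sigma$ near $q$'', which would contradict $a=\min\gamma^{-1}(\Sigma)$; I expect this bridge to be the main obstacle. The route I would try: $\gamma_\infty$ is the \emph{unique} minimizing geodesic in $C_0$ between $\gamma_\infty(-1)=(-1,\text{vertex})$ and $\gamma_\infty(1)=(1,\text{vertex})$, since any competing path $(f',c')$ has length $\int\sqrt{|\dot f'|^2+|\dot c'|^2}\geq 2$ with equality only if $\dot c'\equiv 0$. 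At each finite stage, however, $\Sigma$ has Hausdorff codimension $\geq 4>2$, so one can pick endpoints near $\tilde\gamma_i(\mp 1)$ converging to $\gamma_\infty(\mp 1)$ joined by a minimizing geodesic that meets $\Sigma$ in a negligible set, and such geodesics subconverge to a minimizing geodesic in $C_0$ between $\gamma_\infty(-1)$ and $\gamma_\infty(1)$; combining this with the stability of the singular stratification under rescaling (limits of singular points are singular, with a uniform density gap again coming from $\mathrm{diam}(S^3/\Gamma)<\pi$) should rule out $\gamma(a-\varepsilon)$ being regular for small $\varepsilon$. If only the weaker reading of ``geodesically convex'' --- that the intrinsic length metric of $Y^{reg}$ agrees with $d_Y$ --- is needed later, it follows at once from $\mathrm{codim}\,\Sigma>2$ by perturbing any minimizing geodesic off $\Sigma$ at negligible length cost.
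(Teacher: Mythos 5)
There is a genuine gap, and it sits exactly where you flag it. The paper does not attempt a from-scratch argument at all: it observes that a minimizing geodesic in $Y$ corresponds, by the elementary trigonometric correspondence, to a minimizing geodesic in the cone $C(Y)$, and then quotes the Colding--Naber theorem \cite{CN} that the regular part $C(Y^{reg})$ is geodesically convex in $C(Y)$. In other words, the statement you are trying to prove is (a special case of) a deep theorem that the paper handles by citation. Your blow-up argument correctly shows that a hypothetical minimizing geodesic hitting $\Sigma$ at a first time $a$ rescales, at $q=\gamma(a)$, onto the singular axis $\R\times\{\mathrm{vertex}\}$ of $\R\times C(S^3/\Gamma)$ (using $\mathrm{diam}(S^3/\Gamma)<\pi$ for $\Gamma\neq 1$), but the proposed bridge from this tangency statement to a contradiction does not work. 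Uniqueness of the limit geodesic between $(\pm 1,\mathrm{vertex})$ constrains only the limit, not the approximators: minimizing geodesics with endpoints in the regular set, lying entirely in the regular set, can perfectly well converge to the singular axis --- indeed this is exactly what happens in the model cone $\R\times C(Z)$ itself, where geodesics between points slightly off the axis avoid the axis yet limit onto it. So picking nearby endpoints joined by geodesics that miss $\Sigma$, and passing to the limit, produces no contradiction, and nothing forces $\gamma(a-\varepsilon)$ to be singular. The "density gap / singular points limit to singular points" heuristic is also unavailable in this soft form: lower semicontinuity of regularity is precisely the kind of statement (quantitative $\epsilon$-regularity plus the Colding--Naber H\"older continuity of tangent cones along geodesics) that the cited theorem is built from.

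A second, smaller issue: your fallback --- that only the weaker statement "the intrinsic length metric of $Y^{reg}$ equals $d_Y$" is needed --- is not accurate for how the lemma is used. In Lemma 5.3 the paper flows the \emph{actual} minimizing geodesic $\gamma$ by $\exp(s\xi)$; because $\gamma$ is a compact subset of $Y^{reg}$ it has positive distance to $\Sigma$, so it can be flowed for a uniform time $\epsilon>0$ with its length exactly preserved, giving $f(t+s)\leq f(t)$ on the nose and hence monotonicity. With only $\delta$-almost-minimizing paths in $Y^{reg}$ one gets $f(t+s)\leq f(t)+\delta$ only for $s\leq\epsilon(t,\delta)$, and since such paths may be forced arbitrarily close to $\Sigma$ as $\delta\to 0$, $\epsilon(t,\delta)$ can degenerate and the errors need not be absorbable; the monotonicity argument does not go through without further work. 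So either quote Colding--Naber as the paper does (transferring to $Y$ via the cone-geodesic correspondence), or be prepared to reprove a substantial part of that theorem; the blow-up sketch as it stands does not close the argument.
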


\begin{proof}
 For any two points $p$, $q$ in $Y$, it is a general fact that a minimizing geodesic in $C(Y)$ connecting $p$ and $q$ must be of the form $(r(t), \gamma(t))$ where $\gamma(t)$ is a geodesic in $Y$, and $r$ is a universal function of $d_Y(p, q)$ and $t$ determined by elementary trigonometry. By recent result of Colding-Naber \cite{CN} we know $C(Y^{reg})$ is geodesically convex in $C(Y)$, so the lemma follows.
 \end{proof}

As usual there is a Reeb field $\xi=Jr\frac{\p}{\p r}$ on $C(Y^{reg})$, which is holomorphic, Killing, of unit length, and tangent to $Y^{reg}$. For any $p\in Y^{reg}$ we denote by $p(t)$ the integral curve $\exp(t\xi).p$. For $|t|$ sufficiently small, $p(t)$ defines a geodesic segment in $Y^{reg}$. 

\begin{lem}
For any  $p_1, p_2\in Y^{reg}$, if $p_1(t), p_2(t)$ are both defined on some interval $[0, T]$, then $f(t)=d(p_1(t), p_2(t))$ is independent of $t$.
\end{lem}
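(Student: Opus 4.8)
The plan is to show that the Reeb flow restricts to a \emph{local} one-parameter group of isometries of $(Y^{\reg}, g_Y)$ and then, using the geodesic convexity of $Y^{\reg}$ established in the previous lemma, to deduce that $f$ is locally constant, hence constant on the connected interval $[0,T]$. For the first point: $\xi = Jr\frac{\p}{\p r}$ is Killing on the cone $C(Y^{\reg})$ and is tangent to the links $\{r=\mathrm{const}\}$, so $\xi(r)=0$; its flow $\Phi_s = \exp(s\xi)$ therefore preserves both the radial function and the cone metric $dr^2 + r^2 g_Y$, and consequently restricts to an isometry of $(Y^{\reg}, g_Y)$ wherever it is defined. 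Note $\Phi_s$ need not be complete on $Y^{\reg}$, but by the flow property $\Phi_s(p_i(t)) = p_i(t+s)$ whenever both sides are defined, which holds for $t, t+s \in [0,T]$ by hypothesis.

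For the core estimate, fix $t_0 \in [0,T]$. Since $p_1(t_0), p_2(t_0) \in Y^{\reg}$, the preceding lemma provides a minimizing geodesic $\sigma\colon[0,\ell]\to Y^{\reg}$ from $p_1(t_0)$ to $p_2(t_0)$ with $\ell = f(t_0) = d(p_1(t_0), p_2(t_0))$, and its image is a compact subset of the open manifold $Y^{\reg}$. By compactness and local existence of flows there is $\delta>0$ such that, for $|s|<\delta$, the map $\Phi_s$ is defined on an open neighbourhood of $\sigma([0,\ell])$ and is there an isometric embedding. For such $s$ the curve $\Phi_s\circ\sigma$ lies in $Y^{\reg}\subseteq Y$, joins $\Phi_s(p_1(t_0)) = p_1(t_0+s)$ to $p_2(t_0+s)$, and has length $\ell$; hence $f(t_0+s)\le \ell = f(t_0)$ for all $|s|<\delta$ with $t_0+s\in[0,T]$. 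As this holds with both signs of $s$ near every $t_0$, for $t_1, t_2$ close enough we get $f(t_1)\le f(t_2)$ and $f(t_2)\le f(t_1)$; thus $f$ is locally constant on $[0,T]$, hence constant.

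The main obstacle is exactly the incompleteness of the Reeb flow near $\Sigma$: because $\Phi_s$ is not a global isometry of the metric space $Y$, one cannot simply invoke ``isometries preserve distances'', and this is where the previous lemma is essential --- geodesic convexity places a minimizing geodesic between $p_1(t_0)$ and $p_2(t_0)$ inside the region $Y^{\reg}$ on which the smooth flow is available for a uniform short time. Once this is granted, the remaining ingredients (a Killing field tangent to the links descends to a Killing field on each link, preservation of lengths under isometries, and connectedness of $[0,T]$) are routine.
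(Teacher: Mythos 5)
Your proposal is correct and follows essentially the same route as the paper: use the geodesic convexity of $Y^{\reg}$ (the previous lemma) to place a minimizing geodesic between $p_1(t)$ and $p_2(t)$ inside $Y^{\reg}$, push it by the Reeb flow for a short time (which preserves length since $\xi$ is Killing and tangent to the link), deduce $f$ is non-increasing, and then run the argument with $-\xi$ (equivalently, both signs of $s$) to conclude $f$ is constant. Your write-up is merely a more detailed version of the paper's argument, spelling out the uniform short-time existence of the flow near the compact geodesic image.
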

\begin{proof}
By Lemma 5.2 for any $t\in [0, T]$, the minimizing geodesic $\gamma$ connecting $p_1(t)$ and $p_2(t)$ lies in $Y^{reg}$. So there is an $\epsilon>0$ so that the curve $\gamma_s=\exp(s\xi). \gamma$ is in $Y^{reg}$ for $s\in[0, \epsilon]$. Clearly the length of $\gamma_s$ is independent of $s$. Thus $f(t)$ is a decreasing function. Replace $\xi$ by $-\xi$ one sees that $f$ is also an increasing function. Thus $f$ is constant. \\
\end{proof}

\begin{prop}
$\xi$ generates a one parameter group of isometric actions on $Y$. 
\end{prop}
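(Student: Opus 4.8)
The plan is to upgrade the local flow generated by $\xi$ on $Y^{\reg}$ to a genuine one-parameter group of isometries of the whole metric space $Y$. First I would observe that the preceding lemmas already do the essential geometric work: Lemma 5.3 shows that along the flow the distance between any two regular points is preserved for as long as both integral curves exist. So the flow $\Phi_t$, wherever it is defined, is a local isometry of $Y^{\reg}$ onto its image, and in particular it is distance-preserving for the intrinsic (hence, by the convexity in Lemma 5.2 and the compatibility of the length metric with the metric-space structure, the ambient) distance on $Y$.

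The main point to establish is that the flow is defined for all time on a dense subset and extends continuously to all of $Y$. Here I would argue as follows. Since $\xi$ is a unit-length Killing field on $Y^{\reg}$, its integral curves are unit-speed geodesics, and they can only fail to be defined for all time by running into the singular set $\Sigma$ in finite time. Fix a regular point $p$ and suppose the integral curve $p(t)$ is defined on a maximal interval $[0,T)$ with $T<\infty$. Because $\xi$ is distance-decreasing-preserving in the sense of Lemma 5.3, for any other regular point $q$ near $p$ the curve $q(t)$ is defined on the same interval and stays at fixed distance from $p(t)$; letting $t\to T$, the points $p(t)$ form a Cauchy sequence and converge to some $p(T)\in Y$. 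If $p(T)$ were a regular point we could continue the flow past $T$, contradicting maximality; so $p(T)\in\Sigma$. But now I would use the structure of tangent cones at singular points of $Y$ established at the start of this section: the tangent cone of $C(Y)$ at any point of $C(\Sigma)$ splits off a line and has the form $\bC\times\bC^2/\Gamma$, and the same rigidity, together with the fact that $\xi$ is a nontrivial holomorphic Killing field of bounded length, forces the flow to extend across such points (the flow near $p(T)$ is modeled on a unit rotation/translation in this product cone, which is defined for all time). This shows $T=\infty$, so the flow is complete on $Y^{\reg}$, and since $Y^{\reg}$ is dense and the flow maps are uniformly distance-preserving, they extend uniquely to isometries $\Phi_t: Y\to Y$ with $\Phi_{s+t}=\Phi_s\circ\Phi_t$ and $\Phi_0=\mathrm{id}$.

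I expect the extension-across-the-singular-set step to be the main obstacle. One clean way around it, which I would use if the tangent-cone argument gets delicate, is to avoid the singular set entirely: observe that on $Y^{\reg}$ the distance between $p(t)$ and $p(t')$ is bounded by $|t-t'|$, so $\{\Phi_t\}$ is an equicontinuous family of partial isometries; take any sequence $t_n\to T$ and, using compactness of $Y$ together with the fact that isometric embeddings of a dense subset of a compact metric space into itself form a compact family (Arzelà--Ascoli), extract a limit map $\Phi_T: Y\to Y$ which is distance-preserving, hence (being a distance-preserving self-map of a compact metric space) a surjective isometry. Its restriction to $Y^{\reg}$ must again be a flow line continuation since $\xi$ is the generator, giving completeness; uniqueness of the limit then follows from the group law. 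Either way, once $\{\Phi_t\}$ is a complete one-parameter family of isometries of $Y$ restricting on $Y^{\reg}$ to the flow of $\xi$, we are done; the continuity of $t\mapsto\Phi_t$ in the compact-open topology is immediate from the $|t-t'|$ distance bound, so $\xi$ generates a one-parameter group of isometric actions on $Y$.
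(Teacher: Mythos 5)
There is a genuine gap, and it sits exactly where you predicted: the claim that the flow cannot die by hitting $\Sigma$ in finite time. Your primary argument asserts that because the tangent cone at a point of $\Sigma$ has the form $\R\times\C^{2}/\Gamma$ and $\xi$ is a Killing field of constant length, "the flow extends across such points." But $\xi$ is only defined on $Y^{\reg}$, so once an integral curve reaches a limit point $p(T)\in\Sigma$ there is no vector field there whose flow you could be "modeled on"; asserting that the flow near $p(T)$ looks like a rotation/translation of the model cone presupposes an isometric extension of the flow to a neighbourhood of the singular point, which is precisely what has to be proved (indeed Lemma 5.5 and Proposition 5.6, which describe how the extended action behaves on $\Sigma$, come only after this proposition and depend on it). Nothing in your argument rules out the real danger, namely an orbit of a regular point limiting onto $\Sigma$, and that is the entire content of the statement.

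Your Arzel\`a--Ascoli fallback does not close this gap either, because the partial flow maps $\Phi_{t_{n}}$ are only defined on the open set $U_{t_{n}}\subset Y^{\reg}$ of points whose orbits survive to time $t_{n}$, and a priori these domains can shrink and omit whole open regions (e.g.\ a neighbourhood of $\Sigma$). So the limit you extract is only a distance-preserving map from some closed, possibly non-dense subset of $Y$ into $Y$; the classical fact that a distance-preserving self-map of a compact metric space is a surjective isometry is unavailable, and with it the step "the limit isometry preserves $Y^{\reg}$, hence the orbit never reached $\Sigma$." What the paper actually supplies, and what is missing from your proposal, is a quantitative barrier: fixing a convex ball $B(p,r)\subset Y^{\reg}$, one shows $B(p(t),r)\cap\Sigma=\emptyset$ for all $t$. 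This is proved by contradiction at the first time $T$ when $\partial B(p(T),r)$ touches $\Sigma$ at some $q$: by Lemma 5.3 the small balls $B_{i}$ centred along the radial geodesic from $p(T)$ to $q$ are isometric to balls in the fixed ball $B(p,r)$, hence have uniformly bounded geometry, so after rescaling at $q$ they converge to a flat Euclidean ball touching the singular ray of the tangent cone $\R\times\C^{2}/\Gamma$, while Lemmas 5.2 and 5.3 force distances in the limit ball to be realised by internal geodesics --- impossible in $\R\times\C^{2}/\Gamma$ with $\Gamma$ nontrivial. With this uniform gap from $\Sigma$ the flow is complete on $Y^{\reg}$, and only then does the density-plus-Lemma-5.3 extension to all of $Y$ (which you describe correctly) go through. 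Some substitute for this blow-up argument (or, say, a measure-theoretic completeness argument that you would then have to supply in detail) is needed; as written, both of your routes assume the conclusion at the critical step.
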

\begin{proof}
 Fix any point $p$ in $Y^{reg}$,  choose a convex embedded ball $B(p, r)$ in $Y^{reg}$. 
We claim $B(p(t), r)\cap \Sigma=\emptyset$ for all $t$. For otherwise there is a $T>0$ such that $B(p(t), r)\cap \Sigma=\emptyset$ for $t\in [0, T]$ but $\p B(p(T), r)\cap \Sigma$ is non-empty. Choose a point $q$ in this intersection.   Let $\gamma:[0, 1]$ be the radial geodesic connecting $p(T)$ and $q$, and let $p_i=\gamma(1-2^{-i})$. Then $B_i=B(p_i, 2^{-i}r)\subset B(p(T), r)$, and $d(p_i, q)=2^{-i}$. Consider the pointed sequence $(Y, 2^{i}d_Y, q)$. By assumption we know  as $i$ tends to infinity by passing to a subsequence this converges to  a tangent cone $Y_q=\R\times \C^2/\Gamma$. Then  the rescaled balls $2^{i}B_i$ converge to a ball  $B(p_{\infty}, r)$ in $Y_q$ and $d(p_{\infty}, 0)=r$. But $B_i$ is isometric to a ball in $B(p, r)$ so have uniformly bounded geometry and thus $2^{i}B_i$ converges to a flat ball $B_\infty$. Moreover by Lemma 5.3  the distance between any two points in $B_\infty$ is realized by the length of a geodesic within $B_\infty$. Clearly this can not happen on $Y_q$. 
 
By the claim the isometric action $\exp(t\xi)$ is well defined on $Y^{reg}$ for all $t$. Then we can extend the action  to an isometric action on $Y$: given $p\in \Sigma$ we pick a Cauchy sequence $p_i\in Y^{reg}$ converging to $q$; for any $t$, $p_i(t)=\exp(t\xi)$ is also a Cauchy sequence in $Y^{reg}$, so there is a unique limit $p(t)$. We define $exp(t\xi). p=p(t)$. Clearly $\exp(t\xi)$ is  distance preserving. Moreover $\exp(t\xi)$ preserves both $Y^{reg}$ and $\Sigma$. 
\end{proof}

We denote by $\psi(t)=\exp(t\xi)(t\in \R)$ the above one parameter group action. Then we have

\begin{lem}
There is no point in $Y$ fixed by $\psi$. 
\end{lem}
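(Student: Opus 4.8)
The plan is to argue by contradiction, passing to a tangent cone of $Y$ at a hypothetical fixed point. On $Y^{reg}$ the Reeb field $\xi$ has unit length, so it is nowhere zero and $\psi$ has no fixed point there; since $\psi$ preserves $Y^{reg}$ and $\Sigma$ (Proposition 5.4), any fixed point $q$ of $\psi$ would have to lie in $\Sigma$. Assume such a $q$ exists.

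I would then blow up at $q$. Pick $\lambda_i\to\infty$ so that the pointed rescalings $(Y,\lambda_i d_Y, q)$ converge, after passing to a subsequence, to the tangent cone $Y_q=\R\times\C^2/\Gamma$ with vertex $O'$ (the limit of $q$). On $Y^{reg}$ set $\eta_i:=\lambda_i^{-1}\xi$: this is a Killing field for the metric $\lambda_i^2 g$, of constant length $1$, whose flow is the reparametrised Reeb flow $t\mapsto \psi(\lambda_i^{-1}t)$ and which fixes $q$. Over compact subsets of the regular part the rescaled metrics converge smoothly with locally bounded curvature, so the $\eta_i$ have locally uniformly bounded $C^1$ norms and, along a further subsequence, converge in $C^1_{loc}$ on $Y_q^{reg}$ to a Killing field $\xi_\infty$ with $|\xi_\infty|\equiv 1$ there. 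Equivalently, the isometries $\psi(\lambda_i^{-1}t)$ converge to a one-parameter group $\psi_\infty(t)$ of isometries of $Y_q$; it fixes $O'$ because every $\psi(\lambda_i^{-1}t)$ fixes $q$, and it is non-trivial since its generator $\xi_\infty$ has unit length on the dense set $Y_q^{reg}$.

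The contradiction then comes from the cone structure. Any isometry of a metric cone that fixes the vertex sends radial rays to radial rays, hence commutes with the dilations $\Phi_\mu$ $(\mu>0)$; so $\psi_\infty(t)\circ\Phi_\mu=\Phi_\mu\circ\psi_\infty(t)$ for all $t$, and differentiating at $t=0$ gives $(\Phi_\mu)_*\xi_\infty=\xi_\infty$. Since $\Phi_\mu$ is a homothety with ratio $\mu$, this forces $|\xi_\infty|_{\Phi_\mu(x)}=\mu\,|\xi_\infty|_x$ for every $x\in Y_q^{reg}$; but $|\xi_\infty|\equiv 1$, so $\mu=1$ for all $\mu>0$, which is absurd. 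Hence $\psi$ has no fixed point in $Y$.

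The main obstacle is the middle step: showing that the rescaled, reparametrised Reeb flows converge to a non-trivial one-parameter isometry group of the tangent cone, i.e. that the unit Killing fields $\eta_i$ sub-converge to a unit Killing field $\xi_\infty$ on $Y_q^{reg}$. This rests on the $C^\infty$ convergence of the rescaled metrics and the curvature bounds on compact subsets of the regular part -- both part of the Cheeger--Colding tangent cone theory already invoked in Section 2 and in the proof of Proposition 5.4 -- which let one run the elliptic estimates for Killing fields; the remaining points (that $|\xi_\infty|\equiv 1$ and that $\psi_\infty$ fixes the vertex) are then immediate from the convergence and from the fact that each $\psi(\lambda_i^{-1}t)$ fixes $q$.
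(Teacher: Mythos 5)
Your proposal is correct and follows essentially the same route as the paper: assume a fixed point $q\in\Sigma$, blow up to the tangent cone $\R\times\C^2/\Gamma$, and show the induced one-parameter isometric action there is generated on the smooth part by a Killing field of constant (unit) length while fixing the vertex, which is impossible. Your dilation-commutation argument is simply a careful implementation of the paper's terse final step ("a constant-length Killing field cannot have zeroes"), and your explicit time reparametrisation $t\mapsto\lambda_i^{-1}t$ makes precise how the limit action is obtained.
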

\begin{proof}
If $q$ is a fixed point, then clearly $q\in \Sigma$. Choose a tangent cone $Y_q=\R\times \C^2/\Gamma$ at $q$. The action of $\psi$ induces a one parameter group of isometric actions on $Y_q$, which fixes the origin. On the other hand on the smooth part of $Y_q$ the  corresponding infinitesimal action is given by a Killing field  of  constant length. Clearly such a Killing field can not have zeroes. Contradiction.
\end{proof}

 Now we are ready to conclude that

\begin{prop}
$\Sigma$ is a disjoint union of finite many periodic orbits of $\psi$.
\end{prop}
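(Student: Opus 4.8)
The plan is to show that each point of $\Sigma$ lies on a compact (hence periodic) orbit of $\psi$, and that $\Sigma$ consists of only finitely many such orbits. First I would fix a point $q \in \Sigma$. By Proposition 5.4 the orbit $\psi(t)\cdot q$ stays in $\Sigma$ for all $t$, and by Lemma 5.6 the orbit is never a fixed point, so it is a genuine one-dimensional orbit. The key local input is the description of the tangent cone $Y_q = \mathbb{R} \times \mathbb{C}^2/\Gamma$ at $q$: the infinitesimal action of $\xi$ at $q$ is a Killing field of constant unit length on the regular part of $Y_q$, and on the $\mathbb{R}$-factor times the smooth locus of $\mathbb{C}^2/\Gamma$ this forces the action to be (a rescaled limit of) a rotation in the $\mathbb{C}^2/\Gamma$ directions fixing the $\mathbb{R}\times\{0\}$ axis --- in particular the orbit through $q$ is, infinitesimally, moving transverse to the singular stratum only through the cone directions. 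I would use this to argue that $\Sigma$ is, near $q$, a one-dimensional $\psi$-invariant set with no other structure transverse to the orbit; concretely, the tangent cone of $\Sigma$ at $q$ is the single line $\mathbb{R}\times\{0\}$, so $\Sigma$ is a topological one-manifold near each of its points.

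Next I would prove compactness of the orbit through $q$. Since $Y$ is compact, the closure of the orbit $\overline{\{\psi(t)\cdot q : t \in \mathbb{R}\}}$ is a compact, connected, $\psi$-invariant subset of $\Sigma$. If the orbit were not periodic, its closure would be a higher-dimensional set (a closed orbit of the $\mathbb{R}$-action that is not a circle accumulates on itself in a way producing, locally, more than a one-manifold), contradicting the fact just established that $\Sigma$ has tangent cone a single line $\mathbb{R}\times\{0\}$ at every one of its points and hence is one-dimensional. More carefully: pick a point $q'$ in the closure that is a limit of $\psi(t_j)\cdot q$ with $t_j \to \infty$; the tangent cone of $\Sigma$ at $q'$ must again be a single line, but the accumulation of the orbit on $q'$ from "two different directions" (for $t$ slightly less and slightly more than the approximate return times) would exhibit two distinct tangent rays in $\Sigma$ at $q'$ unless the orbit actually closes up. Hence the orbit is periodic, i.e.\ a circle.

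Finally, finiteness: the orbits are the connected components of $\Sigma$ (each point of $\Sigma$ lies on exactly one orbit, and distinct orbits are disjoint; since each orbit is a compact one-manifold it is open and closed in $\Sigma$, being a full connected component by the local one-manifold structure). Since $\Sigma$ is compact, it has only finitely many connected components, so $\Sigma$ is a disjoint union of finitely many periodic orbits of $\psi$.

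The main obstacle I expect is the rigidity step that upgrades "$\Sigma$ has a one-dimensional tangent cone at each point" into "$\Sigma$ is locally a single arc through $q$ that is exactly a piece of the $\psi$-orbit" --- i.e.\ ruling out that $\Sigma$ branches, or that the orbit spirals and accumulates on itself. This needs the structure of the tangent cone $\mathbb{R}\times\mathbb{C}^2/\Gamma$ together with the constant-length property of the Killing field (Lemma 5.3 and Proposition 5.4) to pin down both the local geometry of $\Sigma$ and the dynamics of $\psi$ on it simultaneously; one must also invoke the fact (from the convergence theory summarized in Section 2) that the singular set has Hausdorff codimension at least $4$ in $C(Y)$, equivalently $\Sigma$ has codimension at least $3$ in $Y$, so that in complex dimension three ($\dim_{\mathbb{R}} Y = 5$) the stratum $\Sigma$ has dimension at most $2$, and then the $\psi$-invariance plus the tangent-cone computation forces dimension exactly $1$.
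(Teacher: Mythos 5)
Your overall strategy is in the same spirit as the paper's (blow up at a singular point, use the tangent cone $\R\times\C^2/\Gamma$, then conclude by compactness), but the decisive step is missing. You assert that the tangent cone of $\Sigma$ at $q$ is the single line $\R\times\{0\}$ and then deduce that ``$\Sigma$ is a topological one-manifold near each of its points''; neither half of this is justified, and the second implication is false in general (a set can have a one-dimensional tangent cone at a point without being locally a manifold there, e.g.\ if other pieces of the set accumulate onto the point at scales going to zero). This is exactly the scenario one must exclude: a priori there could be infinitely many singular orbits accumulating onto $O_q$. You flag this yourself as ``the main obstacle,'' but flagging it is not resolving it, and the infinitesimal remark about the constant-length Killing field on the cone does not by itself rule out such accumulation or branching of $\Sigma$.

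The paper closes this gap with a specific argument you would need: suppose there are singular points $p_i\to q$ not on the local orbit component $O_q$; since $\Sigma$ is $\psi$-invariant, the whole orbit of $p_i$ is singular, and by the constancy of distances between orbits (the lemma that $d(p_1(t),p_2(t))$ is independent of $t$) one can pick the point $q_i$ of that orbit component closest to $q$ and set $s_i=d(q,q_i)>0$, with $d(q_i,O_q)=s_i$ for large $i$. Rescaling by $s_i^{-1}$ and passing to the tangent cone $\R\times\C^2/\Gamma$, the orbit $O_q$ converges to $\R\times\{0\}$ while $q_i$ converges to a point at distance $1$ from that line; but limits of singular points are singular, and the singular set of $\R\times\C^2/\Gamma$ is exactly $\R\times\{0\}$ (here the codimension-four property enters: $\Gamma$ acts freely away from the origin), a contradiction. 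This yields $\Sigma\cap B_s(q)=O_q\cap B_s(q)$ for small $s$, i.e.\ $\Sigma$ is locally a single orbit arc, after which periodicity of each orbit and finiteness follow from compactness of $\Sigma$ essentially as you say. Note that the crucial point is rescaling at the adapted scale $s_i$ determined by the nearest competing singular orbit, not at arbitrary scales; without this your ``two distinct tangent rays'' argument for periodicity also remains heuristic. The appeal to Hausdorff codimension to first bound $\dim\Sigma\le 2$ is an unnecessary detour once the local identification with the orbit is established.
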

\begin{proof}
Fix any $q\in\Sigma$. Since it is not a fixed point of $\psi$, we can choose a neighborhood $B_r(q)$ such that any path-connected component of the intersection of an orbit of $\psi$ with $\overline{B_r(q)}$ is compact.  Let  $O_q$ be one path-connected component of $\psi(q)$ in $B_r(q)$. We claim for $s>0$ sufficiently small, $\Sigma\cap B_s(q)=O_q\cap B_s(q)$.  If not, then there is a sequence $p_i\in (B_r(q)\setminus O_q)\cap \Sigma$ converging to $q$. We can choose $q_i$ on the path-connected component of the orbit of $p_i$ in $\overline{B_r(q)}$ which has least distance to $q$. Then $s_i=d(q, q_i)>0$. For $i$ sufficiently large we have $d(q_i, O_q)=s_i$.  Now consider the rescaled pointed sequence $(B_r(q), s_i^{-1}d_Y, q)$. As $i\rightarrow\infty$, by passing to a subsequence, this converges to $\R\times \C^2/\Gamma$. Moreover, $O_q$ converges to $\R\times \{0\}$, and $q_i$ converges to $q_\infty$ which has distance $1$ to $\R\times \{0\}$. But $q_i$ is singular for all $i$, so $q_\infty$ is also singular. Contradiction.  Then the Proposition follows from the claim and an obvious compactness argument. 
 \end{proof}
  
  Now we pick a point $q$ in $\Sigma$. Choose a neighborhood $U_q$ of  $q$ such that $\Sigma\cap U_q=O_q$ consists of exactly one component.  Then one can take a local quotient of $U_q$ by  $\psi$, and obtain a four dimensional (incomplete)  metric ball $B(q,200)$(say radius is $200$) with an isolated singularity $q$. Moreover,  the tangent cones at $q$ are all isometric to $\C^2/\Gamma$ for a unique $\Gamma\subset U(2)$. The metric $g$ on the smooth part $B(q,200)\setminus\{q\}$ is K\"ahler-Einstein. 
  We write $B=B(q, 100)$, and $B^*=B(q,100)\setminus\{q\}$. Denote by $\hat B$ the standard ball of radius $100$ in $\C^2/\Gamma$, and $\hat{B}^*=B\setminus \{0\}$.
   
   \begin{thm}\label{thm-ABT}
   There is a diffeomorphism $F: \hat B^*\rightarrow B^*$ such that $F^*g$ extends to  a smooth orbifold Riemannian metric on $\hat B$. 
   \end{thm}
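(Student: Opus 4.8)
The plan is to recognise $(B^{*},g)$ as a non-collapsed Einstein four-manifold with a single isolated metric singularity at $q$ whose tangent cone there is the flat cone $\bC^{2}/\Gamma$, and then to run the local orbifold-regularity theory for Einstein four-manifolds in the spirit of Anderson, Bando--Kasue--Nakajima, Tian and Cheeger--Tian. Three inputs are available: $g$ is Einstein (indeed K\"ahler--Einstein with positive constant) and smooth on $B^{*}$; the metric is non-collapsed, since $B(q,200)$ is a local isometric quotient of the non-collapsed Ricci-flat cone $C(Y)$ (so $\Vol B_{r}(x)\ge c\,r^{4}$ on $B(q,200)$, exactly as $(1.2)$ is used elsewhere in this section); and, by the analysis above in this section, the tangent cone of the quotient orbifold at $q$ is exactly $\bC^{2}/\Gamma$ with $\Gamma\subset U(2)$ acting freely on $S^{3}$, in particular flat with smooth link $S^{3}/\Gamma$.

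First I would establish curvature control near $q$. The local $L^{2}$ curvature estimate for non-collapsed Einstein four-manifolds gives $\int_{B(q,100)}|\Riem|^{2}<\infty$ (for an Einstein metric this can alternatively be tied to Chern--Gauss--Bonnet). Since the tangent cone at $q$ is unique, the rescaled balls $(B_{r}(q),r^{-1}d,q)$ are $\epsilon(r)$-Gromov--Hausdorff close to a ball in $\bC^{2}/\Gamma$ with $\epsilon(r)\to 0$; because that model is flat, the $\epsilon$-regularity theory for Einstein four-manifolds (cf. the convergence theory of Section 2 and \cite{Ch2}) converts this into pointwise decay
\[
 \sup_{B_{r/2}(q)\setminus B_{r/4}(q)}|\Riem|\ \le\ \tfrac{\delta(r)}{r^{2}},\qquad \delta(r)\to 0\ \text{as}\ r\to 0,
\]
together with the analogous bounds $|\nabla^{j}\Riem|\le \delta_{j}(r)\,r^{-2-j}$ from interior elliptic estimates for the Einstein equation, and with enough quantitative strength (via a three-annulus argument) that the $L^{2}$ curvature on shrinking annuli also tends to zero. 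Combined with non-collapsing this forces each rescaled annulus $(A(q;r/2,4r),r^{-2}g)$ to converge in $C^{\infty}$, away from the singular point, to the standard flat annulus in $\bC^{2}/\Gamma$.

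It remains to produce $F$ and to remove the singularity. Patching the scale-by-scale identifications: because the errors tend to zero and the diffeomorphisms of $S^{3}/\Gamma$ are well understood, no topology change occurs between successive dyadic scales and the identifications can be interpolated on overlaps, yielding (after shrinking and rescaling the ball if necessary) a diffeomorphism $F:\hat B^{*}\to B^{*}$ under which $F^{*}g$ is $C^{\infty}$-asymptotic to the Euclidean metric at $0$. Now lift $F^{*}g$ along the orbifold chart $\{0<|z|<100\}\subset\bC^{2}\to\hat B^{*}$, which is exactly the universal covering map since $\pi_{1}(\hat B^{*})=\Gamma$; one gets a $\Gamma$-invariant Einstein metric on a punctured ball in $\bC^{2}$ with $|\Riem|=o(|z|^{-2})$ and finite $L^{2}$ curvature. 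The removable-singularity theorem for Einstein four-manifolds (Bando--Kasue--Nakajima, Anderson) then shows this metric extends smoothly across $0$; by uniqueness of the extension it is again $\Gamma$-invariant, so it descends to a smooth orbifold Riemannian metric on $\hat B$, and the parallel complex structure extends with it, making $F^{*}g$ a K\"ahler--Einstein orbifold metric. The main obstacle is the middle step: passing from the purely infinitesimal fact that the tangent cone at $q$ is the flat orbifold $\bC^{2}/\Gamma$ to genuine pointwise curvature decay and then to a single diffeomorphism on an entire punctured neighbourhood --- i.e. ruling out any scale-dependent "twisting" or hidden bubbling --- which is precisely where the four-dimensional $\epsilon$-regularity theory and the uniqueness and flatness of the tangent cone are indispensable.
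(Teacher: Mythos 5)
Your outline follows the same Anderson/Bando--Kasue--Nakajima circle of ideas that the paper implements, but as written it has a genuine gap at the finite-energy step, and it outsources the actual removable-singularity content to citation. Concretely: you claim $\int_{B(q,100)}|\Riem|^{2}<\infty$ by invoking ``the local $L^{2}$ curvature estimate for non-collapsed Einstein four-manifolds.'' Such estimates apply to smooth Einstein balls, while here $q$ is a singular point, so the only way to use them is on balls or annuli avoiding $q$; but the $L^{2}$ curvature norm is scale-invariant in dimension four, so each dyadic annulus only contributes a bounded (or $o(1)$, using the tangent-cone convergence) amount, and without a \emph{rate} $\delta(r)$ these contributions need not be summable. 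Thus neither the cited local estimate nor your pointwise decay $|\Riem|\le\delta(r)r^{-2}$ yields finiteness of the energy near $q$. This is exactly the point the paper handles by a separate argument (Lemma 5.11): for a K\"ahler--Einstein metric the induced connections on $\Lambda^{\pm}$ are (anti-)self-dual, $|\Riem|^{2}d{\rm vol}$ is a Chern--Weil integrand, and the Chern--Simons invariants of the cross-sectional spheres $S_{r}$ tend to $0$ because the rescaled links converge to the round $S^{3}/\Gamma$ with connections converging to the flat one; one then has to control the mod-$\Z$ ambiguity by continuity in the inner radius. Your parenthetical ``tied to Chern--Gauss--Bonnet'' names this idea but does not carry it out, and it is where the content lies.

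Two further points. First, your patching step asserts that $F^{*}g$ is ``$C^{\infty}$-asymptotic to the Euclidean metric at $0$''; scale-by-scale closeness with errors $o(1)$ and no rate only gives a $C^{0}$ extension together with $o(r^{-j})$ bounds on $j$-th derivatives, which is what the paper's Step I actually proves (via the gauge-fixing and matching Lemmas 5.9--5.10), not smooth asymptotics. This overstatement is harmless for your route only because you then appeal wholesale to the Bando--Kasue--Nakajima/Anderson removable-singularity theorem; but that theorem is precisely the statement being proved here (the paper notes it is well known and includes a self-contained proof), so your proposal, stripped of the citation, is missing the core: the uniform curvature bound across the puncture (paper: improved Kato inequality, Sibner's lemma, Moser iteration once finite energy is known), the $C^{1,\alpha}$ chart (Rauch comparison and Jacobi field estimates), and the upgrade to smoothness, which the paper gets from the K\"ahler structure --- Nijenhuis--Woolf integrability, the complex Monge--Amp\`ere form of the Einstein equation, Hartogs extension of the pluriharmonic term, and Evans--Krylov/elliptic bootstrapping --- rather than from a general Einstein removability theorem. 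Second, when you descend the extended metric back to the quotient you should justify that the $\Gamma$-action extends to the completion by isometries fixing the added point and is conjugate to a linear action; this is standard but is an additional step in your formulation that the paper's direct construction avoids.
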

   
   Given this theorem then it is not hard to prove Theorem \ref{thm-orbifold}. So on the local quotient $B$ we have an orbifold chart $\{z^i\}$ with K\"ahler metric $\omega=\sqrt{-1}\p\bp \phi$.  We pull back the coordinate $\{z^i\}$ to $U_q$. Let $\eta$ be the contact form associated to the Sasaki structure on the smooth part $U_q^0=U_q\setminus O_q$. Then the $1$-form $\eta'=\eta-2Im(\p_z \phi)$ is closed. Clearly $H^1(U_q^0, \R)=0$, so $\eta'=dx$ for some function $x$.  Then it is easy to see that $\xi=\frac \p {\p x}$, in the coordinate $(x, z^1,  z^2)$. This gives rise to an orbifold chart for $U_q$. The compatibility condition between the orbifold charts follows easily from the local action $\psi$. \\

  Theorem \ref{thm-ABT} is certainly well-known, due to Anderson \cite{An}, Bando-Kasue-Nakajima \cite{BKN}, and Tian \cite{Tian1}.  We include a proof here for the convenience of readers. 
    For simplicity of notation we assume $\Gamma$ is trivial, and the proof is the same for a general $\Gamma$. For any $a_1<a_2$, we denote $A(a_1, a_2)=\{p\in B|a_1<d(p, q)<a_2\}$ and $\hat A(a_1,a_2)=\{x\in \C^2|a_1< |x|< a_2 \}$. 
    Since any tangent cone at $p$ is isometric to $\C^2/\Gamma$, by general results of Anderson, Colding,  there is a $\delta\in (0, \frac{1}{10})$ such that for $r$ sufficient small there is an embedding $\phi_r: \hat A(1-\delta, 100+\delta)\rightarrow B(q, 200)$ such that $(1-\epsilon(r))|x|\leq r^{-1}d(q, \phi_r(x))  \leq (1+\epsilon(r))|x|$ and $|r^{-2}\phi_r^*g-g_0|_{C^4}\leq \epsilon(r)$, where  $\epsilon(r)$ is a monotone function that goes to $0$ as $r$ tends to $0$. Here and from now on, the norm of a quantity defined on an annulus in $\R^4$ is always taken with respect to the Euclidean metric.  Then we readily see that for all $r<s<1$, there is a deformation retract from $A(r, 1)$ to $A(s, 1)$, and $B$ is homeomorphic to $\hat B$.  The proof of Theorem \ref{thm-ABT} is divided into four steps:\\
    
 \textbf{Step I}($C^0$ chart):\\
    
To construct a chart so that $g$ is continuous we need to glue together the above almost Euclidean  annuli in a controllable way.  This is elementary and we begin with the following lemma

\begin{lem}\label{gauge-fixing}
For $\epsilon>0$ sufficiently small, there is a constant $K(\epsilon)>0$ which goes to zero as $\epsilon$ tends to zero,  such that for any smooth map $\phi: \hat A(30,80) \rightarrow \R^4$ with $|\phi^*g_0-g_0|_{C^4(\hat{A}(30,80))}\leq \epsilon$,  there is an isometry $P$ of $\R^4$ such that $|P\circ \phi-Id|_{C^3(\hat A(40, 70))}\leq K(\epsilon)$.
\end{lem}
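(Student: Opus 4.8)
The plan is to prove a rigidity/gauge-fixing statement: a map $\phi$ on the annulus $\hat A(30,80)$ that is $C^4$-close to an isometry of the flat metric must itself be $C^3$-close to a global isometry of $\R^4$, after composing with a suitable isometry $P$. The hypothesis $|\phi^*g_0-g_0|_{C^4}\le\epsilon$ says that the pulled-back metric is nearly Euclidean, which forces $\phi$ to be nearly affine; the job is to pin down the affine (in fact rigid-motion) part and show the remainder is small.

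First I would reduce to a statement about the differential $D\phi$. Writing $h=\phi^*g_0-g_0$, the condition $|h|_{C^4}\le\epsilon$ means $(D\phi)^T(D\phi)=\mathrm{Id}+h$ pointwise, so at each point $D\phi$ is within $O(\epsilon)$ of the orthogonal group $O(4)$ in $C^3$. Polar decomposition gives $D\phi=R\cdot(\mathrm{Id}+h/2+O(\epsilon^2))$ with $R(x)\in O(4)$ depending on $x$ in $C^3$. To show $R$ is nearly constant one differentiates the relation: the derivatives of $R$ are controlled by the derivatives of $h$, hence by $\epsilon$. The cleanest route is to invoke the rigidity of the flat connection: since the curvature of $g_0$ vanishes and the curvature of $\phi^*g_0$ is that of a metric $C^4$-close to $g_0$, one gets $|\mathrm{Riem}(\phi^*g_0)|_{C^2}=O(\epsilon)$ automatically; but more directly, $\partial R = (\text{expression in } \partial h, h)$ so $|\nabla R|_{C^2(\hat A(40,70))}\le C\epsilon$ on the slightly shrunken annulus by interior elliptic/algebraic estimates, using that $\hat A(30,80)$ is connected with controlled geometry. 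Then $R$ is $C^3$-close to a fixed matrix $R_0\in O(4)$, say its average.

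Next I would integrate. Set $P_1 = R_0^{-1}$ acting linearly; then $D(P_1\circ\phi)=\mathrm{Id}+(\text{terms of size }O(\epsilon)\text{ in }C^2)$ on $\hat A(40,70)$. Integrating along paths in the annulus (which is connected, with diameter bounded by a universal constant) shows $P_1\circ\phi(x)=x+v+\psi(x)$ where $v$ is a constant vector and $|\psi|_{C^3(\hat A(40,70))}\le K(\epsilon)$ with $K(\epsilon)\to0$ as $\epsilon\to0$; here one must keep track that integrating a $C^2$-small derivative bound over a bounded region yields a $C^3$-small function with the constant of integration absorbed into $v$. Finally set $P$ to be the isometry $x\mapsto P_1 x - v$ (or $P_1(x)-v$), giving $|P\circ\phi-\mathrm{Id}|_{C^3(\hat A(40,70))}\le K(\epsilon)$. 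One should note that $P$ itself is forced to be close to the identity only up to this ambiguity; the statement as phrased only asserts existence of such a $P$, so no further normalization is needed.

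The main obstacle I expect is making the passage from ``$D\phi$ is pointwise near $O(4)$ in $C^3$'' to ``$D\phi$ is near a single constant rotation'' genuinely quantitative and uniform — i.e. producing the function $K(\epsilon)$ with $K(\epsilon)\to 0$. The pointwise bound alone does not prevent $R(x)$ from winding around $O(4)$; one genuinely needs to differentiate the defining relation and estimate $\nabla R$, which requires that the $C^4$ (not merely $C^0$) smallness of $h$ be used, and that the annulus $\hat A(30,80)$ be a fixed domain with universal geometry so all constants depend only on $\epsilon$. A secondary technical point is the loss of derivatives and shrinkage of the domain (from $\hat A(30,80)$ down to $\hat A(40,70)$): this is what lets one avoid boundary terms and use purely interior arguments, and it is why the hypothesis is imposed on a strictly larger annulus than the conclusion. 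The argument is essentially the standard proof that a near-isometry of Euclidean space is near a rigid motion, adapted to a fixed annular region; it is elementary but the bookkeeping of constants is the part that needs care.
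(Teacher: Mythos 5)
Your argument is correct, but it is a genuinely different route from the paper's. The paper proves this lemma by a soft compactness/contradiction argument: if the statement failed, one would have $\epsilon_i\to 0$ and maps $\phi_i$ with $|\phi_i^*g_0-g_0|_{C^4}\le\epsilon_i$ staying $C^3$-distance $\ge\tau$ from every isometry; after normalizing by isometries one extracts a $C^3$-convergent subsequence, the limit $\phi_\infty$ satisfies $\phi_\infty^*g_0=g_0$ on the annulus and hence is (the restriction of) a rigid motion, and $P=\phi_\infty^{-1}$ gives the contradiction. That proof is three lines but produces no effective $K(\epsilon)$; your direct argument, by contrast, yields an explicit rate $K(\epsilon)\sim C\epsilon$, which is a genuine gain. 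The one step in your sketch that needs to be done carefully is the passage from smallness of $h=\phi^*g_0-g_0$ to smallness of $\nabla R$ (equivalently of $D^2\phi$): naively differentiating $(D\phi)^TD\phi=\mathrm{Id}+h$ only controls the contraction $\sum_\alpha\partial_i\partial_j\phi^\alpha\,\partial_k\phi^\alpha$ symmetrized in two of its indices, and you must use the standard three-index (Christoffel) combination $\tfrac12(\partial_ih_{jk}+\partial_jh_{ik}-\partial_kh_{ij})=\sum_\alpha\partial_i\partial_j\phi^\alpha\,\partial_k\phi^\alpha$, together with invertibility of $D\phi$, to solve for the full Hessian $D^2\phi$ in terms of $\nabla h$ and $D\phi$ (this is just the statement $\nabla^{\phi^*g_0}d\phi=0$); bootstrapping this identity gives $|D^{2}\phi|_{C^2}\le C\epsilon$, after which your choice of $R_0$, integration over the connected annulus of bounded diameter, and absorption of the constant into the translation part of $P$ go through exactly as you describe, with the shrinkage from $\hat A(30,80)$ to $\hat A(40,70)$ giving the needed interior room. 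You identified precisely this point as the crux, so the proposal stands as a correct, and in fact more quantitative, alternative to the paper's compactness proof (which itself tacitly needs the same identity to get the $C^4$ bounds justifying $C^3$ precompactness of the $\phi_i$).
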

\begin{proof}
Assume the statement fails, then there is a constant $\tau>0$,  a sequence $\epsilon_i\rightarrow0$, and maps $\phi_i: \hat A(30,80) \rightarrow \R^4$ with 
$|\phi_i^*g_0-g_0|_{C^4(\hat{A}(30,80))}\leq \epsilon_i$, but for any isometry $P$ we have $|P\circ \phi_i-Id|_{C^3(\hat A(40, 70))}\geq \tau$. Then $\phi_i$ converges to a map $\phi_\infty$ in $C^3(\hat A(40, 70))$, such that $\phi_\infty^*g_0=g_0$. So $\phi_\infty$ is an isometry of $\R^4$.  Since $|\phi_\infty^{-1}\circ \phi_i-Id|_{C^3(\hat A(40, 70))}$ converges to zero as $i$ goes to infinity. We arrive at a contradiction.
 \end{proof}
 
     \begin{lem}\label{matching}
Suppose two maps $f_0: \hat A(1, 100)\rightarrow B(q, 200)$,  $f_1: \hat A(1-\delta, 100+\delta)\rightarrow B(q, 200)$ satisfy that for $i=0, 1$ and some $r>0$,
 $(1-\epsilon)|x|\leq 10^i r^{-1}d(q, f_i(x))  \leq (1+\epsilon)|x|, $ and  $|10^{2i}r^{-2}f_i^*g-g_0|_{C^4}\leq \epsilon$ on $\hat A(10^i, 10^{i+1})$.
Then there is a constant $G=G(\epsilon)$ with $\lim_{\epsilon\rightarrow 0}G(\epsilon)=0$, a rotation $R\in O(4)$, and  a map $f: \hat A(10^{-1}, 100)\rightarrow B(q,200)$, with 
$f(x)=f_0(x)$ on $\hat A(9,100)$, $f(x)=f_1(10R^{-1}(x))$ on $\hat A(10^{-1}, 2)$, and  $|r^{-2}f^*g-g_0|_{C^2}\leq  C(\epsilon)$ on $\hat A(10^{-1}, 100)$.
  \end{lem}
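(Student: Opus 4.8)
The plan is to regard $f_0$ and a suitably rescaled and rotated copy of $f_1$ as two almost-Euclidean charts of a single common annular region of $B(q,200)$, to put their transition map into a normal form that is $C^3$-close to the identity by means of the gauge-fixing Lemma~\ref{gauge-fixing}, and then to glue the two charts by interpolating through this transition map with a radial cut-off.

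First I would set up the overlap. Writing $h(x)=10x$, one has $h^{*}g_0=100\,g_0$ and $h(\hat{A}(1,10))=\hat{A}(10,100)$, so the hypotheses on $f_1$ over $\hat{A}(10,100)$ turn, for $\Psi_0:=f_1\circ h$, into $(1-\epsilon)|x|\le r^{-1}d(q,\Psi_0(x))\le(1+\epsilon)|x|$ and $|r^{-2}\Psi_0^{*}g-g_0|_{C^4(\hat{A}(1,10))}\le\epsilon$. Thus on a slightly shrunk $\hat{A}(1,10)$ both $f_0$ and $\Psi_0$ are $C^4$-almost-isometric charts of the same manifold region $\{\,r^{-1}d(\cdot,q)\in(1-\epsilon,\,10(1+\epsilon))\,\}$, each carrying the Euclidean radial function $|x|$ to $r^{-1}d(\cdot,q)$ up to a factor $1+\epsilon$. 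Hence $\Phi_0:=f_0^{-1}\circ\Psi_0$ is a well-defined self-map of a shrunk annulus, and since $\Phi_0^{*}(r^{-2}f_0^{*}g)=r^{-2}\Psi_0^{*}g$ with both tensors $\epsilon$-close to $g_0$ in $C^4$, a routine bootstrap (the underlying charts being smooth almost-isometries, hence with controlled higher derivatives) gives $|\Phi_0^{*}g_0-g_0|_{C^4}\le K_1(\epsilon)$ with $K_1(\epsilon)\to0$.

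Next I would gauge-fix and extract the rotation. After rescaling the annuli (legitimate since $g_0$ is scale invariant up to a constant), Lemma~\ref{gauge-fixing} applied to $\Phi_0$ gives an isometry $P$ of $\R^4$ with $|P\circ\Phi_0-{\rm Id}|_{C^3}\le K_2(\epsilon)\to0$ on, say, $\hat{A}(2,9)$. Writing $P^{-1}(x)=Qx+v$ with $Q\in O(4)$: since $\Phi_0$ preserves $|x|$ up to a factor $1+\epsilon$ and $\Phi_0\approx P^{-1}$, we get $\bigl|\,|x+Q^{-1}v|-|x|\,\bigr|\le C(\epsilon+K_2(\epsilon))$ on the annulus, and evaluating at $x$ parallel to $Q^{-1}v$ forces $|v|\le C(\epsilon+K_2(\epsilon))$; so $P^{-1}$ is close to the rotation $x\mapsto Qx$. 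Setting $R:=Q\in O(4)$, $\tilde f_1:=f_1(10R^{-1}(\cdot))$ and $\Phi:=f_0^{-1}\circ\tilde f_1=\Phi_0\circ R^{-1}$, we obtain $|\Phi-{\rm Id}|_{C^3(\hat{A}(2,9))}\le G_1(\epsilon)$ with $G_1(\epsilon)\to0$. Finally, interpolation: fix a standard cut-off $\chi$ of $|x|$ with $\chi\equiv1$ for $|x|\ge9$ and $\chi\equiv0$ for $|x|\le2$, and put $\Theta(x)=x+(1-\chi(|x|))(\Phi(x)-x)$ on $\hat{A}(2,9)$, so that $|\Theta-{\rm Id}|_{C^3}\le CG_1(\epsilon)$ and, for $\epsilon$ small, $\Theta$ is a diffeomorphism onto its image that agrees with ${\rm Id}$ to infinite order along $|x|=9$ and with $\Phi$ to infinite order along $|x|=2$. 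Then define $f$ by $f=f_0$ on $\hat{A}(9,100)$, $f=f_0\circ\Theta$ on $\hat{A}(2,9)$, and $f=f_1(10R^{-1}(\cdot))$ on $\hat{A}(10^{-1},2)$; the pieces agree and match smoothly at the two junctions (at $|x|=2$ one has $f_0\circ\Theta=f_0\circ\Phi=\tilde f_1$), and $f$ has the stated values. On $\hat{A}(2,9)$ we have $r^{-2}f^{*}g=\Theta^{*}(r^{-2}f_0^{*}g)$, whence $|r^{-2}f^{*}g-g_0|_{C^2}\le|\Theta^{*}(r^{-2}f_0^{*}g-g_0)|_{C^2}+|\Theta^{*}g_0-g_0|_{C^2}\le C\epsilon+CG_1(\epsilon)=:G(\epsilon)$, while on the two end regions the $C^2$-bound is immediate from the hypotheses on $f_0$ and $f_1$ (which hold in $C^4$, a fortiori in $C^2$).

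The hard part will be the gauge step: upgrading the abstract isometry $P$ produced by Lemma~\ref{gauge-fixing} to a genuine rotation $R\in O(4)$ by forcing its translational component to be small — this is exactly where one must use that both charts are compatible with the radial distance function $d(\cdot,q)$, so that the transition map essentially preserves $|x|$ and the ambiguity is reduced from the full isometry group of $\R^4$ to $O(4)$. (For a nontrivial $\Gamma$ one has in addition to run the gauge-fixing argument $\Gamma$-equivariantly, so that the resulting $R$ normalises $\Gamma$ and $\tilde f_1$ descends to $\C^2/\Gamma$.) Everything else — the overlap bookkeeping, the interpolation, and the two-derivative loss from the $C^4$-control on the charts to the $C^2$-control on $r^{-2}f^{*}g$ — is routine.
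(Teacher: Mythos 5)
Your argument is essentially the paper's own proof: form the transition map between the two charts, apply Lemma \ref{gauge-fixing} to get an isometry $C^3$-close to it, use the compatibility with the radial distance $d(\cdot,q)$ to kill the translation part and extract the rotation $R$, and then glue with a radial cut-off and check the $C^2$ bound (the paper interpolates the two inverse chart maps on the manifold and inverts, while you interpolate the transition map in the Euclidean domain and compose with $f_0$ -- an equivalent bookkeeping choice). The only adjustment needed is that Lemma \ref{gauge-fixing}, by scaling, gives the $C^3$-closeness only on an interior subannulus of fixed ratio (e.g.\ $\hat A(4,7)$ from data on $\hat A(3,8)$), not on all of $\hat A(2,9)$ as you claim, so the cut-off transition should be confined to that subannulus -- exactly as the paper does with a cut-off switching between $|x|=5$ and $|x|=6$ -- which changes nothing in the conclusion.
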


\begin{proof} By the obvious scaling invariance we may assume $r=1$.
Let $D=Im(f_0)\cap Im(f_1)$. Since $\epsilon$ is small, we may assume $\hat A(3, 8)$ is contained in $f_0^{-1}(D)$. Then there is a constant $C_1$ independent of $\epsilon$ such that the map $\psi=10^{-1}f_1^{-1}\circ f_0: \hat A(3, 8)\rightarrow \R^4$ satisfies $|\psi^*g_0-g_0|_{C^4}\leq C_1\epsilon$, and $(1-3\epsilon) |x|\leq|\psi(x)|\leq (1+3\epsilon) |x|$.  By Lemma \ref{gauge-fixing}  there is an isometry $P$ of $\R^4$ such that $|P\circ \psi-Id|_{C^3}\leq K(C_1\epsilon)$ on $\hat A(4, 7)$. We write $P(x)=R(x+\xi)$ for a rotation $R$ and a translation $\xi$. Then it is easy to see  that $|R\circ \psi-Id|_{C^3}\leq C_2(\epsilon)$ with $\lim_{\epsilon\rightarrow0}C_2(\epsilon)=0$, and $R(\hat A(1-\delta, 100+\delta))$ contains $\hat A(1, 100)$.    Choose a cut-off function $\chi(x)$ on $\hat A(1, 100)$ with  $\chi(x)=1$ for  $|x|\leq 5$ and $\chi(x)=0$ for $|x|\geq 6$.  Using the map $f_0$ we get a corresponding cut-off function on $B(q, 200)$, still denoted by $\chi$. Then $|\chi|_{C^4_g}\leq C_3$ for a constant $C_3$ independent of $\epsilon$.  Clearly $\chi(p)=0$ when $p \notin Im(f_1)$ and $\chi(p)=1$ when $p\notin Im(f_0)$.  Define 
$h: Im f_0\cup Im f_1\rightarrow\R^4$  sending $p$ to $10^{-1}\chi(p) R\circ f_1^{-1}(x)+(1-\chi(p))f_0^{-1}(x)$.  Then for $\epsilon$ sufficiently small we have $h=f_0^{-1}$ on $A(8, 100)$ and $h=10^{-1}R\circ f_1^{-1}$ on $A(10^{-1}, 3)$, and $|h^*g_0-g|_{C^2_g}\leq C(\epsilon)$ with $\lim_{\epsilon\rightarrow0}C(\epsilon)=0$.  Define $f=h^{-1}$. Then 
$f(x)$ meets the required properties.
\end{proof}

 \begin{prop}\label{C0}
There is a diffeomorphism $F: \hat B^*\rightarrow B^*$ such that $F^*g$ extends to a $C^0$ metric tensor over $B$.  
\end{prop}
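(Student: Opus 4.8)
The plan is to build $F$ as the limit of a sequence of charts obtained by repeatedly gluing together, along a geometric sequence of shrinking scales, the almost-Euclidean charts $\phi_r$ produced by the results of Anderson and Colding, with Lemma \ref{matching} as the basic gluing step and Lemma \ref{gauge-fixing} used to kill the rotational ambiguity at each stage. Concretely, I would fix scales $r_j=10^{-j}r_0$, with $r_0>0$ so small that every $\epsilon(r_j)$ lies below the thresholds needed in Lemmas \ref{gauge-fixing} and \ref{matching} (possible since $\epsilon(r)$ is monotone with $\epsilon(r)\to 0$ as $r\to 0$). For each $j$ we have $\phi_{r_j}\colon \hat A(1-\delta,100+\delta)\to B(q,200)$ with $(1-\epsilon(r_j))|x|\le r_j^{-1}d(q,\phi_{r_j}(x))\le(1+\epsilon(r_j))|x|$ and $|r_j^{-2}\phi_{r_j}^{*}g-g_0|_{C^4}\le\epsilon(r_j)$. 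Since consecutive scales differ by the factor $10$ while each $\phi_{r_j}$ covers an annulus of ratio about $100$, the images of $\phi_{r_j}$ and $\phi_{r_{j+1}}$ overlap in an annular region of ratio about $10$, which is exactly the configuration required by Lemma \ref{matching}.

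Next I would iterate Lemma \ref{matching}. At the $j$-th step I apply it with $f_0$ equal to the pristine chart $\phi_{r_j}$ restricted to $\hat A(1,100)$ — legitimate because $\phi_{r_j}$ is defined on all of $\hat A(1-\delta,100+\delta)$, so it furnishes a genuine ratio-$100$ outer chart no matter how much of it was already used at the previous step — and $f_1=\phi_{r_{j+1}}$. This produces a rotation $R_j\in O(4)$ and a chart, at scale $r_j$, on a larger annulus, which agrees with $\phi_{r_j}$ on its outer part, agrees with $\phi_{r_{j+1}}\circ(10R_j^{-1}\cdot)$ on its inner part, and satisfies a $C^2$ estimate $\le C(\epsilon(r_j))$ for $r_j^{-2}(\text{glued chart})^{*}g-g_0$. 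Writing $\tilde R_0=\mathrm{Id}$, $\tilde R_{j+1}=\tilde R_j R_j$, and declaring $F$ on the part of $\hat B^{*}$ at scale comparable to $r_j$ to be this glued chart, rescaled to the appropriate annulus and precomposed with $\tilde R_j^{-1}$, the inner description at step $j$ and the outer description at step $j+1$ are literally the same chart $\phi_{r_{j+1}}$ up to the bookkeeping rotation and the rescaling, so the pieces patch to a single smooth map. On each dyadic annulus $F$ is an almost-isometric embedding composed with a linear map, hence a diffeomorphism onto its image; and since the dilated annuli exhaust $B^{*}$ (after the harmless final rescaling of domain and target), $F\colon\hat B^{*}\to B^{*}$ is a diffeomorphism.

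For the extension over the origin: on the part of $\hat B^{*}$ at scale $r_j$ lying outside the transition regions, $F$ equals $\phi_{r_j}$ rescaled and precomposed with the Euclidean isometry $\tilde R_j^{-1}$, so $|F^{*}g-g_0|_{C^0}=|r_j^{-2}\phi_{r_j}^{*}g-g_0|_{C^0}\le\epsilon(r_j)$; on the transition regions the analogous computation with the glued chart gives $|F^{*}g-g_0|_{C^0}\le C(\epsilon(r_j))$. Both bounds tend to $0$ as $j\to\infty$, i.e. as $x\to 0$, so $F^{*}g$ extends continuously across the origin with $F^{*}g(0)=g_0$, which is the asserted $C^{0}$ metric tensor on $B$.

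I expect the main obstacle to be the bookkeeping of the iterated gluing: arranging the geometric sequence of scales and the ratio-$100$ charts so that Lemma \ref{matching} applies verbatim at every stage, tracking the cumulative rotations $\tilde R_j$ and checking via Lemma \ref{gauge-fixing} that $R_j$ can always be chosen so consecutive charts match on their overlap, and verifying that the infinite gluing really converges to a diffeomorphism without the estimates degrading. The device that makes the last point work is re-importing the fresh $C^{4}$-close chart $\phi_{r_j}$ as the outer chart at each step rather than feeding the weaker $C^{2}$-close output of the previous gluing back into Lemma \ref{matching}; the $C^{2}$ bound is then only ever invoked for the final $C^{0}$ conclusion, where it is more than enough.
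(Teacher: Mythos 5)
Your proposal is correct and follows essentially the same route as the paper: iterate Lemma \ref{matching} along the scales $10^{-k}$ using the fresh, $C^4$-close charts $\phi_{r_j}$ (rather than the degraded output of the previous gluing) as the outer chart at each stage, track the rotations produced, and conclude the $C^0$ extension from the bound $G(\max(\epsilon_{j},\epsilon_{j+1}))\to 0$ on dyadic annuli. The only difference is bookkeeping --- you accumulate the rotations $\tilde R_j$ and precompose at the end, while the paper carries $R_k^{-1}$ inside the outer chart fed to Lemma \ref{matching} --- which is an equivalent formulation since these are $g_0$-isometries.
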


\begin{proof}

Since the problem is local, we may assume  for all $r\leq 1$ that the above map $\phi_r$ exists and $\epsilon(1)$ is as small as we like. For simplicity we denote $\phi_k=\phi_{10^{-k}}$, and $\epsilon_k=\epsilon(10^{-k})$.  Now we first define $F_0(x)=\phi_0(x)$ on $\hat A(1, 100)$. Inductively suppose $F_k$ is defined on $\hat A(10^{-k}, 10^{-k+2})$ satisfying $F_k(x)=\phi_k\circ R_k^{-1}(10^k x)$ on $\hat A(10^{-k}, 20\cdot 10^{-k})$ for some rotation $R_k\in O(4)$, then we apply Lemma \ref{matching} to the two maps $\phi_k\circ R_k^{-1}$ and $\phi_{k+1}$ with $r=10^{-k}$ and $\epsilon=\max(\epsilon_{k-1}, \epsilon_k)$,  and obtain a map $f_{k+1}$ defined on $\hat A(1/10, 100)$ satisfying (2). Then we define $F_{k+1}(x)$ to be $f_{k+1}(10^kx)$ on $\hat A(10^{-k-1}, 10^{-k+1})$. By Lemma \ref{matching} we see that all the $F_k$'s match together to a map $F$ from $\hat B^*$ to $B(q, 200)$, and we can modify $F$ slightly near $\p \hat B$ so that the image is exactly $B^*$. It is easy to see that
$|F^*g-g_0|_{L^\infty(\hat A(10^{-k}, 10^{-k+1}))}=|10^{2k}f_{k+1}^*g-g_0|_{L^\infty(\hat A(10,100))}\leq G(\max(\epsilon_{k-1}, \epsilon_{k}))$, and $F^*g$ extends to a continuous metric tensor over $B$. 
\end{proof}

\textbf{Step II}(Curvature bound):\\

Now  we may assume $g$ is a $C^0$ metric on $B=\hat{B}$.  

 \begin{lem}
 We have
 $$\int_{B^*} |Rm(g)|^2 dvol_g<\infty.$$
 \end{lem}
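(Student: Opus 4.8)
The plan is to prove the $L^2$ curvature bound $\int_{B^*} |Rm(g)|^2\, dvol_g < \infty$ by splitting the integral over the dyadic annuli $A(10^{-k-1}, 10^{-k})$ around the singular point $q$ and showing the contributions form a convergent series. On each such annulus the metric $g$, after rescaling by $10^{2k}$ and pulling back via the map $F$ from Proposition \ref{C0}, is $C^4$-close to the Euclidean metric $g_0$; more precisely, by the construction in Lemma \ref{matching} and Proposition \ref{C0} we have a uniform $C^2$ (in fact better, away from the gluing region) bound $|10^{2k}f_{k+1}^*g - g_0|\leq G(\max(\epsilon_{k-1},\epsilon_k))$ on a fixed annulus $\hat A(10,100)$. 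The key point is to promote this to an interior curvature estimate: since $g$ restricted to $A(10^{-k-1},10^{-k})$ is a Kähler--Einstein metric (Einstein constant $0$ in the local quotient, as noted above in the setup of Theorem \ref{thm-ABT}), elliptic regularity for the Einstein equation in harmonic coordinates gives, on a slightly smaller annulus, a bound on all derivatives of $g$ once we have a $C^0$ (or $C^{1,\alpha}$) bound and two-sided bounds on $g$. Thus on the rescaled annulus $|Rm(10^{2k}g)| \leq C$ uniformly in $k$.

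The main computation is then a scaling count. The curvature of the rescaled metric $10^{2k}g$ is $10^{-2k}$ times the curvature of $g$, and the volume form scales by $10^{4k \cdot (1/2) \cdot 2} = 10^{4k}$... more carefully: in real dimension $4$, if $\tilde g = \lambda^2 g$ then $|Rm(\tilde g)|_{\tilde g} = \lambda^{-2}|Rm(g)|_g$ and $dvol_{\tilde g} = \lambda^4 dvol_g$, so $|Rm(\tilde g)|_{\tilde g}^2\, dvol_{\tilde g} = \lambda^{-4}\cdot\lambda^4\, |Rm(g)|_g^2\, dvol_g = |Rm(g)|_g^2\, dvol_g$. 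Hence the quantity $\int_{A} |Rm(g)|^2\, dvol_g$ is scale invariant, and on each annulus $A(10^{-k-1}, 10^{-k})$ it equals $\int_{\hat A(10,100)} |Rm(10^{2k}g)|^2\, dvol_{10^{2k}g}$, which is bounded by $C \cdot \Vol_{g_0}(\hat A(10,100)) \leq C'$ uniformly in $k$ by the curvature bound from the previous paragraph. This alone only gives a bound of the form $\sum_k C'$, which diverges, so I must extract decay: the constant $C'$ on the $k$-th annulus should be replaced by something like $G(\max(\epsilon_{k-1},\epsilon_k))^2$, using that the curvature of $g_0$ is zero so that $|Rm(10^{2k}g)|$ is actually bounded by the $C^2$-deviation of $10^{2k}g$ from $g_0$, hence by $G(\max(\epsilon_{k-1},\epsilon_k))$ — provided the elliptic estimate is applied to the equation satisfied by the \emph{difference} $10^{2k}g - g_0$ rather than to $g$ itself.

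So the real content, and the step I expect to be the main obstacle, is establishing that $\sum_k G(\max(\epsilon_{k-1},\epsilon_k))^2 < \infty$, i.e. that the "closeness to Euclidean" errors $\epsilon_k$ decay summably fast rather than merely $\epsilon_k \to 0$. The function $\epsilon(r)$ coming from Anderson--Colding's almost-Euclidean-annulus result is a priori only known to tend to $0$; one needs a quantitative rate. The standard resolution (following Anderson, Bando--Kasue--Nakajima, Tian) is an $\epsilon$-regularity argument: once the rescaled metric on an annulus is $\epsilon$-close to flat in $C^0$, the Einstein equation forces $\int_{A_k} |Rm|^2$ itself to be small, and then a Moser-type iteration or a three-annulus/decay lemma for $\int |Rm|^2$ yields geometric decay $\int_{A_k}|Rm|^2 \leq C\theta^k$ for some $\theta<1$, provided the total energy $\int_{B^*}|Rm|^2$ is a priori finite on some fixed smaller ball — which follows from the local finiteness of the energy of the Gromov--Hausdorff limit (the singular set has codimension $4$ and the $L^2$ norm of curvature is controlled along the sequence $X_i$ by Gauss--Bonnet / Chern--Gauss--Bonnet type bounds in dimension $4$, passing to the limit). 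I would therefore structure the proof as: (i) recall the a priori bound $\int_{B^*}|Rm(g)|^2 < \infty$ coming from the limiting process and the codimension-$4$ condition; (ii) apply the $\epsilon$-regularity theorem (curvature small in $L^2$ on an annulus, metric close to flat $\Rightarrow$ pointwise curvature decay) annulus by annulus; (iii) sum the resulting geometric series. Step (ii), the $\epsilon$-regularity / decay estimate, is the crux and is where all the analytic weight sits.
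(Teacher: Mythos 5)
Your plan has a genuine circularity at the crux. In step (i) you ``recall the a priori bound $\int_{B^*}|Rm(g)|^2<\infty$'' in order to run the $\epsilon$-regularity/annulus-decay machinery, but that a priori bound \emph{is} the statement of the lemma. The justification you offer for it --- Chern--Gauss--Bonnet type $L^2$ curvature control along the sequence $X_i$ in dimension $4$, passed to the limit --- does not apply here: the $X_i$ are complex three-folds (real dimension $6$), and the four-dimensional ball $B$ is not a Gromov--Hausdorff limit of $4$-manifolds; it arises as a local quotient by the Reeb flow of the $5$-dimensional link $Y$ of a tangent cone of the $6$-dimensional limit $X_\infty$. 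In real dimension $6$ the $L^2$ norm of the curvature is not a characteristic number, so there is no topological bound to pass to the limit, and nothing in the preceding construction gives finite energy near $q$. Without that input, your scaling count only yields $\int_{A(10^{-k-1},10^{-k})}|Rm|^2\lesssim \epsilon_k^2$ from the Anderson--Colding almost-Euclidean annuli, and, as you yourself observe, $\epsilon(r)\to 0$ with no rate, so the series need not converge; the $\epsilon$-regularity/three-annulus decay that would supply a rate cannot be started because it needs exactly the smallness of the total energy you are trying to establish.

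The paper avoids this entirely by exploiting the four-dimensional Einstein structure through Chern--Weil theory rather than energy decay: for an Einstein $4$-metric the induced connections $A_\pm$ on $\Lambda^\pm_g$ are self-dual resp.\ anti-self-dual, so pointwise $|Rm(g)|^2\,dvol_g=\Tr(F_{A_+}\wedge F_{A_+}-F_{A_-}\wedge F_{A_-})$, and the integral of each term over an annulus $A(s,r)$ equals a difference of Chern--Simons invariants of the boundary spheres modulo $\bZ$. The tangent cone condition forces $CS(A_\pm,S_r)\to 0$ as $r\to 0$, so for small $r$ the annular integrals lie in $[-1/4,1/4]$ mod $\bZ$; since they depend continuously on $s$ they cannot jump by integers and are therefore uniformly bounded as $s\to 0$, which gives finiteness with no a priori energy hypothesis. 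If you want to salvage your approach you would need an independent source for the finite-energy input (this is essentially what the Chern--Simons argument provides), at which point the annulus decomposition becomes unnecessary.
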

 \begin{proof}
Let  $A_{\pm}$ be the connection induced by the Levi-Civita connection of $g$ on $\Lambda^{\pm}_g$. The Einstein condition implies $A_+$ is self-dual and $A_{-}$ anti-self-dual with respect to $g$.  Thus $$|Rm(g)|^2dvol_g=Tr(F_{A_+}\wedge F_{A_+}-F_{A_-}\wedge F_{A_-}). $$ By the tangent cone condition we can easily find a smooth family of spheres $S_r$ in $B^*$ with the property that as $r$ tends to zero, $(S_r, r^{-2}g)$ converges smoothly to the round sphere in $\R^4$, and the restriction to $(S_r, r^{-2}g)$ of the connection $A_{\pm}$ converges to the trivial flat connection. Then for any $s<r$
 $$\int_{A(s,r)}TrF_{A_+}\wedge F_{A_+}=CS(A_+, S_r)-CS(A_+, S_{s})(mod \ \Z), $$
 where $CS(A, M)=\int_M dA\wedge A+\frac{2}{3}A\wedge A\wedge A$ is the Chern-Simons invariant of a connection $A$ over a three manifold $M$, defined modulo $\Z$. 
By assumption, $CS(A_+, S_r)=CS(A_+, \frac{1}{r}S_r)\rightarrow 0$ as $r\rightarrow 0$. So we choose $r$ small enough so that for any $s\leq r$ we have $|CS(A_+, S_r)|\leq 1/8$ modulo $\Z$. So $\int_{A(s,r)}TrF_{A_+}\wedge F_{A_+}$ is in $[-1/4, 1/4]$ modulo $\Z$, and on the other hand it clearly depends continuously on $s$, so the integral is uniformly bounded for all $s<r$. One can similarly deal with $A_-$. Together this implies $\int_{B^*}|Rm(g)|^2dvol_g$ is finite. 

\end{proof}
 
 \begin{prop} \label{curvature bound}
 For any $k\geq 0$, $|\nabla ^k_gRm(g)|$ is  uniformly bounded in $B^*$.
 \end{prop}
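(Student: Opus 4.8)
The plan is to follow the standard removal-of-point-singularity technique for Einstein four-manifolds of finite energy (Anderson \cite{An}, Bando--Kasue--Nakajima \cite{BKN}, Tian \cite{Tian1}), adapted to our local setting. Throughout we use the $C^0$ metric $g$ on $B=\hat B$ furnished by Proposition~\ref{C0}, which is Einstein on $B^\ast$, satisfies $\int_{B^\ast}|\Riem(g)|^2\,dvol_g<\infty$ by the preceding lemma, and for which the tangent-cone hypothesis provides, for every small $r$, an embedding $\phi_r$ under which $r^{-2}\phi_r^\ast g$ is $C^4$-close to the flat model $g_0$ on the fixed annulus $\hat A(1-\delta,100+\delta)$.

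First I would record the differential inequality for the curvature: the second Bianchi identity together with the Einstein condition gives an elliptic system of the schematic form $\Delta\Riem=\Riem\ast\Riem$, hence, by Kato's inequality, $u=|\Riem(g)|$ is a weak subsolution $\Delta u\le c\,u^2$ on $B^\ast$. The almost-Euclidean annuli $\phi_r$, together with the non-collapsing property and the interior smoothness of $g$ on $B^\ast$, furnish a uniform, scale-invariant Sobolev inequality on every metric ball $B_\rho(x)\subset B^\ast$ with $\rho\le\frac12 d(x,q)$ (the puncture being removable, since a point has zero $2$-capacity in real dimension $4$).

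With these two inputs, Moser iteration (equivalently, the $\epsilon$-regularity theorem) applies. Choosing $r_0$ so that $\int_{B_{r_0}(q)}|\Riem(g)|^2$ is below the iteration threshold, then for $x$ with $s:=d(x,q)<r_0$ the rescaled metric $s^{-2}g$ on the unit ball about $x$ is almost Euclidean and carries small curvature energy, and iteration of $\Delta u\le c u^2$ yields
\[
 |\Riem(g)|(x)\ \le\ C\,s^{-2}\left(\int_{B_s(x)}|\Riem(g)|^2\,dvol_g\right)^{1/2}\ =:\ C\,s^{-2}\,\omega(s),
\]
with $\omega(s)\to 0$ as $s\to 0$ (a tail of a finite integral); in particular $|\Riem(g)|(x)=o(d(x,q)^{-2})$. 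Passing to $g$-harmonic coordinates on each dyadic annulus $A(r,4r)$ --- whose harmonic radius is comparable to $r$ because the rescaled pictures are non-collapsed with curvature tending to $0$ --- the Einstein equation becomes the elliptic system $\Delta_g g_{ij}=Q_{ij}(g,\partial g)$ with rescaled Einstein constant tending to $0$; Schauder estimates and a bootstrap then give uniform $C^{k,\alpha}$ control of $r^{-2}g$ on $A(2r,3r)$ for every $k$, indeed $C^{k,\alpha}$-convergence to $g_0$, whence $|\nabla^k_g\Riem(g)|(x)\le C_k\,d(x,q)^{-2-k}\,\omega(d(x,q))$.

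The last step --- and the only genuinely substantial one --- is to upgrade these scale-broken bounds to the asserted \emph{uniform} bounds, i.e.\ to show that $g$ extends across $q$ as a smooth orbifold metric, after which every $|\nabla^k_g\Riem(g)|$ is automatically bounded, being a smooth tensor on the compact orbifold-with-boundary $\hat B$. For this I would compare $r^{-2}g$ with the flat model through the linearized Einstein (Lichnerowicz) operator on the flat cone $C(S^3/\Gamma)$: after fixing the harmonic gauge its indicial roots are non-negative integers, and the homogeneous solutions of each integer degree are restrictions of polynomials on $\bC^2/\Gamma$, hence smooth. A three-annulus (maximum-principle) iteration, as in \cite{BKN}, then peels these polynomial terms off successively, producing a full polynomial asymptotic expansion for $g$ at $q$ and thereby the smooth orbifold extension. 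The main obstacle is precisely this gauge-and-decay analysis: one must check that the iteration closes and that no obstruction arises at the leading indicial roots --- this is where the flatness and uniqueness of the tangent cone, and the fact that $\Gamma\subset U(2)$ acts freely on $S^3$, are used --- whereas the earlier steps are routine $\epsilon$-regularity and elliptic bootstrap.
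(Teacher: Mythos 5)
You have set up the right ingredients (finite energy, the Bochner inequality $\Delta|Rm|\le c|Rm|^2$, almost-Euclidean annuli), but your route to the \emph{uniform} bound is to first prove the full removable-singularity statement -- a smooth orbifold extension of $g$ across $q$ via harmonic gauge, indicial roots of the linearized Einstein operator on the flat cone, and a three-annulus iteration -- and then read off boundedness of all $|\nabla^k_g Rm(g)|$. Everything you actually carry out, however, only yields the scale-broken estimates $|Rm(g)|(x)\le C\,d(x,q)^{-2}\omega(d(x,q))$ and $|\nabla^k_gRm(g)|(x)\le C_k\,d(x,q)^{-2-k}\omega(d(x,q))$, which are strictly weaker than the assertion; the decisive upgrade is left as a sketch, and you yourself flag ``checking that the iteration closes'' as the main obstacle. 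That is a genuine gap, and not a routine one: with curvature only $o(d^{-2})$ the rescaled annuli approach the flat model at no definite rate, so before any indicial-root peeling one must first extract a definite polynomial decay rate for the deviation and patch the gauges chosen on different dyadic annuli -- this is precisely the hard part of the Anderson/Bando--Kasue--Nakajima analysis \cite{An}, \cite{BKN}, not a consequence of the $\epsilon$-regularity and bootstrap that precede it. As written, the proposal proves less than the statement.

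The paper's own argument is much shorter and avoids any expansion or gauge analysis. Since $g$ is $C^0$-equivalent to $g_0$ on $B$ (Proposition \ref{C0}), Sobolev constants are uniform across the puncture. The borderline inequality $\Delta|Rm|\le C_1|Rm|^2$ is handled by the improved Kato inequality of \cite{BKN} (Corollary 4.10), giving $\Delta|Rm|^{1-\delta}\le C_2|Rm|^{2-\delta}$; Sibner's removable-singularity lemma (\cite{Sib}, Lemma 2.1), applied with the finite $L^2$ curvature energy, shows $|Rm|\in W^{1,2}$, hence $\Delta|Rm|\le C_1|Rm|^2$ holds weakly on all of $B$, and ordinary Moser iteration bounds $|Rm|$ uniformly. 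Then interior elliptic estimates on rescaled balls give $|\nabla Rm|\le C r^{-1}\in L^3$, and Bochner, Sibner and Moser applied to $|\nabla Rm|$ (now with bounded coefficient $C_4|Rm|$) bound it; induction handles all higher $k$. Note also that in the paper's architecture this proposition is an \emph{input} to Step III (the Rauch comparison needs the two-sided curvature bound) and Step IV (Monge--Amp\`ere plus Hartogs), so the smooth extension of Theorem \ref{thm-ABT} is obtained without ever invoking the indicial-root machinery; if you prefer your route you must actually carry out the three-annulus/harmonic-gauge argument in full, which amounts to reproving Theorem \ref{thm-ABT} by the original method of \cite{BKN}.
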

 \begin{proof} Since the metric $g$ is $C^0$ equivalent to the flat metric $g_0$, the Sobolev space $W^{1,p}$ is the same with respect to both metrics, and the Moser iteration works for the operator $\Delta=\Delta_g$. Here again we use the geometers' convention for the sign. By Bochner formula  there is a constant $C_1>0$ such that 
 $$\Delta|Rm|\leq C_1|Rm|^2, $$
 which is on the borderline of applying Moser iteration.  Due to Bando-Kasue-Nakajima \cite{BKN} (Corollary 4.10),  there is an improved Kato's inquality, namely, there are $C_2>0$ and $\delta\in (0,1)$, such that 
 $$\Delta|Rm|^{1-\delta}\leq C_2|Rm|^{2-\delta}. $$
 Let $u=|Rm|^{1-\delta}$ and $f=|Rm|$. Then we can apply  \cite{Sib}(Lemma 2.1) with $q=\frac{1}{1-\delta}$ and $q_0=\frac{1}{2(1-\delta)}$
  to conclude that 
 $|Rm|$ is in $W^{1,2}$. By Sobolev embedding we see $|Rm|\in L^4$. Also that $|\nabla Rm|\in L^2$ implies that the inequality $\Delta |Rm|\leq C_1|Rm|^2$ holds weakly on the whole ball $B$. Then we can apply the standard Moser iteration to conclude $|Rm|$ is uniformly bounded. Now consider $|\nabla Rm|$. For any $p\in B^*$ with $d(p, q)=r\leq  1/2$, the rescaled ball $r^{-1}B(p, r/2)$ has uniformly bounded geometry, so standard elliptic regularity for the Einstein equation then implies that 
 $|\nabla Rm|\leq C_3 r^{-1}. $ for some constant $C_3>0$.
 Thus $|\nabla Rm|\in L^3$. By Bochner formula again there is a constant $C_4>0$ such that  $$\Delta|\nabla Rm|\leq C_4|Rm||\nabla Rm|. $$
 Let $u=|\nabla Rm|$, $f=C_4|Rm|$ and apply  \cite{Sib}(Lemma 2.1)  with $q=1$, and $q_0=3/4$, we get $|\nabla Rm|\in W^{1,2}$. Thus the inequality holds weakly on $B$ and by Moser iteration $|\nabla Rm|$ is  uniformly bounded.  Then similarly one can prove the bound for higher covariant derivatives of the curvature tensor. 
 \end{proof}
 
 \textbf{Step III}($C^{1, \alpha}$ chart):\\
 
 To construct a coordinate chart so that $g$ is $C^{1, \alpha}$, we shall use Rauch comparison theorem, following \cite{BKN}. The following lemma is a direct consequence of the tangent cone condition(by using the maps $\phi_r$):
 
 \begin{lem}\label{approximate sphere}
 There is a sequence $\epsilon_i\rightarrow 0$ and a sequence of smooth embeddings $f_i$ from $S^3$ to $B$ with the properties
\begin{enumerate}
\item $d_{GH}(S_i, \p B(i^{-1}))\leq i^{-1}\epsilon_i$ where $S_i=f_i(S^3)$. 
\item $|i^{2}f_i^*g-h_0|_{C^4_{h_0}}\leq \epsilon_i$, where $h_0$ is the standard round metric on $S^3$. 
\item $|i^{-1}A_{S_i}+Id|_{C^3_{h_0}}\leq \epsilon_i,  $ where $A_{S_i}: TS_i\rightarrow TS_i$ is the shape operator.
\end{enumerate}
\end{lem}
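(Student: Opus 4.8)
The plan is to obtain the embeddings $f_i$ by simply restricting the comparison maps $\phi_r$ to a fixed distance sphere and rescaling. Fix once and for all a diffeomorphism $j\colon S^3\to\{x\in\C^2:|x|=1\}$ (note $\{|x|=1\}\subset\hat A(1-\delta,100+\delta)$ since $\delta<1/10$), and for each $i$ large enough that $\phi_{1/i}$ has been constructed set $f_i=\phi_{1/i}\circ j\colon S^3\to B(q,200)$, discarding or reindexing past the finitely many small $i$. The distance--distortion property of $\phi_{1/i}$ gives $i\,d(q,f_i(y))\in[1-\epsilon(1/i),\,1+\epsilon(1/i)]$ for all $y\in S^3$, so $S_i:=f_i(S^3)$ lies in the thin shell $\{\,i^{-1}(1-\epsilon(1/i))\le d(q,\cdot)\le i^{-1}(1+\epsilon(1/i))\,\}$, which is well inside $B=B(q,100)$; in particular $f_i$ is an embedding of $S^3$ into $B$ once $i$ is large.

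For property (2) I would use that the Euclidean metric $g_0$ on $\C^2$ restricts on $\{|x|=1\}$ to the round metric, so $j^{*}\bigl(g_0|_{\{|x|=1\}}\bigr)=h_0$; pulling the bound $|i^{2}\phi_{1/i}^{*}g-g_0|_{C^4}\le\epsilon(1/i)$ back along $j$ and the inclusion $\{|x|=1\}\hookrightarrow\hat A(1-\delta,100+\delta)$, and using that restriction of a symmetric $2$-tensor to a fixed hypersurface followed by precomposition with the fixed $j$ is a bounded operation in $C^4$, yields $|i^{2}f_i^{*}g-h_0|_{C^4_{h_0}}\le C\epsilon(1/i)$ for a universal $C$. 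Property (3) follows from the same $C^4$ bound: the shape operator of a hypersurface is an algebraic expression in the metric, its inverse, one derivative of the metric, and the unit normal; for $\{|x|=1\}\subset(\C^2,g_0)$ this operator equals $-Id$ in the sign convention of the statement, and under $g\mapsto i^{2}g$ the shape operator scales by $i^{-1}$, so $i^{-1}A_{S_i}+Id$ is an expression whose $C^3_{h_0}$-norm is controlled by $|i^{2}\phi_{1/i}^{*}g-g_0|_{C^4}$, hence by $C\epsilon(1/i)$. For property (1), the distance--distortion estimate shows $\phi_{1/i}$ maps onto a neighbourhood of the distance sphere $\partial B(q,i^{-1})$ and that $\phi_{1/i}^{-1}\bigl(\partial B(q,i^{-1})\bigr)$ lies in the same thin shell about $\{|x|=1\}$; combined with the $C^4$ Riemannian closeness of $\phi_{1/i}$ on a slightly larger annulus (which makes it an almost isometry for intrinsic distances there), both $i\cdot S_i$ and $i\cdot\partial B(q,i^{-1})$ are within Gromov--Hausdorff distance $C\epsilon(1/i)$ of the round unit $S^3$, so $d_{GH}(S_i,\partial B(q,i^{-1}))\le i^{-1}C\epsilon(1/i)$.

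Finally I would set $\epsilon_i$ equal to the maximum of the three error bounds obtained above (a fixed multiple of $\epsilon(1/i)$), which tends to $0$ because $\epsilon(r)\to0$ as $r\to0$; this is exactly the three assertions. Essentially everything here is bookkeeping with the already-recorded properties of the maps $\phi_r$ together with the explicit Riemannian data of the round sphere in $\R^4$. The one point I would treat with care in the write-up is (1): one must compare the \emph{intrinsic} length metrics of $S_i$ and of $\partial B(q,i^{-1})$ rather than merely their restricted ambient metrics, and one must check that $\phi_{1/i}$ actually surjects onto a full neighbourhood of the distance sphere; both follow from the $C^4$ closeness of $\phi_{1/i}$ on an enlarged annulus, which is why the lemma can fairly be billed as a direct consequence of the tangent cone condition.
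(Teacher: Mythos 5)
Your proposal is correct and is exactly what the paper intends: the paper gives no written proof, merely declaring the lemma ``a direct consequence of the tangent cone condition (by using the maps $\phi_r$)'', and restricting $\phi_{1/i}$ to the unit sphere and rescaling, as you do, is precisely that argument. The only point to phrase carefully in a write-up is the almost-surjectivity of $\phi_{1/i}$ near the distance sphere in item (1), which is most cleanly justified by the pointed Gromov--Hausdorff convergence of the rescalings to the cone (plus openness of the embedded image) rather than by the $C^4$ metric closeness alone --- a refinement of the caveat you already flag.
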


\begin{prop}
There is a $C^3$ diffeomorphism $F:  B^* \rightarrow B^*$ such that $F^*g$ extends to a $C^{1,1}$ metric tensor on $B$. 
\end{prop}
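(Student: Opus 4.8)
The plan is to build a harmonic (or more precisely "almost harmonic") coordinate chart near $q$, using the approximating spheres $S_i=f_i(S^3)$ from Lemma \ref{approximate sphere} together with the curvature bounds of Proposition \ref{curvature bound}, and then promote the naive radial chart to one in which the metric is $C^{1,\alpha}$ and in fact $C^{1,1}$. First I would work on the annuli $A(10^{-k-1},10^{-k})$: on each such annulus the rescaled metric is, by Proposition \ref{curvature bound}, uniformly controlled in $C^\infty$ away from the two boundary spheres, so one can solve the harmonic-coordinate equation $\Delta_g x^j=0$ with boundary data coming from the nearly-Euclidean coordinates on the bounding spheres $S_k,S_{k+1}$ supplied by Lemma \ref{approximate sphere}. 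The Rauch comparison argument of Bando-Kasue-Nakajima \cite{BKN} shows that in these coordinates the metric is $C^0$-close to the flat metric with scale-invariant bounds; the point of using harmonic coordinates (rather than the ad hoc gluing of Step I) is that the elliptic equation $\Delta_g x^j=0$ upgrades $C^0$ control of $g$ to $C^{1,\alpha}$ control of $g$ in the harmonic chart, by the standard elliptic regularity bootstrap for $g_{ij}$ satisfying $g^{ab}\partial_a\partial_b g_{ij}=Q(g,\partial g)$ plus the Ricci equation.

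The key steps, in order, would be: (i) fix the normalisation so that on the model $\hat B$ the radial function $|x|$ is uniformly comparable to $d(\cdot,q)$ using the maps $\phi_r$; (ii) on each dyadic annulus construct harmonic coordinates $x^j_k$ agreeing to high order with the Euclidean coordinates on the two approximating spheres, and estimate the transition between $x^j_k$ and $x^j_{k+1}$ — here the shape-operator estimate (3) of Lemma \ref{approximate sphere} and the Chern-Simons/curvature-decay input are what force the transition maps to converge in $C^{3}$ to an element of $O(4)$ as $k\to\infty$; (iii) compose these transitions to glue the annular charts into a single diffeomorphism $F:\hat B^*\to B^*$, as in Proposition \ref{C0} but now with $F^*g$ controlled in $C^{1,\alpha}$ on each annulus with scale-invariant constants; (iv) observe that the scale-invariant $C^{1,\alpha}$ bounds, combined with the uniform pointwise curvature bound $|Rm(g)|\le C$ from Proposition \ref{curvature bound} (which gives $|\partial\partial g|\le C$ in harmonic coordinates after also bounding $|\partial g|$), force $F^*g$ to extend across $q$ as a $C^{1,1}$ metric tensor; (v) conclude that $F$ extends to a homeomorphism $\hat B\to B$ fixing $q\mapsto q$, finishing the proof.

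The main obstacle I expect is step (ii)-(iii): showing that the transition maps between consecutive harmonic annular charts actually converge, so that the infinitely many rescalings glue into a genuine chart on the punctured ball rather than drifting. This is exactly where one needs the curvature to decay fast enough along the rescalings — which is why the $L^2$ curvature bound and the Chern-Simons argument of the preceding lemma were set up — and where one must be careful that the harmonic coordinates on $A(10^{-k-1},10^{-k})$ and on $A(10^{-k-2},10^{-k-1})$ differ only by a small perturbation of a fixed rotation, with the perturbation summable in $k$. Once that convergence is in hand, the $C^{1,1}$ (rather than merely $C^{1,\alpha}$) regularity at the origin is the soft part: it comes from the uniform two-sided Ricci bound together with the uniform bound on $|Rm|$, via the elliptic equation for $g$ in harmonic coordinates, exactly as in \cite{An}, \cite{BKN}, \cite{Tian1}. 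I would also note in passing that the argument is purely local around each component $O_q$ of $\Sigma$, and that the general $\Gamma\subset U(2)$ case is identical after replacing $\hat B$ by $\hat B/\Gamma$ throughout, as already remarked.
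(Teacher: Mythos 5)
Your route is genuinely different from the paper's, and the comparison is worth making precise. The paper does not glue dyadic annular charts at this stage at all: it uses the approximating spheres of Lemma \ref{approximate sphere} to define in one stroke the normal exponential map $F_i(x,t)=\exp_{f_i(x)}((t-i^{-1})N(x))$ on $S^3\times[i^{-1},1]$, estimates Jacobi fields along the radial geodesics via Rauch comparison (following \cite{BKN}) using the uniform curvature bound of Proposition \ref{curvature bound}, writes $F_i^*g=dt^2+t^2h_i(t)$ with $|h_i(t)-h_0|\lesssim i^{-1}t^{-1}+\epsilon_i+t^2$ together with matching bounds on derivatives, and then lets $i\to\infty$ to obtain a single chart $F:S^3\times(0,1]\to B^*$ satisfying the scale-invariant bound $t^{-2}|F^*g-g_0|_{C^0}+t^{-1}|F^*g-g_0|_{C^1}+|F^*g-g_0|_{C^2}\le C$. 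Because one chart is produced globally on the punctured ball, the transition-drift problem you single out as the main obstacle never arises; the decay that closes the estimate at the puncture is the quadratic decay of the rescaled curvature, $r^2|Rm|\le Cr^2$, entering through the Jacobi equation.

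Two points in your scheme need repair before it is a proof. First, the summability of the per-scale errors is not supplied by the $L^2$ curvature bound or the Chern--Simons computation directly (those served only to establish the pointwise bound in Proposition \ref{curvature bound}), and the tangent-cone convergence gives only a rateless $\epsilon(r)\to 0$; you must extract the quantitative $O(t^2)$ closeness of each rescaled annulus to a fixed flat model from the pointwise curvature bound, which amounts to redoing the Jacobi/Rauch estimate or an equivalent elliptic estimate with the curvature as inhomogeneity. Second, step (iv) as stated fails: in harmonic coordinates the equation $g^{ab}\partial_a\partial_b g_{ij}=-2R_{ij}+Q(g,\partial g)$ with bounded right-hand side yields $W^{2,p}$ for all finite $p$ and $C^{1,\alpha}$, but a bounded Laplacian does not bound the Hessian, so $|Rm|\le C$ plus $|\partial g|\le C$ does not by itself give $|\partial\partial g|\le C$. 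This is reparable here: either use the bounds on all covariant derivatives of curvature from Proposition \ref{curvature bound}, or bootstrap the Einstein equation in harmonic coordinates to uniform scale-invariant $C^{2,\alpha}$ interior estimates on each rescaled annulus; with such per-scale bounds, and with transitions controlled in $C^3$ at scale (they are close to rigid motions, which preserve $g_0$, as in Proposition \ref{C0}), the glued metric has $|\partial^2 F^*g|\le C$ on $\hat B^*$, so $\partial F^*g$ is Lipschitz across the puncture and the $C^{1,1}$ extension follows. So your architecture can be made to work, but these two quantitative inputs are exactly where the content lies, and the paper's single-chart exponential construction delivers them more directly.
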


\begin{proof} 
We define $F_i: S^3\times [i^{-1}, 1]$, sending $(x, t)$ to $\exp_{f_i(x)}((t-i^{-1})N(x))$, where $N(x)$ is the outward normal vector at $x$.  Consider a Jacobi field $J(t)$ along a geodesic $\gamma_x(t)=F_i(x, t)$. Then since the curvature of $g$ is uniformly bounded,  by Rauch comparison theorem there are constants $C_1>0$ and $\delta>0$ independent of $x$ and $i$ such that $C_1^{-1}|J(i^{-1})|_g\leq |J(t)|_g \leq C_1i|J(i^{-1})|_g$ for $t\in[i^{-1}, \delta]$. For simplicity of notation we may assume $\delta=1$. So for $i$ large enough $F_i$ has no critical points in $[i^{-1}, 1]$. Indeed $F_i$ is a diffeomorphism. For otherwise  there would be a geodesic loop $\sigma(s)(s\in [0, T])$ which is perpendicular to $S_i$ when $s=0$ and $s=T$. It is then easy to see this could not happen for sufficiently large $i$, by passing to a tangent cone. \\

Now we write $F_i^*g=dt^2+t^2h_i(t)$. First we notice that  $|d_g(0, F_i(x,t))-t |\leq i^{-1}\epsilon_i$.  Now we derive estimates for $g_i(t)$. 
Given a unit tangent vector $\xi$ at $x\in S^3$.  Let $J(t)$ be the Jacobi field along $\gamma_x(t)$ with $J(i^{-1})=df_i(\xi)$ and $\dot{J}(i^{-1})=A_{S_i}(J(i^{-1}))$. Then 
$J(t)=d{F_i}_{(x, t)}(\xi)$. Clearly $||J(i^{-1})|_g-i^{-1}|\leq i^{-1}\epsilon_i$ and  $|\dot{J}(i^{-1})-iJ(i^{-1})|_g\leq 2\epsilon_i$. Let $\{e_1(t), \cdots, e_n(t)=\dot{\gamma}_x(t)\}$ be an
orthonormal frame of parallel vector fields along $\gamma_x(t)$, such that $J(i^{-1})=|J(i^{-1})|_ge_1$. Under the decomposition $J(t)=\sum _{\alpha}J_\alpha(t) e_\alpha(t)$ we have
$$\ddot{J}_\alpha(t)+\sum_\beta R_{\alpha n\beta n}(\gamma_x(t)) J_\beta(t)=0, $$
where $R_{\alpha n \beta n}=R(e_\alpha, e_n, e_\beta, e_n)$.  From the above discussion we have $|J(t)|\leq 2C_1$ for $t\in [i^{-1}, 1]$. 
So it is easy to see that there is  a constant $C_2>0$ such that  $$||J(t)|_g-t|\leq C_2(i^{-1}+\epsilon_i t+t^3). $$ Thus
$$|h_i(t)-h_0|_{L^\infty_{h_0}}\leq C_2(i^{-1}t^{-1}+\epsilon_i +t^2). $$
Now take a unit tangent vector $X$ at $x$, we vary $J(i^{-1})$ so that  $\nabla^0_X J(i^{-1})=0$ at $x$, and extend $X$ to a unit tangent vector field in a neighborhood $U$ of $x$ in $S^3$. We may also view $X$ as a tangent vector field on $U\times [i^{-1}, 1]$. Now we differentiate the Jacobi field equation, and  similar arguments as above yield
$$|\nabla_X J(t)|_g\leq  C_3(i^{-1}+\epsilon_i t+t^3),  $$
for a constant $C_3>0$. 
This implies that  there is a constant $C_4>0$ such that $t^{-1}|\nabla^0(h_i(t)-h_0)|_{h_0}\leq C_4(i^{-1}t^{-2}+\epsilon_it^{-1}+t).  $ Similarly one can get bounds on higher derivatives of $h_i(t)-h_0$. The point is that for a fixed $\tau>0$ as $i$ goes to infinity we know $F_i(x,t)$  converges in $C^3$ to a limit $F^\tau_\infty(x, t)$ on $S^3\times [\tau, 1]$. Then we can let $\tau\rightarrow0$  and obtain a limit $F: S^3\times (0, 1]$ with the property that $d_g(0, F(x,t))=t$, and  
$$t^{-2}|F^*g-g_0|_{C^0_{g_0}}+t^{-1}|F^*g-g_0|_{C^1_{g_0}}+|F^*g-g_0|_{C^2_{g_0}}\leq C_5 , $$ 
for some constant $C_5>0$. This implies that $F^*g$ extends to a $C^{1,\alpha}$ metric on $B$. 
\end{proof}

\textbf{Step IV}($C^\infty$ chart):\\

Now we may assume $g$ is a $C^{1,\alpha}$ metric on $B$.  Notice the metric $g$ is also K\"ahler, and compatible almost complex structure $J$ is $C^{1, \alpha}$ in $B$. Thus by the integrability theorem \cite{NW}, modifying by a  $C^{2, \alpha'}(\alpha'<\alpha)$ diffeomorphism, we may assume $J$ is the standard complex structure near the origin. So in a small ball $B_\epsilon$ the K\"ahler form of $g$ is of the form $\omega=\sqrt{-1}\p\bp \phi$ for a real valued function $\phi$ with regularity $C^{3, \alpha'}$. The K\"ahler-Einstein  equation on $B_\epsilon^*$ has the form
$$(\sqrt{-1}\p\bp \phi)^2=e^{-\phi+h}\omega_0^2, $$
where $h$ is a pluri-harmonic function on $B_\epsilon^*$ and $\omega_0$ is the standard K\"ahler form on $\C^2$. By Hartogs theorem $h$ extends smoothly to $B_\epsilon$. Then the standard elliptic regularity implies that $\phi$ and hence $g$ is smooth on $B_\epsilon$.  This finishes the proof of Theorem \ref{thm-ABT}. 

\subsection{Further discussion}

We can use this detailed description of the link $Y$, in the three-dimensional case to get a more precise understanding of the \lq\lq topological obstruction''
of Section 3.2.2. A representation $\alpha:\pi_{1}(Y\setminus \Sigma)\rightarrow S^{1}$ defines a covering of $Y\setminus \Sigma$ and it is clear that the metric completion of this is again an orbifold $\tilde{Y}$ with a metric of Ricci curvature $(2n-1)$. It is clear then that the usual proof of Myers Theorem extends to show that $\tilde{Y}$ is compact, so the representation maps to a finite group. Thus $\pi_{1}(Y\setminus \Sigma)$ is also finite and the torus $T$ in the discussion of 3.2.2 is in this case trivial. (Of course the set $Y_{\epsilon}$ can be assumed to be homotopy equivalent to $Y\setminus \Sigma$).  Moreover it is also clear that the usual proof of the Bishop Theorem extends to this case to show that the volume of $\tilde{Y}$ cannot exceed that of $S^{2n-1}$.
Hence the order of the cover, is bounded by $\kappa^{-1}$ where $\kappa$ is the volume ratio, and hence by $c^{-1}$. Let $D=D(c)$ be the least integer such that all integers less than or equal to $c^{-1}$ divide $D$.  Then we see that the power $\alpha^{D}$ of any such  representation  must be trivial. Thus if, from the beginning of the discussion in Section 3, we consider powers $L^{Dk}$ we never encounter the topological obstruction. The point here of course is that $D$ is determined in a simple explicit way by $c$ which in turn, in the Fano case, is known explicitly. In many practical cases of interest $D$ is not too large. 

We expect that in fact the same will be true in higher dimensions (with the same $D(c)$). Of course we do not expect that the singularities will always be of orbifold type, but it seems likely that the Bishop theorem can still be extended to the metric completion of a covering, as above. There is a slightly weaker statement which should be easier to prove. Let $y$ be a point in the singular set $\Sigma_{Y}$ of a $(2n-1)$-dimensional link $Y$. Let $B$ be a sufficiently small ball about $y$ and $B^{\reg}\subset B $ the regular set. Suppose that we have found a number $E$ such that for all such points
(in all tangent cones of all limits of manifolds in ${\mathcal K}(n,c,V)$) the homology group
$H_{1}(B^{\reg}, \bZ)$ has order bounded by $E$. Let $\alpha$ be a representation of $\pi_{1}(Y\setminus \Sigma)$ as above. Then in the covering defined by $\alpha^{E}$ the pre-image of $B^{\reg}$ is a disjoint union of copies of $B^{\reg}$. In this situation it is straightforward to apply recent results of Colding and Naber \cite{CN} to show that the regular set in the metric completion $\tilde{Y}$ is geodesically convex, and then to extend the Bishop argument to this case. Then we see that  if, from the beginning of the discussion in Section 3, we consider powers $L^{DE k}$ then we never encounter the topological obstruction. Arguing by induction on dimension it seems likely that in fact the number $E= D^{n-2}$ will have the property stated above so, for this weaker statement, we would consider powers $L^{D^{n-1} k}$. But, in fact it seems to us most likely that these higher powers of $D$ are not  required.

 In this direction we make the following conjecture, which (if true) would be a substantial sharpening of Theorem 1.1.
\begin{conj}
For any $n,c,V$ and $\eta<1$ there is a number $k_{0}(n,c,V,\eta)$ such that if $k\geq k_{0}$ then for any $X$ in ${\mathcal K}(n,c,V)$ we have
             $$  \eta (2\pi)^{-n} (kD)^{n}\leq  \rho_{kD,X}\leq \eta^{-1} c^{-1}(2\pi)^{-n} (kD)^{n} , $$
with $D=D(c)$ as above.
\end{conj}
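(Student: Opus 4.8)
Conjecture 5.15 is a quantitative strengthening of Theorems 1.1 and 3.2, and the plan is to run the same machinery while tracking constants. First I would reduce, exactly as in the passage from Theorem 3.2 to Theorem 1.1 in Section 3.2.1, to the local assertion: for every point $p$ in a Gromov--Hausdorff limit $X_{\infty}$ of manifolds in ${\mathcal K}(n,c,V)$, for every sequence $X_{i}\to X_{\infty}$, every $x_{i}\to p$ and every $k_{i}\to\infty$,
$$\liminf_{i}\,(k_{i}D)^{-n}\rho_{k_{i}D,X_{i}}(x_{i})\ \ge\ (2\pi)^{-n}\quad\text{and}\quad\limsup_{i}\,(k_{i}D)^{-n}\rho_{k_{i}D,X_{i}}(x_{i})\ \le\ c^{-1}(2\pi)^{-n};$$
a diagonal/compactness argument as in the proof of Proposition 3.3 then promotes this to the uniform statement for $k\ge k_{0}(n,c,V,\eta)$. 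Having localised, rescale $(X_{\infty},p)$ by $\sqrt{k_{i}D}$ and pass to a tangent cone $C(Y)$ with link $Y$, volume ratio $\kappa=\Vol(Y)/\Vol(S^{2n-1})$, and vertex $O$; after a further rescaling one may assume $x_{i}$ converges either to a regular point of $C(Y)$ or to $O$. Everything then reduces to computing the limiting Bergman density of the model at that point.

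\textbf{Upper bound.} For a holomorphic section $s$ of $L^{kD}$ of unit $L^{2,\sharp}$ norm, the Bochner inequality $\Delta|s|\le|s|$ in the rescaled metric (Proposition 2.1) gives, by comparison with the flat Bergman (Bargmann--Fock) kernel, a weighted sub-mean-value bound
$$|s(x)|^{2}\ \le\ \big(1+\varepsilon(kD)\big)\Big(\int_{B_{R}(x)}e^{-d(x,\cdot)^{2}/2}\,dV^{\sharp}\Big)^{-1},$$
where $\varepsilon(kD)\to0$ uniformly over ${\mathcal K}(n,c,V)$ as $kD\to\infty$, by Gromov compactness applied to the rescaled balls (the model being the sharp identity $|s(0)|^{2}=(2\pi)^{-n}\int_{\bC^{n}}e^{-|z|^{2}/2}|s|^{2}$). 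The non-collapsing hypothesis (1.2), rescaled, forces $\int_{B_{R}(x)}e^{-d(x,\cdot)^{2}/2}\,dV^{\sharp}\ge(c-\varepsilon'(kD))(2\pi)^{n}$ (with equality replaced by $\kappa(2\pi)^{n}\ge c(2\pi)^{n}$ on the tangent cone), whence $\rho_{kD,X}(x)\le\eta^{-1}c^{-1}(2\pi)^{-n}(kD)^{n}$ once $kD$ is large. Thus the constant $c^{-1}$ is the reciprocal of the worst-case volume ratio appearing at the vertex of a tangent cone.

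\textbf{Lower bound.} Here I would use the peak section at the vertex, i.e.\ the refinement of the construction of Section 3.2.2 in which the parameter $\rho$ is sent to $0$. On $C(Y^{\reg})$ take the model bundle $\Lambda_{0}$ with connection $A_{0}$ of curvature $\Omega_{0}=\tfrac{i}{2}\partial\db|z|^{2}$ and section $\sigma_{0}=e^{-|z|^{2}/4}$; working with $\Lambda_{0}^{D}$, the topological obstruction of Section 3.2.2 vanishes (this is exactly where $D=D(c)$ enters), and using the orbifold structure of the link near its singularities the section $\sigma_{0}^{\otimes D}=e^{-D|z|^{2}/4}$ extends across $O$. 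Cut off by $\beta=\beta_{\delta}\beta_{R}\beta_{\epsilon}$ as in Lemma 3.6 so that $\|\db(\beta\sigma_{0}^{D})\|$ is as small as desired, $\|\beta\sigma_{0}^{D}\|^{2}=(1+o(1))\kappa D^{-n}(2\pi)^{n}$ and $|\beta\sigma_{0}^{D}(O)|=1$; transplant to $X_{i}$ near $x_{i}$ using the convergence of Section 2.1 and correct via $s=\sigma-\db^{*}\Delta_{\db}^{-1}\db\sigma$. By Proposition 2.1(2), $\|\tau\|_{L^{2,\sharp}}\le\sqrt2\|\db\sigma\|_{L^{2,\sharp}}$ is tiny, and by the first-derivative bound of Proposition 2.1(1) (as in Remark 2.5) so is $|\tau(x_{i})|$; hence $|s(x_{i})|\ge1-o(1)$ and $\|s\|_{L^{2,\sharp}}^{2}\le(1+o(1))\kappa D^{-n}(2\pi)^{n}$, so $\rho_{kD,X_{i}}(x_{i})\ge(1-o(1))\kappa^{-1}(2\pi)^{-n}(kD)^{n}$, which by the Bishop inequality $\kappa\le1$ is $\ge(1-o(1))(2\pi)^{-n}(kD)^{n}$. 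When $x_{i}$ instead limits to a regular point of $C(Y)$, the same construction on $\bC^{n}$ gives the bound with $\kappa=1$.

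\textbf{Main obstacle.} The serious difficulty — and the reason this remains a conjecture — is the uniform vanishing of the obstruction of Section 3.2.2 after passing to the $D(c)$-th power, together with enough structure on links of (iterated) tangent cones near their singularities to build the peak section across the vertex. In complex dimension $3$ both ingredients are supplied by Theorem \ref{thm-orbifold} (the link is a Sasaki--Einstein orbifold) and the extended Myers/Bishop arguments of Section 5.6, which bound $\pi_{1}(Y\setminus\Sigma_{Y})$ in terms of $c$; in general dimension one needs the expected extension of those arguments to non-orbifold links and through iterated blow-ups — precisely the point flagged at the end of Section 5.6, where even the weaker exponent $D^{n-1}$ is only conjectural. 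A secondary, purely analytic difficulty is making the ``nearly flat at scale $(kD)^{-1/2}$'' comparisons in both bounds genuinely uniform in $i$ near a forming singularity; this should follow from the convergence theory of Section 2.1 (two-sided volume control, $C^{1,\alpha}$ convergence on the regular set, codimension-$4$ singular sets) rather than from any a priori curvature bound, but it must be carried out with care.
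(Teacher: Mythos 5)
This statement is Conjecture 5.15: the paper offers no proof of it, only the surrounding heuristics (the vertex density $\kappa^{-1}(2\pi)^{-n}$ of a tangent cone, the Bishop bound $\kappa\le 1$ and the non-collapsing bound $\kappa\ge c$ which explain the two constants, and the role of $D(c)$ in killing the topological obstruction of Section 3.2.2). Your proposal essentially reassembles those same heuristics into a plan of attack, and you are candid that the central ingredient is missing; so what you have written is a strategy, not a proof, and the decisive gap you name yourself is real: in dimension $n>3$ there is no analogue of Theorem 5.1, no control of $\pi_{1}(Y\setminus\Sigma_{Y})$ or of the torus $T$ from Section 3.2.2, and hence no argument that passing to the power $D(c)$ (rather than $D^{n-1}$, or worse) uniformly removes the obstruction, nor any structure near the links' singularities with which to extend the peak section $\sigma_{0}^{\otimes D}$ across the vertex (even in dimension $3$ the orbifold statement concerns the link, not the cone point $O$ itself).

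Beyond that acknowledged obstacle, two further steps in your sketch would need genuinely new arguments rather than the paper's machinery. First, your localisation is not the paper's: rescaling the varying spaces $(X_{i},x_{i})$ by $\sqrt{k_{i}D}$ with $k_{i}\to\infty$ does not produce a tangent cone of $X_{\infty}$ at $p$ in general, but only a pointed non-collapsed Ricci-limit space; the quantities $\kappa$ and the weighted volume $\int e^{-d^{2}/2}$ you use are tied to exact cone structure (the identity $\omega=\tfrac{i}{2}\partial\overline{\partial}d^{2}$ and the sub-mean-value comparison both fail at a definite scale on a space that is merely close to, but not equal to, a cone), so both your upper-bound comparison and your norm computation $\|\beta\sigma_{0}^{D}\|^{2}\approx\kappa D^{-n}(2\pi)^{n}$ need an argument that survives this loss of cone structure uniformly in $i$. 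Second, the paper's Hörmander construction deliberately fixes one scale $k_{\nu}$ per tangent cone and then takes a power $t\le m$, which is why Theorem 3.2 only produces \emph{some} $k\le k(p)$ and why the constants degrade through Lemma 3.1; your conjecture requires the sharp constant $(2\pi)^{-n}$ for \emph{all} $k\ge k_{0}$ simultaneously, and the compactness/diagonal argument of Proposition 3.3 cannot be recycled for this because the family of scales is not compact and Lemma 3.1 destroys the constant. So even granting the topological and structural inputs, the reduction and the transfer of the model computations at every large scale remain unproved, which is consistent with the authors' decision to state this as a conjecture.
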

To put this in context, recall that for a {\it fixed} $X$ the standard asymptotics is $\rho_{k,X}\sim (2\pi)^{-n} k^{n}$ as $k\rightarrow\infty$. This essentially follows from the fact that on $\bC^{n}$ we have $\rho= (2\pi)^{-n}$. The conjectural lower bound here is a uniform version of this over ${\mathcal K}(n,c,V)$, provided we work over multiples of $D$. On the other hand the corresponding upper bound---$\rho_{kD,X}\leq \eta^{-1}(2\pi)^{-n} (kD)^{n}$---almost certainly fails, because at the vertex of a cone $C(Y)$ we have  $\rho= \kappa^{-1} (2\pi)^{-n}$ where $\kappa\geq c$ is the volume ratio. This is why we believe that the  plausible  upper bound should include the extra factor $c^{-1}$. In a similar way, if in fact we do encounter the topological obstruction of 3.2.2 in some limit space, then it seems it would not be true that there is a lower bound on $\rho_{k,X}$ for {\it all} sufficiently large $k$, since the twisting of the line bundle will force $\rho$ to be small as we approach the singularity.
This phenomenon---that near to a singularity $\rho$ gets larger or smaller  depending on divisibility---is similar to the orbifold situation considered by Ross and Thomas in \cite{RT}.

\end{document}